\numberwithin{equation}{section}
\newcommand{\als}{\al^*}
\newcommand{\orho}{\overline{\rho}}
\newcommand{\dbk}{\del_{\beta_k}}
\newcommand{\cZ}{\mathcal{Z}}
\newcommand{\vp}{\varphi}
\newcommand{\N}{{\mathbb N}}
\newcommand{\eps}{{\varepsilon}}
\newcommand{\C}{{\mathbb C}}
\newcommand{\R}{{\mathbb R}}
\newcommand\cA{{\cal  A}}
\newcommand\cB{{\cal  B}}
\newcommand\cD{{\cal  D}}
\newcommand\cU{{\cal  U}}
\newcommand\cK{{\cal  K}}
\newcommand\cN{{\cal  N}}
\newcommand\cO{{\cal O}}
\newcommand\cY{{\cal Y}}
\newcommand\cM{{\mathcal M}}
\newcommand\cS{{\mathcal S}}
\newcommand{\RN}[1]{%
  \textup{\uppercase\expandafter{\romannumeral#1}}
}
\newcommand{\al}{\alpha}
\newcommand{\be}{\beta}
\newcommand{\ga}{\gamma}
\newcommand{\Ga}{\Gamma}
\newcommand{\ka}{\kappa}
\newcommand{\dal}{\partial_{\alpha_l}}
\newcommand{\del}{\partial}
\def\eps{\epsilon }
\newcommand\adots{\mathinner{\mkern2mu\raise1pt\hbox{.}
\mkern3mu\raise4pt\hbox{.}\mkern1mu\raise7pt\hbox{.}}}
\renewcommand{\div}{{\rm div}}
\newcommand{\curl}{{\rm curl}}
\newcommand{\na}{{\nabla}}
\newcommand{\la}{\langle }
\newcommand{\ra}{\rangle }
\newcommand\br{\begin{remark}}
\newcommand\er{\end{remark}}
\newcommand\bp{\begin{pmatrix}}
\newcommand\ep{\end{pmatrix}}
\newcommand\beq{\begin{equation}}
\newcommand\ee{\end{equation}}
\newcommand\ba{\begin{equation}\begin{aligned}}
\newcommand\ea{\end{aligned}\end{equation}}
\newcommand\nn{\nonumber}
\newcommand{\bap}{\begin{app}}
\newcommand{\eap}{\end{app}}
\newcommand{\begs}{\begin{exams}}
\newcommand{\eegs}{\end{exams}}
\newcommand{\beg}{\begin{example}}
\newcommand{\eeg}{\end{exaplem}}
\newcommand{\bpr}{\begin{proposition}}
\newcommand{\epr}{\end{proposition}}
\newcommand{\bt}{\begin{theorem}}
\newcommand{\et}{\end{theorem}}
\newcommand{\bc}{\begin{corollary}}
\newcommand{\ec}{\end{corollary}}
\newcommand{\bl}{\begin{lem}}
\newcommand{\el}{\end{lem}}
\newcommand{\bd}{\begin{definition}}
\newcommand{\ed}{\end{definition}}
\newcommand{\brs}{\begin{remarks}}
\newcommand{\ers}{\end{remarks}}
\newtheorem{theo}{Theorem}[section]
\newtheorem{prop}[theo]{Proposition}
\newtheorem{cor}[theo]{Corollary}
\newtheorem{lem}[theo]{Lemma}
\newtheorem{defn}[theo]{Definition}
\newtheorem{rem}[theo]{Remark}
\newtheorem{exams}[theo]{Examples}
\newtheorem{assumption}[theo]{Assumption}
\newtheorem{definition}[theo]{Definition}
\newtheorem*{propnn}{Proposition}
\newtheorem*{theonn}{Theorem}
\author{Matthew Hernandez}
\title{Mechanisms of Lagrangian Analyticity in Fluids}
\begin{document}

\maketitle

\begin{abstract}
Certain systems of inviscid fluid dynamics have the property that for solutions that are only slightly better than differentiable in Eulerian variables, the corresponding Lagrangian trajectories are analytic in time. We elucidate the mechanisms in fluid dynamics systems that give rise to this automatic Lagrangian analyticity, as well as mechanisms in some particular fluids systems which prevent it from occurring.

We give a conceptual argument for a general fluids model which shows that the fulfillment of a basic set of criteria results in the analyticity of the trajectory maps in time. We then apply this to the incompressible Euler equations to prove analyticity of trajectories for vortex patch solutions. We also use the method to prove the Lagrangian trajectories are analytic for solutions to the pressureless Euler-Poisson equations, for initial data with moderate regularity.

We then examine the compressible Euler equations, and find that the finite speed of propagation in the system is incompatible with the Lagrangian analyticity property. By taking advantage of this finite speed we are able to construct smooth initial data with the property that some corresponding Lagrangian trajectory is not analytic in time. We also study the Vlasov-Poisson system on $\R^2\times \R^2$, uncovering another mechanism that deters the analyticity of trajectories. In this instance, we find that a key nonlocal operator does not preserve analytic dependence in time. For this system we can also construct smooth initial data for which the corresponding solution has some non-analytic Lagrangian trajectory. This provides a counterexample to Lagrangian analyticity for a system in which there is an infinite speed of propagation, unlike the compressible Euler equations.

\end{abstract}

\tableofcontents








\begin{section}{Introduction}

In previous studies of several systems of PDE in fluid mechanics, it has been observed that when one imposes only a small amount of regularity in the initial data, the trajectories of fluid particles for the corresponding system are in fact analytic in the time variable while the solution exists. The goal of this paper is to illuminate the underlying mechanisms in physical systems that lead to this automatic analyticity of Lagrangian trajectories, as well as those that prevent it from occurring.

In general, one can examine this property for certain PDE involving a velocity field, a time-dependent vector function $u(x,t)$ on $\R^d\times\R$, as an unknown in the system. The corresponding trajectory map $X(\al,t)$ then satisfies the ODE system
\begin{eqnarray}\label{intro01}
\frac{dX}{dt}(\al,t)&=&u(X(\al,t),t),\\
X(\al,0)&=&\al,\nn
\end{eqnarray}
where $u(x,t)$ satisfies the original PDE one started with.

Writing the original PDE in terms of $u(X(\al,t),t)$, rather than $u(x,t)$, (and making the analogous replacements for other potential unknowns in the original system) yields what is often referred to as the PDE written in Lagrangian variables. The intuitive idea is that instead of thinking of the unknown quantities in the system as functions of fixed points in space $x\in\R^d$ (and time), one can think of them as functions with various particles (and time) as their inputs, in a sense, where the particles are labelled by their starting positions $\al\in\R^d$. More concretely, the composition of a function $f$ with $X(\al,t)$ transforms the function $f$ into one which takes an initial position $\al\in\R^d$ as the input, and returns the value of $f$ obtained as one follows the associated particle and samples $f$ at the particle's location.

For certain types of equations, by using the original PDE, the evolution of the tangent of a trajectory, given by $u(X(\al,t),t)$, can be described in terms of a nonlocal operator acting on the initial velocity field $u_0=u|_{t=0}$ and the trajectory map $X$, and the equation \eqref{intro01} for $X$ can be expressed as
\ba\label{intro02}
\frac{dX}{dt}(\al,t)=F[X,u_0](\al,t),
\ea
where the operator $F$ may also depend on the initial data for other potential unknowns in the original PDE. The fact that there is no dependence on $u(x,t)$ at times $t>0$ in the right hand side of \eqref{intro02} may allow one to solve for the trajectory map $X$ without knowledge of the solution $u$ to the original PDE at later times. In many such PDE one observes that $u$ and the other unknowns in the original equation are determined by the solution $X$ to the equation \eqref{intro02} with initial data $X|_{t=0}=Id$.

Here, in particular, we study systems of PDE that have such a Lagrangian formulation. For example, we explicitly discuss examples such as vortex patch solutions for the $2$D incompressible Euler equations, the Euler-Poisson equation, and the Vlasov-Poisson equation. We also study the compressible Euler equations, which are more naturally expressed in Lagrangian variables as a PDE. Our objectives are to develop and apply a conceptual method for testing that the Lagrangian trajectories of a given fluid mechanics model are automatically analytic, even when the velocity field is somewhat nonsmooth, and to provide counterexamples, that is, to discuss fluid models which have solutions whose velocity fields are smooth, but whose Lagrangian trajectories need not be analytic. To the best of the author's knowledge, the counterexample-type results in this paper to the Lagrangian analyticity property are the first in the literature.

To verify that a fluid mechanics system has analytic Lagrangian trajectories, we use a natural approach of complexifying time in \eqref{intro02}, and considering the system as an ODE in a function space. The idea of proving analyticity of Lagrangian paths for the incompressible Euler equations by viewing them as an ODE with complex time appears in the articles \cite{serfatiT,serfati} of Serfati and in his unpublished works \cite{serfati3,serfati4,serfati5} (see also the careful development of the ideas in \cite{serfati} in the master's thesis of T. Hertel \cite{hertel}). In particular, \cite{serfatiT} and \cite{serfati3} discuss analyticity in the context of vortex patches. Results showing analyticity of trajectories for solutions to fluids equations amidst low spatial regularity have also been discussed in \cite{bf, cvw, fz,shnirelman}. These studies treat cases in which the initial data is slightly better than $C^1$. 
In \cite{fz}, a recursive formula for the time derivatives of the trajectory map is used to show analyticity for the trajectories of solutions to the $3$D incompressible Euler equations on the torus.  In \cite{cvw}, the $2$D SQG equations are considered; here the authors also use a recursive formula and then apply combinatorial identities to calculate and bound the $n$th order time derivatives of the trajectory map. In \cite{shnirelman}, a more abstract setting is used to prove the result for the incompressible Euler equations in arbitrary dimensions, where the flows of the solutions are handled as geodesics in the space of volume-preserving diffeomorphisms.

In this paper, in Section 2, we develop a systematic technique based on a complex ODE description which boils proofs of the Lagrangian analyticity property down to checking a few simple properties, and which can be applied to various fluid models. We prove that a general system has analytic Lagrangian trajectories if one is able to write it in the form \eqref{intro02} and then verify that two basic criteria hold. The first criterion is that for the particular choice of initial data $u_0$, the operator $F[\cdot,u_0]$ is locally bounded, in some sense, when acting on functions in a suitable function space. The second criterion is that the operator preserves analytic dependence in time when one inputs an analytic function of time $X(\al,t)$ in the corresponding argument.

In Section 3, as our first application of the material in Section 2, we prove Lagrangian analyticity for vortex patch solutions to the $2$D incompressible Euler equations, a case in which the gradient of the velocity field has discontinuities, unlike the results in the studies \cite{bf, cvw, fz, serfati,shnirelman}, in which the initial data is assumed to be slightly better than differentiable everywhere.

In the section that follows, we use the method to prove the analyticity of trajectories for the pressureless Euler-Poisson system for initial data with moderate spatial regularity. This system models stellar dynamics, in which particles experience a long-range attractive force, as well as the dynamics of charged fluids, in which the particles experience repulsion, when the effects of pressure are negligible.
One new obstacle over the system handled in the previous section is that the flows for the Euler-Poisson system are compressible. For this result, we provide formulas for taking spatial derivatives of the quantity $F[X,u_0]$ to arbitrary order and estimates bounding the $L^2$ norms of the results in a uniform way. While the result for vortex patch solutions to the $2$D incompressible Euler equations in Section 3 uses techniques more specific to the setting there, and is likely sharper in terms of the regularity of the initial data, the style of proof in Section 4 used for the Euler-Poisson system, in terms of Sobolev spaces, seems more robust and likely applies formulaically with almost no changes to most fluid equations with Lagrangian form \eqref{intro02} when $F$ satisfies the basic criteria discussed in Section 2.

In Section 5, we consider the compressible Euler equations, which turn out to be quite different from the incompressible Euler equations with regard to the analyticity of trajectories. We construct $C^\infty$ initial data which has trajectories that are not real analytic in time at $t=0$. This gives a counterexample to the Lagrangian analyticity property for a fluid mechanics system in the sense that no matter how smooth one requires the initial data to be, we find that the corresponding Lagrangian trajectories need not be analytic. We show that the finite speed of propagation for the compressible Euler equations is a key mechanism which is incompatible with the analyticity of the trajectories. This idea results in a relatively transparent counterexample of a fluid equation in which smooth data can easily lead to non-analytic trajectories.

Finally, in Section 6, we consider the Lagrangian formulation for the Vlasov-Poisson equation, in which the unknown $f(x,v,t):\R^2\times\R^2\times\R\to \R$ is a probability distribution governing the velocities and spatial positions of a large number of particles under the effects of either a force of mutual attraction or a repulsive force.
The natural Lagrangian formulation associated with this problem is in terms of a phase space trajectory $Z(t)=(X(t),V(t))$ in $\R^2\times\R^2$, say with $Z(0)=(x_0,v_0)$, where  $Z(t)$ satisfies an ODE of the form \eqref{intro02}. There are several interesting features the Vlasov-Poisson system shares with the incompressible Euler equations. For one, $f$ is conserved along trajectories, just as vorticity is in the $2$D incompressible Euler equations. Secondly the phase space flow for Vlasov-Poisson is also volume-preserving. Third, both systems have an infinite speed of propagation. In this case we found that, despite these similarities, the situation is very different with regard to the analyticity of trajectories. For the Vlasov-Poisson system, we are also able to construct examples of $C^\infty$ initial data with the property that some resulting Lagrangian trajectory is not analytic in time. A key difference for the Vlasov-Poisson system from incompressible Euler and Euler-Poisson is that the corresponding operator to $F[\cdot,u_0]$ in \eqref{intro02} does not preserve the analyticity of time dependence in this case.

Overall, we hope that the work here lucidly explains the essential reasons some fluid mechanics systems do have the Lagrangian analyticity property, and certain fluids mechanics systems do not.

\end{section}
\begin{section}{A general ODE method for proving Lagrangian analyticity}
In this section, we provide a strategy for proving Lagrangian analyticity results for fluid mechanics systems. One can often exploit the structure of such fluids equations, such as the incompressible Euler equations, to represent their corresponding trajectory maps as solutions of a relatively tractable ODE in an abstract function space. The main result in this section is a proof that if one has an ODE in an abstract function space, say for the trajectory maps, which is of a particular form, then the solutions are analytic in time. This results in a conceptual method for proving Lagrangian analyticity. Indeed, criteria leading to existence and uniqueness, as well as analyticity, of solutions to ODE are not overly technical, so the core parts of our strategy are clear-cut and easily summarized.

The first step is to reformulate the PDE in Lagrangian variables in the manner described in the introduction. Let us take the $\R^d$-valued function $u(x,t)$ on $\R^d\times\R$ to be the velocity field associated to the original PDE. Then the trajectory map $X(\al,t)$ is the solution to the equation
\begin{eqnarray}
\frac{dX}{dt}(\al,t)&=&u(X(\al,t),t),\\
X(\al,0)&=&\al.\nn
\end{eqnarray}
For this section, we assume that one is able to rewrite this system in the form
\begin{eqnarray}
\frac{dX}{dt}&=&\tilde F(X),\label{eq00001}\\
X|_{t=0}&=&Id,\nn
\end{eqnarray}
for an operator $\tilde F$ defined on a set of maps from $\R^d$ to $\C^d$, depending on $u_0=u|_{t=0}$ and the other initial data prescribed for the original PDE. We give explicit derivations for the corresponding operators $\tilde F$ in the case of the incompressible Euler equations in Section 3 and in the case of the Euler-Poisson equation in Section 4.

We have yet to discuss some simple matters such as the domain and range of the operator $\tilde F$ in \eqref{eq00001}. We now address this, as well as give a complete discussion on the structure required for our strategy.
\begin{subsection}{The basic structure}
Let us consider an ODE of the form
\ba\label{thesis001}
\frac{dX}{dt}&=&\tilde F (X),\quad
X =Id\quad\mbox{at }t=0,
\ea
for unknown $X(t)$, with $(X(t)-Id)\in Y$ for each $t$, where $Y$ is a Banach space of maps from a fixed open subset $\Omega\subset\R^d$ to $\C^d$, for some positive integer $d$. Moreover, we require that $\lVert X(t)-Id\rVert_Y \leq \delta$, for some $\delta>0$, and that the operator $\tilde F$ is defined on the set $\{X:\Omega\to\C^d\,:\lVert X-Id\rVert_Y \leq \delta\}$, taking values in $Y$. 
We work under the assumption that $Y$ is a Banach space of continuous maps with a norm that dominates the $C^{0,1}(\Omega)$ norm, i.e. with the property that
\ba
\lVert
f
\rVert_{C^{0,1}(\Omega)}
=\sup_{x\in\Omega}|f(x)|+\sup_{x,y\in\Omega}\frac{|f(x)-f(y)|}{|x-y|}\leq C \lVert f \rVert_Y.
\ea
In this section we give criteria for the operator $\tilde F$ which are relatively simple and which guarantee that solutions $X(t)$ are analytic in time in a small disc in $\C$ as maps into $Y$, with Theorem \ref{mainth}. We now give an overview of these criteria.

\textbf{A summary of the key properties of $\tilde F$ that lead to analytic trajectories}
\begin{itemize}
\item
\textbf{Local boundedness}
\end{itemize}

The first criterion is that $\tilde F$ satisfies a local boundedness property near the identity map. To say this is to assert that for a fixed $\delta>0$, $\tilde F$ sends maps $X$ in the closed $\delta$-ball about the identity in $Y$ into $Y$, satisfying a bound of the form
\ba\label{0000'}
\sup\{\lVert \tilde F(X) \rVert_{Y}:\lVert X-Id\rVert_{Y}\leq\delta\} \leq C,
\ea
for some $C$, possibly dependent on the initial data under consideration.

We present Proposition \ref{bddF}, proved in Section 3, which gives us this property for the corresponding operator for vortex patch solutions to the $2$D incompressible Euler equations. In this case, we take $Y$ to be the set of $C^{1,\mu}$ maps on $\Omega$ for some $\mu\in(0,1)$, where $\Omega$ is the initial patch.

\begin{propnn}
Fix $\mu \in(0,1)$. Assume we have a finitely-connected, bounded domain $\Omega$ which has $C^2$ non-self-intersecting boundary, and let $F$ be as in Definition \ref{F2}, with 
\ba
\omega(\al):=\begin{cases}
1 & \mbox{if }\al\in\Omega, \\
0 & \mbox{otherwise.}
\end{cases}
\ea
Then there exists a $\delta>0$ such that
\ba
\sup\{\lVert F(X) \rVert_{C^{1,\mu}(\Omega)}:\lVert X-Id\rVert_{C^{1,\mu}(\Omega)}\leq\delta\} \leq C,
\ea
where $C$ depends only on $\Omega$.
\end{propnn}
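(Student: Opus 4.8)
The plan is to work directly with the explicit operator of Definition~\ref{F2}, which for a vortex patch is the Biot--Savart law written in Lagrangian variables: since $\omega=\mathbf{1}_{\Omega}$, up to a universal constant
\[
F(X)(\al)=\frac{1}{2\pi}\int_{\Omega}K\bigl(X(\al)-X(\beta)\bigr)\,d\beta,\qquad K(z)=\frac{z^{\perp}}{\lvert z\rvert^{2}}.
\]
So the task is to bound $F(X)$ and one $\al$-derivative of it in $C^{0,\mu}(\Omega)$, uniformly over $X$ with $\lVert X-Id\rVert_{C^{1,\mu}(\Omega)}\le\delta$. First I would fix $\delta$ small — depending only on the constant relating the $C^{0,1}$ and $C^{1,\mu}$ norms, hence ultimately only on $\Omega$ — so that each such $X$ is a bi-Lipschitz homeomorphism onto its image with $\tfrac12\lvert\al-\beta\rvert\le\lvert X(\al)-X(\beta)\rvert\le 2\lvert\al-\beta\rvert$, with $DX$ everywhere invertible and $\lVert (DX)^{-1}\rVert_{C^{0,\mu}(\Omega)}$ bounded. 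The $C^{0}$ bound on $F(X)$ is then immediate: the integrand is $O(\lvert X(\al)-X(\beta)\rvert^{-1})=O(\lvert\al-\beta\rvert^{-1})$, integrable over the bounded planar set $\Omega$, with a bound depending only on $\mathrm{diam}\,\Omega$.

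The crux, and the main difficulty, is the $C^{1,\mu}$ estimate, because differentiating the kernel in $\al$ produces a term $(\partial_{m}K)(X(\al)-X(\beta))\,\partial_{\al_{j}}X_{m}(\al)$ of size $O(\lvert\al-\beta\rvert^{-2})$, which is not integrable in two dimensions. To get around this I would exploit the patch structure ($\omega=\mathbf{1}_{\Omega}$) together with an integration by parts in $\beta$, via the chain-rule identity
\[
(\partial_{m}K)\bigl(X(\al)-X(\beta)\bigr)=-\sum_{n}\bigl[(DX(\beta))^{-1}\bigr]_{nm}\,\partial_{\beta_{n}}\Bigl[K\bigl(X(\al)-X(\beta)\bigr)\Bigr],
\]
which turns the bad $\al$-derivative into a $\beta$-divergence. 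Since $X$ is only $C^{1,\mu}$ one cannot integrate by parts against the coefficient $(DX(\beta))^{-1}$ directly (that would need $D^{2}X$); instead I would freeze the coefficient at the base point, $(DX(\beta))^{-1}=(DX(\al))^{-1}+\bigl[(DX(\beta))^{-1}-(DX(\al))^{-1}\bigr]$. The frozen part gives a clean application of the divergence theorem, producing a boundary integral over $\partial\Omega$ of $K(X(\al)-X(\beta))$ plus a harmless local term coming from the excised circle $\partial B_{\eps}(\al)$ as $\eps\to0$ (an explicit smooth function of $DX(\al)$, hence $C^{0,\mu}$ in $\al$ and uniformly bounded); in the remaining part the factor $\lvert(DX(\beta))^{-1}-(DX(\al))^{-1}\rvert\lesssim\lvert\al-\beta\rvert^{\mu}$ improves the kernel to the integrable size $O(\lvert\al-\beta\rvert^{\mu-2})$. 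Rearranging with $DX(\al)(DX(\al))^{-1}=I$, this yields
\[
\partial_{\al_{j}}F(X)(\al)=-\frac{1}{2\pi}\int_{\partial\Omega}K\bigl(X(\al)-X(\beta)\bigr)n_{j}(\beta)\,dS(\beta)+\mathcal{A}_{j}(\al)+\frac{1}{2\pi}C_{j}(\al),
\]
with $\mathcal{A}_{j}$ a weakly singular area integral (kernel $O(\lvert\al-\beta\rvert^{\mu-2})$, coefficients controlled in $C^{0,\mu}$) and $C_{j}$ the local term; a short regularization argument — excising $B_{\eps}(\al)$ and letting $\eps\to0$ — justifies that this is genuinely $\partial_{\al_{j}}F(X)$.

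It then remains to bound these three pieces in $C^{0}$ and in $C^{0,\mu}(\Omega)$, uniformly over the $\delta$-ball. The local term $C_{j}$ is a smooth function of $DX(\al)$, so $\lVert C_{j}\rVert_{C^{0,\mu}}$ is controlled by $\lVert DX\rVert_{C^{0,\mu}}$ — routine. The area integral $\mathcal{A}_{j}$ is handled by the usual Schauder-type splitting: for $h=\lvert\al-\al'\rvert$ one separates the part of $\Omega$ within distance $2h$ of the two points (estimated directly using $\int_{B_{r}}\lvert\al-\beta\rvert^{\mu-2}\,d\beta\lesssim r^{\mu}$) from its complement (estimated by kernel-difference bounds gaining a power of $\lvert\al-\beta\rvert^{-1}$, after also splitting off the value at $\al$ of the $C^{0,\mu}$ coefficient so that the leftover constant-coefficient contribution needs only the \emph{boundedness} of the boundary integral); the $C^{0,\mu}$ control on $(DX)^{-1}$, $DX$ and $X$ keeps this uniform.

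The genuinely delicate step — where essentially all the work lies — is the boundary integral $\int_{\partial\Omega}K(X(\al)-X(\beta))\,n_{j}(\beta)\,dS(\beta)$. Restricted to $\partial\Omega$ this is a Cauchy/layer-potential-type kernel whose leading part is odd, $\sim\pm s^{-1}$ in an arclength parameter $s$, so it is a genuine principal-value singular integral on the finite disjoint union of $C^{2}$ Jordan curves comprising $\partial\Omega$; the $C^{2}$ hypothesis enters precisely here (bounded curvature, $C^{1}$ unit tangent field, chord comparable to arc), and classical boundary-regularity estimates for such integrals — adapted to the $X$-distorted kernel using the bi-Lipschitz and $C^{1,\mu}$ bounds on $X$ — show it is bounded with one-sided $C^{0,\mu}$ traces onto $\overline{\Omega}$, with constants depending only on the geometry of $\Omega$. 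I expect this to be the main obstacle. A possible alternative worth recording: changing variables $y=X(\beta)$ reduces $F(X)$ to the velocity field at $X(\al)$ of a variable-vorticity patch on $X(\Omega)$ with $C^{0,\mu}$ density $1/\det DX\circ X^{-1}$, whence $F(X)\in C^{1,\mu}(\overline{\Omega})$ follows from the classical one-sided $C^{1,\mu}$ regularity of such a velocity field up to a $C^{1,\mu}$ boundary together with the stability of $C^{1,\mu}$ under composition — though this hides the role of the bi-Lipschitz bound and ultimately rests on the same layer-potential estimates.
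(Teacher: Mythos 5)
Your decomposition (chain rule in $\be$, freezing $(\na X(\be))^{-1}$ at $\al$, divergence theorem on $\Omega\setminus B_\eps(\al)$) is a legitimate alternative to the paper's route, and the commutator/area term and the local term are indeed routine once the frozen piece is under control. But the decisive step is not established: everything is reduced to the uniform boundedness and uniform $C^{0,\mu}$ regularity, up to $\del\Omega$, of the distorted boundary integral $\int_{\del\Omega}K(X(\al)-X(\be))\,n_j(\be)\,dS(\be)$, and this cannot be obtained by putting absolute values inside (that gives $\int_{\del\Omega}|\al-\be|^{-1}dS(\be)\sim\log(1/\mathrm{dist}(\al,\del\Omega))$, which blows up at the boundary); it requires the cancellation structure of a layer potential. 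The classical one-sided estimates you invoke are for convolution kernels $K(\al-\be)$ with H\"older densities on $C^{1,\mu}$ or $C^2$ curves; here the kernel is $K(X(\al)-X(\be))$ with $X$ only $C^{1,\mu}$, \emph{complex-valued} (time is complexified, so $X$ maps into $\C^2$), and ranging over the ball $B_\delta$, and the constants must be uniform in $X$. Adapting the layer-potential theory to this setting (including the frozen constant-matrix kernels $K((\al-\be)A)$ with $A\in\C^{2\times 2}$ near $I$, and the $\al$-H\"older estimates for the $O(|\al-\be|^{\mu-1})$ correction on the curve) is precisely the analytic core of the proposition, and your proposal names it as ``the main obstacle'' without carrying it out. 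Note also that your Schauder argument for the area term itself leans on the unproved boundedness of this boundary integral, so the gap propagates through the whole chain. Finally, the fallback you record --- changing variables $y=X(\be)$ and quoting one-sided $C^{1,\mu}$ regularity of patch velocities on $X(\Omega)$ --- is not available here: for complex-valued $X$ there is no image domain $X(\Omega)\subset\R^2$ and no real change-of-variables Jacobian; the paper has to confront exactly this point in Lemma \ref{diffFormula}(ii), proving the identity first for real $X$ and extending to complex $X$ by analytic continuation in an auxiliary parameter $\lambda$.

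For contrast, the paper avoids the boundary term altogether: it differentiates $F_X$ only along divergence-free vector fields $T$ tangent to $\del\Omega$ (Lemmas \ref{diffFormula} and \ref{vfpair}), so the integration by parts produces no boundary contribution; the resulting integrand $(T_\al\na X(\al)-T_\be\na X(\be))\na K(X(\al)-X(\be))$ carries an extra factor vanishing like $|\al-\be|^{\mu}$, and its $L^\infty$ and H\"older bounds are obtained from the cancellation of $\na K$ over (half-)annuli together with the Geometric Lemma of \cite{bc} (Lemmas \ref{MAINBOUND} and \ref{mainHbound}); the normal derivative $N(F_X)$ is then recovered algebraically from $T(F_X)$ via the div/curl relations for $G_X$. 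If you want to pursue your route, the missing ingredient is a quantitative, $X$-uniform H\"older estimate up to the boundary for the distorted single-layer-gradient kernel; as it stands, that estimate is assumed rather than proved.
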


For the pressureless Euler-Poisson system, in Section 4, we give a proof of the corresponding bound for the analogous operator for $H^s$ maps from $\R^3$ to $\C^3$, $s\geq6$, with Proposition \ref{pcor02}.

The analyticity of trajectories also relies critically on the analytic properties of the operator $\tilde F$, which are related to the remaining major criterion for analytic trajectories.

\begin{itemize}
\item
\textbf{Preservation of analytic time dependence}
\end{itemize}

Now we consider the composition of $\tilde F$ with maps dependent on an additional, complex parameter $z$, which plays the role of the time variable. Precisely, we compose it with a function of $z$ in a complex disc, $X_z:\Omega\to\C^d$ with $(X_z-Id)\in Y$ for each $z$. Let us denote $d_r:=\{z\in\C:|z|<r\}$ for $r>0$. The desired property of $\tilde F$ is that the following implication holds for any such $X_z$ mapping into the domain of $\tilde F$ and any $r>0$:
\begin{eqnarray}\label{thesis002}
& &\mbox{If for each }\al\in\Omega\mbox{ we have }X_z(\al):d_r\to\C^d\mbox{ is analytic in }z,\\
& &\mbox{then for each }\al\in\Omega\mbox{ we have }F(X_z)(\al):d_r\to\C^d\mbox{ is analytic in }z.\nn
\end{eqnarray}

In this case we say the operator $\tilde F$ preserves analyticity ((ii) of Definition \ref{pres3}).
Such a property is quite natural if we expect to have analytic trajectories. Note that if $X(\al,t)$ solves \eqref{thesis001} and is analytic in $t$, $t$ playing the role of $z$, and we take the composition $\tilde F(X(\cdot,t))(\al)$, this returns the tangent vector $\frac{dX}{dt}(\al,t)$, which must be analytic in $t$. It is verified with Lemma \ref{vPreservesA'} that for vortex patch solutions to $2$D Euler, the corresponding operator $F$ preserves analyticity in this sense. In Section 4, we show the analogous result holds for the Euler-Poisson system.

With Theorem \ref{mainth} we prove that if $\tilde F$ is locally bounded near the identity map in $Y$ and preserves analyticity, then there is a unique analytic $Y$-valued function on a disc in $\C$ solving the equation \eqref{thesis001}.

We verify that both of the above criteria are satisfied for vortex patch solutions to the $2$D incompressible Euler equations in Section 3. We also prove the criteria are satisfied for the $3$D Euler-Poisson system in Section 4. By using Theorem \ref{mainth}, we thus get analyticity of trajectories for both systems.

Additionally, we examine some examples of fluid mechanics systems to which the above method of analytic trajectories does not apply. In particular, they are both examples which are related to the incompressible Euler equations, but we prove that they do not have the Lagrangian analyticity property, unlike incompressible Euler.
\begin{rem}
(i) For the compressible Euler equations, it appears that the Lagrangian formulation of the system is most naturally expressed as a second order PDE, rather than an ODE of the form \eqref{thesis001}. In this case, there exists $C^\infty$ initial data for which the solution has some trajectory which is not analytic in time. This is proved in Section 5.

(ii) The Vlasov-Poisson system has a natural Lagrangian formulation of the form \eqref{thesis001}, but the corresponding operator \textit{does not} satisfy the preservation of analyticity property, \eqref{thesis002}. For this system, we are also able to construct $C^\infty$ initial data such that the solution has some trajectory which is not analytic in time. This is done in Section 6.
\end{rem}
\end{subsection}
\begin{subsection}{Analytic solutions in time for the ODE in an abstract function space}
Now let us put the criteria discussed above on a firm footing.
We write $X_z$ to indicate a function dependent on a parameter $z\in\C$, with $(X_z-Id)$ taking values in a Banach space $Y$ of continuous maps from an open subset $\Omega$ of $\R^d$ to $\C^d$. As mentioned above, we make the assumption that the $\Vert\cdot\rVert_Y$ norm dominates the $C^{0,1}$ norm. 
\begin{assumption}\label{as1}
$Y\subset C(\Omega)$ is a Banach space of maps from $\Omega\subset\R^d$ to $\C^d$ with the property that
\ba\label{bprop}
\lVert f\rVert_{C^{0,1}(\Omega)}\leq C \lVert f \rVert_Y\textrm{ for all }f\in Y.
\ea
\end{assumption}
Moreover, we specifically consider maps within a distance $\delta$ of the identity map in the $Y$ norm, for some $\delta>0$.
\begin{definition}\label{banach'}
Given a Banach space $Y$ satisfying Assumption \ref{as1} we define (i)
\ba
B_\delta:=\{X:\lVert X-Id\rVert_Y\leq \delta\},
\ea
(ii)
\ba\label{24}
\cY_r:=\{Y\textrm{-valued maps }X_z\textrm{ analytic in }z\textrm{ on }d_r\},
\ea
where $d_r=\{|z|<r\}$, and (iii)
we consider the set of maps which differ from the identity by an element of $\cY_r$, and which map into the ball $B_\delta$ as functions of $z$:
\ba\label{26}
\cB_{\delta,r}:=\{\mbox{maps }X_{(\cdot)}:(X_{(\cdot)}-Id)\in\cY_r, \;X_z\in B_\delta \textrm{ for all }z\in d_r\}.
\ea
\end{definition}

With this, we now have a setting in which we can verify whether or not for each fixed $\al$, $\tilde F(X_z)(\al)$ inherits analyticity in $z$ from the functions $X_z$ analytic in $z$, that is, whether the property of preservation of analyticity holds.
\begin{defn}\label{pres3}
(i) For any function $\tilde F:B_\delta\to\{\C^d\textrm{-valued maps on }\Omega\}$, we define $\tilde F$ with domain $\cB_{\delta,r}$ in the compatible way, mapping $X_{(\cdot)}\in\cB_{\delta,r}$ to the function $\tilde F(X_{(\cdot)}):d_r\to\{\C^d\textrm{-valued maps on }\Omega\}$ defined by\footnote{Occasionally we use $\tilde F(X_z)$ to refer to the map $\tilde F(X_{(\cdot)})$.}
\ba
\tilde F(X_{(\cdot)})(z):=\tilde F(X_z)\quad\textrm{ for }z\in d_r.
\ea

(ii) We say that $\tilde F$ preserves analyticity if (for any $r>0$ for the disc of analyticity $d_r$) for any $X_{(\cdot)}\in\cB_{\delta,r}$, at each fixed $\al\in\Omega$, $\tilde F(X_z)(\al)$ is analytic in $d_r$.
\end{defn}

Note that so far we have only discussed how analyticity of $\tilde F (X_z)(\al)$ in $z$ in a disc $d_r$ for each fixed $\al\in\Omega$ is inherited from analyticity of $X_z(\al)$ in $z$ in $d_r$ for each $\al\in\Omega$. Now we discuss how to improve this to get the seemingly stronger conclusion that $\tilde F(X_z)$ is analytic in $z$ in $d_r$ as a map into the Banach space $Y$.
\begin{lem}\label{omegalem'}
Let $B_\delta$, $\cB_{\delta,r}$, $Y$, and $\cY_r$ be as in Definition \ref{banach'}. Consider a function $\tilde F:B_\delta\to Y$. If we have a bound of the form
\ba\label{65}
\sup_{X\in B_\delta} \lVert \tilde F(X) \rVert_Y\leq C<\infty,
\ea
and if $\tilde F$ preserves analyticity, then in fact for any $r>0$ as the radius of analyticity in the definition of $\cB_{\delta,r}$, for $X_z\in\cB_{\delta,r}$ we have that $\tilde F(X_z)$ is an analytic function from $d_r$ into $Y$. That is,
\ba\label{27}
\tilde F:\cB_{\delta,r}\to \cY_r.
\ea
\end{lem}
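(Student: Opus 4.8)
The plan is to upgrade pointwise-in-$\al$ analyticity to Banach-space-valued analyticity using a weak-analyticity criterion combined with the uniform bound \eqref{65} and the domination of the $C^{0,1}$ norm by the $Y$ norm. The guiding principle is that a locally bounded map into a Banach space which is ``weakly analytic'' (analytic after pairing against a separating family of functionals) is in fact strongly analytic; here the separating family will be the evaluation functionals $f \mapsto f(\al)$, $\al\in\Omega$, which separate points of $Y$ since $Y \subset C(\Omega)$.

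First I would fix $r>0$ and $X_{(\cdot)}\in\cB_{\delta,r}$, and show that $z\mapsto \tilde F(X_z)$ is locally bounded as a $Y$-valued map; this is immediate from \eqref{65}, since $X_z\in B_\delta$ for all $z\in d_r$ gives $\lVert \tilde F(X_z)\rVert_Y\leq C$ uniformly. Next, to establish analyticity on a disc $d_{r'}$ with $r'<r$, I would fix a closed contour $\Gamma$ (say a circle of radius $\rho$ with $r' < \rho < r$) and define the candidate derivative/Cauchy-integral
\ba
G(z):=\frac{1}{2\pi i}\oint_\Gamma \frac{\tilde F(X_\zeta)}{\zeta - z}\,d\zeta,\qquad z\in d_{r'},
\ea
where the integral is a $Y$-valued (Bochner) integral. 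This is well-defined: the integrand is bounded in $Y$ by \eqref{65} and, by the preservation-of-analyticity hypothesis together with $\zeta\mapsto X_\zeta$ being continuous into $Y$ (hence, via \eqref{bprop}, uniformly on $\Omega$), one checks it is continuous in $\zeta$ in the $Y$ norm, so the Bochner integral exists. Standard contour manipulations show $G$ is analytic on $d_{r'}$ as a $Y$-valued function (differentiate under the integral sign, or expand $\frac{1}{\zeta-z}$ in a geometric series uniformly on compact subsets of $d_{r'}$).

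It then remains to identify $G(z)$ with $\tilde F(X_z)$. For this I would apply the evaluation functional at an arbitrary $\al\in\Omega$: since point evaluation is a bounded linear functional on $Y$ (by \eqref{bprop}) it commutes with the Bochner integral, giving
\ba
G(z)(\al)=\frac{1}{2\pi i}\oint_\Gamma \frac{\tilde F(X_\zeta)(\al)}{\zeta - z}\,d\zeta.
\ea
By the preservation-of-analyticity property in Definition \ref{pres3}, $\zeta\mapsto \tilde F(X_\zeta)(\al)$ is analytic on $d_r$, so the scalar Cauchy integral formula yields $G(z)(\al)=\tilde F(X_z)(\al)$. Since $\al\in\Omega$ was arbitrary and evaluations separate points of $Y$, we conclude $G(z)=\tilde F(X_z)$ as elements of $Y$ for every $z\in d_{r'}$, hence $\tilde F(X_z)$ is analytic on $d_{r'}$. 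Letting $r'\uparrow r$ gives analyticity on all of $d_r$, i.e. $\tilde F(X_z)\in\cY_r$, which is \eqref{27}.

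The main obstacle I anticipate is the measurability/continuity bookkeeping needed to make the Bochner integral legitimate — specifically, verifying that $\zeta\mapsto \tilde F(X_\zeta)$ is continuous (or at least strongly measurable and essentially separably valued) as a map into $Y$. Continuity of $\zeta \mapsto X_\zeta$ into $Y$ follows from its analyticity (part of $\cB_{\delta,r}$), but continuity of the composition $\zeta\mapsto\tilde F(X_\zeta)$ is \emph{not} assumed — only preservation of pointwise analyticity and the uniform bound are. The clean way around this is to avoid Bochner integration of $\tilde F(X_\zeta)$ directly: instead define $G(z)$ by its action on evaluation functionals via the scalar Cauchy integrals $\frac{1}{2\pi i}\oint_\Gamma \frac{\tilde F(X_\zeta)(\al)}{\zeta-z}d\zeta = \tilde F(X_z)(\al)$, observe that $z\mapsto \tilde F(X_z)(\al)$ is analytic with derivatives bounded (Cauchy estimates from \eqref{65} applied pointwise), so the Taylor coefficients $a_n(\al) := \frac{1}{n!}\partial_z^n \tilde F(X_z)(\al)\big|_{z=0}$ satisfy $\sup_\al|a_n(\al)| \leq C/\rho^n$ for any $\rho<r$; then show each $a_n\in Y$ with $\lVert a_n\rVert_Y$ controlled — this last point uses that $\tilde F(X_z)\in Y$ for each $z$ together with the fact that a $Y$-valued function which is locally bounded and weakly (i.e., pointwise) analytic has its difference quotients converging in $Y$ by a closed-graph / uniform-boundedness argument. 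This weak-to-strong analyticity step is the technical heart; everything else is routine.
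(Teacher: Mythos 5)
Your instinct to abandon the Bochner integral is correct (strong measurability of $\zeta\mapsto\tilde F(X_\zeta)$ is indeed not available, so your first route does not close), but the fallback you settle on still has a genuine gap: you reduce everything to the claim that a locally bounded $Y$-valued function which is analytic after composing with the evaluation functionals is strongly analytic, and you assert this follows ``by a closed-graph / uniform-boundedness argument.'' That is precisely the step that cannot be waved through. The evaluations separate points of $Y$ but are not norming for it (for $Y=C^{1,\mu}(\Omega)$ or $H^s$ they only see the sup norm), so your pointwise Cauchy estimates $\sup_\al|a_n(\al)|\leq C/\rho^n$ give no control on $\lVert a_n\rVert_Y$, and the Dunford-type argument (uniform boundedness over a norming family of functionals) does not apply. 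A weak-to-strong holomorphy theorem for a merely separating subspace of the dual does exist (Arendt--Nikolski), and citing it would legitimize your route, but it is a substantive theorem rather than a routine closed-graph application; as written, you have labeled the technical heart and then left it unproven.

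The paper closes exactly this step elementarily, and the mechanism is worth internalizing: for fixed $\al$ the scalar Cauchy formula gives an explicit representation of the difference-quotient error over a fixed circle,
\[
\frac{1}{h}\big(\tilde F(X_{z+h})(\al)-\tilde F(X_z)(\al)\big)-\frac{d}{dz}\big(\tilde F(X_z)(\al)\big)
=\frac{h}{2\pi i}\int_{|\zeta-z|=r'}\frac{\tilde F(X_\zeta)(\al)}{(\zeta-(z+h))(\zeta-z)^2}\,d\zeta ,
\]
and one then takes the $Y$-norm of this function of $\al$ and bounds it by $C|h|\sup_{|\zeta|<r}\lVert\tilde F(X_\zeta)\rVert_Y$ using \eqref{65}; the same device gives the Cauchy-type bound on $\lVert\frac{d}{dz}\tilde F(X_z)\rVert_Y$. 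The decisive point is that the error term has an explicit integral representation whose integrand is uniformly bounded in $Y$ by hypothesis, so one never needs the evaluations to be norming nor any abstract weak-to-strong principle. If you want to keep your Taylor-coefficient formulation, it is this identity (applied to $a_n$, or to differences of difference quotients, written as contour integrals of $\tilde F(X_\zeta)(\al)$) that promotes your sup-norm estimates to $Y$-norm estimates; without it, or without a proper appeal to the Arendt--Nikolski theorem, your argument does not close.
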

\begin{proof}
By the Cauchy integral formula, for each $\al\in\Omega$ and $z\in d_r$ we have
\ba\label{28}
\frac{d}{dz} \left( \tilde F(X_z)(\al)\right)=\frac{1}{2\pi i} \int_{|\zeta-z|=r'} \frac{ \tilde F(X_\zeta)(\al)}{(\zeta-z)^2}d\zeta,
\ea
for $r'<\textrm{dist}(z,\del d_r)$. Thus by applying $\lVert\cdot\rVert_Y$ to both sides we obtain
\ba\label{29}
\bigg\lVert \frac{d}{dz} \tilde F(X_z)  \bigg\rVert_Y\leq\frac{C'}{\textrm{dist}(z,\del d_{r})}
\sup_{|\zeta|<r}\lVert \tilde F(X_\zeta) \rVert_Y\leq \frac{C''}{\textrm{dist}(z,\del d_{r})},
\ea
by \eqref{65}, where in the left hand side we are considering the norm of the function mapping a given $\al$ to $\frac{d}{dz}\left( \tilde F(X_z)(\al)\right)$. Now it just remains to prove that as $h$ tends to zero, the difference quotient $(\tilde F(X_{z+h})-\tilde F(X_z))/h$ converges in the $Y$ norm to the function $\al\mapsto\frac{d}{dz} \left( \tilde F(X_z)(\al)\right)$. Consider $h$ of size much smaller than the distance from $z$ to $\del d_r$, and fix $r'>|h|$, with $r'$ smaller than the distances of both $z$ and $z+h$ to the boundary of $d_r$. Then
\begin{eqnarray}
\frac{1}{h}(\tilde F(X_{z+h})(\al)-\tilde F(X_z)(\al))
&=&\frac{1}{2\pi i h}
	\left(	\int_{|\zeta-z|=r'}
				\frac{\tilde F(X_\zeta)(\al)}{\zeta-(z+h)}d\zeta
				-
			\int_{|\zeta-z|=r'}
				\frac{\tilde F(X_\zeta)(\al)}{\zeta-z}d\zeta
	\right),\\
&=&
\frac{1}{2\pi i}
\int_{|\zeta-z|=r'}\frac{\tilde F(X_\zeta)(\al)}{(\zeta-(z+h))(\zeta-z)}d\zeta,\nn
\end{eqnarray}
and so we have
\begin{eqnarray}\\
\frac{1}{h}(\tilde F(X_{z+h})(\al)-\tilde F(X_z)(\al))-\frac{d}{dz}(\tilde F(X_z)(\al))
&=&
\frac{1}{2\pi i}\bigg(
		\int_{|\zeta-z|=r'}
			\frac{\tilde F(X_\zeta)(\al)}{(\zeta-(z+h))(\zeta-z)}d\zeta\nn\\
& &
\hspace{.7in}-\int_{|\zeta-z|=r'}
			\frac{\tilde F(X_\zeta)(\al)}{(\zeta-z)^2}d\zeta
		\bigg),\nn\\
&=&
\frac{h}{2\pi i}
\int_{|\zeta-z|=r'}
	\frac{\tilde F(X_\zeta)(\al)}{(\zeta-(z+h))(\zeta-z)^2}d\zeta.\nn
\end{eqnarray}
We apply the $Y$ norm to both sides to get the bound
\begin{eqnarray}
\bigg\lVert
\frac{1}{h}(\tilde F(X_{z+h})(\al)-\tilde F(X_z)(\al))-\frac{d}{dz}(\tilde F(X_z)(\al))
\bigg\rVert_Y
&\leq&
\frac{C' h}{(\textrm{dist}(z,\del d_{r}))^2}\sup_{|\zeta|<r}\lVert \tilde F(X_\zeta)\rVert_Y\\
&\leq& \frac{C'' h}{(\textrm{dist}(z,\del d_{r}))^2}.\nn
\end{eqnarray}
Since the right hand side is $O(|h|)$, the conclusion of the lemma follows.
\end{proof}

In the coming ODE argument, we will also need that the operator of concern is Lipschitz on some ball about the identity in $Y$. With the following lemma, we show this property also follows from the local boundedness property together with preservation of analyticity.

\begin{lem}\label{what'}
Consider a function $\tilde F:B_\delta\to Y$ and assume that we have a bound of the form
\ba\label{65prime'}
\sup_{X\in B_\delta} \lVert \tilde F(X) \rVert_Y\leq C,
\ea
and that $\tilde F$ preserves analyticity. For any $\eps\leq \delta/10$ we have that the function $\tilde F$ with domain $B_\eps$ is Lipschitz with constant $C_0$ depending only on $C$.
\end{lem}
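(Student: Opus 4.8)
The plan is to connect $X$ and $X'$ by a straight segment in $Y$, complexify the segment parameter, and apply a Cauchy estimate to the resulting $Y$-valued analytic function coming out of Lemma \ref{omegalem'}; the one subtlety — and where I expect essentially all the work to lie — is that the segment must be traversed at unit speed in the $Y$-norm rather than by the convex parameter, or else the estimate degrades from Lipschitz to merely uniform.

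Concretely, I would fix $X,X'\in B_\eps$. If $X=X'$ there is nothing to prove, so set $\nu:=\lVert X-X'\rVert_Y>0$ and define $X_z:=X+z\,(X'-X)/\nu$ for $z\in\C$, so that $X_0=X$, $X_\nu=X'$, and for each fixed $\al\in\Omega$ the function $z\mapsto X_z(\al)$ is affine, hence entire. Since $\lVert X_z-Id\rVert_Y\le \lVert X-Id\rVert_Y+|z|\le\eps+|z|$, we get $X_z\in B_\delta$ for all $|z|<R:=\delta-\eps$, so $X_{(\cdot)}\in\cB_{\delta,R}$. The hypothesis $\eps\le\delta/10$ supplies the two margins I will need: $R\ge 9\delta/10$, while $\nu\le\lVert X-Id\rVert_Y+\lVert X'-Id\rVert_Y\le 2\eps\le\delta/5$, so $\nu$ sits comfortably inside $d_R$.

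Next I would invoke Lemma \ref{omegalem'}: since $\tilde F$ obeys the bound \eqref{65prime'} and preserves analyticity, $z\mapsto\tilde F(X_z)$ is analytic from $d_R$ into $Y$ and is bounded there by $C$. Applying the Cauchy integral formula on a circle $|\zeta|=R'$ with $\nu<R'<R$ (say $R'=\delta/2$) and subtracting the representations at $z=\nu$ and $z=0$,
\[
\tilde F(X')-\tilde F(X)=\tilde F(X_\nu)-\tilde F(X_0)=\frac{\nu}{2\pi i}\int_{|\zeta|=R'}\frac{\tilde F(X_\zeta)}{\zeta(\zeta-\nu)}\,d\zeta,
\]
the point being that the difference of the two Cauchy kernels carries an explicit factor $\nu$. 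Taking $\lVert\cdot\rVert_Y$, using the standard norm inequality for $Y$-valued contour integrals and $\lVert\tilde F(X_\zeta)\rVert_Y\le C$ on $|\zeta|=R'$,
\[
\lVert\tilde F(X')-\tilde F(X)\rVert_Y\le\frac{\nu}{2\pi}\cdot 2\pi R'\cdot\frac{C}{R'(R'-\nu)}=\frac{C\,\nu}{R'-\nu}\le\frac{C\,\nu}{\,3\delta/10\,}=\frac{10C}{3\delta}\,\lVert X-X'\rVert_Y,
\]
which is the asserted Lipschitz bound with $C_0=10C/(3\delta)$ — independent of $\eps$ and of the particular $X,X'$, as required.

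The main obstacle, as flagged, is conceptual rather than technical: the naive choice $X_z=X+z(X'-X)$ on $[0,1]$ also lands in some $\cB_{\delta,R'}$, but the same Cauchy estimate then yields only $\lVert\tilde F(X')-\tilde F(X)\rVert_Y\lesssim C$, since the kernel difference at $z=1$ versus $z=0$ contributes the factor $|1-0|=1$ in place of $\nu$. Rescaling so the segment is traversed at unit $Y$-speed is exactly what upgrades this to a bound proportional to $\lVert X-X'\rVert_Y$, and the room afforded by $\eps\le\delta/10$ is what prevents the Cauchy estimate from losing more than an absolute constant. (Alternatively, one can integrate $\frac{d}{dt}\tilde F(X_t)$ over $t\in[0,\nu]$ and use the derivative bound \eqref{29} together with $\textrm{dist}(t,\del d_R)\ge R-\nu$ on that interval, obtaining the same conclusion with a slightly different constant.) Everything else — analyticity and boundedness of $z\mapsto\tilde F(X_z)$, and the norm inequality for Banach-valued integrals — is provided by Lemma \ref{omegalem'} and standard facts.
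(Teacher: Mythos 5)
Your proof is correct and follows essentially the same route as the paper: you complexify the unit-speed segment joining the two maps, check it lies in $\cB_{\delta,r}$ for a radius comfortably larger than $\lVert X-X'\rVert_Y$, and then apply Lemma \ref{omegalem'} together with a Cauchy estimate to the $Y$-valued analytic function $z\mapsto\tilde F(X_z)$. The only cosmetic difference is that you extract the factor $\lVert X-X'\rVert_Y$ directly from the difference of the two Cauchy kernels, whereas the paper integrates the derivative bound \eqref{29} along the interval $[0,\lVert X-Z\rVert_Y]$ --- precisely the variant you acknowledge in your closing parenthetical.
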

\begin{proof}
Let us define $r_0:=\delta/4$. For distinct $X,Z\in B_\eps$, we may write
\ba\label{201}
\tilde F(X)-\tilde F(Z)=\int^{\lVert X-Z\rVert_Y}_0\frac{d}{d\tau}
\tilde F(X_\tau)d\tau,
\ea
where we define $X_\tau:=Z+\tau(X-Z)/\lVert X-Z\rVert_Y$ for any $\tau\in\{|z| < r_0\}$. Note this contains $[0,2\eps]$, and thus the interval $[0,\lVert X-Z\rVert_Y]$ integrated over in \eqref{201}. Now we check that we have a suitable bound on $\frac{d}{d\tau}(\tilde F(X_\tau))$, uniform in $\tau\in[0,\lVert X-Z\rVert_Y]$. Since $\eps$ and $r_0$ are each less than $\delta/2$, we know that for all $|\tau|<r_0$ we have $X_\tau \in B_\delta$. Since then $X_{(\cdot)}$, defined on the disc $\{|z|<r_0\}$, is in $\cB_{\delta,r_0}$, we know that $\tilde F(X_\tau)$ is analytic in $\tau$ in $\{|z|<r_0\}$ by Lemma \ref{omegalem'}, and moreover the bound \eqref{29} with $r_0$ in place of $r$ shows $\lVert\frac{d}{d\tau}\tilde F(X_\tau)\rVert_Y$ is bounded by some constant as $\tau$ varies in the smaller disc $\{|z|\leq 2\eps\}$. For this, we have used the fact that $r_0-2\eps\geq \delta/20>0$. We observe that the resulting constant is thus determined by $\delta$ and the bounding constant $C$ from \eqref{65prime'}. The argument is concluded by applying $\lVert \cdot \rVert_Y$ to both sides of \eqref{201} and bounding this above by moving the norm into the integrand.
\end{proof}
Now suppose $\tilde F$ satisfies the hypotheses of Lemma \ref{what'}, and let us pick a constant $\eps$ which satisfies the hypotheses of Lemma \ref{what'}:
\ba
\eps:=\frac{\delta}{10}.
\ea

By restricting the domain of such an $\tilde F$ to $B_\eps$, we ensure that it is Lipschitz with constant $C_0$, from Lemma \ref{what'}, which readies us for an application of a fixed point theorem.
Note that for any $r>0$ we have a natural choice of norm for $\cY_r$, with
\ba\label{40}
\lVert X_z \rVert_{\cY_r}:=\sup_{z\in d_r}\lVert X_z \rVert_Y.
\ea

Now we have established all the properties of $\tilde F$ that are required for solving the equation
\begin{eqnarray}\label{36'}
&\frac{d}{dt} X_t& = \tilde F(X_t),\\
&X_t |_{t=0}&=Id,\nonumber
\end{eqnarray}
in the desired setting. We prove that the solution to the equation exists as a fixed point of a contraction mapping on the space of functions analytic in time as maps into $Y$. For a given $\tilde F$ and $r>0$, we define the following mapping on functions $X_{(\cdot)}\in \cB_{\eps,r}$:
\ba\label{35}
\Phi_{\tilde F}\left(X_{(\cdot)}\right)(t):=Id+\int^t_0 {\tilde F}(X_\zeta)d\zeta\quad\quad\textrm{for }t\in d_r,
\ea
where the integration is taken along the straight path from $0$ to $t$. This is the map we will show has a fixed point that solves the ODE \eqref{36'}.
\begin{theo}\label{mainth}
Suppose that $\tilde F:B_\delta\to Y$ satisfies a bound of the form \eqref{65} 
and that $\tilde F$ preserves analyticity.
Then for $\eps=\delta/10$ and sufficiently small $r_1>0$, there exists a unique fixed point $X_{(\cdot)}\in\cB_{\eps,{r_1}}$ of $\Phi_{\tilde F}:\cB_{\eps,{r_1}}\to\cB_{\eps,{r_1}}$, and the function $X(\al,t)=X_t(\al)$ is analytic in $t$ on $d_{r_1}$ and the unique solution to the equation \eqref{36'}.
\end{theo}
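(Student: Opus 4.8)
The plan is to obtain $X_{(\cdot)}$ as the unique fixed point of the map $\Phi_{\tilde F}$ from \eqref{35}, via the contraction mapping principle applied on $\cB_{\eps,r_1}$ equipped with the metric induced by the norm $\lVert\cdot\rVert_{\cY_{r_1}}$ of \eqref{40}, for $r_1>0$ chosen small enough. The genuinely substantial content is already packaged into Lemmas \ref{omegalem'} and \ref{what'}; what remains is to verify that $\Phi_{\tilde F}$ is a well-defined self-map of a complete metric space and a contraction, and then to read the ODE off from the fixed-point identity.

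First I would record the structural facts that will be used. Since $\eps=\delta/10<\delta$ we have $B_\eps\subset B_\delta$ and hence $\cB_{\eps,r}\subset\cB_{\delta,r}$ for every $r$, so Lemma \ref{omegalem'} applies: for any $X_{(\cdot)}\in\cB_{\eps,r}$ the map $\zeta\mapsto\tilde F(X_\zeta)$ is an analytic $Y$-valued function on $d_r$, in particular continuous and, by \eqref{65}, bounded in $Y$ by $C$. By Lemma \ref{what'}, the restriction of $\tilde F$ to $B_\eps$ is Lipschitz with a constant $C_0$ depending only on $C$. I would also note that $\cY_{r_1}$ with the norm \eqref{40} is a Banach space (a uniform limit on $d_{r_1}$ of $Y$-valued maps analytic on $d_{r_1}$ is again analytic there, the convergence being uniform on compact subsets), and that $\cB_{\eps,r_1}=Id+\{X_{(\cdot)}-Id:\lVert X_{(\cdot)}-Id\rVert_{\cY_{r_1}}\le\eps\}$ is a closed ball translated by $Id$, hence a nonempty complete metric space.

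Next I would check $\Phi_{\tilde F}:\cB_{\eps,r_1}\to\cB_{\eps,r_1}$. Given $X_{(\cdot)}\in\cB_{\eps,r_1}$, the function $t\mapsto\int_0^t\tilde F(X_\zeta)\,d\zeta$ along the straight path is an analytic $Y$-valued function on $d_{r_1}$: the integrand is a continuous $Y$-valued function, so the primitive is differentiable with derivative $\tilde F(X_t)$ by the Banach-space-valued fundamental theorem of calculus, and one upgrades this to complex analyticity by Morera's theorem (equivalently, for each fixed $\al$ the scalar primitive is analytic and the family $\{t\mapsto\int_0^t\tilde F(X_\zeta)(\al)\,d\zeta\}_{\al\in\Omega}$ is locally bounded in $Y$, so one invokes the same argument as in Lemma \ref{omegalem'}). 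Thus $\Phi_{\tilde F}(X_{(\cdot)})-Id\in\cY_{r_1}$, and
\[
\lVert\Phi_{\tilde F}(X_{(\cdot)})(t)-Id\rVert_Y\le|t|\sup_{\zeta\in d_{r_1}}\lVert\tilde F(X_\zeta)\rVert_Y\le r_1C\le\eps
\]
provided $r_1\le\eps/C$, so $\Phi_{\tilde F}(X_{(\cdot)})(t)\in B_\eps$ for all $t\in d_{r_1}$, as required. For the contraction estimate, for $X_{(\cdot)},Z_{(\cdot)}\in\cB_{\eps,r_1}$ I would use $X_\zeta,Z_\zeta\in B_\eps$ together with the Lipschitz bound:
\[
\lVert\Phi_{\tilde F}(X_{(\cdot)})(t)-\Phi_{\tilde F}(Z_{(\cdot)})(t)\rVert_Y\le|t|\sup_{\zeta\in d_{r_1}}\lVert\tilde F(X_\zeta)-\tilde F(Z_\zeta)\rVert_Y\le r_1C_0\lVert X_{(\cdot)}-Z_{(\cdot)}\rVert_{\cY_{r_1}}.
\]
Taking $r_1<\min\{\eps/C,\,1/(2C_0)\}$ makes $\Phi_{\tilde F}$ a contraction (with constant at most $1/2$), so the Banach fixed point theorem furnishes a unique $X_{(\cdot)}\in\cB_{\eps,r_1}$ with $X_{(\cdot)}=\Phi_{\tilde F}(X_{(\cdot)})$.

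Finally, differentiating the identity $X_t=Id+\int_0^t\tilde F(X_\zeta)\,d\zeta$ (legitimate since $\zeta\mapsto\tilde F(X_\zeta)$ is continuous into $Y$) gives $\frac{d}{dt}X_t=\tilde F(X_t)$ with $X_0=Id$, and $X(\al,t)=X_t(\al)$ is analytic in $t$ on $d_{r_1}$ precisely because $X_{(\cdot)}-Id\in\cY_{r_1}$; uniqueness of the solution among $B_\eps$-valued solutions (shrinking $r_1$ further if necessary so that any competing solution stays in $B_\eps$) follows from a Gr\"onwall argument built on the Lipschitz bound of Lemma \ref{what'}. I expect the only delicate point to be the self-map verification — that the straight-path time integral of a $Y$-valued analytic function is again $Y$-valued analytic, so that $\Phi_{\tilde F}$ does not escape $\cY_{r_1}$ — together with the routine bookkeeping of choosing a single $r_1$ meeting all smallness constraints at once; the rest is standard Picard–Banach machinery.
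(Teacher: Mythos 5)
Your proposal is correct and follows essentially the same route as the paper: a contraction mapping argument for $\Phi_{\tilde F}$ on $\cB_{\eps,r_1}$, using the bound \eqref{65} (via Lemma \ref{omegalem'}) for the self-map property and Lemma \ref{what'} for the Lipschitz estimate, with $r_1$ chosen small enough (e.g.\ $r_1<\min\{\eps/C,1/(2C_0)\}$). The extra details you supply — completeness of $\cB_{\eps,r_1}$, analyticity of the straight-path primitive, and recovering the ODE from the fixed-point identity — are exactly the points the paper labels ``easy to check,'' so there is no substantive difference.
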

\begin{proof}
It is easy to check that $\Phi_{\tilde F}$ maps $\cB_{\eps,{r_1}}$ to itself, given that $t$ is restricted to $d_{r_1}$ and $r_1$ is chosen small enough. The bound \eqref{65} allows us to ensure that $\Phi_{\tilde F}(X_{(\cdot)})$ is close to $Id$ for small $r_1$. 
Using Lemma \ref{what'}, we have the Lipschitz property of $\tilde F$ on $B_{\eps}$, and so we get that $\Phi_{\tilde F}$ is a contraction mapping on $\cB_{\eps,r_1}$ as long as $r_1<1/(2C_0)$. The conclusion of the theorem now follows from the contraction mapping principle.
\end{proof}

Having established the more general material of this section, for our proofs showing analyticity of Lagrangian trajectories for vortex patches for $2$D incompressible Euler, and for solutions to the Euler-Poisson system, it remains for us to verify that the local boundedness property and the preservation of analyticity property both hold for the corresponding $\tilde F$ operators. In the next section we do so for vortex patch solutions to the incompressible Euler equations.
\end{subsection}
\end{section}

\begin{section}{Lagrangian analyticity for the incompressible Euler equations}
The incompressible Euler equations on $\R^d$ are given by
\begin{eqnarray}\label{eulereqns}
\frac{\del u}{\del t}+u\cdot \nabla u +\na p = 0,\\
\na\cdot u = 0,\nonumber\\
u|_{t=0}=u_0,\nn
\end{eqnarray}
where $u(x,t)$ is a vector field defined on $\R^d\times\R$, taking values in $\R^d$, and the pressure $p(x,t)$ is an unknown scalar, with initial data $u_0$ to be prescribed, for example, as a function in $C^{1,\mu}$ for some $\mu\in(0,1)$, or as an initial velocity field corresponding to a vortex patch. For the results of this section, we are interested in the case $d=2$, the setting for vortex patches, but we comment that the core ideas of the method described in Section 2 for Lagrangian analyticity results do not depend significantly on the dimension under consideration.

In this section we show that vortex patch solutions to the $2$D Euler equations have analytic Lagrangian trajectories. Though we only give the details of Lagrangian analyticity for these kinds of solutions to incompressible Euler, the methods in this paper can also be applied to verify the analyticity of trajectories for velocity fields solving the $2$D and $3$D Euler equations as long as the vorticity is H\"{o}lder continuous and the solution is periodic or satisfies some mild decay condition.

Let us proceed by showing how one frames the $2$D incompressible Euler equations written in Lagrangian variables in the form of an ODE in an abstract function space, such as \eqref{thesis001}.
\begin{subsection}{The Lagrangian setting for the incompressible Euler equations}

The first thing to observe is that by taking the scalar curl of the first equation in \eqref{eulereqns} in the $2$D case, we arrive at the transport equation for the vorticity, $\omega=\del_1 u_2 - \del_2 u_1$.
\ba
\frac{\del \omega}{\del t} + u \cdot \na \omega = 0.
\ea
Thus the vorticity is transported by the flow, which is equivalently expressed by $\frac{d}{dt}\omega(X(\al,t),t)=0$, or
\ba\label{tr}
\omega(X(\al,t),t)=\omega_0(\al),
\ea
where the initial vorticity is denoted by $\omega_0(\al)=\omega(\al,0)$.

Our main tool is the Biot-Savart law, which allows us to recover the velocity vector $u(x,t)$ from the vorticity $\omega$, with
\ba\label{biotsavartlaw}
u(x,t)=\frac{1}{2\pi}\int\frac{(x-y)^\perp}{|x-y|^2}\omega(y,t)dy,
\ea
for the $2$D case. In this expression, we use the notation $y^\perp=(-y_2,y_1)$. Considering the Lagrangian point of view, we may use this to find the velocity vector tangent to a specified point along a path given by $X(\al,t)$. Inputting this function into the above expression and making a change of variables yields
\ba\label{000'''}
\frac{dX}{dt}(\al,t)=\frac{1}{2\pi}\int \frac{(X(\al,t)-X(\be,t))^\perp}{|X(\al,t)-X(\be,t)|^2}\omega_0(\be)d\be,
\ea
where we have taken into account \eqref{tr} along with the area preserving condition satisfied by incompressible flows, i.e. that $\det \na X(\al,t)=1$. We denote the kernel appearing here by $K$, with
\ba\label{01prime}
K(y)=\frac{1}{2\pi}\frac{(-y_2,y_1)}{y_1^2+y_2^2}.
\ea
Note that $K(y)$ analytically extends to complex pairs $(y_1,y_2)$ outside of the set $\{y_1\pm i y_2=0\}$. Considering the right hand side of \eqref{000'''}, inside of the integral, we have an expression with very nice dependence on the quantity $X(\al,t)$. The idea will be to regard the right hand side as a function $F$ of the map $X_t=X(\cdot,t):\R^2\to\R^2$, and extend the definition of $F$ to a well-behaved function defined on maps more generally from $\R^2$ to $\C^2$. Here the complex output is necessitated by our complexification of the time variable. In that formulation, the equation \eqref{000'''} becomes
\ba\label{000'}
\frac{d}{dt}X_t=F(X_t),
\ea
together with the initial data $X_t|_{t=0}=Id$.
\end{subsection}

\begin{subsection}{Lagrangian analyticity for vortex patch solutions}

Vortex patch solutions to the $2$D incompressible Euler equations are characterized by having vorticity $\omega(x,t)$ of the form
\ba
\omega(x,t)=\begin{cases}
0 & \mbox{if }x\in \Omega(t),\\
1 & \mbox{otherwise},
\end{cases}
\ea
where $\Omega(t)$ is a bounded region that is transported by the flow. If the vorticity is initially given by an identifier function on a region $\Omega$ with smooth boundary, the solution exists globally, and $\omega$ has this form at later times, due to the fact that vorticity is transported by the flow for the $2$D incompressible Euler equations. One has good existence and uniqueness properties and that the smoothness of the boundary $\del\Omega(t)$ is maintained in time (see, for example \cite{bc,chemin2,serfati2,yudovich}) despite the jump discontinuity in the gradient of the velocity across the boundary. Considering the question of the regularity in time of the particle trajectories, and thus the evolution of the set $\Omega(t)$, for these discontinuous vorticities is a natural progression from the corresponding question for H\"{o}lder continuous vorticity, studied in past studies of Lagrangian analyticity for the incompressible Euler equations such as \cite{bf, cvw, fz,serfati,shnirelman}. See Serfati \cite{serfatiT,serfati3} for the first work on Lagrangian analyticity in the context of vortex patches.

To show that the Lagrangian trajectories of vortex patches are analytic, we have the main task of proving the local boundedness result, Proposition \ref{bddF}, for the operator $F$ in this scenario, which is given in Definition \ref{F2}. In accordance with the remarks above, in this part we treat the case where the initial curl $\omega(\al):=\mbox{curl}\,u_0(\al)$ is of the form
\ba\label{final001}
\omega(\al)=\begin{cases}
1 & \mbox{if }\al\in\Omega,\\
0 & \mbox{otherwise},
\end{cases}
\ea
for a bounded open set $\Omega\subset \R^2$ satisfying some basic assumptions. We also show in this section that the operator $F$ then preserves analyticity, in the sense of (ii) of Definition \ref{pres3}. As a consequence, we are able to prove Theorem \ref{final}, which we summarize below.

\begin{theonn}
Assume that $\Omega$ is a bounded open set which is finitely-connected, and which has $C^2$ non-self-intersecting boundary. Then for the vortex patch solution to \eqref{eulereqns} with initial vorticity given by \eqref{final001}, with corresponding trajectory map $X$, we have for each $\al$ in $\overline{\Omega}$ that the trajectory $X(\al,t)$ is analytic in time.
\end{theonn}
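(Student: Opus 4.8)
\quad The plan is to realize the trajectory map as the unique solution of the abstract ODE \eqref{000'} and then apply Theorem \ref{mainth}. Fix $\mu\in(0,1)$ and take $Y=C^{1,\mu}(\overline\Omega)$, the Banach space of such maps $\overline\Omega\to\C^2$; it satisfies Assumption \ref{as1} since the $C^{1,\mu}$ norm controls the $C^{0,1}$ norm. Let $F$ be the operator of Definition \ref{F2}, i.e.\ the extension to $\C^2$-valued maps of the right-hand side of \eqref{000'''} with $\omega=\omega_0$ given by \eqref{final001}; since $\omega_0$ is supported in $\overline\Omega$, the value $F(X)(\al)$ for $\al\in\overline\Omega$ depends only on $X|_{\overline\Omega}$, so working over $\Omega$ loses nothing. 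Everything then reduces to the two hypotheses of Theorem \ref{mainth}: (a) the local boundedness estimate \eqref{0000'} for $F$ near $Id$ in $Y$, which is Proposition \ref{bddF}; and (b) preservation of analyticity in the sense of Definition \ref{pres3}(ii), which is Lemma \ref{vPreservesA'}. Granting these, Theorem \ref{mainth} produces $\eps=\delta/10$, a radius $r_1>0$, and a unique $X_{(\cdot)}\in\cB_{\eps,r_1}$ with $X(\al,t)=X_t(\al)$ analytic in $t$ on $d_{r_1}$ and solving \eqref{000'}. To identify this $X$ with the genuine Lagrangian trajectory map, I would use the classical regularity theory for vortex patches with $C^2$ boundary: the corresponding flow is $C^1$ in $t$ and $C^{1,\mu}$ in space on $\overline\Omega$, with $\|X(\cdot,t)-Id\|_{C^{1,\mu}}\to 0$ as $t\to 0$, and (since vorticity is transported and $\det\na X(\al)=1$) it satisfies \eqref{000'''}; hence near $t=0$ it is a $C^1$ solution of \eqref{000'} lying in $B_\eps$, and uniqueness for that ODE --- which follows from the Lipschitz property of $F$ on $B_\eps$ provided by Lemma \ref{what'} --- forces it to coincide with the analytic solution. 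Thus $t\mapsto X(\al,t)$ is real-analytic near $t=0$ for every $\al\in\overline\Omega$; restarting the argument at any later time $t_0$, where the vorticity is the indicator of the transported patch $\Omega(t_0)$ (again bounded, finitely-connected, with $C^2$ non-self-intersecting boundary), then propagates analyticity over the whole interval of existence, which here is all of $\R$.

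The step I expect to be the main obstacle is the local boundedness estimate, Proposition \ref{bddF}. The $C^0$ bound is routine: the kernel $K$ of \eqref{01prime} satisfies $|K(y)|\lesssim|y|^{-1}$, which is integrable in $\R^2$, and for $X$ in a small $C^{1,\mu}$-ball about $Id$ the map $X$ is bi-Lipschitz with $|X(\al)-X(\be)|\simeq|\al-\be|$, so the singularity stays integrable over the bounded set $\Omega$. The real work is bounding $\na_\al F(X)$ together with its $\mu$-H\"{o}lder seminorm: since $\na_\al F(X)(\al)=\int_\Omega\na K\big(X(\al)-X(\be)\big)\,\na X(\al)\,d\be$ and $\na K$ is homogeneous of degree $-2$, this is a genuine Calder\'on--Zygmund singular integral. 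I would handle it by (i) splitting the $\be$-integral into a near-diagonal region and a far region; (ii) on the near-diagonal piece, subtracting the frozen-coefficient kernel $\na K\big(\na X(\al)(\al-\be)\big)$, whose difference from the true kernel is $\lesssim|\al-\be|^{\mu-2}$ by the $C^{1,\mu}$ modulus of continuity of $\na X$ and hence integrable in $\R^2$, while the frozen-coefficient term is treated via the cancellation of the principal-value kernel together with $\det\na X(\al)=1$; and (iii) controlling the resulting boundary contribution --- arising from the jump of $\omega_0$ across $\partial\Omega$ --- using the $C^2$ regularity of $\partial\Omega$. This last point is the classical obstruction in vortex-patch regularity theory, and it is precisely where working in $C^{1,\mu}$ rather than $C^1$ is essential; finite connectivity and boundedness of $\Omega$ enter only to make the far and large-$|\al-\be|$ pieces harmless.

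For preservation of analyticity, Lemma \ref{vPreservesA'}, I would argue as follows. Take $X_{(\cdot)}\in\cB_{\delta,r}$, so each $z\mapsto X_z(\al)$ is analytic on $d_r$ with $\|X_z-Id\|_Y\le\delta$. Writing $X_z(\al)-X_z(\be)=(\al-\be)+w_z(\al,\be)$ with $|w_z(\al,\be)|\lesssim\delta|\al-\be|$ (Assumption \ref{as1}), one checks that $|(X_z(\al)-X_z(\be))_1\pm i(X_z(\al)-X_z(\be))_2|\gtrsim|\al-\be|$ once $\delta$ is small, so for $\al\ne\be$ the point $X_z(\al)-X_z(\be)$ stays a distance $\gtrsim|\al-\be|$ from the singular set $\{y_1\pm iy_2=0\}$ of the analytic extension of $K$. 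Hence for each fixed pair $\al\ne\be$, $z\mapsto K\big(X_z(\al)-X_z(\be)\big)$ is analytic on $d_r$, with a majorant $\lesssim|\al-\be|^{-1}$ locally uniform in $z$. It follows that $F(X_z)(\al)=\int_\Omega K\big(X_z(\al)-X_z(\be)\big)\,d\be$ is analytic in $z$: for any triangle $T\subset d_r$ we have $\oint_{\partial T}F(X_z)(\al)\,dz=\int_\Omega\big(\oint_{\partial T}K(X_z(\al)-X_z(\be))\,dz\big)\,d\be=0$, where the exchange of integrals is justified by the majorant $|\al-\be|^{-1}$, integrable over the bounded set $\Omega$, and the inner integral vanishes by Cauchy's theorem, so Morera's theorem applies. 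This yields analyticity of $z\mapsto F(X_z)(\al)$ at each fixed $\al$, i.e.\ Definition \ref{pres3}(ii); combined with Proposition \ref{bddF}, Lemma \ref{omegalem'} then upgrades this to analyticity of $z\mapsto F(X_z)$ as a $Y$-valued map, which is all that Theorem \ref{mainth} requires.
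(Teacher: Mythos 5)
Your proposal follows the same top-level route as the paper's proof of Theorem \ref{final}: take $Y=C^{1,\mu}$, invoke Proposition \ref{bddF} and Lemma \ref{vPreservesA'}, and apply Theorem \ref{mainth}. The differences are in the supporting details. For boundary points the paper keeps $Y=C^{1,\mu}(\Omega)$ and recovers analyticity at $\al\in\del\Omega$ by a limiting argument (the analytic functions $X(\al_k,t)$, $\al_k\to\al$, are uniformly bounded and converge to $X(\al,t)$), whereas you work on $\overline\Omega$ from the start; both are fine. Your Morera/Fubini argument for preservation of analyticity is a clean alternative to the paper's direct difference-quotient computation in Lemma \ref{vPreservesA'}, which instead bounds the $\zeta$-derivative of $K(X_\zeta(\al)-X_\zeta(\be))$ via a Cauchy-integral Lipschitz estimate on $\frac{d}{d\zeta}X_\zeta$ and dominated convergence. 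Your explicit identification of the abstract ODE solution with the Yudovich trajectory map, using the Lipschitz property of $F$ on $B_\eps$ from Lemma \ref{what'}, is a step the paper leaves implicit, and your restarting argument to propagate analyticity over the whole interval of existence is a legitimate addition beyond what the paper's proof records.

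One caution: your sketch of Proposition \ref{bddF} is not how the paper proves it, and as written it would not close. The frozen-coefficient splitting with annular cancellation gives the pointwise bound $|\na F_X|\leq C$ (this is the content of Lemmas \ref{MAINBOUND} and \ref{2ndBOUND}), but it does not yield the $\mu$-H\"older seminorm of $\na F_X$: the density being integrated is the indicator of $\Omega$, which lies in no H\"older class, so there is no Schauder-type gain to invoke, and ``controlling the boundary contribution using the $C^2$ regularity of $\del\Omega$'' is precisely the step that needs an actual mechanism. The paper supplies it in the Bertozzi--Constantin style: differentiate $F_X$ along divergence-free vector fields tangent to $\del\Omega$, where the commutator structure $T_\al\na X(\al)-T_\be\na X(\be)$ gives the H\"older bound (Lemma \ref{diffFormula}(i) and Lemma \ref{mainHbound}), recover the normal derivative algebraically from the div--curl identities satisfied by $F_X$ (Lemma \ref{diffFormula}(ii)), and use two tangential fields with disjoint zero sets (Lemma \ref{vfpair}) to cover all of $\Omega$. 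Since you cite Proposition \ref{bddF} rather than reprove it, your proof of the theorem stands; but if your sketch were meant to replace the proposition, that is where the gap would be.
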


Now we begin by rigorously defining the corresponding operator $F$ for the case of a vortex patch.
\begin{definition}\label{F2}
Consider a fixed function $\omega(\al)$ of the form \eqref{final001}, for some given bounded set $\Omega\subset\R^2$. We consider the space $C^{0,1}(\Omega)$ of $\C^2$-valued, Lipschitz continuous functions on $\Omega$. For a map $X\in C^{0,1}(\Omega)$ with $\lVert X-Id\rVert_{C^{0,1}(\Omega)}\leq\delta$, for a small fixed $\delta>0$ to be specified later, we define
\ba
F(X)(\al)=\int_{\Omega}
	K(X(\al)-X(\be)) d\be.
\ea
\end{definition}
In this definition $K$ is regarded as the complex vector valued function
\ba\label{01}
K(z)=\frac{1}{2\pi}\frac{(-z_2,z_1)}{z_1^2+z_2^2},\quad z_1\pm i z_2\neq 0.
\ea
Let us note that for $X$ sufficiently close to $Id$, say as in Definition \ref{F2}, the denominator of $K(X(\al)-X(\be))$ cannot vanish unless $\al=\be$. This follows from the fact that
\ba\label{closetoid'}
(X(\al)-X(\be))-(\al-\be)=(X-Id)(\al)-(X-Id)(\be)=O(|\al-\be|)O(\delta).
\ea
Here we comment that we use the convention that vectors indicating a spatial position or displacement, such as $X(\al)$ or $(\al-\be)$, are row vectors. Thus, as we have written it here, the $O(|\al-\be|)$ factor in the rightmost expression is also a row vector, and the $O(\delta)$ vector is a $2\times 2$ matrix acting on it.
From \eqref{closetoid'} we get
\begin{eqnarray}\label{015}
|(X_1(\al)-X_1(\be))^2+(X_2(\al)-X_2(\be))^2|
&\geq&
\left|(\al-\be)^2_1+(\al-\be)^2_2\right|
-C\delta|\al-\be|^2 ,\\
&\geq& |\al-\be|^2/2,\nonumber
\end{eqnarray}
for sufficiently small $\delta$. This computation leads us to
\begin{rem}
In the integral defining $F(X)(\al)$, we get something no more singular than $|\al-\be|^{-1}$, and so $F(X)$ gives us a finite $\C^2$-valued function on $\Omega$.
\end{rem}

Now that the operator $F$ has been defined, we consider its behavior with regard to the criterion of preservation of analytic dependence in time, discussed in Section 2.

\vspace{.1in}

\textbf{Preservation of analyticity for the operator $F$}

\vspace{.05in}

\noindent  With the goal of showing that the operator $F$ defined above preserves analyticity, we first present a bound on the derivatives of the kernel $K$ which is useful in the verification of this fact.
\begin{lem}\label{kernelbound}
There is a small positive constant $\delta'$ such that for a matrix $M\in\C^{2\times2}$ with $|M|\leq \delta'$, the function $K(z):\C^2\setminus \{z_1\pm i z_2=0\}\to \C^2$  defined by \eqref{01}, satisfies
\ba\label{Kbound'}
|(\del^\ga_z K)(\be(I+M))| &\leq& A_\ga|\be|^{-|\ga|-1}\quad\textrm{for all nonzero }\be\in\R^2\textrm{ and }|\ga|\geq 0.
\ea
\end{lem}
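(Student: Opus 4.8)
The plan is to reduce the claimed bound to the standard homogeneity estimate $|(\del^\ga K)(z)|\le C_\ga|z|^{-|\ga|-1}$, valid for $z\in\C^2$ away from the singular set $\{z_1\pm iz_2=0\}$, and then to control the distance of $\be(I+M)$ from that set when $|M|$ is small. First I would record that $K$ is (componentwise) a ratio of a linear form to the quadratic form $q(z)=z_1^2+z_2^2=(z_1+iz_2)(z_1-iz_2)$, so that each derivative $\del^\ga_z K$ is a sum of terms of the form (polynomial of degree $1+|\ga|$)$/q(z)^{1+|\ga|}$, and in particular is homogeneous of degree $-1-|\ga|$; hence on the region where $|q(z)|\ge c|z|^2$ one has $|(\del^\ga K)(z)|\le C_\ga|z|^{-|\ga|-1}$ with $C_\ga$ depending only on $\ga$ and $c$. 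This is the routine part.

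The substantive step is to check that for $\be\in\R^2$ nonzero and $|M|\le\delta'$ small, the point $z=\be(I+M)$ satisfies $|z|\ge|\be|/2$ and $|q(z)|\ge |\be|^2/2$, say. The first is immediate since $z-\be=\be M$ has $|\be M|\le\delta'|\be|$. For the second, I would write $q(z)=q(\be)+ \big(q(\be(I+M))-q(\be)\big)$; since $\be$ is a \emph{real} vector, $q(\be)=|\be|^2>0$, and the difference is $O(\delta')|\be|^2$ because $q$ is a quadratic form and $z-\be=\be M$ is of size $\le\delta'|\be|$ — concretely $q(\be(I+M))-q(\be) = 2\,\be\,(\text{symmetrization of }M)\,\be^{\mathsf T} + \be M(\ldots)M^{\mathsf T}\be^{\mathsf T}$, each term bounded by $C\delta'|\be|^2$. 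Choosing $\delta'$ small enough that $C\delta'<1/2$ gives $|q(\be(I+M))|\ge|\be|^2/2$. Note this is exactly the computation already carried out in \eqref{015} with $M$ there playing the role of the perturbation here, so I can cite that estimate essentially verbatim. Combining the two bounds, $z=\be(I+M)$ lies in the good region with $c=1/2$ (after adjusting by the factor relating $|q(z)|\ge|\be|^2/2$ to $|q(z)|\ge c|z|^2$, using $|z|\le(1+\delta')|\be|$), so the homogeneity estimate applies and yields $|(\del^\ga K)(\be(I+M))|\le C_\ga|\be|^{-|\ga|-1}$, which is \eqref{Kbound'} with $A_\ga=C_\ga$.

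The only mild obstacle is bookkeeping: making the dependence of the constant in the homogeneity estimate on the lower bound $c$ explicit, and confirming that no $\ga$-dependence leaks into $\delta'$ (it does not — $\delta'$ is fixed once and for all by the requirement $C\delta'<1/2$ in the estimate on $q$, and the quantitative lower bounds on $|z|$ and $|q(z)|$ are uniform in $\ga$). I would also remark that $\be(I+M)$ never meets $\{z_1\pm iz_2=0\}$ under these hypotheses — which is precisely the statement $q(z)\ne 0$ just proved — so $\del^\ga_z K$ is genuinely defined and analytic at each such point, justifying the manipulation.
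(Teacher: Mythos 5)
Your proposal is correct and follows essentially the same route as the paper: establish the homogeneity-type bound $|(\del^\ga_z K)(z)|\leq C_\ga|z|^{|\ga|+1}/|z_1^2+z_2^2|^{|\ga|+1}$ and then lower-bound $|[\be(I+M)]_1^2+[\be(I+M)]_2^2|$ by $|\be|^2/2$ via the same computation as \eqref{015}. You simply spell out the bookkeeping (uniformity of $\delta'$ in $\ga$, the comparison of $|z|$ with $|\be|$) that the paper leaves implicit.
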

\begin{proof}
An easy calculation shows that the derivatives of $(\del^\ga_z K)(z)$ satisfy the bound
\ba\label{nmk}
|(\del^\ga_z K)(z)|\leq \frac{C_\ga|z|^{|\ga|+1}}{{\left|z^2_1+z^2_2\right|}^{|\ga|+1}}.
\ea
Using this and the estimate $|[\be (I+M)]^2_1+[\be (I+M)]^2_2|\geq |\be|^2/2$ for small enough $\delta'$ (checked in a similar manner to the bound \eqref{015}) one may verify \eqref{Kbound'}.
\end{proof}

Now we claim the following.
\begin{lem}\label{vPreservesA'}
Consider a fixed function $\omega(\al)$ of the form \eqref{final001} for a bounded set $\Omega$. Then the operator $F$ defined in Definition \ref{F2} preserves analyticity. That is, for any $r>0$ and $X_{(\cdot)}\in\cB_{\delta,r}$, for fixed $\al\in\Omega$, $F(X_z)(\al)$ is analytic in $d_r$, where $\cB_{\delta,r}$ is as in Definition \ref{banach'}.
\end{lem}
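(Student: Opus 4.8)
The plan is to reduce the statement to two facts: that the integrand $K(X_z(\al)-X_z(\be))$ is analytic in $z$ for each fixed $\be\neq\al$, and that the $\be$-integral admits an $L^1(\Omega)$ bound uniform in $z$, so that analyticity passes through the integral via Morera's theorem. First I would fix $\al\in\Omega$ and observe that evaluation at $\al$ is a bounded linear functional on $Y$: by Assumption \ref{as1} the $Y$-norm dominates the $C^{0,1}(\Omega)$ norm and hence the sup norm. Since $X_{(\cdot)}\in\cB_{\delta,r}$ means $X_{(\cdot)}-Id$ is a $Y$-valued analytic function on $d_r$, composing with this functional shows $z\mapsto X_z(\al)=\al+(X_z-Id)(\al)$ is analytic on $d_r$ for each $\al$, and likewise for each fixed $\be$. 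Hence for fixed $\al$ and $\be\neq\al$ the map $z\mapsto w(z):=X_z(\al)-X_z(\be)$ is $\C^2$-valued and analytic on $d_r$, and by \eqref{015} (valid once the $\delta$ of Definition \ref{F2} is small enough) its value obeys $\lvert w(z)_1^2+w(z)_2^2\rvert\geq\lvert\al-\be\rvert^2/2>0$, so $w(z)$ stays in $\C^2\setminus\{w_1\pm iw_2=0\}$, where $K$ from \eqref{01} is holomorphic. Being a composition of holomorphic maps, $z\mapsto K(X_z(\al)-X_z(\be))$ is therefore analytic on $d_r$ for each such $\be$.

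Next I would record the uniform bound. The estimate \eqref{015}, whose constant depends only on $\delta$ and not on $X$, holds for every $X_z$ with $X_z\in B_\delta$; combined with the Lipschitz bound $\lvert X_z(\al)-X_z(\be)\rvert\leq(1+\delta)\lvert\al-\be\rvert$ and the form \eqref{01} of $K$, it yields
\ba
\lvert K(X_z(\al)-X_z(\be))\rvert\leq\frac{C}{\lvert\al-\be\rvert}\qquad\text{for all }z\in d_r,\ \be\in\Omega\setminus\{\al\},
\ea
with $C$ independent of $z$ and $\be$. Since $\Omega\subset\R^2$ is bounded, the function $\be\mapsto\lvert\al-\be\rvert^{-1}$ belongs to $L^1(\Omega)$, so $F(X_z)(\al)=\int_\Omega K(X_z(\al)-X_z(\be))\,d\be$ is well-defined (cf. the remark following Definition \ref{F2}), and dominated convergence, using the continuity in $z$ of the integrand for each fixed $\be$ established above, shows $z\mapsto F(X_z)(\al)$ is continuous on $d_r$.

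Finally I would pass analyticity through the integral with Morera's theorem. For any closed triangle $T\subset d_r$,
\ba
\oint_{\partial T}F(X_z)(\al)\,dz=\int_\Omega\Big(\oint_{\partial T}K(X_z(\al)-X_z(\be))\,dz\Big)\,d\be,
\ea
where the interchange of $\oint_{\partial T}$ and $\int_\Omega$ is justified by Fubini's theorem, the double integrand being dominated by $C\lvert\al-\be\rvert^{-1}$ times the length of $\partial T$, which is integrable over $\Omega\times\partial T$. By the analyticity of the first step and Cauchy's theorem the inner contour integral vanishes for every $\be\neq\al$, so $\oint_{\partial T}F(X_z)(\al)\,dz=0$; combined with the continuity of $z\mapsto F(X_z)(\al)$, Morera's theorem shows it is analytic on $d_r$. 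As $\al\in\Omega$ was arbitrary, $F$ preserves analyticity in the sense of (ii) of Definition \ref{pres3}.

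The only delicate point is the kernel's singularity at $\be=\al$: a direct differentiation under the integral sign would meet the factor $\lvert\al-\be\rvert^{-2}$ from \eqref{nmk}, which is not obviously integrable in two dimensions. The Morera argument avoids this altogether, using only the $\lvert\al-\be\rvert^{-1}$ bound for $K$ that \eqref{015} supplies uniformly in $z$; everything else is routine once $\delta$ is fixed small enough that the uniform gap \eqref{015} keeps $X_z(\al)-X_z(\be)$ off the singular variety of $K$.
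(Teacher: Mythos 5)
Your proof is correct, but it follows a genuinely different route from the paper. The paper proves analyticity at each fixed $\al$ by attacking the difference quotient of $F(X_z)(\al)$ directly: it writes $K(X_{z+h}(\al)-X_{z+h}(\be))-K(X_z(\al)-X_z(\be))$ as a $\zeta$-integral of $\bigl(\tfrac{d}{d\zeta}X_\zeta(\al)-\tfrac{d}{d\zeta}X_\zeta(\be)\bigr)\na K(X_\zeta(\al)-X_\zeta(\be))$, obtains a uniform Lipschitz bound on $\tfrac{d}{d\zeta}X_\zeta(\cdot)$ via the Cauchy integral formula, and then pairs the $O(|\al-\be|)$ Lipschitz difference against the $O(|\al-\be|^{-2})$ bound on $\na K$ from Lemma \ref{kernelbound} to get an integrable $O(|\al-\be|^{-1})$ majorant, so that dominated convergence yields the derivative together with the explicit formula $\tfrac{d}{dz}\bigl(F(X_z)(\al)\bigr)=\int_\Omega\bigl(\tfrac{d}{dz}X_z(\al)-\tfrac{d}{dz}X_z(\be)\bigr)\na K(X_z(\al)-X_z(\be))\,d\be$. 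You instead avoid differentiating under the integral altogether: you observe that $z\mapsto X_z(\al)$ is analytic because point evaluation is bounded on $Y$, that \eqref{015} keeps $X_z(\al)-X_z(\be)$ off the singular variety of $K$ so the integrand is analytic in $z$ for each fixed $\be\neq\al$, and then you pass analyticity through the $\be$-integral with the uniform $C|\al-\be|^{-1}$ bound, Fubini, Cauchy's theorem, and Morera. Your argument is softer — it needs only the bound on $K$ itself, not on $\na K$, and sidesteps exactly the $|\al-\be|^{-2}$ issue you flag at the end — which the paper instead resolves by the cancellation built into pairing $\na K$ with the Lipschitz difference of the $z$-derivatives. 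What the paper's heavier computation buys is the explicit derivative formula, in the same spirit as the differentiation formulas it uses elsewhere; for the lemma as stated, your Morera route is a complete and somewhat cleaner alternative. One small point to keep explicit: membership in $B_\delta$ is measured in the $Y=C^{1,\mu}$ norm, so the Lipschitz smallness used in \eqref{015} is $\lVert X_z-Id\rVert_{C^{0,1}}\leq C\delta$ rather than $\delta$ itself; this only changes constants, as you indicate.
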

\begin{proof}
Fix an $r>0$, and an $X_{(\cdot)}\in\cB_{\delta,r}$. What we want to prove is that the limit definition of
$\frac{d}{d z} \left( F(X_z)(\al)\right)$ converges for each $z\in d_r$, $\al\in\Omega$. Fixing $\al\in\Omega$ and $z\in d_r$ we find
\begin{eqnarray}\label{0point56'}
& &\frac{1}{h}\int_\Omega( K(X_{z+h}(\al)-X_{z+h}(\be))-K(X_z(\al)-X_z(\be)))\omega(\be) d\be\\
& &=\int_\Omega
\frac{1}{h}
\left(
\int^{z+h}_z
\left(\frac{d}{d\zeta}X_\zeta(\al)-\frac{d}{d\zeta}X_\zeta(\be)\right)\na K(X_\zeta(\al)-X_\zeta(\be))d\zeta\right) d\be,\nn
\end{eqnarray}
where we take the straight path from $z$ to $z+h$ in the integral. Since $X_{(\cdot)}\in\cB_{\delta,r}$ implies $X_\zeta(\cdot)$ is Lipschitz with a fixed constant as $\zeta$ varies in $d_r$, by using the Cauchy integral formula one finds that we get a uniform Lipschitz bound on $\frac{d}{d\zeta}X_\zeta(\cdot)$, i.e.
\ba
\sup_{\ga,\be\in\Omega}\left(\frac{\left|\frac{d}{d\zeta}X_\zeta(\ga)-\frac{d}{d\zeta}X_\zeta(\be)\right|}{|\ga-\be|}\right)\leq C,
\ea
as $\zeta$ varies in the interval from $z$ to $z+h$, which must be contained in $d_r$ for small $h$. Moreover, since we have $\left(X_\zeta(\al)-X_\zeta(\be)\right)=(\al-\be)(I+O(\delta))$, we can apply Lemma \ref{kernelbound} to get a bound on the kernel in the right hand side of \eqref{0point56'}.  Together with the Lipschitz bound on $\frac{d}{d\zeta}X_\zeta(\cdot)$, we can thus bound the quantity inside the integral over $\beta$ uniformly by $C|\al-\be|^{-1}$, then observe that the quantity within the integral taken in the $\zeta$ variable is a continuous function of $\zeta$ as long as $\beta\neq\al$, and then apply the dominated convergence theorem to find
\ba
\frac{d}{dz}\left(F(X_z)(\al)\right)
=
\int_\Omega
\left(\frac{d}{dz}X_z(\al)-\frac{d}{dz}X_z(\be)\right)\na K(X_z(\al)-X_z(\be))d\be.
\ea
\end{proof}

\textbf{Local boundedness for the operator $F$}

\vspace{.05in}

In all that follows let us consider an arbitrary fixed $\mu\in(0,1)$, and let us use the notation $F_X$ to refer to $F(X)$. Proposition \ref{bddF} asserts the following: under the hypothesis that the initial patch $\Omega$ is bounded, finitely-connected, and has a $C^2$ non-self-intersecting boundary, there exists a $\delta>0$ and a bound of the form
\ba
\sup_{X\in B_\delta}\lVert F_X \rVert_{C^{1,\mu}(\Omega)}\leq C,
\ea
for a constant $C=C(\Omega)$, where $B_\delta=\{X\in C^{1,\mu}(\Omega):\lVert X-Id\rVert_{C^{1,\mu}(\Omega)}\leq\delta\}$. The proof of this estimate is the main difficulty in proving the Lagrangian analyticity result.

We apply the methods of Section 2, taking the Banach space of maps $Y$ to be $C^{1,\mu}(\Omega)$. When combining the result of Proposition \ref{bddF} with Lemma \ref{vPreservesA'} and Theorem \ref{mainth}, this proves that for $\al\in\Omega$, the trajectories $X(\al,t)$ of the solutions $u$ solving \eqref{eulereqns} are analytic in time, as we conclude in Theorem \ref{final}.

Observe that $F_X$ is not difficult to bound pointwise with the use of Lemma \ref{kernelbound}:
\begin{align}\label{aa1'}
|F_X(\al)|\leq\left|\int_\Omega K(X(\al)-X(\be))d\be\right|\leq\int_\Omega C|\al-\be|^{-1}d\be<\infty.
\end{align}
The main estimates involve bounds on the gradient of $F_X(\al)$. The first task is to establish pointwise bounds on $\na F_X(\al)$ and related quantities, via Lemmas \ref{2ndBOUND} and \ref{MAINBOUND}. With that, we are also able to justify a useful formula for the derivatives of $F_X$ in the directions of vector fields $T$ tangent to the boundary of the patch and $N$ normal to the boundary, which is given in Lemma \ref{diffFormula}. With these formulas, the proof of the H\"{o}lder bound on $\na F_X(\al)$ is essentially reduced to proving the H\"{o}lder continuity of $T(F_X)(\al)$, which is verified with Lemma \ref{mainHbound}. Once these lemmas have been established, the proof of Proposition \ref{bddF} follows without much additional difficulty.

\begin{subsubsection}{Pointwise gradient bounds}
We begin with Lemma \ref{2ndBOUND}, which guarantees in particular the differentiability of $F_X$.
\begin{lem}\label{2ndBOUND}
Assume we have a finitely-connected, bounded domain $\Omega$ with $C^2$ non-self-intersecting boundary. Then there is a $\delta_1>0$ such that for all $X\in B_{{\delta_1}}$, $F_X(\al)$ is differentiable in $\Omega$ with gradient satisfying the pointwise bound
\ba
|\na F_X(\al)|\leq C\quad \mbox{for all }\al\in\Omega,
\ea
for a constant $C$ depending on $\Omega$.
\end{lem}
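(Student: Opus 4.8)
The plan is to differentiate $F_X(\al)=\int_\Omega K(X(\al)-X(\be))\,d\be$ under the integral sign. Formally, $\na_\al F_X(\al)=\int_\Omega \na X(\al)\,(\na K)(X(\al)-X(\be))\,d\be$, but this integral is only borderline convergent: by Lemma \ref{kernelbound}, $|(\na K)(X(\al)-X(\be))|\le A_1|\al-\be|^{-2}$ since $X(\al)-X(\be)=(\al-\be)(I+O(\delta))$, and $|\al-\be|^{-2}$ is not integrable in $\R^2$. So the first step is to \emph{not} differentiate naively, but to exploit cancellation. The standard device is to subtract off a term that integrates to something harmless: write, for a fixed reference point, say using the structure of the kernel,
\ba
\na_\al F_X(\al)=\int_\Omega \na X(\al)\big[(\na K)(X(\al)-X(\be))-(\na K)(\al-\be)\big]\,d\be+\na X(\al)\int_\Omega (\na K)(\al-\be)\,d\be,
\ea
provided we make sense of each piece. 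For the first integrand, the mean value theorem along the segment joining $\al-\be$ to $X(\al)-X(\be)$ gives a factor $|(X-Id)(\al)-(X-Id)(\be)|\le \delta|\al-\be|$ times $\sup|\na^2 K|$ along that segment, and $|\na^2 K|\lesssim |\al-\be|^{-3}$ by the analogue of \eqref{nmk} combined with Lemma \ref{kernelbound}; the net power is $\delta|\al-\be|\cdot|\al-\be|^{-3}=\delta|\al-\be|^{-2}$, still not integrable. This shows the subtraction must be more careful: one should subtract the \emph{value at $\al=\be$ of the first-order Taylor remainder}, i.e. use that $(\na K)$ is odd so that $\int_{|\al-\be|<R}(\na K)(\al-\be)\,d\be$ is, after accounting for the domain geometry, a bounded boundary expression, and pair the principal-value structure against the boundary of $\Omega$. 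Concretely I would fix a small ball $B(\al,\rho)$, split $\Omega=(\Omega\cap B(\al,\rho))\cup(\Omega\setminus B(\al,\rho))$, handle the outer region by absolute convergence (there $|\al-\be|^{-2}$ is fine away from the singularity, with constant depending on $\rho$ and $\diam\Omega$), and on the inner region use that $K$ is (up to constants) a Calder\'on--Zygmund kernel: the oddness of $\na K$ makes $\mathrm{p.v.}\int_{B(\al,\rho)}(\na K)(\al-\be)\,d\be$ converge, and the difference $(\na K)(X(\al)-X(\be))-(\na K)(\al-\be)$ is then genuinely integrable once we also use the first-order cancellation coming from $\det\na X\approx 1$ and the Lipschitz bound on $X-Id$.

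The key steps, in order: (1) establish the second-derivative kernel bound $|(\na^2 K)(X(\al)-X(\be))|\le A_2|\al-\be|^{-3}$ from Lemma \ref{kernelbound}; (2) for $\al$ interior, write $F_X(\al+he_j)-F_X(\al)$, split the domain into $B(\al,2|h|)$ and its complement in $\Omega$; (3) on $B(\al,2|h|)$, bound each of the two terms separately using $|K(X(\al)-X(\be))|\le C|\al-\be|^{-1}$, giving a contribution $O(|h|)$ after integrating $|\al-\be|^{-1}$ over a ball of radius $2|h|$ — hence this piece divided by $h$ is $O(1)$ but does not obviously converge, so instead I would keep it and show it tends to the principal-value term; (4) on $\Omega\setminus B(\al,2|h|)$, use the mean value theorem in $h$ and dominated convergence with dominating function $C|\al-\be|^{-2}\mathbf 1_{|\al-\be|>2|h|}$, whose integral over $\Omega$ is $O(\log(1/|h|))$ — again not uniformly bounded, which signals that the clean approach is to combine (3) and (4): add and subtract $(\na K)(\al-\be)$ \emph{before} splitting, so that the log-divergence from the free kernel is computed explicitly (it is a boundary term, bounded by $C(\Omega)$ since $\del\Omega\in C^2$) and the remainder integrand is $O(|\al-\be|^{-2+\epsilon})$ or better and hence absolutely integrable uniformly in $\al$. (5) Conclude $F_X$ is differentiable with $|\na F_X(\al)|\le C(\Omega)$ uniformly for $X\in B_{\delta_1}$, choosing $\delta_1\le\delta'$ from Lemma \ref{kernelbound} and small enough that the $O(\delta)$ perturbation of the kernel bounds is absorbed.

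The main obstacle is precisely this logarithmic borderline: $\na F_X$ sits at the endpoint where the naive integral is $L\log L$-type divergent, so one cannot bound $|\na F_X|$ by crudely moving absolute values inside. The real content is showing that the principal-value integral $\mathrm{p.v.}\int_\Omega(\na K)(X(\al)-X(\be))\,d\be$ exists and is bounded \emph{uniformly in $\al\in\Omega$ and $X\in B_{\delta_1}$}, which is where the geometric hypotheses on $\Omega$ (finitely connected, $C^2$ boundary, non-self-intersecting) enter: they control the boundary term that results from the principal value and ensure the ``solid angle'' contribution is bounded. I expect to handle this by expressing the $\al$-derivative of $F_X$ as a convergent sum of (a) an absolutely convergent volume integral of the kernel difference and (b) an explicit boundary integral over $\del\Omega$ obtained via the divergence theorem (using that $\na K$ is, componentwise, a derivative of a function with only $|\al-\be|^{-1}$ singularity), the latter being bounded in terms of $\|\del\Omega\|_{C^2}$ and the number of boundary components. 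The uniformity in $X$ throughout follows because every kernel estimate from Lemma \ref{kernelbound} holds with $X(\al)-X(\be)=(\al-\be)(I+M)$, $|M|\le C\delta_1$, with constants independent of the particular $X$.
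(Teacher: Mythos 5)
You correctly isolated the crux: everything reduces to showing that the truncated integral $\int_{\Omega\setminus B_r(\al)}\na K(X(\al)-X(\be))\,d\be$ is bounded uniformly in $\al$, $r$, and $X\in B_{\delta_1}$ (this is the paper's Lemma \ref{MAINBOUND}), and you correctly observed that subtracting the flat kernel $\na K(\al-\be)$ only produces a difference of size $\delta|\al-\be|^{-2}$, which is small but still non-integrable. However, the remedies you then propose do not close this gap. Your claim that the remainder becomes ``$O(|\al-\be|^{-2+\epsilon})$ or better'' using the Lipschitz bound on $X-Id$ and $\det\na X\approx 1$ is false: with only Lipschitz control there is no gain in the exponent, and the smallness $\delta$ never restores integrability. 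The idea that actually works, and that your proposal is missing, is to freeze coefficients at $\al$ and compare with the linearized kernel $\na K((\al-\be)\na X(\al))$: since $X\in C^{1,\mu}$, one has $|X(\al)-X(\be)-(\al-\be)\na X(\al)|\lesssim|\al-\be|^{1+\mu}$ (this is where the H\"older seminorm $[\na X]_\mu$, i.e.\ the choice $Y=C^{1,\mu}$, is essential, together with the chord-arc property of $\Omega$), so the kernel difference is $O(|\al-\be|^{\mu-2})$ and absolutely integrable. The frozen kernel is then handled by exact cancellation: its integral over any annulus, and (since $\na K$ is \emph{even} --- not odd, as you wrote; $K$ itself is the odd one) over any half-annulus centered at $\al$, vanishes, and the mismatch between $\Omega\cap\{\textrm{annulus}\}$ and the half-annulus near $\del\Omega$ is controlled by the Geometric Lemma of Bertozzi--Constantin with the scale $D=\inf_{\del\Omega}|\na\varphi|/\sup_\Omega|\na^2\varphi|$. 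That is where the $C^2$, non-self-intersecting boundary enters --- not through a ``solid angle''/boundary-term estimate of the kind you sketch.

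A second concrete problem is your step (b): obtaining an explicit boundary integral over $\del\Omega$ via the divergence theorem applied to $\na K(X(\al)-X(\be))$ requires writing this kernel as $-(\na X(\be))^{-1}\na_\be\bigl(K(X(\al)-X(\be))\bigr)$ and then moving the $\be$-derivative off the kernel, which lands it on $(\na X(\be))^{-1}$; for $X\in C^{1,\mu}$ that derivative does not exist, so this route needs more regularity than the lemma assumes. Finally, the difference-quotient bookkeeping you gesture at (splitting into $B_r(\al)$ and its complement, boundary circles of radius $r$, the combination $\del_{\al_i}+\del_{\be_i}$ acting on the kernel to exploit $|\na X(\al)-\na X(\be)|\lesssim|\al-\be|^\mu$) is the routine part and matches the paper; but without the frozen-kernel comparison and the geometric lemma, the uniform bound at the heart of the statement is not established, so as written the proposal has a genuine gap.
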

For the most part the proof of Lemma \ref{2ndBOUND} is routine once Lemma \ref{MAINBOUND} is proven. Note that if we were to simply place the gradient inside the integral in the expression for $F_X$ and use a principal value integral, we would obtain the expression
\ba
p.v.\int_\Omega\na X(\al)\na K(X(\al)-X(\be))d\be. 
\ea
Here we use the convention that $(\na X )_{ij}=\del X_j/\del \al_i$. The next lemma allows us to bound expressions like this, and provides the main estimate used to prove Lemma \ref{2ndBOUND}. It is also used crucially in Lemma \ref{mainHbound}, which in turn provides the key estimate for establishing the H\"{o}lder continuity of $\na F_X$.
\begin{lem}\label{MAINBOUND}
Under the same assumptions as in Lemma \ref{2ndBOUND}, for sufficiently small $ \delta_2$ there exists a constant $C$ such that that for all $X\in B_{\delta_2}$, all $\alpha\in\Omega$, and $r>0$,
\ba\label{a01}
\left|\int_{\Omega\setminus B_r(\al)} \na K(X(\al)-X(\be))d\be\right|\leq C.
\ea
\end{lem}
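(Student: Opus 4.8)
The key point is to exploit cancellation in the integrand $\na K(X(\al)-X(\be))$ over the region $\Omega\setminus B_r(\al)$, using that $\na K$ is (essentially) a kernel of Calderón–Zygmund type whose average over spheres or over "annular'' regions vanishes. First I would reduce to the case $X=Id$ plus a perturbation. Writing $X(\al)-X(\be) = (\al-\be)(I+O(\delta))$ and Taylor expanding $\na K$ around the Euclidean difference $\al-\be$, one sees that $\na K(X(\al)-X(\be))$ splits as $\na K(\al-\be)$ plus a remainder that is $O(\delta)|\al-\be|^{-2}$ in size (using the derivative bound $|\D^\ga_z K(z)|\le C_\ga|z|^{-|\ga|-1}$ from Lemma \ref{kernelbound}, with $|\ga|=2$ for the remainder term). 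The remainder, after being multiplied by the Jacobian factors, is no worse than $C\delta|\al-\be|^{-2}$ on $\Omega\setminus B_r(\al)$, which is \emph{not} integrable by itself; so the naive splitting is not enough, and one must keep the cancellation structure.

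**Main steps.** The cleaner route is to integrate by parts. Since $\na K(y) = $ (a matrix of) derivatives of $K$, and $K$ itself is only $|y|^{-1}$ in size, one writes $\na K(X(\al)-X(\be))$ as a $\be$-derivative of $K(X(\al)-X(\be))$ up to a factor of $(\na X(\be))^{-1}$ (or its cofactor), i.e.
\ba
\na_y K(X(\al)-X(\be)) = -\big(\na_\be\big)\big[K(X(\al)-X(\be))\big]\,\big(\na X(\be)\big)^{-1},
\ea
and then applies the divergence theorem on $\Omega\setminus B_r(\al)$. This converts the volume integral into (i) a boundary integral over $\del\Omega$, where the integrand is $O(1)$ since $|X(\al)-X(\be)|\gtrsim \dist(\al,\del\Omega)$ is \emph{not} uniformly bounded below — so actually one has to be more careful here when $\al$ is near $\del\Omega$ — plus (ii) a boundary integral over $\del B_r(\al)$, where $K(X(\al)-X(\be)) = O(r^{-1})$ against a sphere of surface measure $O(r)$, giving $O(1)$, plus (iii) a volume term coming from $\na_\be[(\na X(\be))^{-1}]$ hitting $K$, which is $O(\|X-Id\|_{C^{1,\mu}})\cdot|\al-\be|^{-1}$ and hence integrable. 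The $\del\Omega$ term is handled using the $C^2$ regularity of the boundary and the fact that $\Omega$ is bounded and finitely connected: one splits $\del\Omega$ into the portion far from $\al$ (trivially $O(1)$) and, when $\al$ is within distance $\sim r$ of $\del\Omega$, one uses the $C^2$ flatness of $\del\Omega$ at that scale together with the cancellation of $\na K$ integrated against a nearly flat piece of surface (the "almost-odd'' symmetry of $\na K(\al-\be)$ in $\be-\al$ across a hyperplane), reducing the borderline contribution to a convergent integral. Throughout, smallness of $\delta_2$ is used to guarantee the denominators in $K$ don't vanish (via \eqref{015}) and that $\na X$ is invertible with controlled inverse.

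**The main obstacle.** The genuinely delicate part is the uniformity in $\al$, especially when $\al$ approaches $\del\Omega$, and the uniformity in $r>0$ (including $r$ large, where $\Omega\setminus B_r(\al)$ may be empty — trivial — and $r$ small). The boundary term from $\del\Omega$ does not decay and must be controlled by the cancellation of the Biot–Savart kernel's gradient against the (near-)hyperplane tangent to $\del\Omega$, which is exactly where the $C^2$ hypothesis on $\del\Omega$ is essential: it gives the quadratic error in approximating $\del\Omega$ by its tangent line, making the residual integral $\int |\al-\be|^{-2}\cdot|\al-\be|^{\,} \,d\sigma(\be) \sim \int |\al-\be|^{-1}d\sigma$ converge near $\al$. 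Making this rigorous — choosing the right cutoff scale (comparable to $\dist(\al,\del\Omega)$ when that is $\lesssim r$, and to $r$ otherwise), and bookkeeping the interplay of the two balls $B_r(\al)$ and the boundary collar — is where essentially all the work lies; the interior estimates and the perturbative ($\delta$-dependent) terms are routine by comparison.
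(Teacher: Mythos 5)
Your proposed route has a genuine gap at its central step. The integration by parts you describe writes $\na K(X(\al)-X(\be))$ as $-(\na X(\be))^{-1}\na_\be\big[K(X(\al)-X(\be))\big]$ and then moves $\na_\be$ off $K$ by the divergence theorem; this unavoidably produces the volume term in which $\na_\be$ hits $(\na X(\be))^{-1}$, i.e.\ it requires second derivatives of $X$. But $X$ lies only in a $C^{1,\mu}$ ball: $\na X$ is merely H\"older continuous, $\na_\be[(\na X(\be))^{-1}]$ need not exist, and your claimed bound $O(\lVert X-Id\rVert_{C^{1,\mu}})\,|\al-\be|^{-1}$ for that term is not something the $C^{1,\mu}$ norm can give. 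This is precisely the difficulty the lemma is meant to overcome. The paper's device is to first replace $X(\al)-X(\be)$ not by $\al-\be$ (which, as you correctly note, leaves a non-integrable $O(\delta)|\al-\be|^{-2}$ error) but by the \emph{frozen-gradient} expression $(\al-\be)\na X(\al)$: using $[\na X]_{C^{0,\mu}}$ and the chord-arc property $d_\Omega(\al,\be)\le C|\al-\be|$ (needed because $\Omega$ is not assumed convex, so one integrates along paths inside $\Omega$), the error is $O(|\al-\be|^{\mu-2})$, which \emph{is} integrable. For the frozen kernel $\na K((\al-\be)\na X(\al))$ the matrix $(\na X(\al))^{-1}$ is constant in $\be$, so it pulls out of the divergence and no differentiation of $X$ is ever needed; the remaining integral is then handled by exact cancellation over annuli and half-annuli centered at $\al$ (using homogeneity and the evenness of $\na K$), with the mismatch between $\Omega\cap\{D>|\al-\be|>\rho_0\}$ and the half-annulus controlled by the Geometric Lemma of Bertozzi--Constantin, which is where the $C^2$ boundary enters quantitatively.

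A secondary problem is your treatment of the boundary term over $\del\Omega$. After your integration by parts the integrand there is $K$ (size $|\al-\be|^{-1}$) against arclength on a curve, and $\int_{\del\Omega}|\al-\be|^{-1}\,d\sigma(\be)$ is \emph{not} uniformly bounded as $\al$ approaches $\del\Omega$ -- it diverges logarithmically -- so the ``residual integral $\sim\int|\al-\be|^{-1}d\sigma$ converges near $\al$'' claim is false as stated. One can in principle rescue this with a genuine boundary-layer analysis exploiting the odd symmetry of $K$ about the foot point together with the $C^2$ flatness, but making that uniform in $d(\al,\del\Omega)$ and in $r$ is comparable in difficulty to the paper's geometric-lemma argument, and your sketch does not supply it. So neither the volume term nor the $\del\Omega$ term is controlled by the proposal as written; the missing idea is the frozen-coefficient comparison kernel $\na K((\al-\be)\na X(\al))$ and the annulus/half-annulus cancellation built on it.
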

\begin{proof}
To begin, we claim that the quantity in the left hand side of \eqref{a01} that we want to bound is within a bounded constant of
\ba\label{vpn005}
I:=\int_{\Omega\setminus B_r(\al)} \na K((\al-\be)\na X(\al))d\be.
\ea
To show this, first we establish that for $\al$ and $\be$ in $\Omega$, we have
\ba\label{p007}
|X(\al)-X(\be)-(\al-\be)\na X(\al)|
\leq
C|\al-\be|^{1+\mu}.
\ea
To verify the above inequality, let us define a distance function $d_\Omega(\al,\be)$ for points $\al,\be\in\Omega$ by 
\ba
d_\Omega(\al,\be):=\mbox{inf}\{\mbox{length}(\ga):\mbox{curves }\ga\subset\Omega\mbox{ joining }\al\mbox{ and }\be\}.
\ea
Since the boundary $\del \Omega$ is given by a $C^2$, non-self-intersecting curve, for some constant $C$ depending on $\Omega$ we have the following for all $\al$ and~$\be$ in $\Omega$.
\ba\label{vpn003}
d_\Omega(\al,\be)\leq C|\al-\be|.
\ea
Consider a fixed $\al$ and $\be$ in $\Omega$. For small $\eps>0$, we may choose a curve $\ga^\eps_{\al,\be}(s):[0,1]\to\Omega$ with $\ga^\eps_{\al,\be}(0)=\be$, $\ga^\eps_{\al,\be}(1)=\al$, and
\ba\label{vpn004}
|\mbox{length}(\ga^\eps_{\al,\be})-d_\Omega(\al,\be)|\leq\eps.
\ea
Now we observe
\begin{eqnarray}\label{vpn001}
X(\al)-X(\be)-(\al-\be)\na X(\al)
&=& \int^1_0 \frac{d}{d\tau}(X(\ga^\eps_{\al,\be}(\tau))d\tau-(\al-\be)\na X(\al),\\
&=& \int^1_0 \frac{d\ga^\eps_{\al,\be}}{d\tau}(\tau)\big(\na X(\ga^\eps_{\al,\be}(\tau))-\na X(\al)\big)d\tau,\nn
\end{eqnarray}
For a function $c(\eps)$ which tends to zero as $\eps$ tends to zero, we have for all $\tau\in[0,1]$
\ba\label{vpn003002}
\left|d_\Omega(\al,\be)-[d_\Omega(\al,\ga^\eps_{\al,\be}(\tau))+d_\Omega(\ga^\eps_{\al,\be}(\tau),\be)]\right|\leq c(\eps).
\ea
By using \eqref{vpn003002} and \eqref{vpn003} we find
\begin{eqnarray}\label{vpn003003}
|\al-\ga^\eps_{\al,\be}(\tau)|\leq d_\Omega(\al,\ga^\eps_{\al,\be}(\tau)) \leq  
d_\Omega(\al,\ga^\eps_{\al,\be}(\tau))+d_\Omega(\ga^\eps_{\al,\be}(\tau),\be)
&\leq& d_\Omega(\al,\be)+c(\eps)\nn\\
&\leq& C|\al-\be|+c(\eps).
\end{eqnarray}
In \eqref{vpn001}, we use the H\"{o}lder continuity of $\na X$ along with \eqref{vpn004}, \eqref{vpn003003}, and \eqref{vpn003} to get
\begin{eqnarray}\label{vpn002}
|X(\al)-X(\be)-(\al-\be)\na X(\al)|
&\leq&
\int^1_0
\left|\frac{d \ga^\eps_{\al,\be}}{d\tau}(\tau)d\tau
\right|
\sup_{\tau\in[0,1]}|\na X(\ga^\eps_{\al,\be}(\tau))-\na X(\al)|\\
&\leq&
C'\;\mbox{length}(\ga^\eps_{\al,\be})|\al-\ga^\eps_{\al,\be}(\tau)|^\mu\nn\\
&\leq&
C'(d_\Omega(\al,\be)+\eps)(C|\al-\be|+c(\eps))^\mu\nn\\
&\leq&
C'(C|\al-\be|+\eps)(C|\al-\be|+c(\eps))^\mu.
\end{eqnarray}
Taking the limit as $\eps$ tends to zero, we obtain \eqref{p007}.
Now we observe
\begin{eqnarray}\label{p006}
& &\na K(X(\al)-X(\be))-\na K((\al-\be)\na X(\al))=\\
& &\sum^2_{i=1}[(X(\al)-X(\be))-(\al-\be)\na X(\al)]_i\int^1_0(\del_i\na K)(\tau(X(\al)-X(\be))+(1-\tau)(\al-\be)\na X(\al))d\tau\nonumber.
\end{eqnarray}

For the integral over $\tau$ in the right hand side of \eqref{p006}, we note $X(\al)-X(\be)=(\al-\be)(I+O(\delta_2))$ and $\na X(\al)=I+O( \delta_2)$, so that for any $\tau\in(0,1)$,
\ba
\tau(X(\al)-X(\be))+(1-\tau)(\al-\be)\na X(\al)=(\al-\be)(I+O( \delta_2)).
\ea
Thus, by using Lemma \ref{kernelbound}, for small enough $\delta_2$ we get
\ba\label{p008}
|\na^2 K(\tau(X(\al)-X(\be))+(1-\tau)(\al-\be)\na X(\al))|\leq c'|\al-\be|^{-3}.
\ea
Using \eqref{p007} and \eqref{p008} in \eqref{p006} we now find
\begin{eqnarray}
|\na K(X(\al)-X(\be))-\na K((\al-\be)\na X(\al))|
\leq c''|\al-\be|^{\mu-2}.
\end{eqnarray}
Now we can bound the difference:
\ba\label{p010}
\left|
\int_{\Omega\setminus B_r(\al)}(\na K(X(\al)-X(\be))-\na K((\al-\be)\na X(\al))d\be
\right|
\leq c''\int_{\Omega}|\al-\be|^{\mu-2}d\be\leq \tilde{c}.
\ea

Recall \eqref{vpn005} and consider that we want \eqref{a01}. Now we have reduced the proof of the lemma to bounding $I$. We fix a positive constant $D>0$ to be specified later, determined by $\Omega$, and uniformly bound the quantity
\ba\label{p011}
I':=\int_{\Omega\cap\{|\al-\be|>D\}}|\na K((\al-\be)\na X(\al))|d\be.
\ea
Indeed, we have
\ba\label{p012}
I'
\leq C\int_{\Omega\cap\{|\al-\be|>D\}}|\al-\be|^{-2}d\be
\leq C\int_{A_D(\al)}|\al-\be|^{-2}d\be\leq \underline{C},
\ea
where $A_D(\al)$ is taken to be an annulus centered at $\al$ with inner radius $D$ and with the same area as $\Omega$.

Now note that if $r\geq D$, we have
\ba\label{p013}
|I|\leq I'\leq\underline{C},
\ea
and so in that case we are done.

On the other hand, suppose that we have $r<D$. In that case we split the integral over $\Omega\setminus B_r$ in the expression for $I$, getting
\begin{eqnarray}\label{p014}
|I|&=&\left|\left(
\int_{\Omega\cap\{|\al-\be|\geq D\}}+\int_{\Omega\cap\{D>|\al-\be|>r\}}
\right)
\na K((\al-\be)\na X(\al))d\be\right|,\\
&\leq& I'+\left|\int_{\Omega\cap\{D>|\al-\be|>r\}}\na K((\al-\be)\na X(\al))d\be\right|,\label{p014b}
\end{eqnarray}
so that it remains to bound the integral on the right in line \eqref{p014b}. Let us denote $d_\al:=\textrm{dist}(\al,\del\Omega)$. Consider now the case that $d_\al > r$. First, suppose $d_\al$ is so large that $d_\al\geq D$. In that case the remaining integral is over the entire annulus, which is then contained in $\Omega$, namely
\ba
\int_{D>|\al-\be|>r}\na K((\al-\be)\na X(\al))d\be.
\ea
However, we have that for any annulus $A(\al)$ centered at $\al$,
\ba
\int_{A(\al)}\na K((\al-\be)\na X(\al))d\be=0.
\ea
To see this, consider the following calculation, for which we take $R$ and $R'$ to be the inner and outer radii, respectively, of $A(\al)$.
\begin{eqnarray}\\
\int_{A(\al)}\na K((\al-\be)\na X(\al))d\be
&=&-(\na X(\al))^{-1}\int_{A(\al)}\na_\be( K((\al-\be)\na X(\al)))d\be,\nn\\
&=&-(\na X(\al))^{-1}\int_{\del A(\al)}\nu_{out}(\be) K((\al-\be)\na X(\al))d\sigma(\be),\label{p015b}\nn\\
&=&-(\na X(\al))^{-1}\left(
\int_{|\al-\be|=R'}
-\int_{|\al-\be|=R}\right)\label{p015'}\\
& &\hspace{1.5in}
n^+(\al-\be) K((\al-\be)\na X(\al))d\sigma(\be),\nn\\
&=&0,\nn
\end{eqnarray}
where $\nu_{out}=(\nu^1_{out},\nu^2_{out})^t$ indicates the outward pointing normal for $\del A(\al)$, and $n^+(\al-\be)=\frac{(\al-\be)^t}{|\al-\be|}$. One finds the expression on the right in \eqref{p015'} is zero by observing that the expression
\ba
\int_{|\al-\be|=\rho}n^+(\al-\be) K((\al-\be)\na X(\al))d\sigma(\be)=\int_{|\be|=\rho}n^+(\be) K(\be\na X(\al))d\sigma(\be)
\ea
is independent of $\rho$, as the expression in the integral on the right is homogenous of degree $0$ in $\be$. Thus the case $d_\al\geq D$ is handled. Now we consider the case that $d_\al<D$. Then if $d_\al>r$ we can split the integral on the right in \eqref{p014b} into the following two:
\begin{eqnarray}\label{p016}
\int_{\Omega\cap\{D>|\al-\be|>r\}}\na K((\al-\be)\na X(\al))d\be
&=&
\int_{\Omega\cap\{D>|\al-\be|>d_\al\}}
\na K((\al-\be)\na X(\al))d\be\\
& &+\int_{\{d_\al\geq |\al-\be|>r\}}
\na K((\al-\be)\na X(\al))d\be.\nn
\end{eqnarray}
The second integral is over an annulus centered at $\al$, so it is zero, and we are left with just the first integral in the right hand side in the case that $d_\al>r$. Thus, we will have handled both cases, $d_\al>r$ and $d_\al\leq r$, if we can bound the quantity
\ba
\tilde{I}:=\int_{\Omega\cap\{D>|\al-\be|>\rho_0\}}\na K((\al-\be)\na X(\al))d\be,
\ea
where $\rho_0=\max(d_\al,r)$. First, we choose $\hat{\al}\in\del \Omega$ such that $|\al-\hat{\al}|=d_\al$, and we define
\begin{eqnarray}
\cS(\al)     &:=&\{\hat{u}\in S^1:\hat{u}\cdot\nu_{in}(\hat{\al})> 0\},\\
\cU(\al)     &:=&\{\al+\rho \hat{u}:\rho\in(\rho_0,D),\hat{u}\in\cS(\al)\}.\nn
\end{eqnarray}
The goal will be to show that, by the choice of $D$ we make, the regions of integration for the integral in $\tilde{I}$ and the corresponding integral over the semi-annulus $\cU(\al)$ will mostly overlap, since the boundary of $\Omega$ is somewhat smooth. This will be helpful since we will be able to show that the corresponding integral over $\cU(\al)$ is zero.

In the proof of Lemma \ref{vfpair}, a $C^2$ function $\varphi:\R^2\to\R$ is produced with the properties that the interior of the patch is given by $\Omega=\{\be\in \R^2:\varphi(\be)>0\}$, where the vector field $T$ in the statement of the lemma is given by $\na^\perp \varphi\big|_{\overline\Omega}$. Following the strategy of \cite{bc}, we use this to define $D$, taking
\ba\label{p004b}
D:=
\frac{\inf_{\del\Omega}|\na\varphi|}{\sup_\Omega|\na^2\varphi|}
.
\ea
Clearly the denominator is nonzero. Otherwise $T_\al$ would be constant, and $\Omega$ unbounded. We denote $A_{\rho_0,D}:=\{D>|\al-\be|>\rho_0\}$ and check
\begin{eqnarray}
\tilde{I}	&=&	\left(	\int_{\Omega\cap A_{\rho_0,D}}
						-\int_{\cU(\al)}+\int_{\cU(\al)}
				\right)
				\na K((\al-\be)\na X(\al))d\be,\\
			&=&	\left(	\int_{(\Omega\cap A_{\rho_0,D})\setminus \cU(\al)}
						-\int_{\cU(\al)\setminus (\Omega\cap A_{\rho_0,D})}
						+\int_{\cU(\al)}
				\right)
				\na K((\al-\be)\na X(\al))d\be.\label{p017}
\end{eqnarray}
We argue that
\ba\label{p018}
\int_{\cU(\al)}\na K((\al-\be)\na X(\al))d\be=0.
\ea
Because $K(z)$ is odd, $\na K(z)$ is even, and so \eqref{p018} follows from the corresponding fact for the full annulus. Using \eqref{p018} in \eqref{p017}, it follows that
\ba
|\tilde{I}|	\leq	\int_{(\Omega\cap A_{\rho_0,D})\Delta\cU(\al)}|\na K((\al-\be)\na X(\al))|d\be.
\ea
We define for $\rho\in(d_\al,D)$
\begin{eqnarray}
\cA_\rho(\al)&:=&\{ \hat{u}\in S^1:\al+\rho \hat{u}\in\Omega\},\\
R_\rho(\al)  &:=&\cA_\rho(\al)\Delta\cS(\al),\nn
\end{eqnarray}
and note that we have $(\Omega\cap A_{\rho_0,D})\Delta \cU(\al)=\{\al+\rho \hat{u}:\rho\in(\rho_0,D),\hat{u}\in R_\rho(\al)\}$. By using the Geometric Lemma from \cite{bc}, one finds that the following bound on the measure of $R_\rho(\al)$ holds\footnote{In \cite{bc} the proof is given for patches with $C^{1,\nu}$ boundary for $\nu\in(0,1)$ and an analogous constant replacing $D$ in the definition of the set corresponding to $R_\rho(\al)$. However, the Geometric Lemma also holds in the case $\nu=1$, which is what we use.}:
\ba\label{p019}
|R_\rho(\al)|\leq 2\pi\left(3\frac{d_\al}{\rho}+2\frac{\rho}{D}\right),
\ea
as long as $\rho\in(d_\al,D)$ and $d_\al<D$. Thus we get the bound
\begin{eqnarray}\label{p020}
|\tilde{I}|	&\leq&	C\int_{(\Omega\cap A_{\rho_0,D})\Delta\cU(\al)}	|\al-\be|^{-2}d\be,\\
			&=&		C\int^D_{\rho_0}	\int_{R_\rho(\al)}d\theta\rho^{-1}d\rho,\nn\\
			&=&		C\int^D_{\rho_0}	|R_\rho(\al)|\rho^{-1}d\rho,\nn\\
			&\leq&	C'\left(
								\int^D_{\rho_0}	d_\al \rho^{-2}d\rho
								+\int^D_{\rho_0}	D^{-1}d\rho
					\right),\nn\\
			&\leq&	C''((d_\al/\rho_0-d_\al/D)+(1-\rho_0/D)).\nn
\end{eqnarray}
Now using \eqref{p012} and \eqref{p014}-\eqref{p014b}, our bound on $\tilde I$, and \eqref{p016} (needed for the case $d_\al>r$ only), we have bounded $I$. Using \eqref{p010}, this bounds the integral in the left hand side of \eqref{a01}.
\end{proof}
Now, we provide the proof of Lemma \ref{2ndBOUND}.
\begin{proof}{(Proof of Lemma \ref{2ndBOUND})}
Consider $\al$ varying in a small open neighborhood $N\subset \Omega$ and a fixed $r>0$ such that $B_r(\al)\subset\Omega$ for all $\al$ in that neighborhood. We write
\ba
F_X(\al)=\int_{\Omega\setminus B_r(\al)} K(X(\al)-X(\be))d\be+\int_{B_r(\al)}K(X(\al)-X(\be))d\be.
\ea
We denote by $D^h_{\al_i}$ the difference quotient operator with increment $h$ approximating the partial derivative with respect to $\al_i$. For a fixed $i$ we have
\begin{eqnarray}
D^h_{\al_i}(F_X)(\al)=
D^h_{\al_i}\left(
			\int_{\Omega\setminus B_r(\al)} K(X(\al)-X(\be))d\be
	\right)
	+
D^h_{\al_i}\left(
			\int_{B_r(\al)} K(X(\al)-X(\be))d\be
	\right),\\
	=I^\flat_h+I^\sharp_h.\nonumber
\end{eqnarray}
To handle $I^\flat_h$, we note
\begin{eqnarray}\label{f01}
\del_i\left(\int_{\Omega\setminus B_r(\al)}K(X(\al)-X(\be))d\be\right)&=&\del_i X(\al)\int_{\Omega\setminus B_r(\al)}\na K(X(\al)-X(\be))d\be\\
& &+\int_{|\be-\al|=r}\nu^i_{in}(\be)K(X(\al)-X(\be))d\sigma(\be),\nn
\end{eqnarray}
where the second integral in the right hand side of \eqref{f01} is easily seen to be bounded uniformly in $r$, since $K(X(\al)-X(\be))=O(|\al-\be|^{-1})=O(r^{-1})$ and the circumference of the ball is $2\pi r$. Furthermore, the first integral on the right hand side of \eqref{f01} is bounded independently of $\al$, $N$, and $r$ by Lemma \ref{MAINBOUND}. Thus, the left hand side of \eqref{f01} is bounded uniformly, and so we can conclude that $\lim_{h\to 0}I^\flat_h$ exists for any $\al\in N$ and is uniformly bounded. In addition, note that by using the dominated convergence theorem in the right hand side of \eqref{f01}, we find that the left hand side defines a continuous function of $\al$ in $N$.
Observe
\begin{eqnarray}\label{f02}
I^\sharp_h&=&
\int_{B_r(\al+h e_i)}D^{h}_{\al_i}(K(X(\al)-X(\be)))d\be
+\int K(X(\al)-X(\be))D^{h}_{\al_i}(1_{B_r(\al)}(\be))d\be,
\end{eqnarray}
where
\ba
\lim_{h\to 0}
\int K(X(\al)-X(\be))D^{h}_{\al_i}(1_{B_r(\al)}(\be))d\be
=\int_{|\be-\al|=r}\nu^i_{out}(\be) K(X(\al)-X(\be))d\sigma(\be).
\ea
We handle this in the same way that we handled the boundary integral in the right hand side of \eqref{f01}, finding the result is bounded independently of $r$, $\al$, and $N$, and is continuous with respect to $\al$ in $N$. For the integral on the left in \eqref{f02}, we have
\begin{eqnarray}
\int_{B_r(\al+h e_i)}D^{h}_{\al_i}(K(X(\al)-X(\be)))d\be&=&
\int_{B_r(\al+h e_i)}(D^{h}_{\al_i}+D^{h}_{\be_i})(K(X(\al)-X(\be)))d\be\\
& &-\int_{B_r(\al+h \hat{u})}D^{h}_{\be_i}(K(X(\al)-X(\be)))d\be,\nn\\
&=&J^1_h+J^2_h.\nn
\end{eqnarray}
Regarding $J^1_h$, we note that
\ba\label{fin2031}
\int_{B_r(\al)}(\del_{\al_i}+\del_{\be_i})(K(X(\al)-X(\be)))d\be=
\int_{B_r(\al)}(\del_i X(\al)-\del_i X(\be))\na K(X(\al)-X(\be))d\be
\ea
is uniformly bounded in $r$, since $|\na X(\al)-\na X(\be)|\leq C|\al-\be|^\mu$ and $|\na K(X(\al)-X(\be))|\leq C'|\al-\be|^{-2}$. From this we can conclude that $\lim_{h\to 0}J^1_h$ exists and is uniformly bounded. In addition, with a change of variables we find the right hand side of \eqref{fin2031} is
\ba
\int_{B_r(0)}(\del_i X(\al)-\del_i X(\al-\be))\na K(X(\al)-X(\al-\be))d\be,
\ea
which we see is continuous in $\al$ by using the dominated convergence theorem.
Now we consider $J^2_h$. Note
\begin{eqnarray}\label{fin203}
\int_{B_r(\al+h e_i)}D^{h}_{\be_i}(K(X(\al)-X(\be)))d\be
&=&
	\frac{1}{h}\bigg(
					\int_{B_r(\al+h e_i)}
						K(X(\al)-X(\be+he_i)) 
					d\be\\
& &\hspace{.5in}
					-
					\int_{B_r(\al+h e_i)}
					K(X(\al)-X(\be))
					d\be
				\bigg),\nonumber\\
&=&					
	\frac{1}{h}\bigg(
					\int_{B_r(\al+2he_i)}
						K(X(\al)-X(\be)) 
					d\be\nn\\
& &\hspace{.5in}
					-
					\int_{B_r(\al+h e_i)}
					K(X(\al)-X(\be))
					d\be
				\bigg),\nn
\end{eqnarray}
and
\ba\label{n001}
\lim_{h\to0}\frac{1}{h}\left(
					\int_{B_r(\al+h e_i)}-\int_{B_r(\al)}
					\right)
					K(X(\al)-X(\be))d\be
				=
\int_{|\al-\be|=r} \nu^i_{out}(\be)K(X(\al)-X(\be))d\sigma(\be),
\ea
which as we saw before is uniformly bounded. Also, the existence of the limit \eqref{n001} implies that the limit as $h$ tends to zero of the expression in \eqref{fin203} exists and is the same. Note, too, that the right hand side of \eqref{n001} is a continuous function of $\al$. With that, we get that $\lim_{h\to 0}J^2_h$ exists and is uniformly bounded, and we can conclude that $\lim_{h\to 0}I^\sharp_h$ in addition to $\lim_{h\to 0}I^\flat_h$ both exists and is uniformly bounded, giving us the existence of the limit defining $\del_i(F_X(\al))$, which we find is a continuous function of $\al$, for both $i=1$ and $2$. Since we get the existence and continuity of each of the partial derivatives in the neighborhood $N$, we have that $F_X$ is differentiable there. From the above discussion we also see that this gives a bound on $|\na F_X(\al)|$ that is independent of the chosen $r$, $\al$, and $N$.
\end{proof}
\end{subsubsection}
\begin{subsubsection}{Tangential vector fields and H\"{o}lder bounds}
Now that we have justified that one can make sense of the gradient of $F_X$ and we have some related bounds, we can turn our attention to finding a convenient way to express derivatives along certain vector fields. With the following lemma, we get a formula for differentiating along a vector field tangent to the boundary of $\Omega$ and along a vector field normal to that one. The remainder of the key steps for proving Proposition \ref{bddF} are framed in terms of the resulting expressions.
\begin{lem}\label{diffFormula}
Assume $\Omega$ is as in Lemma \ref{2ndBOUND}. Then there is a $\delta>0$ so that we have the following. Let $T=(T^1,T^2):\overline{\Omega}\to\R^2$ be any continuously differentiable, divergence-free vector field tangent to $\del \Omega$. For $X\in B_\delta$ we have (i)
\ba
T(F_X)(\al)=\int_\Omega (T_\al \na X(\al)-T_\be \na X(\be))\na K(X(\al)-X(\be))d\be\quad\textrm{for all }\al\in\Omega.
\ea
Here, we have denoted the value of the vector field at the point $\al$ by $T_\al=(T^1(\al),T^2(\al))$, and when the matrix valued function $\na X(\cdot)$ appears to the right of $T_{(\cdot)}$, we mean that it acts on $T_{(\cdot)}$ via multiplication.

(ii) Taking $N=(N^1,N^2):\overline{\Omega}\to\R^2$ to be the orthogonal vector field given by $N_\al=T_\al^\perp$, we have
\begin{align}\label{p0001}
N(F_X)(\al)=\left(
T(F_X)(\al)P_X(\al)
-\det \cO(\al)e_1
\right)
(Q_X(\al))^{-1},
\end{align}
for all $\al\in\Omega$ such that $T_\al\neq 0$, where $e_1=(1, 0)$,
$\cO(\al):=
\left(
\begin{array}{ccc}
T_\al \\
N_\al     \end{array} \right)$, and
\begin{align}
P_X=
\left(
\begin{array}{ccc}
N(X_1)        & -N(X_2) \\
N(X_2)        & N(X_1) \end{array} \right),
\quad\quad
Q_X=
\left(
\begin{array}{ccc}
T(X_1)        & -T(X_2) \\
T(X_2)        & T(X_1) \end{array} \right).
\end{align}
\end{lem}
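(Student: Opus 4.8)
The plan is to differentiate under the integral sign in a careful way that avoids the singularity at $\al=\be$, using the fact (already established in Lemma \ref{2ndBOUND}) that $F_X$ is differentiable on $\Omega$ with a uniformly bounded gradient. For part (i), I would start from the principal-value representation of $\na F_X(\al)$ obtained in the course of proving Lemma \ref{2ndBOUND}, namely $\na F_X(\al) = \mbox{p.v.}\int_\Omega \na X(\al)\,\na K(X(\al)-X(\be))\,d\be$ plus boundary contributions that cancel; contracting with $T_\al$ gives $T(F_X)(\al) = \mbox{p.v.}\int_\Omega T_\al\na X(\al)\,\na K(X(\al)-X(\be))\,d\be$. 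The key manoeuvre is then to symmetrize: write $T_\al\na X(\al) = (T_\al\na X(\al)-T_\be\na X(\be)) + T_\be\na X(\be)$, and observe that the piece involving $T_\al\na X(\al)-T_\be\na X(\be)$ is, by the argument in \eqref{vpn001}--\eqref{p007} combined with the $C^{0,\mu}$ regularity of $\na X$ and $T$, bounded by $C|\al-\be|^{\mu}\cdot|\al-\be|^{-2}=C|\al-\be|^{\mu-2}$, hence absolutely integrable, so the principal value is an honest integral there. It remains to show the leftover term $\mbox{p.v.}\int_\Omega T_\be\na X(\be)\,\na K(X(\al)-X(\be))\,d\be$ vanishes. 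For that I would note $T_\be\na X(\be)\,\na K(X(\al)-X(\be)) = -\na_\be\big(K(X(\al)-X(\be))\big)\cdot(\text{something})$ — more precisely, since $\na K$ is the Jacobian of $K$ and $T$ is divergence-free and tangent to $\del\Omega$, the vector field $\be\mapsto T_\be\na X(\be)\na K(X(\al)-X(\be)) = T_\be\cdot\na_\be\big(K(X(\al)-X(\be))\big)$ is the divergence (in $\be$) of $T_\be\, K(X(\al)-X(\be))$, using $\div_\be T_\be = 0$. Integrating over $\Omega\setminus B_\eps(\al)$ and applying the divergence theorem, the $\del\Omega$ boundary term vanishes because $T$ is tangent to $\del\Omega$, and the inner boundary term over $|\be-\al|=\eps$ is $O(\eps\cdot\eps^{-1})=O(1)$ but in fact tends to $0$ (or to a term that is absorbed) as $\eps\to 0$ by an odd-symmetry/cancellation argument analogous to the one used for $\int_{A(\al)}\na K((\al-\be)\na X(\al))\,d\be = 0$ in Lemma \ref{MAINBOUND}. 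Assembling these pieces yields the formula in (i).

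For part (ii), the idea is purely algebraic once (i) is in hand: I would apply the formula from (i) with $T$ replaced by $N$, giving $N(F_X)(\al) = \int_\Omega (N_\al\na X(\al)-N_\be\na X(\be))\na K(X(\al)-X(\be))\,d\be$, and then compare this with $T(F_X)(\al)$ by expressing the row vectors $N_\al\na X(\al)$ and $N_\be\na X(\be)$ in terms of $T_\al\na X(\al)$, $T_\be\na X(\be)$ and the $2\times 2$ matrices $P_X$, $Q_X$, $\cO$. Concretely, writing $\na X$ in the ``complex'' $2\times 2$ form $\begin{pmatrix} a & -b \\ b & a\end{pmatrix}$ is not available since $\na X$ need not be conformal, but the matrices $Q_X(\al)=\begin{pmatrix} T(X_1) & -T(X_2) \\ T(X_2) & T(X_1)\end{pmatrix}$ and $P_X(\al)$ are built precisely so that $T_\al\na X(\al)$ and $N_\al\na X(\al)$ can be recovered from each other: one checks the identity $N_\al\na X(\al) = T_\al\na X(\al)\,P_X(\al)Q_X(\al)^{-1} - (\det\cO(\al))\,e_1\,Q_X(\al)^{-1}$ at the level of $2\times 2$ matrices acting on the relevant vectors, using $N_\al = T_\al^\perp$ and $\det\cO(\al) = T_\al\wedge N_\al = |T_\al|^2$. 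Substituting the corresponding relation for the $\be$-integrand as well and matching terms against the formula in (i) for $T(F_X)$ produces \eqref{p0001}; the hypothesis $T_\al\neq 0$ guarantees $Q_X(\al)$ is invertible (for $X$ close to $Id$, $Q_X(\al)\approx\begin{pmatrix} T^1_\al & -T^2_\al\\ T^2_\al & T^1_\al\end{pmatrix}$ which has determinant $\approx|T_\al|^2\neq 0$).

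The main obstacle I anticipate is the rigorous justification of the ``p.v.'' manipulations in part (i): specifically, showing that after the symmetrization the non-integrable-looking leftover term $\mbox{p.v.}\int_\Omega T_\be\na X(\be)\,\na K(X(\al)-X(\be))\,d\be$ genuinely vanishes rather than contributing a finite ``local term'' from the excised ball $B_\eps(\al)$. This requires care because $\na K$ is only conditionally integrable and the divergence-theorem argument produces a boundary integral over $|\be-\al|=\eps$ that is $O(1)$ a priori; one must show that the leading-order part of this boundary integral cancels by the evenness of $\na K$ (equivalently, oddness of $K$) combined with the near-identity structure $X(\al)-X(\be)=(\al-\be)(I+O(\delta))$, exactly the mechanism already exploited in \eqref{p015'}--\eqref{p018}. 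A clean way to package this is to first prove the formula on $\Omega\setminus B_\eps(\al)$, identify the $O(\eps^\mu)$-error from the symmetrized (integrable) part and the $O(1)$ boundary term from the divergence part, and then show the latter tends to $0$ using the explicit homogeneous-degree-$-2$ structure of $\na K((\al-\be)\na X(\al))$ on circles about $\al$; passing $\eps\to 0$ finishes part (i). Part (ii), by contrast, should be routine linear algebra once the sign and transpose conventions (row vectors, $(\na X)_{ij}=\del X_j/\del\al_i$) are tracked consistently.
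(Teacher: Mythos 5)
Your sketch of part (i) is essentially the computation the paper intends: symmetrize, note the symmetrized integrand is $O(|\al-\be|^{\mu-2})$, and dispose of the leftover term by writing $T_\be\na X(\be)\na K(X(\al)-X(\be))=-\div_\be\big(T_\be\,K(X(\al)-X(\be))\big)$, using $\div T=0$ and the tangency of $T$ to $\del\Omega$, with the $\eps$-sphere terms handled as in Lemmas \ref{2ndBOUND} and \ref{MAINBOUND}. No objection there.

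Part (ii) is where your proposal has a genuine gap, and it is not a matter of bookkeeping. Your first step is to ``apply the formula from (i) with $T$ replaced by $N$,'' but $N=T^\perp$ satisfies neither hypothesis that the derivation of (i) uses: it is normal, not tangent, to $\del\Omega$ (so the divergence-theorem boundary integral over $\del\Omega$ survives, with density $\pm|T_\be|$ there), and $\div N=-\curl T$, which is not zero for $T=\na^\perp\vp$ unless $\vp$ is harmonic. The would-be identity $N(F_X)(\al)=\int_\Omega(N_\al\na X(\al)-N_\be\na X(\be))\na K(X(\al)-X(\be))d\be$ is in fact false: take $X=Id$, $\Omega$ the unit disc, $\vp(\al)=(1-|\al|^2)/2$, so $T_\al=\na^\perp\vp=(\al_2,-\al_1)$ and $N_\al=\al$. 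Then $F_{Id}(\al)=\tfrac12(-\al_2,\al_1)$ is homogeneous of degree one, so $N(F_{Id})(\al)=(\al\cdot\na)F_{Id}(\al)=F_{Id}(\al)$, whereas Euler's relation $(\al-\be)\na K(\al-\be)=-K(\al-\be)$ shows the right-hand side above equals $-\int_\Omega K(\al-\be)d\be=-F_{Id}(\al)$. Moreover, the ``routine linear algebra'' you invoke afterwards cannot be repaired: the pointwise identity you posit, $N_\al\na X(\al)=T_\al\na X(\al)P_X Q_X^{-1}-\det\cO(\al)\,e_1Q_X^{-1}$, already fails at $X=Id$ (with $T_\al=e_1$, $N_\al=e_2$ the left side is $(0,1)$ while the right side is $(0,-1)-(1,0)=(-1,-1)$). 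The inhomogeneous term $-\det\cO(\al)e_1$ in \eqref{p0001} is not of algebraic origin; it records PDE information about $F_X$, namely that the velocity field generated by the patch has curl $\det(\na X(\al))^{-1}$ at $X(\al)$ (vorticity $\equiv 1$ inside the patch), and no manipulation of the two convolution formulas by themselves can produce it.

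The paper's proof of (ii) goes through exactly this missing input. It sets $G_X(x)=\int_\Omega K(x-X(\be))d\be$, so that $F_X=G_X\circ X$, proves $\div G_X=0$ and $(\curl G_X)(X(\al))=\det(\na X(\al))^{-1}$ by a change of variables, rewrites $\na G_X(X(\al))=(\cO(\al)\na X(\al))^{-1}\big(T(F_X)(\al),N(F_X)(\al)\big)^t$ by the chain rule, and then computes $\del_{x_2}G_X(X(\al))$ in two ways and equates; this is what generates both $P_X,Q_X$ and the $\det\cO\,e_1$ term. There is also a second step you would still need even with a correct real-variable argument: elements of $B_\delta$ are $\C^2$-valued, and the div/curl computation only makes sense for real $X$, so the paper extends \eqref{p0001} to complex $X$ by substituting $X_\lambda=\mathrm{Re}(X)+\lambda\,\mathrm{Im}(X)$, showing both sides are analytic in $\lambda$ on $\{|\lambda|<2\}$, and evaluating at $\lambda=i$; your proposal does not address this complexification at all.
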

\begin{proof}
One verifies the statement of (i) with a computation using the fact that $T$ is divergence free and tangent to $\del \Omega$ in particular. Now we show that (ii) holds. At first, let us prove the formula \eqref{p0001} holds for $X\in B_{\delta'}$ for small $\delta'$ with $X$ taking values in $\R^2$, rather than $\C^2$. We then define
\ba\label{p031}
G_X(x):=\int_\Omega K(x-X(\be))d\be.
\ea
Then we claim that
\begin{eqnarray}\label{p032a}
(\div G_X)(X(\al))  &=& 0,\\
(\curl G_X)(X(\al)) &=& \det (\na X(\al))^{-1},\quad\textrm{for }\al\in\Omega.\label{p032b}
\end{eqnarray}
This follows if we observe that
\ba
G_X(x)=\int_{X(\Omega)}K(x-y)|\det (\na X(X^{-1}(y)))^{-1}|dy,
\ea
so $\div G_X=0$ and
\ba
(\curl G_X)(X(\al))=\int_{X(\Omega)}\delta(X(\al)-y)|\det \left(\na X (X^{-1}(y))\right)^{-1}|dy=\det \left(\na X(\al)\right)^{-1},
\ea
noting that we can ensure the determinant is positive for real $X\in B_{\delta'}$ if $\delta'$ is small enough. From \eqref{p032a} and \eqref{p032b} we find
\ba\label{p033}
(\del_{x_2}G_X)(X(\al))
=
	(\del_{x_1}G)(X(\al))
		\left(
			\begin{array}{ccc}	0	&	-1	\\
								1	&	0
			\end{array}
		\right)
	-\det(\na X(\al))^{-1} e_1.
\ea
Meanwhile,
\ba\label{p034}
\left(\cO(\al)\na X(\al)\right)^{-1}
\left(
	\begin{array}{ccc}	T(F_X)(\al) \\
						N(F_X)(\al)
	\end{array}
\right)
=(\na G_X)(X(\al)).
\ea
This gives us two ways of calculating $\del_{x_2}G_X(X(\al))$. For the first, we simply look at the second row in the above expression. For the second, we compute the first row, $\del_{x_1}G_X(X(\al))$, and then use \eqref{p033} to get another expression for $\del_{x_2}G_X(X(\al))$ in terms of this. Equating these gives us
\begin{eqnarray}\label{p034'}
& &
e_2 (\cO(\al)\na X(\al))^{-1}
\left(
	\begin{array}{ccc}	T(F_X)(\al) \\
						N(F_X)(\al)
	\end{array}
\right)
\\
& &\hspace{.75in}=
	e_1
	\left((\cO(\al)\na X(\al))^{-1}
	\left(
		\begin{array}{ccc}	T(F_X)(\al) \\
							N(F_X)(\al)
		\end{array}
	\right)
	\left(
	\begin{array}{ccc}	0	&	-1 \\
						1	&	0
	\end{array}
	\right)
	-\det (\na X(\al))^{-1}I
\right).\nn
\end{eqnarray}
After a little computation and rearrangement this gives
\ba\label{p035}
N(F_X)(\al)Q_X(\al)
=
T(F_X)(\al)P_X(\al)-\det\cO(\al) e_1,
\ea
and then multiplying on the right by $(Q_X(\al))^{-1}$ gives the desired formula. Thus, \eqref{p0001} holds for $X\in B_{\delta'}$ taking values in $\R^2$. To get the same result for  $\delta$ sufficiently small and any $X\in B_\delta$, not necessarily real, we fix such an $X$ and define the map $X_\lambda:=\textrm{Re}(X)+\lambda \textrm{Im}(X)$ for $\lambda$ in a disc in $\C$ of radius 2 about $0$. Then observe that $X\in B_\delta$ implies $X_\lambda$ satisfies $\lVert X_\lambda-Id\rVert_Y\leq C\delta$. Thus, as long as $\delta<\frac{\delta'}{C}$, $X_\lambda\in B_{C\delta}$ implies that the formula holds for $X_\lambda$ for all $\lambda$ in the interval $[-1,1]$. Now we claim that if we insert $X_\lambda$ in the right hand side of \eqref{p0001}, the result is an analytic function of $\lambda$ in $\{|\lambda|< 2\}$, given $X\in B_\delta$ for small enough $\delta$. 
To verify this, we first fix an arbitrary in $\al\in\Omega$. Now we consider the spatial difference quotients approximating $T(F_{X_\lambda})(\al)$. For sufficiently small $h\neq 0$ and a differentiable function $g:\Omega\to\R^2$, we define
\ba
T^h(g)(\al):=\frac{1}{h}(g(\al+h T_\al )-g(\al)).
\ea 
Note that as long as $\delta$ is small enough, $X_\lambda\in B_{C\delta}$ satisfies the hypothesis of Lemma \ref{2ndBOUND}. We find that as $h$ varies in some small interval, say $h\neq 0$ with $|h|<h_{max}(\al)$ for some $h_{max}>0$, the $T^h(F_{X_\lambda})(\al)$ are all analytic functions of $\lambda$, and moreover they are convergent and uniformly bounded as $h$ tends to zero, by Lemma \ref{2ndBOUND}. This implies that $T(F_{X_\lambda})(\al)$ is analytic in $\{|\lambda|< 2\}$. In fact, if we consider \eqref{p0001} with $X_\lambda$ in place of $X$, which we shall refer to as equation \eqref{p0001}${}_\lambda$, we find that all the terms in the right hand side are analytic in $\lambda$ as long as $T_\al\neq 0$. Similarly, the spatial difference quotients approximating $N(F_{X_\lambda})(\al)$ are all analytic functions of $\lambda$ in $\{|\lambda|<2\}$, also uniformly bounded, so that $N(F_{X_\lambda})(\al)$ is also analytic in $\lambda$. Recall that we already have the equality \eqref{p0001}${}_\lambda$ for $\lambda$ in $[-1,1]$, and so we must have that \eqref{p0001}${}_\lambda$ holds for all $|\lambda|<2$. In particular it holds for $\lambda=i$, which proves the formula for $X=\textrm{Re}(X)+i\textrm{Im}(X)$.
\end{proof}

Note that we can try to recover $\na F_X$ from the expressions for $T(F_X)$ and $N(F_X)$ via the relation
\ba\label{p034f}
\cO(\al)\na F_X(\al)=
\left(
\begin{array}{ccc}
T(F_X)(\al) \\
N(F_X)(\al)     \end{array} \right).
\ea
Of course, this does not work if $\cO(\al)$ is not invertible, which happens exactly at points where $T_\al$ is zero. The next lemma provides a pair of vector fields $T$ and $T'$ which are not zero at the same points, so that when it is used in combination with Lemma \ref{diffFormula} we get a complete description of the gradient of $F_X(\al)$ at every point $\al\in\Omega$.
\begin{lem}\label{vfpair}
Assume $\Omega$ is as in Lemma \ref{2ndBOUND}. There exists a pair of $C^1$, divergence-free vector fields $T,T':\overline{\Omega}\to\R^2$ tangent to $\del\Omega$ and such that each of $T$ and $T'$ has finitely many zeros belonging to the interior, with the zeros of $T$ all distinct from the zeros of $T'$.
\end{lem}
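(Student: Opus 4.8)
The plan is to build both vector fields as $\na^{\perp}$ of $C^{2}$ defining functions for $\Omega$ that differ by a small, compactly supported perturbation, and to invoke Sard's theorem to make their zero sets finite and disjoint.

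First I would recall the standard fact that a bounded planar domain with $C^{2}$ non-self-intersecting boundary carries a $C^{2}$ function $\varphi_{0}\colon\R^{2}\to\R$ with $\Omega=\{\varphi_{0}>0\}$, $\R^{2}\setminus\overline{\Omega}=\{\varphi_{0}<0\}$, and $\na\varphi_{0}\neq0$ on $\partial\Omega$, constructed by taking $\varphi_{0}$ equal to the signed distance to $\partial\Omega$ in a tubular neighborhood and globalizing with a cutoff; in particular one can arrange $|\na\varphi_{0}|\equiv1$ on a one-sided collar $\{0<\varphi_{0}<\eta_{0}\}$. (The $C^{2}$ hypothesis on $\partial\Omega$ enters here through the $C^{2}$ regularity of the signed distance function near the boundary.) For any such defining function $\psi$, the field $\na^{\perp}\psi:=(-\del_{2}\psi,\del_{1}\psi)$ is $C^{1}$; it is divergence-free because $-\del_{1}\del_{2}\psi+\del_{2}\del_{1}\psi=0$; and it satisfies $\na^{\perp}\psi\cdot\na\psi\equiv0$, hence is tangent to the level sets of $\psi$ and in particular to $\partial\Omega$. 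Its zeros are precisely the critical points of $\psi$, and if $\na\psi\neq0$ on $\partial\Omega$ they all lie in the interior.

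The only real work is that a generic $C^{2}$ defining function may have a non-discrete critical set, so one perturbs in the interior. Fix $\chi\in C_{c}^{\infty}(\Omega)$ with $\chi\equiv1$ on $\{\varphi_{0}\geq\eta_{0}/2\}\cap\overline{\Omega}$ and $\mathrm{supp}\,\chi\subset\{\varphi_{0}>\eta_{0}/4\}$, and for $a\in\R^{2}$ put $\varphi_{a}:=\varphi_{0}+\chi\,\ell_{a}$, $\ell_{a}(x)=a\cdot x$. For $|a|$ small, $\varphi_{a}$ is again a $C^{2}$ defining function for $\Omega$ with $\na\varphi_{a}\neq0$ on $\partial\Omega$ (it coincides with $\varphi_{0}$ near $\partial\Omega$), and $|\na\varphi_{a}|\geq1-C|a|>0$ on $\{0<\varphi_{0}<\eta_{0}/2\}$ since $|\na\varphi_{0}|\equiv1$ there; hence every zero of $\na\varphi_{a}$ in $\overline{\Omega}$ lies in the set where $\chi\equiv1$ and therefore solves $\na\varphi_{0}=-a$. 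By Sard's theorem applied to the $C^{1}$ map $\na\varphi_{0}\colon\R^{2}\to\R^{2}$, almost every small $a$ is a regular value; fixing such an $a$, the set $\{\na\varphi_{0}=-a\}\cap\overline{\Omega}$ is discrete by the inverse function theorem and finite by compactness of $\overline{\Omega}$, and it lies in the interior since $|\na\varphi_{0}|\equiv1$ on $\partial\Omega$.

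To conclude, I would choose two distinct small vectors $a\neq b$, each such that $-a$ and $-b$ are regular values of $\na\varphi_{0}$, and take $T:=\na^{\perp}\varphi_{a}|_{\overline{\Omega}}$ and $T':=\na^{\perp}\varphi_{b}|_{\overline{\Omega}}$. Each is $C^{1}$, divergence-free, tangent to $\partial\Omega$, with finitely many interior zeros, by the preceding step; and the zeros of $T$ form the set $\{\na\varphi_{0}=-a\}$ while those of $T'$ form $\{\na\varphi_{0}=-b\}$, so the two zero sets are disjoint precisely because $a\neq b$. I expect the main obstacle to be bookkeeping rather than substance: one must perform the perturbation without spoiling the defining-function structure, which is why $\chi$ must vanish near $\partial\Omega$, and one then has to rule out spurious critical points in the transition region --- exactly the point where the normalization $|\na\varphi_{0}|\equiv1$ on the collar is used. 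The remaining ingredients (existence of the $C^{2}$ defining function, and the Sard-type genericity argument) are standard.
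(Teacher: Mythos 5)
Your proof is correct and follows essentially the same route as the paper: both take $T=\na^\perp\varphi$ and $T'=\na^\perp\varphi'$ for two $C^2$ defining functions of $\Omega$ that differ by a slight interior modification. In fact, your localized linear perturbation combined with Sard's theorem and the inverse function theorem supplies precisely the genericity step (finitely many interior critical points, with the two critical sets disjoint because they are the preimages $\{\na\varphi_0=-a\}$ and $\{\na\varphi_0=-b\}$ of distinct regular values) that the paper's proof merely asserts as "slight modifications."
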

\begin{proof}
The assumptions on $\Omega$ imply that it can be described as a set of the form $\{\al\in\R^2:\varphi(\al)>0\}$ for a $C^2$ function $\varphi:\R^2\to\R$ with a finite number of critical points. Moreover, by making slight modifications to the function $\varphi$, one can also obtain a function $\varphi':\R^2\to\R$ with these properties as well, but all of whose critical points are distinct from those of $\varphi$. Taking $T:=\na^\perp\varphi\big|_{\overline \Omega}$ and $T':=\na^\perp\varphi'\big|_{\overline \Omega}$ then gives the vector fields with the desired properties.

\end{proof}

The main remaining task is to prove the H\"{o}lder seminorm bound for $T (F_X)$ and $T' (F_X)$. Once we have this, in combination with using \eqref{p034f} to express the gradient $\na F_X$ in terms of the derivatives along $T$ and $T'$, it will not be hard to prove the H\"{o}lder continuity of $\na F_X$.
\begin{lem}\label{mainHbound}
Let $T$ and $T'$ be the vector fields asserted by Lemma \ref{vfpair}. There exists a constant $C$ depending only on $\Omega$, $T$, and $T'$ such that for all $X\in B_\delta$
\ba\label{p001'}
|T(F_X)(\al)-T(F_X)(\ga)|\leq C|\al-\ga|^\mu\quad\textrm{ for $\al,\ga\in\Omega$}.
\ea
The same bound holds with $T'$ in place of $T$.
\end{lem}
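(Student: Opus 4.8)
I would prove Lemma~\ref{mainHbound} by estimating the integral representation of $T(F_X)$ from Lemma~\ref{diffFormula}(i) with a near/far splitting, taking care to isolate the one cancellation that is needed to avoid a logarithmic loss in the far region. Set $g_\al(\be):=(T_\al\na X(\al)-T_\be\na X(\be))\,\na K(X(\al)-X(\be))$, so that $T(F_X)(\al)=\int_\Omega g_\al(\be)\,d\be$. Fix $\al,\ga\in\Omega$ and put $d:=|\al-\ga|$. Since $T$ is bounded on $\overline\Omega$ and $|\na F_X|\le C$ by Lemma~\ref{2ndBOUND}, $|T(F_X)|\le C$ on $\Omega$, so I may assume $d$ is smaller than any fixed constant (otherwise \eqref{p001'} is immediate). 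I then fix a large constant $K$, depending only on $\Omega$ and on the size of $\delta$, and split $\Omega=(\Omega\cap B_{Kd}(\al))\cup(\Omega\setminus B_{Kd}(\al))$.

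\textbf{Near part.} I bound $\int_{\Omega\cap B_{Kd}(\al)}|g_\al(\be)|\,d\be+\int_{\Omega\cap B_{Kd}(\al)}|g_\ga(\be)|\,d\be$ term by term. Because $T\in C^1$ and $\na X\in C^{0,\mu}$, the map $\be\mapsto T_\be\na X(\be)$ is $\mu$-H\"older on $\overline\Omega$, so $|T_\al\na X(\al)-T_\be\na X(\be)|\le C|\al-\be|^\mu$; combined with $|\na K(X(\al)-X(\be))|\le C|\al-\be|^{-2}$, which comes from Lemma~\ref{kernelbound} since $X(\al)-X(\be)=(\al-\be)(I+O(\delta))$ by \eqref{closetoid'}, this gives $|g_\al(\be)|\le C|\al-\be|^{\mu-2}$. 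As $\mu-2>-2$, integrating over $B_{Kd}(\al)$ yields $\le C(Kd)^\mu=Cd^\mu$. For the $g_\ga$ integral I use $B_{Kd}(\al)\subset B_{(K+1)d}(\ga)$ and the analogous bound $|g_\ga(\be)|\le C|\ga-\be|^{\mu-2}$ to again get $\le Cd^\mu$.

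\textbf{Far part.} For $\be\notin B_{Kd}(\al)$ one has $|\al-\be|\ge Kd$ and $|\ga-\be|$ comparable to $|\al-\be|$. Adding and subtracting $(T_\ga\na X(\ga)-T_\be\na X(\be))\na K(X(\al)-X(\be))$, I write
\ba
g_\al(\be)-g_\ga(\be)&=(T_\al\na X(\al)-T_\ga\na X(\ga))\,\na K(X(\al)-X(\be))\\
&\quad+(T_\ga\na X(\ga)-T_\be\na X(\be))\big[\na K(X(\al)-X(\be))-\na K(X(\ga)-X(\be))\big].
\ea
For the first term, the coefficient $T_\al\na X(\al)-T_\ga\na X(\ga)$ is independent of $\be$, has size $\le Cd^\mu$, and comes out of the integral; the remaining $\int_{\Omega\setminus B_{Kd}(\al)}\na K(X(\al)-X(\be))\,d\be$ is bounded by a constant \emph{uniformly in the cut-off radius} by Lemma~\ref{MAINBOUND}. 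This is the key step: a naive pointwise bound here would leave a divergent $\int|\al-\be|^{-2}$, and Lemma~\ref{MAINBOUND} is precisely the annular-cancellation estimate that saves it. For the second term, $|T_\ga\na X(\ga)-T_\be\na X(\be)|\le C|\ga-\be|^\mu\le C|\al-\be|^\mu$, and writing the kernel difference as $\int_0^1(X(\al)-X(\ga))\,\na^2K\big(X(\ga)-X(\be)+s(X(\al)-X(\ga))\big)\,ds$, I bound $|X(\al)-X(\ga)|\le C\,d_\Omega(\al,\ga)\le Cd$ and, choosing $K$ large enough that the argument stays of size $\gtrsim|\al-\be|$ (again using $X(\cdot)-X(\cdot)=(\cdot-\cdot)(I+O(\delta))$ together with $|\al-\be|\ge Kd$), I get $|\na^2K(\cdots)|\le C|\al-\be|^{-3}$ from Lemma~\ref{kernelbound}. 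Hence the second term is $\le Cd\,|\al-\be|^{\mu-3}$, whose integral over $\{|\al-\be|\ge Kd\}$ is $\le Cd\cdot(Kd)^{\mu-1}=Cd^\mu$ since $\mu-1<0$.

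Summing the near and far contributions gives $|T(F_X)(\al)-T(F_X)(\ga)|\le C|\al-\ga|^\mu$ with $C=C(\Omega,T)$. The argument used $T$ only through Lemmas~\ref{2ndBOUND}, \ref{MAINBOUND}, \ref{diffFormula}(i), i.e.\ through the facts that $T$ is a bounded $C^1$, divergence-free field tangent to $\del\Omega$; since $T'$ has exactly these properties (its zeros are irrelevant here, as the formula in Lemma~\ref{diffFormula}(i) involves no division), the identical estimate holds verbatim for $T'$. I expect the far-field first term to be the only real obstacle: one must resist bounding the kernel pointwise and instead factor out the constant coefficient difference and invoke the uniform-in-radius bound of Lemma~\ref{MAINBOUND}; a secondary nuisance is the bookkeeping that fixes $K$ large enough so that the distorted kernel and the mean-value segments for $\na^2K$ stay comparable to the undistorted ones.
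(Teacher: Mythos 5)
Your proposal is correct and follows essentially the same route as the paper: the near/far splitting at radius comparable to $|\al-\ga|$, the H\"older bound on $\be\mapsto T_\be\na X(\be)$ for the near part, factoring out the $\be$-independent coefficient and invoking Lemma \ref{MAINBOUND} for the first far term, and the fundamental-theorem-of-calculus representation of the kernel difference with a $|\ga-\be|^{-3}$ second-derivative bound for the second far term. The only cosmetic difference is that you obtain the lower bound on the distorted kernel argument along the interpolation segment by writing it as $(\ga-\be)(I+M)$ and citing Lemma \ref{kernelbound}, where the paper carries out the explicit estimate \eqref{p023} with the fixed constant $10$; these are equivalent.
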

\begin{proof}
Here we prove the bound for the vector field $T$, though $T$ is straightforwardly replaced by $T'$ for the other bound.

Fix $\al$, $\ga\in\Omega$. First we split $\Omega$ into the regions
\ba
\Omega_{near}:=\Omega\cap \{\be:|\al-\be|<10|\al-\ga|\},\quad\quad\Omega_{far}:=\Omega\cap\{\be:|\al-\be|>10|\al-\ga|\}.
\ea
Thus, using Lemma \ref{diffFormula}, we can write
\begin{eqnarray}\label{p001pt1}
T(F_X)(\al)-T(F_X)(\ga)
&=&\int_{\Omega_{near}\cup\Omega_{far}}(T_\al\na X(\al)-T_\be \na X(\be))\na K(X(\al)-X(\be))d\be\\
& &-\int_{\Omega_{near}\cup\Omega_{far}}(T_\ga\na X(\ga)-T_\be \na X(\be))\na K(X(\ga)-X(\be))d\be,\nonumber\\
&=&I_{near}(\al)-I_{near}(\ga)+I_{far}(\al)-I_{far}(\ga),\nonumber
\end{eqnarray}
where
\ba
I_{near}(\zeta):=\int_{\Omega_{near}}(T_\zeta\na X(\zeta)-T_\be \na X(\be))\na K(X(\zeta)-X(\be))d\be\quad\textrm{ for }\zeta\in\Omega,
\ea
and $I_{far}$ is defined analogously.
First we will show that $|I_{near}(\al)|$ and $|I_{near}(\ga)|$ are both $O(|\al-\ga|^\mu$). Observe that
\begin{eqnarray}\label{p002'}
|I_{near}(\al)|
&\leq& \int_{\Omega_{near}}|T_\al\na X(\al)-T_\be\na X(\be)||\na K(X(\al)-X(\be))|d\be,\\
&\leq& C'\int_{|\al-\be|<10|\al-\ga|}|\al-\be|^{\mu-2}d\be,\nn\\
&\leq& C'' |\al-\ga|^\mu.\nn
\end{eqnarray}
Similarly,
\begin{eqnarray}\label{p003'}
|I_{near}(\ga)|
&\leq& \int_{\Omega_{near}}|T_\ga\na X(\ga)-T_\be\na X(\be)||\na K(X(\ga)-X(\be))|d\be,\\
&\leq& C'\int_{|\al-\be|<10|\al-\ga|}|\ga-\be|^{\mu-2}d\be,\nn\\
&\leq& C'\int_{|\ga-\be|<10|\al-\ga|}|\ga-\be|^{\mu-2}d\be,\nn\\
&\leq& C'' |\al-\ga|^\mu.\nn
\end{eqnarray}
Now that these quantities have been bounded, we are left with handling
\begin{eqnarray}\label{p003pt1}\\
I_{far}(\al)-I_{far}(\ga)
 &=& \int_{\Omega_{far}}(T_\al\na X(\al)-T_\ga\na X(\ga))\na K(X(\al)-X(\be))d\be\nn\\
 & &+\int_{\Omega_{far}}(T_\ga\na X(\ga)-T_\be\na X(\be))\nn\\
 & &\hspace{.5in}\times(\na K(X(\al)-X(\be))-\na K(X(\ga)-X(\be)))d\be,\nonumber\\
 &=&I^i_{far}+I^{ii}_{far}.\nonumber
\end{eqnarray}
Note that
\begin{eqnarray}
|I^i_{far}|&\leq&
|T_\al\na X(\al)-T_\ga\na X(\ga)|
\left|\int_{\Omega_{far}}
\na K(X(\al)-X(\be))d\be\right|,\\
&\leq& C|\al-\ga|^\mu\left|\int_{\Omega_{far}}\na K(X(\al)-X(\be))d\be\right|,\nn
\end{eqnarray}
where $\Omega_{far}=\Omega\cap\{|\al-\be|>10|\al-\ga|\}$, so an application of Lemma \ref{MAINBOUND} gives the uniform bound
\ba\label{p020pt1}
|I^i_{far}|\leq \tilde C |\al-\ga|^\mu\quad\textrm{for all }\al,\ga\in\Omega.
\ea
Now we proceed with the bound for $I^{ii}_{far}$. We have
\ba\label{p021}
I^{ii}_{far}=\int_{\Omega_{far}}(T_\ga\na X(\ga)-T_\be\na X(\be))(\na K(X(\al)-X(\be))-\na K(X(\ga)-X(\be)))d\be.
\ea
We rewrite
\begin{eqnarray}\label{p022}
& &
\na K(X(\al)-X(\be))-\na K(X(\ga)-X(\be))\\
& &\hspace{.8in}
=\sum^2_{i=1}(X_i(\al)-X_i(\ga))\int^1_0 (\del_i\na K)(\tau X(\al)+(1-\tau)X(\ga)-X(\be))d\tau.\nn
\end{eqnarray}
Now we claim that for small enough $\delta$ there is a universal constant $a>0$ such that for any $\tau\in[0,1]$, and $\al,\be,\ga\in\Omega$,
\ba\label{p023}
\frac{\left|\sum^2_{i=1}(\tau X_i(\al)+(1-\tau)X_i(\ga)-X_i(\be))^2\right|}{|\ga-\be|^2}\geq a\quad\mbox{ if }\quad|\al-\be|>10|\al-\ga|.
\ea
To verify this, first we notice that we have for any $\tau\in[0,1]$,
\begin{eqnarray}\label{n12B}
\sum^2_{i=1}
(\tau X_i(\al)+(1-\tau)X_i(\ga)-X_i(\be))^2
&=&
\sum^2_{i=1}(X_i(\ga)-X_i(\be))^2\\
& &+
2\tau \sum^2_{i=1}(X_i(\ga)-X_i(\be))(X_i(\al)-X_i(\ga))
\nonumber\\& &+\tau^2
\sum^2_{i=1}(X_i(\al)-X_i(\ga))^2.\nonumber
\end{eqnarray}
Using \eqref{n12B}, we bound the left hand side of \eqref{p023} below by
\ba\label{n1703B}
\frac{
\left|
\sum^2_{i=1}
(X_i(\ga)-X_i(\be))^2
\right|
}
{|\ga-\be|^2}
-
2\frac{
\left|
\sum^2_{i=1}
(X_i(\ga)-X_i(\be))
(X_i(\al)-X_i(\ga))
\right|
}
{|\ga-\be|^2}
-\frac{
\left|
\sum^2_{i=1}
(X_i(\al)-X_i(\ga))^2
\right|
}
{|\ga-\be|^2}\\
=A-B_1-B_2.
\ea
Now we note
\begin{eqnarray}\label{neweq0}
& &\quad|X_i(\xi)-X_i(\eta)|\leq(1+\delta)|\xi-\eta|\\
& &\mbox{ and }\quad
\left|\sum^2_{i=1} (X_i(\xi)-X_i(\eta))^2\right|\geq (1-c\delta)|\xi-\eta|^2\quad\mbox{ for any }\xi,\eta\in\Omega,\nn
\end{eqnarray}
and that
\ba\label{neweq2}
\frac{|\al-\ga|}{|\be-\ga|}< \frac{1}{9}\quad\mbox{ if }\quad|\al-\be|> 10|\al-\ga|.
\ea
From \eqref{neweq0}, we find that
\ba\label{neweq21}
A\geq 1-c\delta.
\ea
From \eqref{neweq0} and \eqref{neweq2}, we find that for $\be$ in $\Omega_{far}$, where $|\al-\be|> 10|\al-\ga|$,
\ba\label{n1705B}
B_1 &\leq& 2(1+\delta)\frac{|X(\al)-X(\ga)|}{|\ga-\be|}
=
2(1+\delta)\frac{|\al-\ga|}{|\be-\ga|}\frac{|X(\al)-X(\ga)|}{|\al-\ga|}
\leq 
2(1+\delta)^2\frac{1}{9}.
\ea
From \eqref{neweq0} and \eqref{neweq2}, assuming $\be$ is in $\Omega_{far}$, we also obtain that
\ba\label{n1706B}
B_2=
\frac{
\left|
\sum^2_{i=1}
(X_i(\al)-X_i(\ga))^2
\right|
}
{|\al-\ga|^2}
\frac{|\al-\ga|^2}
{|\be-\ga|^2}
\leq(1+\delta)^2\frac{1}{9^2}.
\ea
Using \eqref{neweq21}-\eqref{n1706B}, assuming $\be\in\Omega_{far}$, provided $\delta>0$ is small enough, for some constant $a>0$ we find
\ba
A-B_1-B_2 \geq a.
\ea
Recalling that the left hand side of \eqref{p023} is bounded below by the left hand side of \eqref{n1703B}, we conclude that we have the bound \eqref{p023} if $\delta$ is small enough.

Using \eqref{p023} along with \eqref{p022} in \eqref{p021} yields
\begin{eqnarray}
|I^{ii}_{far}|
&\leq&	\int_{\Omega_{far}}|T_\ga\na X(\ga)-T_\be\na X(\be)||\na K(X(\al)-X(\be))-\na K(X(\ga)-X(\be))|d\be\nn\\
&\leq&	C'\int_{\Omega_{far}}|\ga-\be|^\mu 	|\ga-\be|^{-3}|X(\al)-X(\ga)|d\be,\\
&\leq&	C''|\al-\ga|\int_{\Omega_{far}}|\ga-\be|^{\mu-3}d\be,\nn\\
&\leq&	C''|\al-\ga|\int_{|\al-\be|>10|\al-\ga|}|\ga-\be|^{\mu-3}d\be,\nn\\
&\leq&	C''|\al-\ga|\int_{|\ga-\be|>|\al-\ga|}|\ga-\be|^{\mu-3}d\be,\nn\\
&=& C'''|\al-\ga|^\mu.\nn
\end{eqnarray}
Using our bound on $I^{ii}_{far}$ along with \eqref{p020pt1}, \eqref{p003pt1}, \eqref{p003'}, and \eqref{p002'}, in \eqref{p001pt1}, we conclude with \eqref{p001'}.
\end{proof}
\end{subsubsection}
\begin{subsubsection}{Proofs of the main bound and of the analyticity of trajectories}
Having established the lemmas of the previous sections, we are ready to give the proof of the main proposition.
\begin{prop}\label{bddF}
Fix $\mu \in(0,1)$. Assume we have a finitely-connected, bounded domain $\Omega$ with $C^2$ non-self-intersecting boundary, and let $F$ be as in Definition \ref{F2}, with 
\ba\label{vpatchdata}
\omega(\al):=\begin{cases}
1 & \mbox{if }\al\in\Omega, \\
0 & \mbox{otherwise.}
\end{cases}
\ea
Then there exists a $\delta>0$ such that
\ba
\sup_{X\in B_\delta}\lVert F(X) \rVert_{C^{1,\mu}(\Omega)} \leq C,
\ea
where $C$ depends only on $\Omega$.
\end{prop}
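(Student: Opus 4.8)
The plan is to control the three contributions to $\lVert F(X)\rVert_{C^{1,\mu}(\Omega)}$ separately and uniformly over $X\in B_\delta$ for a suitably small $\delta$: the sup norm of $F_X$, the sup norm of $\na F_X$, and the $\mu$-H\"{o}lder seminorm of $\na F_X$. The first two are already in hand: the pointwise bound $|F_X(\al)|\le C$ is \eqref{aa1'}, obtained from Lemma \ref{kernelbound}, and $|\na F_X(\al)|\le C$ for all $\al\in\Omega$ is precisely Lemma \ref{2ndBOUND}, valid for $X\in B_{\delta_1}$. So the work that remains is the H\"{o}lder estimate on $\na F_X$, and I would obtain it by reconstructing $\na F_X$ from the directional derivatives handled in Lemmas \ref{diffFormula} and \ref{mainHbound}.

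Away from the zeros of the tangent field, $\na F_X$ is recovered from \eqref{p034f}. Fix the pair $T,T'$ of $C^1$, divergence-free, boundary-tangent vector fields from Lemma \ref{vfpair}; their zero sets are finite and disjoint, so by compactness of $\overline\Omega$ there is $c_0>0$ with $\max(|T_\al|,|T'_\al|)\ge 2c_0$ for every $\al\in\overline\Omega$. Consider a point $\al_0$ with $|T_{\al_0}|\ge 2c_0$ (the case $|T'_{\al_0}|\ge 2c_0$ is identical with $T'$ in place of $T$, since Lemma \ref{mainHbound} covers $T'$ as well). Since $T\in C^1(\overline\Omega)$ is Lipschitz, there is $\rho_0>0$, depending only on $\Omega$, $T$, $T'$ and $c_0$, so that $|T_\eta|\ge c_0$ on $B_{\rho_0}(\al_0)\cap\overline\Omega$; there $\cO(\eta)$ is invertible with $|\det\cO(\eta)|=|T_\eta|^2\ge c_0^2$, and \eqref{p034f} writes $\na F_X(\eta)$ as $\cO(\eta)^{-1}$ acting on the column vector with entries $T(F_X)(\eta)$ and $N(F_X)(\eta)$. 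I would then check that each factor is $\mu$-H\"{o}lder on $B_{\rho_0}(\al_0)\cap\Omega$, with norm bounded in terms of $\Omega$, $T$, $c_0$ and uniform in $X\in B_\delta$: the matrix $\cO(\eta)^{-1}$ is $C^1$ (hence $C^{0,\mu}$) since $\cO\in C^1$ and $\det\cO$ is bounded below; $T(F_X)(\eta)$ is $\mu$-H\"{o}lder by Lemma \ref{mainHbound}, with sup norm controlled by Lemma \ref{2ndBOUND}; and $N(F_X)(\eta)$ is treated through formula \eqref{p0001} of Lemma \ref{diffFormula}, since the entries of $P_X$ and $Q_X$ are products of the $C^1$ fields $T$, $N$ with components of $\na X\in C^{0,\mu}$ (of norm $\le 1+\delta$), $\det\cO\in C^1$, and $\det Q_X=T(X_1)^2+T(X_2)^2$ differs from $|T_\eta|^2$ by $O(\delta)$ and hence stays $\ge c_0^2/2$ on the ball once $\delta$ is small, so that $(Q_X)^{-1}$ is again a ratio of $C^{0,\mu}$ functions with denominator bounded below. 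Multiplying these uniformly bounded $C^{0,\mu}$ pieces yields $[\na F_X]_{C^{0,\mu}(B_{\rho_0}(\al_0)\cap\Omega)}\le C(\Omega)$.

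To globalize, cover $\overline\Omega$ by the balls $B_{\rho_0/2}(\al)$, $\al\in\overline\Omega$, and extract a finite subcover with centers $\al_1,\dots,\al_M$ (each of which satisfies $|T_{\al_k}|\ge 2c_0$ or $|T'_{\al_k}|\ge 2c_0$, so that the previous paragraph applies on $B_{\rho_0}(\al_k)\cap\Omega$). For $\al,\ga\in\Omega$ with $|\al-\ga|<\rho_0/2$, the point $\ga$ lies in some $B_{\rho_0/2}(\al_k)$ and then $\al\in B_{\rho_0}(\al_k)$, so the local estimate gives $|\na F_X(\al)-\na F_X(\ga)|\le C|\al-\ga|^\mu$; for $|\al-\ga|\ge\rho_0/2$ the uniform bound $|\na F_X|\le C$ from Lemma \ref{2ndBOUND} gives $|\na F_X(\al)-\na F_X(\ga)|\le 2C\le 2C(\rho_0/2)^{-\mu}|\al-\ga|^\mu$. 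Combined with the sup-norm bounds on $F_X$ and $\na F_X$, this gives $\lVert F(X)\rVert_{C^{1,\mu}(\Omega)}\le C(\Omega)$ for all $X\in B_\delta$, where $\delta$ is the minimum of $\delta_1$, $\delta_2$ and the $\delta$'s in Lemmas \ref{diffFormula} and \ref{mainHbound}, together with any further smallness needed to keep $\det Q_X$ bounded below on the patches.

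I expect the only genuine subtlety here — everything else being assembly of the preceding lemmas — to be the behavior near the zeros of the tangent field: one must switch to the second field $T'$ there and run the covering/Lebesgue-number argument so that the local H\"{o}lder estimates fit together into a global one, all the while verifying that the auxiliary matrices $\cO$ and $Q_X$ of Lemma \ref{diffFormula} stay invertible with uniformly controlled inverses on each patch.
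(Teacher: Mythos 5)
Your proposal is correct and follows essentially the same route as the paper: the sup norms of $F_X$ and $\na F_X$ come from \eqref{aa1'} and Lemma \ref{2ndBOUND}, and the H\"older seminorm of $\na F_X$ is obtained by reconstructing the gradient from $T(F_X)$ and $N(F_X)$ via \eqref{p034f}, using Lemmas \ref{diffFormula}, \ref{vfpair} and \ref{mainHbound}. The only difference is in the final patching step: where the paper excises disjoint neighborhoods $\cN$, $\cN'$ of the zero sets of $T$ and $T'$, you run a quantitative covering argument based on $\max(|T_\al|,|T'_\al|)\ge 2c_0$ and treat far-apart pairs with the uniform gradient bound, which if anything makes the globalization slightly more explicit but is not a genuinely different argument.
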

\begin{proof}

Let $X\in B_\delta$. To bound $\lVert F_X\rVert_{L^\infty(\Omega)}$, we simply observe that \eqref{aa1'} gives a uniform bound. For the $C^1(\Omega)$ norm, we use the uniform bound on $|\na F_X(\al)|$ following from Lemma \ref{2ndBOUND}.
%
Thus it only remains to give the $[\na F_X]_\mu$ estimate. First we establish the pair of estimates
\begin{eqnarray}
|T(F_X)(\al)-T(F_X)(\ga)|&\leq& C|\al-\ga|^\mu,\label{fin201}\\
|N(F_X)(\al)-N(F_X)(\ga)|&\leq& C|\al-\ga|^\mu,\label{fin202}
\end{eqnarray}
for $\al,\ga\in\Omega\setminus\cN$, where $T$ is the vector field given by Lemma \ref{vfpair}, $N=T^\perp$, and $\cN$ is a small neighborhood of the set of zeros of $T$, which we recall is finite.

Lemma \ref{mainHbound} establishes the H\"older bound for $T(F_X)$ with \eqref{p001}. Recall the formula for $N(F_X)$ given by Lemma \ref{diffFormula}. From that, and from the observation that outside of $\cN$ we have the desired H\"{o}lder continuity of $\cO$, $P_X$, and $Q^{-1}_X$ (since $T$ is $C^1$ and $\na X$ is H\"{o}lder continuous), we get that \eqref{fin201} also implies the estimate \eqref{fin202} for $\al,\ga\in\Omega\setminus\cN$.

Now note $\cO(\al)$ is invertible at any $\al\in\Omega\setminus\{\al:T(\al)=0\}$, and
\ba\label{p0002}
\na F_X(\al)=
(\cO(\al))^{-1}
\left(
\begin{array}{ccc}
T(F_X)(\al) \\
N(F_X)(\al)     \end{array} \right).
\ea
With \eqref{p0002}, and the H\"{o}lder continuity of $\cO^{-1}$ outside of $\cN$, we then find that this implies for some new constant $C$
\ba\label{p0003}
|\na F_X(\al)-\na F_X(\ga)|\leq C|\al-\ga|^\mu
\ea
for all $\al,\ga\in\Omega\setminus\cN$. Applying Lemma \ref{mainHbound} and Lemma \ref{diffFormula} with the vector field $T'$ from Lemma \ref{vfpair} and $N'=T'^\perp$ gives us the same estimate for all $\al,\ga\in\Omega\setminus\cN'$, for a neighborhood $\cN'$ of the set of zeros of $T'$. Then, since we are free to select the neighborhoods $\cN$ and $\cN'$ in such a way that they are disjoint, we have the H\"{o}lder estimate \eqref{p0003} in all of $\Omega$. We conclude
\ba
\lVert F_X\rVert_{C^{1,\mu}(\Omega)}\leq C.
\ea
\end{proof}


Now applying Theorem \ref{mainth} in the case $Y=C^{1,\mu}(\Omega)$, we obtain the Lagrangian analyticity result for vortex patches.
\begin{theo}\label{final}
Assume that $\Omega$ is a bounded open set which is finitely-connected, and which has $C^2$ non-self-intersecting boundary. We define $Y=C^{1,\mu}(\Omega)$. Then there is an $\eps>0$ and an $r>0$ with a unique solution to \eqref{000'} $X(\al,t)$ in $\cB_{\eps,r}$, as defined in Definition \ref{banach'}. The function $X(\al,t)$ is the trajectory map for the solution to \eqref{eulereqns} with initial vorticity given by \eqref{vpatchdata}, and for each $\al\in\overline{\Omega}$, the trajectory $X(\al,t)$ is analytic in $t$.
\end{theo}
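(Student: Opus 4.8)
The plan is to obtain Theorem \ref{final} as a direct application of the abstract machinery of Section 2, namely Theorem \ref{mainth}, with the Banach space taken to be $Y=C^{1,\mu}(\Omega)$ and the operator $\tilde F$ taken to be the vortex patch operator $F$ of Definition \ref{F2} built from the vorticity \eqref{vpatchdata}. Almost all of the real work has already been done in Proposition \ref{bddF} and Lemma \ref{vPreservesA'}; what remains is to assemble these with Theorem \ref{mainth} and to identify the abstract solution with the genuine trajectory map.

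First I would verify that $Y=C^{1,\mu}(\Omega)$ satisfies Assumption \ref{as1}, i.e. that the $C^{1,\mu}$ norm controls the $C^{0,1}$ norm. Since $\partial\Omega$ is $C^2$, the intrinsic distance $d_\Omega$ is comparable to the Euclidean distance (as in \eqref{vpn003}), so integrating $\na f$ along a near-geodesic gives $\mathrm{Lip}(f)\leq C\sup_\Omega|\na f|\leq C\lVert f\rVert_{C^{1,\mu}(\Omega)}$, while the sup norm is trivially dominated; this is \eqref{bprop}. With Assumption \ref{as1} in force, Proposition \ref{bddF} supplies the local boundedness hypothesis \eqref{65} of Theorem \ref{mainth} for $\tilde F=F$, and Lemma \ref{vPreservesA'} supplies the preservation-of-analyticity hypothesis. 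Theorem \ref{mainth} then produces $\eps=\delta/10$, an $r_1>0$, and a unique $X_{(\cdot)}\in\cB_{\eps,r_1}$ solving \eqref{000'} with $X_0=Id$, analytic on $d_{r_1}$ as a map into $Y$. Because $K$ has real coefficients, $F$ carries real-valued maps to real-valued maps, so $z\mapsto\overline{X_{\bar z}}$ is again a fixed point of $\Phi_F$ in $\cB_{\eps,r_1}$; uniqueness forces $X_z=\overline{X_{\bar z}}$, hence $X_t$ is real-valued for real $t\in(-r_1,r_1)$.

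Next I would identify this $X$ with the trajectory map of the vortex patch. By the classical well-posedness and boundary-regularity theory for vortex patches (\cite{chemin2,bc,serfati2,yudovich}), the data \eqref{vpatchdata} generates a global Euler solution $u$ whose flow map $\Phi_t$ restricts to a $C^{1,\mu}$ diffeomorphism of $\Omega$ onto $\Omega(t)$, with $\Phi_0=Id$, with $\det\na\Phi_t\equiv 1$, and with $\omega(\Phi_t(\al),t)=\omega_0(\al)$; moreover $t\mapsto\Phi_t$ is continuous into $C^{1,\mu}(\Omega)$. Feeding $\Phi_t$ into the Biot--Savart law \eqref{biotsavartlaw} and changing variables $y=\Phi_t(\be)$ shows $\tfrac{d}{dt}\Phi_t=F(\Phi_t)$, i.e. $\Phi_t$ solves \eqref{000'}. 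Since $\Phi_t\to Id$ in $C^{1,\mu}(\Omega)$ as $t\to 0$, on a possibly smaller time interval $\Phi_t$ stays in $B_\eps$, and as $F$ is Lipschitz on $B_\eps$ by Lemma \ref{what'}, the standard Picard--Lindel\"of theorem in the Banach space $Y$ gives a unique $B_\eps$-valued solution of \eqref{000'} there, forcing $\Phi_t=X_t$. Hence the abstract solution $X_t$ is the trajectory map, and it is analytic in $t$ near $0$ as a $Y$-valued function, so $X(\al,t)$ is analytic in $t$ for each fixed $\al\in\Omega$.

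Finally, to upgrade to $\al\in\overline\Omega$: since $\partial\Omega$ is $C^2$, every $f\in C^{1,\mu}(\Omega)$ extends continuously to $\overline\Omega$ and point evaluation $\delta_\al:C^{1,\mu}(\Omega)\to\C^2$ is a bounded linear functional for each $\al\in\overline\Omega$; composing $\delta_\al$ with the $Y$-valued analytic map $z\mapsto X_z$ shows $z\mapsto X_z(\al)$ is analytic on $d_{r_1}$, and restricting to real $t$ gives the stated conclusion (the continuous extension of $\Phi_t$ to $\overline\Omega$ coincides with $X_t$ there by continuity from the interior, so $X(\al,t)$ really is the trajectory of $\al$). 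The genuinely hard estimate of the whole argument was already dispatched in Proposition \ref{bddF}; within the present proof the only point requiring care is the identification step, where one must invoke persistence of $C^{1,\mu}$ regularity for the patch and use the real-variable uniqueness coming from Picard--Lindel\"of rather than the analytic-in-time uniqueness built into Theorem \ref{mainth}.
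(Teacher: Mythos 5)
Your proposal is correct and follows essentially the same route as the paper: Theorem \ref{final} is obtained by feeding Proposition \ref{bddF} and Lemma \ref{vPreservesA'} into Theorem \ref{mainth} with $Y=C^{1,\mu}(\Omega)$, and your extra steps (checking Assumption \ref{as1}, realness via $z\mapsto\overline{X_{\bar z}}$, and the Picard--Lindel\"of identification of $X_t$ with the Euler flow) only supply details the paper leaves implicit. The one cosmetic difference is at the boundary: you use continuous extension and boundedness of point evaluation on $C^{1,\mu}(\Omega)$, whereas the paper takes interior points $\al_k\to\al$ and passes to the limit of the uniformly bounded analytic functions $X(\al_k,t)$; both work.
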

\begin{proof}
The result follows from Proposition \ref{bddF}, Lemma \ref{vPreservesA'}, and Theorem \ref{mainth}. In particular, we find directly that the points inside the patch $\Omega(t)=X(\Omega,t)$ have analytic trajectories. To see the analyticity of the particle trajectories at the boundary, consider an $\al\in \del \Omega$. Taking a sequence of $\al_k$ tending to $\al$, the analytic functions $X(\al_k,t)$ are uniformly bounded and converge to $X(\al,t)$, which then must also be analytic.
\end{proof}

\end{subsubsection}
\end{subsection}
\end{section}
\nocite{serfatiT, cvk,bf,cvw,fz,fz2,shnirelman,kv,kv2,bb,isett,chemin,wollman,mb,gst,katoA}



\begin{section}{Lagrangian analyticity for the Euler-Poisson system}
The pressureless Euler-Poisson equation gives a model for the dynamics of a fluid in which the fluid particles experience long-range interaction. In particular, it models stellar dynamics, in which case the force between particles is attractive, or the dynamics of a charged fluid, in which case it is repulsive. Specifically, we consider the three dimensional case. The system is given by
\begin{eqnarray}\label{epfinal01a}
\del_t\rho + u \cdot \na \rho &=&-\rho \div u,\\
\del_t u+u\cdot \na u &=& q\na\Delta^{-1}\rho,\label{epfinal01b}\\
(\rho,u)&=&(\rho_0,u_0)\mbox{ at }t=0.\nn
\end{eqnarray}
where $q=\pm 1$, with unknowns density $\rho:\R^3\times\R\to\R^+$ and velocity $u:\R^3\times\R\to\R^3$, and with initial data $\rho_0\in C^s_c$ and $u_0\in H^s$ for $s\geq 6$. In the case $q=1$, the force is repulsive, and in the case $q=-1$, it is attractive. Here we explicitly handle the case of a repulsive force, i.e. $q=1$, but the results carry over to the $q=-1$ case with essentially no changes to the proofs.

In this section, using a strategy based on that in Section 2, with minor modifications, we will demonstrate that we also have analyticity of the Lagrangian trajectories in $t$ for this fluid mechanics model, for initial data of the regularity prescribed above.

We remark that in the study \cite{fz}, a Lagrangian analyticity-type property is verified for a different Euler-Poisson system, modeling the dynamics of a self-gravitating fluid in a cosmological setting. There the authors use recursive relations for Taylor coefficients to prove the trajectories are analytic as functions of a kind of time parameter, as long as the initial data for the system is slightly better than differentiable. For several reasons the work of \cite{fz} does not apply to the Lagrangian analyticity problem treated here. To highlight one difference, we remark that in \cite{fz}, analyticity is proven for their Euler-Poisson type model not with respect to the typical physical time variable, say $t$, but instead with respect to a parameter $\tau$ proportional to $t^{2/3}$, and analyticity in the typical time $t$ is not obtained, as it is in our situation.

\begin{subsection}{Lagrangian formulation}
Consider a solution $(\rho,u)$ to the Euler-Poisson equations. For each $\al\in\R^3$ the Lagrangian trajectory $X(\al,t)$ is then defined as the solution of the ODE
\begin{eqnarray}\label{epfinal02b}
\frac{dX}{dt}(\al,t)&=&u(X(\al,t),t),\\
X(\al,0)&=&\al.\nn
\end{eqnarray}
One can show that a consequence of \eqref{epfinal01a} together with \eqref{epfinal02b} is that
\ba\label{lasteqn}
\rho(X(\al,t),t))=(\det\na X(\al,t))^{-1}\rho_0(\al).
\ea
For the kernel
\ba\label{epfinal03}
K(y)=\frac{1}{4\pi}\frac{y}{|y|^3},
\ea
from \eqref{epfinal01b}, we find
\begin{eqnarray}\label{NewEp02}
\frac{d^2 X}{dt^2}(\al,t)&=&
\int K(X(\al,t)-y)\rho(y,t)dy,\\
&=&
\int K(X(\al,t)-X(\be,t))\rho(X(\be,t),t)\det \na X(\be,t)d\be,\nn\\
&=&
\int K(X(\al,t)-X(\be,t))\rho_0(\be)d\be.\nn
\end{eqnarray}
Thus, we can write the Lagrangian formulation as a first order ODE in the following way:
\begin{eqnarray}
\frac{dX}{dt}(\al,t)&=&V(\al,t),\\
\frac{dV}{dt}(\al,t)&=&\int K(X(\al,t)-X(\be,t))\rho_0(\be)d\be,\nn
\end{eqnarray}
with initial data $X|_{t=0}=Id$, $V|_{t=0}=u_0$. In the next section, we will define the corresponding operator $F$ on the appropriate Banach space so that the above system can be written as
\begin{eqnarray}\label{NewEp01}
\frac{dX}{dt}(\al,t)&=&V(\al,t),\\
\frac{dV}{dt}(\al,t)&=&F(X)(\al,t),\nn\\
(X,V)|_{t=0}&=&(Id,u_0),\nn
\end{eqnarray}
given initial data $\rho_0\in C^s_c$ and $u_0\in H^s$ for $s\geq 6$ for \eqref{epfinal01a}-\eqref{epfinal01b}.

Here we record the main result of this section, Theorem \ref{maintheoEP}.
\begin{theonn}
Consider a solution $(\rho,u)$ to \eqref{epfinal01a}, with initial data $\rho_0\in C^s_c$, $u_0\in H^s$ for $s\geq 6$. The corresponding trajectory map $X(\al,t)$ solving \eqref{epfinal02b} is analytic in $t$ at $t=0$ for each $\al\in\R^3$.
\end{theonn}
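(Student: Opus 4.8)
The plan is to run the abstract ODE scheme of Section~2, with the single modification that the Lagrangian equation \eqref{NewEp01} is second order in time, and to supply the two structural inputs it requires. I would take $Y=H^s(\R^3;\C^3)$ with $s\ge 6$; the Sobolev embedding $H^s(\R^3)\hookrightarrow C^{0,1}(\R^3)$ (guaranteed since $s\ge 6>5/2$) shows $Y$ meets Assumption~\ref{as1}, and for $\lVert X-Id\rVert_{H^s}\le\delta$ with $\delta$ small the Jacobian $\det\nabla X$ stays close to $1$, so the change of variables in \eqref{NewEp02} is legitimate and $F(X)(\al)=\int K(X(\al)-X(\be))\rho_0(\be)\,d\be$ is well defined (note the $\det\nabla X$ factor cancels against \eqref{lasteqn}, leaving this clean formula). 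The two properties to check are precisely those of Section~2: (a) local boundedness $\sup\{\lVert F(X)\rVert_{H^s}:\lVert X-Id\rVert_{H^s}\le\delta\}\le C$, which is Proposition~\ref{pcor02}, and (b) preservation of analyticity in the sense of Definition~\ref{pres3}(ii).

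Property (b) I would establish exactly as in Lemma~\ref{vPreservesA'}. The kernel $K(y)=\tfrac{1}{4\pi}\tfrac{y}{|y|^3}$ extends holomorphically to $\{y\in\C^3:y_1^2+y_2^2+y_3^2\neq 0\}$, with derivative bounds of the form $|(\del^\ga_y K)(\be(I+M))|\le A_\ga|\be|^{-2-|\ga|}$ for $|M|$ small (the $\R^3$ analogue of Lemma~\ref{kernelbound}). For $X_{(\cdot)}\in\cB_{\delta,r}$ one has $X_z(\al)-X_z(\be)=(\al-\be)(I+O(\delta))$ uniformly in $z\in d_r$, so the argument never reaches the singular cone. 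Writing the $z$-difference quotient of $F(X_z)(\al)$ as an integral along the segment from $z$ to $z+h$, and using the uniform Lipschitz bound on $\tfrac{d}{d\zeta}X_\zeta$ coming from the Cauchy integral formula, the $\be$-integrand is dominated by $C|\al-\be|^{-2}\rho_0(\be)\in L^1_\be$ since $\rho_0$ is bounded with compact support; dominated convergence then gives analyticity of $F(X_z)(\al)$ and the formula $\tfrac{d}{dz}F(X_z)(\al)=\int\bigl(\tfrac{d}{dz}X_z(\al)-\tfrac{d}{dz}X_z(\be)\bigr)\nabla K(X_z(\al)-X_z(\be))\,d\be$. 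With (a) and (b) in hand, Lemma~\ref{omegalem'} upgrades $F(X_z)$ to an $H^s$-valued analytic function and Lemma~\ref{what'} yields the Lipschitz property of $F$ on a small ball $B_\eps$.

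The final step is the ODE argument adapted to second order. I would set up a contraction on pairs $(X_{(\cdot)},V_{(\cdot)})$ of $Y$-valued functions analytic on $d_{r_1}$ with $X_z\in B_\eps$ and $\lVert V_z-u_0\rVert_{H^s}$ small, via the Duhamel map
\[
\Phi(X,V)(t)=\Bigl(Id+\int_0^t V_\zeta\,d\zeta,\ \ u_0+\int_0^t F(X_\zeta)\,d\zeta\Bigr),
\]
integrals taken along the segment from $0$ to $t$ (equivalently, iterate the scalar second-order Duhamel formula $X_t=Id+t\,u_0+\int_0^t(t-\zeta)F(X_\zeta)\,d\zeta$). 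Local boundedness keeps $\Phi$ mapping the ball into itself for $r_1$ small, and the Lipschitz bound on $F$ together with $\lVert\int_0^t(\cdot)\rVert\le|t|\sup(\cdot)$ makes $\Phi$ a contraction once $r_1$ is small relative to the Lipschitz constant; the contraction mapping principle then produces the unique solution, analytic in $t$ on $d_{r_1}$ as an $H^s$-valued map, whose first component is the trajectory map, hence analytic at $t=0$ for each fixed $\al\in\R^3$.

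The main obstacle is Proposition~\ref{pcor02}: the $H^s$ bound on $F(X)$. Since $K$ is a Riesz-type kernel with $|K(y)|\sim|y|^{-2}$ in $\R^3$, one cannot differentiate $s\ge 6$ times under the integral sign: $\del^\ga_\al$ applied to $K(X(\al)-X(\be))$ produces kernels as singular as $|\al-\be|^{-2-|\ga|}$, far from locally integrable. The remedy is to exploit $\rho_0\in C^s_c$: after a Fa\`a di Bruno expansion of the chain rule in the matrix $\nabla X$, one trades $\al$-derivatives for $\be$-derivatives of $K(X(\al)-X(\be))$ and integrates by parts, transferring derivatives onto $\rho_0$. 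What remains are Calder\'on--Zygmund operators (since $\nabla K=\nabla\otimes\nabla\Delta^{-1}$ is of order zero) applied to products of derivatives of $\rho_0$ with derivatives of $X$; these are controlled in $L^2$ by $L^2$-boundedness of Calder\'on--Zygmund operators together with Sobolev product and composition estimates, giving $\lVert F(X)\rVert_{H^s}\le C(\lVert\rho_0\rVert_{C^s_c})\,P(\lVert X-Id\rVert_{H^s})$ for a polynomial $P$, hence the uniform bound on $B_\delta$. Organizing the combinatorics of these arbitrary-order derivative formulas and checking the $\delta$-dependence is uniform is the technical heart of the section.
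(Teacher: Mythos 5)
Your proposal is correct and follows essentially the same route as the paper: the same Lagrangian ODE reformulation with the two Section~2 criteria, the same contraction map $\Phi(X,V)(t)=(Id+\int_0^t V_\zeta\,d\zeta,\ u_0+\int_0^t F(X_\zeta)\,d\zeta)$ on pairs analytic in $d_{r_1}$ (Proposition~\ref{epmainth}), preservation of analyticity proved exactly as in Lemma~\ref{vPreservesA'}, and the $H^s$ bound obtained by trading $\al$-derivatives for $\be$-derivatives, integrating by parts onto $\rho_0$, and invoking $L^2$ bounds for the resulting Calder\'on--Zygmund-type operators, which is precisely the content of Propositions~\ref{prop001}, \ref{diffformula}, and \ref{pcor02}. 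You correctly identify the combinatorial bookkeeping of the arbitrary-order differentiation formula (the paper's recursive $P^\sigma_m$ and the $\cZ_i$ triples) as the technical heart, and your sketch of it matches the paper's argument.
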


\end{subsection}

\begin{subsection}{Definitions of operators}

\begin{subsubsection}{Defining the operator $F(X)$}
In the following we have a fixed $\rho\in C^s_c(\R^3)$, with some fixed $s\geq 6$, where $\rho$ plays the role of the initial density for the system \eqref{epfinal01a}.

Now we define the $F$ for the case of the $3$D Euler-Poisson system. We consider the vector valued functions on $\R^3$ which take values in $\C^3$, in particular such maps sufficiently close to the identify map $Id$ in the Sobolev norm $H^s(\R^3)$, $s\geq 6$.

\ba
B_\delta=\{\C^3 \textrm{-valued } X \textrm { defined on }\R^3: \lVert X-Id \rVert_{H^s(\R^3)}\leq \delta \}.
\ea

We have the kernel
\ba\label{66}
K(z):=\frac{1}{4\pi}\frac{(z_1,z_2,z_3)}{(z^2_1+z^2_2+z^2_3)^{3/2}}\quad \mbox{for }z\in\C^3\setminus\{z^2_1+z^2_2+z^2_3\leq 0\},
\ea
extended analytically, where we choose to take the branch cut for the square root along the negative reals.
Now we record the fact
\ba\label{closetoid}
X(\al)-X(\be)=(\al-\be)\int^1_0 \na X(\tau \al + (1-\tau) \be) d\tau =(\al-\be)(I+O(\delta)).
\ea
Note that we use the convention $(\na X )_{ij}=\del X_j/\del \al_i$.

Now observe
\begin{eqnarray}\label{71}
\sum^3_{i=1}(X_i(\al)-X_i(\be))^2&=&\sum^3_{i=1} [(\al-\be)\int^1_0 \na X(\tau \al+(1-\tau)\be) d\tau]^2_i,\\
&=&\sum^3_{i=1}[(\al-\be)(I+O(\delta))]^2_i=|\al-\be|^2(1+O(\delta))\nonumber.
\end{eqnarray}
Taking the real part of both sides of the equation gives
\ba\label{72''}
\textrm{Re}\left(\sum^3_{i=1}(X_i(\al)-X_i(\be))^2\right)=|\al-\be|^2\textrm{Re}(1+O(\delta))\geq |\al-\be|^2/2,
\ea
for small enough $\delta$. Thus we may plug $z=X(\al)-X(\be)$ into $K(z)$ for any $X\in B_\delta$, and $\al,\be\in\R^3$, and we have no problems with the proposed branch cut for the square root in the definition of $K(z)$.

One verifies from \eqref{71} that the singularity along the diagonal in $K(X(\al)-X(\be))$ is no worse than that of $|\al-\be|^{-2}$, i.e.
\ba\label{72'''}
|K(X(\al)-X(\be))|\leq C |\al-\be|^{-2}
\ea
for some constant $C$, which is an integrable singularity in three dimensions.

\begin{definition}\label{Fd}
Fix a scalar, real valued function $\rho\in H^s(\R^3)\cap C_c(\R^3)$. For $X\in B_\delta$, provided $\delta$ is sufficiently small, we define $F(X)(\al)$ for each $\al\in\R^3$ by
\ba\label{00}
F(X)(\al)=\int
	K(X(\al)-X(\be)) \rho(\beta) d\be.
\ea
\end{definition}

\end{subsubsection}

\begin{subsubsection}{Relevant integral kernel operators}
Taking spatial derivatives of $F(X)(\al)$ for $X\in B_\delta$ naturally leads us to some singular integral operators of the form 
\ba\label{defT}
(T_{\cK}f)(\al)=p.v. \int f(\be)\cK(\al,\be)  d\be,
\ea
for a scalar $f:\R^3\to\C$ in $C^1(\R^3)$, and kernel $\cK :(\R^3\times\R^3\setminus\{\al=\be\})\to \C$, where $|\cK(\al,\be)|\leq C |\al-\be|^{-3}$. The $T_{\cK}f$ will then be defined pointwise, given basic assumptions are satisfied for a given $\cK(\al,\be)$. We extend this definition naturally for vector valued kernels and matrix valued kernels.

We will also handle related operators from $L^2(\R^3)$ to $L^2(\R^3)$. It will assist us to have the following facts recorded, which give bounds for these kinds of integral kernels. The proofs of the boundedness of these operators are delegated to the final subsection on the Euler-Poisson model.
\begin{lem}\label{kernelbound'}
For $M\in(\C^3)^2$ with $|M|\leq \delta'$, where $\delta'$ is a small constant, the function $K(z):\C^3\setminus 0\to \C^3$  defined by  
\eqref{66}, satisfies
\ba\label{Kbound}
|(\del^\ga_z K)(\be(I+M))| &\leq& A_\ga|\be|^{-2-|\ga|}\quad\textrm{for all }\be\in\R^3\textrm{ and }|\ga|\geq 0.
\ea
\end{lem}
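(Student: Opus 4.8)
The plan is to mirror the proof of Lemma~\ref{kernelbound}, adapting it to three dimensions. The first step is to record the rational structure of $\del^\ga_z K$. Set $Q(z):=z_1^2+z_2^2+z_3^2$, so that $K(z)=\frac{1}{4\pi}\,z\,Q(z)^{-3/2}$ componentwise, with the branch of the square root fixed as in \eqref{66}. I will show by induction on $|\ga|$ that
\[
(\del^\ga_z K)(z)=Q(z)^{-3/2-|\ga|}\,P_\ga(z),
\]
where $P_\ga$ is a $\C^3$-valued polynomial that is homogeneous of degree $1+|\ga|$. The base case is $P_0(z)=\frac{1}{4\pi}\,z$, and the inductive step follows from the product rule, since
\[
\del_i\!\left(Q^{-3/2-|\ga|}P_\ga\right)=Q^{-3/2-(|\ga|+1)}\!\left(-(3+2|\ga|)\,z_i\,P_\ga+Q\,\del_i P_\ga\right),
\]
and both summands in the parentheses are homogeneous of degree $2+|\ga|$. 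Homogeneity of $P_\ga$ then yields $|P_\ga(z)|\leq C_\ga|z|^{1+|\ga|}$, hence
\[
|(\del^\ga_z K)(z)|\leq C_\ga\,|z|^{1+|\ga|}\,|Q(z)|^{-3/2-|\ga|}
\]
wherever $Q(z)\notin(-\infty,0]$.

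The second step is the elementary estimate for the argument $z=\be(I+M)$ with $\be\in\R^3\setminus\{0\}$ and $|M|\leq\delta'$. From $z=\be+\be M$ one gets $|z|\leq(1+\delta')|\be|\leq 2|\be|$, and a direct expansion gives $Q(z)=|\be|^2+2\,\be\cdot(\be M)+\sum_i(\be M)_i^2=|\be|^2(1+O(\delta'))$. For $\delta'$ small this keeps $Q(z)$ in a thin sector about the positive real axis, so the branch cut of $Q^{-3/2}$ is avoided and $|Q(z)|\geq|\be|^2/2$; this is exactly the three-dimensional analog of the computation in \eqref{71}--\eqref{72''} already used in this section.

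Combining the two steps, for $z=\be(I+M)$ as above one obtains
\[
|(\del^\ga_z K)(\be(I+M))|\leq C_\ga\,(2|\be|)^{1+|\ga|}\,(|\be|^2/2)^{-3/2-|\ga|}=A_\ga\,|\be|^{-2-|\ga|},
\]
which is the claimed bound \eqref{Kbound}. The computation is essentially routine; the only points needing care are tracking the homogeneity degree of $P_\ga$ through the induction, so that the powers of $|\be|$ collapse precisely to $-2-|\ga|$, and confirming that the branch cut of the square root is avoided uniformly over $|M|\leq\delta'$.
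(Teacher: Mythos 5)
Your proof is correct and matches the paper's intent: the paper dismisses this lemma as a ``straightforward calculation,'' and your computation (the homogeneity induction giving $\del^\ga_z K = Q^{-3/2-|\ga|}P_\ga$ with $P_\ga$ homogeneous of degree $1+|\ga|$, combined with the estimate $Q(\be(I+M)) = |\be|^2(1+O(\delta'))$ exactly as in \eqref{71}--\eqref{72''}) is precisely that calculation, mirroring the two-dimensional Lemma \ref{kernelbound}. No gaps.
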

\begin{proof}
The proof of this is is a straightforward calculation.
\end{proof}
\begin{definition}\label{def001}
Consider a map $X\in B_\delta$ for $\delta>0$. Provided $\delta$ is sufficiently small, we make the following definitions.

(i) For each $k=1,2,3$, for $\al,\be\in\R^3$, $\al\neq \be$, we define
\ba\label{m001}
\tilde K^k_X(\al,\be):=\sum^3_{i=1} \del_k X_i(\be)(\del_i K)(X(\al)-X(\be)),
\ea
and $K_X(\al,\be):=(K^1_X(\al,\be),K^2_X(\al,\be),K^3_X(\al,\be))$.

(ii)
We define for $f\in L^2\cap C^1$
\ba\label{m002}
(T_{X,k} f)(\al):=p.v.\int \tilde K^k_X(\al,\be)f(\be)d\be\quad\mbox{ for }\al\in\R^3,
\ea
and $(T_{X} f)(\al):=((T_{X,1} f)(\al),(T_{X,2} f)(\al),(T_{X,3} f)(\al))$.

(iii)
We define for $f\in L^2$ and $\eps>0$
\ba\label{m003}
(T^\eps_{X,k}f)(\al):=\int_{|\al-\be|>\eps} \tilde K^k_X(\al,\be)f(\be)d\be,
\ea
and $(T^\eps_{X}f)(\al):=((T^\eps_{X,1}f)(\al),(T^\eps_{X,2}f)(\al),(T^\eps_{X,3}f)(\al))$. It is proved in Lemma \ref{lemmaA01} that for each $k$, $T^\eps_{X,k}f$ converges in $L^2$ as $\eps$ tends to zero. We define
\ba
T^0_{X,k}f:=\lim_{\eps\to 0}T^\eps_{X,k}f\quad\mbox{ in }L^2,
\ea
and $T^0_{X}f:=(T^0_{X,1}f,T^0_{X,2}f,T^0_{X,3}f)$.
\end{definition}
\begin{rem}\label{rem001}
It is proved in the subsection on integral kernel operator bounds at the end of our section on the Euler-Poisson model that as long as $\delta$ is sufficiently small, for each $k$, the p.v. limit defining $T_{X,k}$ converges uniformly in $\al$, and that $T^\eps_{X,k}$ and $T^0_{X,k}$ as defined are bounded operators from $L^2$ to $L^2$, with Lemmas \ref{kernellem1'} and \ref{lemmaA01}.
Also as a consequence of these lemmas we have the bounds
\ba\label{m005}
\lVert T^\eps_{X} f \rVert_{L^\infty}\leq C( \lVert f \rVert_{L^2}+\lVert f \rVert_{C^1})\quad\mbox{for all }f\in L^2\cap C^1,
\ea
and
\ba
\lVert T_{X} f \rVert_{L^\infty}\leq C( \lVert f \rVert_{L^2}+\lVert f \rVert_{C^1})\quad\mbox{for all }f\in L^2\cap C^1,
\ea
and the $L^2$ bounds
\ba\label{m006}
\lVert T^\eps_{X}f\rVert_{L^2}
\leq
C\lVert f \rVert_{L^2}\quad\mbox{for all }f\in L^2,
\ea
for $\eps>0$, and
\ba
\lVert T^0_{X}f\rVert_{L^2}
\leq
C\lVert f \rVert_{L^2}\quad\mbox{for all }f\in L^2.
\ea
\end{rem}

\end{subsubsection}

\end{subsection}

\begin{subsection}{Differentiation formulae}
The goal of this section is to give a formula for taking any number of derivatives of the $F(X)$ of Definition $\ref{Fd}$ which results in a compact expression well-suited for making $L^2$ estimates. Using this formula, we readily bound the Sobolev norms of $F(X)$, which then gives us the local boundedness estimate for the operator $F$. The proof of analyticity of trajectories is easily shown once this is verified.

Fix $k=1,2$ or $3$. Observe that for an $X\in B_\delta$ we may regard the quantity
\ba\label{ep001}
\cK_X(\al,\be):=
\del_{\beta_k}\left(
	K(X(\al)-X(\al-\be))
\right)
\ea
as a kernel, with
\ba
|\cK_X(\al,\be)|\leq C|\be|^{-3}.
\ea
Objects like this occur in integrals like in \eqref{m002} and their derivatives, which is apparent if one first makes a change of variable in the integral replacing $\be$ with $(\al-\be)$, then differentiates with respect to some $\al$ component, and manipulates the resulting expression. We elaborate on this technique further in what follows; it provides a useful technique for calculating high order derivatives of the Biot-Savart type integrals composed with trajectory maps, such as \eqref{m002}, in particular, and we remark that it can be adapted to the analogous operators for the incompressible Euler equations. The goal of this part of the section is to give a formula for taking any number of derivatives of $F(X)$, as in Definition $\ref{Fd}$, resulting in a compact expression well-suited for making $L^2$ estimates. Using this, together with the singular integral operator estimates given at the end of the section, we readily bound the Sobolev norms of $F(X)$ as a corollary, which then gives us the local boundedness estimate for the operator $F$. The proof of analyticity of trajectories is easily shown once that is verified.

One can take derivatives of integral convolution type expressions with kernel $\cK_X(\al,\be)$ as written in \eqref{ep001}, potentially repeatedly, in a way that yields a sum of expressions of a similar form, with nearly the same kernel as that in \eqref{ep001}.
We give a rigorous derivation of the relevant differentiation formula with Proposition \ref{prop001}. In the following discussion leading up to the statement of the proposition, we sketch out the main idea of the way the manipulations work, to motivate Proposition \ref{prop001} and lemmas leading up to the differentiation formula.

We begin by considering functions $f\in C^2_c$, $g\in H^1$. As our candidate expression to be differentiated, let us take the following, without being too careful about the precise meaning of the principal value integral in this instance.
\ba
T(\al):=
\mbox{p.v.}
\int
(g(\al)-g(\al-\be))f(\al-\be)
\del_{\beta_k}\left(
	K(X(\al)-X(\al-\be))
\right)d\be.
\ea
Then if we differentiate with respect to $\al_l$, for some $l=1,2$, or $3$, and, without justification at the moment, move the differentiation under the integral sign, we find
\begin{eqnarray}
\del_l T(\al)
&=&
\mbox{p.v.}
\int
\dal
\left[(g(\al)-g(\al-\be))f(\al-\be)
\right]
\del_{\beta_k}\left(
	K(X(\al)-X(\al-\be))
\right)d\be\\
& &+
\mbox{p.v.}
\int
(g(\al)-g(\al-\be))f(\al-\be)
\del_{\beta_k}\left(
	\dal
	[K(X(\al)-X(\al-\be))]
\right)d\be.\nn
\end{eqnarray}
Now we integrate by parts in the second integral on the right. This in fact yields no boundary term; we justify this for these types of calculations in the rigorous derivations coming later in the section.
\begin{eqnarray}
\del_l T(\al)
&=&
\mbox{p.v.}
\int
\dal
\left[(g(\al)-g(\al-\be))f(\al-\be)
\right]
\del_{\beta_k}\left(
	K(X(\al)-X(\al-\be))
\right)d\be\\
& &-
\mbox{p.v.}
\int
\del_{\beta_k}
\left[(g(\al)-g(\al-\be))f(\al-\be)\right]
	\dal
	[K(X(\al)-X(\al-\be))]
d\be.\nn
\end{eqnarray}
Now, we use the next identity, following just from the chain rule
\begin{eqnarray}
\na_{\al}
(K(X(\al)-X(\al-\be)))
&=&
(\na X(\al)-\na X(\al-\be)) \na K(X(\al)-X(\al-\be)),\nn\\
&=&
(\na X(\al)-\na X(\al-\be)) (\na X(\al-\be))^{-1}\na_\be( K(X(\al)-X(\al-\be)))\nn.
\end{eqnarray}
Using this we obtain
\begin{eqnarray}
\del_l T(\al)
&=&
\mbox{p.v.}
\int
\dal
\left[(g(\al)-g(\al-\be))f(\al-\be)
\right]
\del_{\beta_k}\left(
	K(X(\al)-X(\al-\be))
\right)d\be\\
& &-
\sum^3_{i,j=1}
\mbox{p.v.}
\int
\del_{\beta_k}
\left[(g(\al)-g(\al-\be))f(\al-\be)\right]\nn\\
& &\hspace{.8in}\times
(\del_l X_i(\al)-\del_l X_i(\al-\be))
(\na X(\al-\be))^{-1}_{ij}
\del_{\be_j}(K(X(\al)-X(\al-\be))
d\be.\nn
\end{eqnarray}
Thus we have an expression for the derivative in terms of integrals with kernels just of the form \eqref{ep001}. Given higher differentiability in $f$ and $g$, one can repeat the process over and over to calculate higher order derivatives of $T(\al)$, in terms of the corresponding derivatives of $f$, $g$, and $X$, integrated against the one type of integral kernel. The main gain of using this strategy is just that we essentially only need to verify $L^2$ estimates for the one kernel that appears.

This outlines the idea behind the proposition that follows. The proposition is used to verify that computations that arise for us in our estimates of $\lVert F(X)\rVert_{H^s}$ can basically be handled in the same way, with the resulting principal value integrals converging in the $L^2$ sense.

\begin{prop}\label{prop001}
Fix $X\in B_\delta$. We define the spaces
\begin{eqnarray}
\begin{array}{ccc}
\cZ_1=C^1\times C^1\times H^1, &\cZ_2=C^1\times H^1\times C^2, &\cZ_3=H^1\times C^2\times C^2,\\
\cZ_1'=L^\infty\times L^\infty\times L^2, &\cZ_2'=L^\infty\times L^2\times C^1, &\cZ_3'=L^2\times C^1\times C^1,
\end{array}
\end{eqnarray}
where by $C^k$ we mean the set of $k$-times continuously differentiable functions on $\R^3$ with bounded partial derivatives up to order $k$, and we define for $i=1,2,$ and $3$,
\ba
\lVert
(h,g,f)
\rVert_{\cZ_i'}:=
\begin{cases}
\lVert h\rVert_{L^\infty}\cdot\lVert g\rVert_{L^\infty}\cdot\lVert f\rVert_{L^2} & \mbox{if }i=1,\\
\lVert h\rVert_{L^\infty}\cdot\lVert g\rVert_{L^2}\cdot\lVert f\rVert_{C^1} & \mbox{if }i=2,\\
\lVert h\rVert_{L^2}\cdot\lVert g\rVert_{C^1}\cdot\lVert f\rVert_{C^1} & \mbox{if }i=3,
\end{cases}
\quad\quad\mbox{for }(h,g,f)\in \cZ_i'.\quad\quad
\ea
Fix a triple $(h,g,f)\in \cZ_1'$, $\cZ_2'$, or $\cZ_3'$ for which $\mbox{supp}(f)$ is bounded, and where $h,g$, and $f$ are functions defined almost everywhere on $\R^3$.
For any $\eps>0$ we write $B^c_\eps(\al)$ to indicate the complement in $\R^3$ of the ball $B_\eps(\al)$ about $\al\in\R^3$.

(i)
For $\eps>0$ we have the a.e. pointwise-defined function
\ba\label{m007}
(T^\eps_{X}[h,g,f])(\al)
:=
h(\al)\int_{B^c_\eps(\al)}(g(\al)-g(\be))f(\be)\tilde K_X(\al,\be)d\be,
\ea
and we have
\ba\label{m008}
\lim_{\eps\to 0}(T^\eps_{X}[h,g,f])\quad\mbox{ converges in }L^2.
\ea
Defining the limit
\ba
(T^0_{X}[h,g,f]):=\lim_{\eps\to 0}(T^\eps_{X}[h,g,f])\quad\mbox{ in }L^2,
\ea
we have
\ba
\lVert
T^0_{X}[h,g,f]
\rVert_{L^2}
\leq C(\emph{supp}(f))\lVert(h,g,f)\rVert_{\cZ_i'},
\ea
for any $(h,g,f)\in \cZ_i'$, for $i=1,2,$ or $3$. In the above bound the factor in front depends only on the support of $f$.

(ii)
If in addition the triple $(h,g,f)$ is in $\cZ_i$ for some $i=1,2,$ or $3$, we have that $(T^0_{X}[h,g,f])$ is in $H^1$, and for $k=1,2$, or $3$, and $l=1,2$, or $3$, we have the formula
\begin{eqnarray}\label{m009}\\
\del_{l}((T^0_{X,k}[h,g,f])(\al))
&=&
\lim_{\eps\to 0}
\bigg(
\int_{B^c_\eps(0)}
\dal\big(h(\al)(g(\al)-g(\al-\be))f(\al-\be)\big)\tilde K^k_X(\al,\al-\be)
d\be\nn\\
& &
\hspace{0in}-
h(\al)\sum^3_{i,j=1}
\int_{B^c_\eps}
\del_{\be_k}
\big((g(\al)-g(\al-\be))f(\al-\be)\big)(\del_l X_i(\al)-\del_l X_i(\al-\be))\nn\\
& &\hspace{1.4in}\times (\na X(\al-\be))^{-1}_{ij}\tilde K^k_X(\al,\al-\be)
d\be
\bigg)\quad\mbox{ in }L^2.\nn
\end{eqnarray}

\end{prop}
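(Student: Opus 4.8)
The plan is to treat (i) by reducing everything to the singular integral operators $T^\eps_X$, $T_X$ of Definition \ref{def001}, whose $L^2$ and $L^\infty$ mapping bounds \eqref{m005}, \eqref{m006} and whose $\eps\to0$ convergence (Lemma \ref{lemmaA01}) are already available. For each component $k$ one splits
\ba
(T^\eps_{X,k}[h,g,f])(\al)=h(\al)g(\al)(T^\eps_{X,k}f)(\al)-h(\al)(T^\eps_{X,k}(gf))(\al).
\ea
In the case $(h,g,f)\in\cZ_1'$ both terms are bounded in $L^2$ directly by \eqref{m006} together with $\lVert gf\rVert_{L^2}\le\lVert g\rVert_{L^\infty}\lVert f\rVert_{L^2}$; in the case $\cZ_2'$ the first term is bounded by $\lVert h\rVert_{L^\infty}\lVert g\rVert_{L^2}\lVert T^\eps_X f\rVert_{L^\infty}$ via \eqref{m005}, after which $\lVert f\rVert_{L^2}\le C(\mathrm{supp}\,f)\lVert f\rVert_{C^1}$ is used; in the case $\cZ_3'$ one does not split at all, since $g\in C^1$ and the convexity of $\R^3$ give $|g(\al)-g(\be)|\le\lVert\na g\rVert_{L^\infty}|\al-\be|$, so $(g(\al)-g(\be))\tilde K_X(\al,\be)$ is only weakly singular ($O(|\al-\be|^{-2})$, locally integrable in three dimensions), the $\be$-integral is absolutely convergent and bounded in $L^\infty(\al)$ by $C(\mathrm{supp}\,f)\lVert\na g\rVert_{L^\infty}\lVert f\rVert_{L^\infty}$, and the outer factor $h\in L^2$ yields the $L^2$ estimate. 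Convergence in $L^2$ as $\eps\to0$, and hence the same bound for the limit, then follows from the $\eps\to0$ convergence of $T^\eps_X$ in the first two cases and from dominated convergence in the third. The compact support of $f$ enters precisely to turn $C^1$-norms into $L^2$-norms and to keep the weakly singular integrals in $L^2(\al)$.

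For (ii), fix $\eps>0$ and change variables $\be\mapsto\al-\be$, so that
\ba
(T^\eps_{X,k}[h,g,f])(\al)=h(\al)\int_{B^c_\eps}(g(\al)-g(\al-\be))f(\al-\be)\,\tilde K^k_X(\al,\al-\be)\,d\be,
\ea
the region $B^c_\eps=B^c_\eps(0)$ now being independent of $\al$. Since $X-Id\in H^s(\R^3)$ with $s\ge6$ embeds in $C^4$, for $|\be|>\eps$ the kernel $\tilde K^k_X(\al,\al-\be)=\del_{\be_k}\big(K(X(\al)-X(\al-\be))\big)$ is bounded together with its $\al$-derivatives (Lemma \ref{kernelbound'}), so $\del_{\al_l}$ passes under the integral with \emph{no} boundary contribution — this is the purpose of the change of variable. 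The term in which $\del_{\al_l}$ falls on $h,g,f$ is already the first term of \eqref{m009}. For the term in which $\del_{\al_l}$ falls on the kernel, one uses the chain-rule identity
\ba
\na_\al\big(K(X(\al)-X(\al-\be))\big)=\big(\na X(\al)-\na X(\al-\be)\big)\big(\na X(\al-\be)\big)^{-1}\na_\be\big(K(X(\al)-X(\al-\be))\big)
\ea
to rewrite the $\al$-derivative of the singular kernel $K(X(\al)-X(\al-\be))$ as a linear combination of its $\be$-derivatives against the bounded factors $\del_l X(\al)-\del_l X(\al-\be)$ and $(\na X(\al-\be))^{-1}$, and then integrates by parts in $\be_k$ over $B^c_\eps$. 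This transfers the surviving $\be$-derivative onto $(g(\al)-g(\al-\be))f(\al-\be)$ — which is exactly why $\cZ_i$ asks for one more derivative on the relevant slot than $\cZ_i'$ — and produces the second group of terms in \eqref{m009} together with a surface integral over $\{|\be|=\eps\}$.

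It remains to let $\eps\to0$. The surface term tends to $0$ in $L^2(\al)$: on $\{|\be|=\eps\}$ the factor $\del_{\al_l}\big(K(X(\al)-X(\al-\be))\big)$ is $O(|\be|^{-2})$ (a first derivative of $K$, of order $|\be|^{-3}$, against the difference $\na X(\al)-\na X(\al-\be)=O(|\be|)$), so the surface integrand is $O(\eps^{-2})$ times the remaining factors; estimating the spherical integral of those by Minkowski's integral inequality, combined with the pointwise bound $|w(\al)-w(\al-\be)|\le C\eps$ when $w\in C^1$ and the $L^2$-translation bound $\lVert w(\cdot)-w(\cdot-\be)\rVert_{L^2}\le\eps\lVert\na w\rVert_{L^2}$ when $w\in H^1$ (applied to whichever of $h,g,f$ carries the Sobolev regularity, or directly to the untranslated $H^1$ factor in the $\cZ_1$ configuration), yields a bound of order $\eps$. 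The bulk terms converge in $L^2$ by part (i): expanding the product rule in \eqref{m009}, recombining $\del_l g(\al)$ with $\del_l g(\al-\be)$ (and the analogous pairs) into single differences, absorbing the bounded factors $\del_k X_i(\al-\be)$ and $(\na X(\al-\be))^{-1}_{ij}$ into the ``$f$''-slot, and reading $\del_l X_i(\al)-\del_l X_i(\al-\be)$ as a ``$g$''-slot, exhibits each bulk integral as a $T^\eps_X[\tilde h,\tilde g,\tilde f]$ with $(\tilde h,\tilde g,\tilde f)$ in one of $\cZ_1',\cZ_2',\cZ_3'$, hence convergent in $L^2$ by \eqref{m008}. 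Since $T^\eps_X[h,g,f]\to T^0_X[h,g,f]$ and $\del_l(T^\eps_X[h,g,f])$ both converge in $L^2$, closedness of the weak-differentiation operator on $L^2$ gives $T^0_X[h,g,f]\in H^1$ with derivative equal to the asserted limit.

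The main obstacle is the bookkeeping of singularity orders through the integration by parts: one must check that \emph{every} integral produced — especially those in which a $\be$-derivative has landed on $(g(\al)-g(\al-\be))f(\al-\be)$ — genuinely matches a $\cZ_i'$ template of part (i) (which is what pins down the precise derivative counts in the definition of the $\cZ_i$), and one must handle uniformly across the three configurations the fact that the distinguished factor is only $H^1$, with no pointwise meaning on the spheres $\{|\be|=\eps\}$; this is what forces the Minkowski/$L^2$-translation argument for the boundary term in place of a naive pointwise estimate, and is also why, for fixed $\eps$, $\del_{\al_l}(T^\eps_X[h,g,f])$ must be understood as a weak derivative.
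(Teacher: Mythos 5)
Your proposal is correct and takes essentially the same route as the paper's proof: part (i) rests on the $L^2$/$L^\infty$ mapping and $\eps\to0$ convergence properties of the basic operators $T^\eps_{X,k}$ (the paper splits every case as $hg\,T^\eps_{X,k}f-h\,T^\eps_{X,k}(gf)$, while you treat $\cZ_3'$ directly via the Lipschitz bound on $g$ and the resulting weakly singular kernel, which also works), and part (ii) uses the same change of variables, the same chain-rule conversion of $\al$-derivatives of the kernel into $\be$-derivatives, integration by parts with the $\{|\be|=\eps\}$ surface term killed by the Minkowski/$L^2$-translation estimate, identification of the bulk terms with part-(i)/basic-operator expressions, and closedness of weak differentiation. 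The only substantive difference is presentational: the paper carries out the fixed-$\eps$ differentiation by pairing against test functions, which is the rigorous form of your ``differentiate under the integral, understood weakly,'' and it resolves the double-difference bulk term by a further splitting into the simple operators $T^\eps_{X,j}$ applied to products --- the bookkeeping point you correctly flag.
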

\begin{proof}

Fix a triple $(h,g,f)\in \cZ_1'$, $\cZ_2'$, or $\cZ_3'$, with each of $f$, $g$, and $h$ defined almost everywhere, and with $\mbox{supp}(f)$ bounded. Fix an $l=1,2$, or $3$. The first thing we note is that for each $\eps>0$, for each $\al$ for which both $h(\al)$ and $g(\al)$ are defined (guaranteed to hold for almost all $\al$), the integrand in the definition of $(T^\eps_{X,k}[h,g,f])(\al)$ is clearly integrable over the region of integration. Thus this gives a pointwise defined function almost everywhere. 

For \eqref{m008}, notice that for each $\eps>0$, a change of variables gives the following, where we simply write $B^c_\eps$ to indicate the complement of the ball $B_\eps(0)$ about the origin.
\begin{eqnarray}\label{m009001}
(T^\eps_{X,k}[h,g,f])(\al)&=&
h(\al)g(\al)\int_{B^c_\eps} f(\al-\be)\del_{\be_k}(K(\theta(\al,\be)))d\be\\
& &-h(\al)\int_{B^c_\eps}g(\al-\be)f(\al-\be)\del_{\be_k}(K(\theta(\al,\be)))d\be,\nn\\
&=&
h(\al)g(\al)\int_{(B_\eps(\al))^c} f(\be)\tilde K^k_X(\al,\be) d\be-h(\al)\int_{(B_\eps(\al))^c}g(\be)f(\be)\tilde K^k_X(\al,\be)d\be,\nn\\
&=&
h(\al)g(\al)( T^\eps_{X,k} f)(\al)-h(\al)(T^\eps_{X,k}(gf))(\al).\nn
\end{eqnarray}
Suppose $(h,g,f)\in \cZ_1'$. Then we find $f\in L^2$ and $gf\in L^2$, so $T^\eps_{X,k} f$ and $T^\eps_{X,k}(gf)$ both converge in $L^2$ as $\eps$ tends to zero, in view of Definition \ref{def001} and Remark \ref{rem001}. At the same time $h$ and $g$ are in $L^\infty$, so the entire right hand side of the last equality of \eqref{m009001} converges in $L^2$ as $\eps$ tends to zero. One easily verifies then
\ba
\lVert
T^0_{X,k}[h,g,f]
\rVert_{L^2}
\leq
C\lVert
(h,g,f)
\rVert_{\cZ_1'}
\ea

Suppose $(h,g,f)\in \cZ_2'$. Then $f\in C^1_c$ and $hg\in L^2$. So by Definition \ref{def001} and Remark \ref{rem001}, $T^\eps_{X,k} f$ converges in $L^\infty$ and thus $(hg\cdot ( T^\eps_{X,k} f))$ converges in $L^2$ norm as $\eps$ tends to zero. Also, we have $h\in L^\infty$ and $gf\in L^2$, so $(h\cdot (T^\eps_{X,k}(gf))$ also converges in $L^2$ in the limit, and again we find the entire right hand side converges in $L^2$. In this case we find
\begin{eqnarray}
\lVert
T^0_{X,k}[h,g,f]
\rVert_{L^2}
&\leq&
\lVert h \rVert_{L^\infty}
\lVert g \rVert_{L^2}(\lVert f \rVert_{C^1}+\lVert f \rVert_{L^2})
+
\lVert h \rVert_{L^\infty}
\lVert g \rVert_{L^2}\lVert f \rVert_{L^\infty},\\
&\leq&
C(\mbox{supp}(f))\lVert(h,g,f)
\rVert_{\cZ_2'}.\nn
\end{eqnarray}

Now suppose $(h,g,f)\in \cZ_3'$. Then we have $hg\in L^2$ and $f\in C^1_c$, and we have $h\in L^2$ and $gf\in C^1_c$, so that we find the right hand side converges in $L^2$ as $\eps$ tends to zero similarly. Finally, we have
\begin{eqnarray}
\lVert
T^0_{X,k}[h,g,f]
\rVert_{L^2}
&\leq&
\lVert h \rVert_{L^2}
\lVert g \rVert_{L^\infty}(\lVert f \rVert_{C^1}+\lVert f \rVert_{L^2})
+
\lVert h \rVert_{L^2}
(
\lVert g \rVert_{C^1}\lVert f \rVert_{C^1}
+
\lVert g \rVert_{L^\infty}\lVert f \rVert_{L^2})
,\nn\\
&\leq&
C(\mbox{supp}(f))\lVert(h,g,f)
\rVert_{\cZ_3'}.
\end{eqnarray}

This completes the proof of part (i). Now we begin with part (ii).

Suppose the triple $(h,g,f)$ is in one of $\cZ_1$, $\cZ_2$, or $\cZ_3$. First we consider the $\del_l$ derivative for a fixed $\eps>0$ of $(T^\eps_{X,k}[h,g,f])$. We claim that for any test function $\varphi\in C^\infty_c$ the following holds, where $\la \cdot,\cdot\ra$ indicates the $L^2$ inner product.
\begin{eqnarray}\label{m009002}
& &-\la\del_l \varphi, (T^\eps_{X,k}[h,g,f])\rangle=\\
& &\hspace{.5in}
\int \varphi(\al)
\int_{B^c_\eps}
\dal\big(h(\al)(g(\al)-g(\al-\be))f(\al-\be)\big)\del_{\be_k}(K(\theta(\al,\be)))
d\be d\al \nn\\
& &\hspace{.4in}
-\sum^3_{i,j=1}\int \bigg(\varphi(\al)
h(\al)
\int_{B^c_\eps}
\del_{\be_k}
\big((g(\al)-g(\al-\be))f(\al-\be)\big)\nn\\
& &\hspace{1.6in}\times(\del_l X_i(\al)-\del_l X_i(\al-\be))
(\na X(\al-\be))^{-1}_{ij}\del_{\be_j}(K(\theta(\al,\be)))
d\be
\bigg)d\al\nn\\
& &\hspace{.5in}
+\la \varphi , \tilde B^\eps_{\textit{boundary}}\ra,\nn
\end{eqnarray}
where $\tilde B^\eps_{\textit{boundary}}$ is a boundary term resulting from an integration by parts in an integral over $\be$ that arises in the calculation. Compare this with \eqref{m009}. We need to verify that \eqref{m009002} holds, where the right hand side forms an $L^2$ pairing of $\varphi$ with a function in $L^2$. We also need to verify that this function that is paired with $\varphi$ converges to an element in $L^2$ as $\eps$ tends to zero, and that $\tilde B^\eps_{\textit{boundary}}$ tends to zero in $L^2$. If we can show these things, it follows that
\ba
-\la\del_l \varphi, (T^0_{X,k}[h,g,f])\rangle=-\lim_{\eps\to 0}\la\del_l \varphi, (T^\eps_{X,k}[h,g,f])\rangle=\la \varphi,\textrm{RHS of \eqref{m009}} \ra.
\ea
Since $\varphi$ was an arbitrary test function, this implies that the right hand side of \eqref{m009} is the weak derivative of $T^0_{X,k}[h,g,f]$, and that this derivative is in $L^2$. This allows us to conclude the proof of part (ii).

Now we give the calculation for \eqref{m009002}. For $\eps>0$, in the following we use $\chi_\eps$ to indicate the identifier function $1_{B^c_\eps}$ for the set $B^c_\eps=\R^3\setminus (B_\eps(0))$.
\begin{eqnarray}\label{m009003}
& &-\la
\del_l \varphi,
T^\eps_{X,k}[h,g,f])
\ra\\
& &\hspace{0.7in}=
-\int\del_l \vp(\al)h(\al)
\int_{B^c_\eps}
(g(\al)-g(\al-\be))f(\al-\be)\dbk (K(\theta(\al,\be)))d\be d\al,\nn\\
& &\hspace{0.7in}=
-\int\int
\del_l\vp(\al)h(\al)(g(\al)-g(\al-\be))f(\al-\be)\chi_\eps(\be) \dbk(K(\theta(\al,\be)))d\be d\al,\nn\\
& &\hspace{0.7in}=
-\int\int
\del_l\vp(\al)h(\al)(g(\al)-g(\al-\be))f(\al-\be)\chi_\eps(\be) \dbk(K(\theta(\al,\be)))d\al d\be,\nn\\
& &\hspace{0.7in}=
\int\int
\vp(\al)\dal\bigg(h(\al)(g(\al)-g(\al-\be))f(\al-\be)\chi_\eps(\be) \dbk(K(\theta(\al,\be)))\bigg)d\al d\be,\nn\\
& &\hspace{0.7in}=
\int\int
\vp(\al)\dal\bigg(h(\al)(g(\al)-g(\al-\be))f(\al-\be)\bigg)\chi_\eps(\be) \dbk(K(\theta(\al,\be)))d\al d\be\nn\\
& &\hspace{0.9in}+
\int\int
\vp(\al)(h(\al)(g(\al)-g(\al-\be))f(\al-\be))\chi_\eps(\be) \dbk(\dal (K(\theta(\al,\be))))d\al d\be,\nn\\
& &\hspace{0.7in}=
 A^\eps+B^\eps.\label{m009003c}
\end{eqnarray}
Regarding the third line in \eqref{m009003}, since the triple $(h,g,f)$ is in $\cZ_1$, $\cZ_2$, or $\cZ_3$, and $f$ is compactly supported, changing the order of integration is justified.

For $A^\eps$, we have
\begin{eqnarray}\label{m009004}\\
A^\eps
&=&
\int
\vp(\al)
\int_{B^c_\eps}
\bigg(
\del_l h(\al) (g(\al)-g(\al-\be))f(\al-\be) + h(\al)(\del_l g(\al)-\del_l g(\al-\be))f(\al-\be)\nn\\
& &\hspace{1in}
+h(\al)(g(\al)-g(\al-\be))\del_l f(\al-\be)
\bigg)
\dbk(K(\theta(\al,\be)))d\be d\al,\nn\\
&=&
\int\vp(\al)
\bigg(
(\del_l h(\al)g(\al)+h(\al)\del_l g(\al))\int_{B^c_\eps}f(\al-\be)\dbk(K(\theta(\al,\be)))d\be\nn\\
& &\hspace{1in}
-\del_l h(\al)
\int_{B^c_\eps}g(\al-\be)f(\al-\be)\dbk(K(\theta(\al,\be)))d\be\nn\\
& &\hspace{1in}
-h(\al)
\int_{B^c_\eps}(\del_l g(\al-\be)f(\al-\be)+g(\al-\be)\del_l f(\al-\be)) \dbk(K(\theta(\al,\be)))d\be\nn\\
& &\hspace{1in}
+h(\al)g(\al)
\int_{B^c_\eps}\del_l f(\al-\be) \dbk(K(\theta(\al,\be)))d\be
\bigg)d\al,\nn\\
&=&
\int \vp(\al)\left(A^\eps_1(\al)+A^\eps_2(\al)+A^\eps_3(\al)+A^\eps_4(\al)\right)d\al.\nn
\end{eqnarray}
After making the change of variables replacing $\be$ with $(\al-\be)$ one then finds
\begin{eqnarray}\label{m009005}
A^\eps_1
&=&
(\del_l h \cdot g +h \cdot \del_l g )\cdot T^\eps_{X,k}f,\\
A^\eps_2
&=&
-\del_l h \cdot T^\eps_{X,k}(g\cdot f),\nn\\
A^\eps_3
&=&
-h \cdot T^\eps_{X,k}(\del_l g \cdot f+ g \cdot \del_l f),\nn\\
A^\eps_4
&=&
h\cdot g \cdot T^\eps_{X,k}(\del_l f).\nn
\end{eqnarray}
One uses Remark \ref{rem001} to verify that since $(h,g,f)\in \cZ_i$ for $i=1,2$, or $3$, with $\mbox{supp}(f)$ bounded, each of the $A^\eps_j$, $j=1,\ldots,4$, is in $L^2$.

For $B^\eps$ in \eqref{m009003c}, we have
\begin{eqnarray}\label{m009006}
B^\epsilon
&=&
\int\int_{B^c_\eps}
\vp(\al)h(\al)(g(\al)-g(\al-\be))f(\al-\be)\\
& &\hspace{.8in}\times \dbk
\bigg(
(\del_l X(\al)-\del_l X(\al-\be))(\na K)(\theta(\al,\be))
\bigg)
d\be d\al,\nn\\
&=&
-\int\int_{B^c_\eps}
\vp(\al)h(\al)\dbk
\bigg((g(\al)-g(\al-\be))f(\al-\be)
\bigg)\nn\\
& &\hspace{.8in}
\times(\del_l X(\al)-\del_l X(\al-\be))(\na K)(\theta(\al,\be))
d\be d\al\nn\\
& &+
\int\int_{|\be|=\eps}
\vp(\al)h(\al)(g(\al)-g(\al-\be))f(\al-\be)
\nn\\
& &\hspace{.8in}
\times
(\del_l X(\al)-\del_l X(\al-\be))(\na K)(\theta(\al,\be))\nu^k_{\textit{out}}(\be)
d\sigma(\be) d\al\nn,\\
&=&
B^\eps_{\textit{int}}+B^\eps_{\textit{boundary}}.\nn
\end{eqnarray}
In the above expression, we denote by $\nu_{out}$ the outward pointing normal. We note that the integration by parts above works since we can guarantee that $(g(\al)-g(\al-\be))f(\al-\be)$ is at least in $H^1_\be$ or in $C^1_\be$. Now we consider $B^\eps_{\textit{int}}$. Using the chain rule and the fact that $\na_\be \theta(\al,\be)=\na X(\al-\be)$ is invertible for any $\al,\be$, we can write this as
\begin{eqnarray}\label{m009007}
B^\eps_{\textit{int}}&=&
-\int\int_{B^c_\eps}
\vp(\al)h(\al)\dbk
\bigg((g(\al)-g(\al-\be))f(\al-\be)
\bigg)
(\del_l X(\al)-\del_l X(\al-\be))\\
& &
\hspace{1.7in}\times
(\na_\be\theta(\al,\be))^{-1}
\na_\be\big( K(\theta(\al,\be))\big)
d\be d\al\nn\\
& &=
-\sum^3_{i,j=1}\int\vp(\al)
h(\al)\int_{B^c_\eps}
\dbk
\bigg((g(\al)-g(\al-\be))f(\al-\be)
\bigg)(\del_l X_i(\al)-\del_l X_i(\al-\be))
\nn\\
& &\hspace{1.7in}
\times
(\na X(\al-\be))^{-1}_{ij}
\del_{\be_j}(K(\theta(\al,\be)))d\be d\al.\nn
\end{eqnarray}
Now we compute for $i=1,2$, and $3$,
\begin{eqnarray}\label{m009008}
& &
\dbk
\big[(g(\al)-g(\al-\be))f(\al-\be)
\big]
(\del_l X_i(\al)-\del_l X_i(\al-\be))\\
& &\hspace{0in}=
\big[\del_k g(\al-\be)f(\al-\be)-g(\al)\del_k f(\al-\be)+g(\al-\be)\del_k f(\al-\be)\big]
(\del_l X_i(\al)-\del_l X_i(\al-\be)),\nn
\end{eqnarray}
and then distributing multiplication gives
\begin{eqnarray}\label{m009009}\\
\textrm{RHS of \eqref{m009008}}=\hspace{.1in}
\del_l X_i(\al)\del_k g(\al-\be)f(\al-\be)
&-&\del_k g(\al-\be)f(\al-\be)\del_l X_i(\al-\be)\nn\\
-g(\al)\del_l X_i(\al) \del_k f(\al-\be)
&+&g(\al)\del_k f(\al-\be)\del_l X_i(\al-\be)\nn\\
+\del_l X_i(\al)g(\al-\be)\del_k f(\al-\be)
&-&g(\al-\be)\del_k f(\al-\be)\del_l X_i(\al-\be).\nn
\end{eqnarray}
We arrange this as
\begin{eqnarray}\label{m009010}\\
\textrm{RHS of \eqref{m009009}}
&=&
\del_l X_i(\al)\big[\del_k g(\al-\be)f(\al-\be)+g(\al-\be)\del_k f(\al-\be)\big]\nn\\
& &-\big[\del_k g(\al-\be)f(\al-\be)+g(\al-\be)\del_k f(\al-\be)\big]\del_l X_i(\al-\be)\nn\\
& &+g(\al)\del_k f(\al-\be) \del_l X_i(\al-\be)\nn\\
& &-g(\al)\del_l X_i(\al) \del_k f(\al-\be)\nn.
\end{eqnarray}
Using \eqref{m009008}-\eqref{m009010} in \eqref{m009007}, we get
\begin{eqnarray}\label{m009011}
& &B^\eps_{\textit{int}}=-\sum^3_{i,j=1}
\int\vp(\al)\bigg[
h(\al)\del_l X_i(\al)\\
& &
\hspace{.6in}\times
\int_{B^c_\eps}(\del_k g(\al-\be)f(\al-\be)+g(\al-\be)\del_k f(\al-\be))
(\na X(\al-\be))^{-1}_{ij}
\del_{\be_j}(K(\theta(\al,\be)))d\be\nn\\
& &\hspace{1.3in}
-h(\al)\int_{B^c_\eps}
(\del_k g(\al-\be)f(\al-\be)+g(\al-\be)\del_k f(\al-\be))\del_l X_i(\al-\be)\nn\\
& &\hspace{2.2in}
\times(\na X(\al-\be))^{-1}_{ij}
\del_{\be_j}(K(\theta(\al,\be)))d\be\nn\\
& &\hspace{1.3in}
+h(\al)g(\al)\int_{B^c_\eps}
\del_k f(\al-\be) \del_l X_i(\al-\be)
(\na X(\al-\be))^{-1}_{ij}
\del_{\be_j}(K(\theta(\al,\be)))d\be\nn\\
& &\hspace{1.3in}
-h(\al)g(\al)\del_l X_i(\al)\int_{B^c_\eps}
\del_k f(\al-\be)
(\na X(\al-\be))^{-1}_{ij}
\del_{\be_j}(K(\theta(\al,\be)))d\be
\bigg]d\al,\nn\\
& &=-\sum^3_{i,j=1}\int\vp(\al)(B^\eps_{1;ij}(\al)+B^\eps_{2;ij}(\al)+B^\eps_{3;ij}(\al)+B^\eps_{4;ij}(\al))d\al.\nn
\end{eqnarray}
With the change of variables replacing $\be$ with $(\al-\be)$, we see that the $B^\eps_{m;ij}$, $i,j=1,2,3$, $m=1,\ldots,4$ are thus given by
\begin{eqnarray}\label{m009012}
B^\eps_{1;ij}&=&
h\cdot\del_l X_i\cdot 
T^\eps_{X,j}\bigg(
(\del_k g\cdot f + g \cdot \del_k f)\cdot (\na X)^{-1}_{ij}
\bigg)
\\
B^\eps_{2;ij}&=&
-h\cdot
T^\eps_{X,j}\bigg(
(\del_k g \cdot f + g \cdot \del_k f)\cdot \del_l X_i\cdot (\na X)^{-1}_{ij}
\bigg)
\nn\\
B^\eps_{3;ij}&=&
h\cdot g\cdot
T^\eps_{X,j}\bigg(
\del_k f \cdot \del_l X_i \cdot (\na X)^{-1}_{ij}
\bigg)
\nn\\
B^\eps_{4;ij}&=&
-h\cdot g\cdot \del_l X_i\cdot 
T^\eps_{X,j}\bigg(
\del_k f\cdot (\na X)^{-1}_{ij}
\bigg)
\nn
\end{eqnarray}
One can consider each of the cases $(h,g,f)\in\cZ_1$, $(h,g,f)\in\cZ_2$, and $(h,g,f)\in\cZ_3$, and use the facts that $\mbox{supp}(f)$ is bounded and that $\na X$ and $(\na X)^{-1}$ are bounded in $C^1$, to bound each $B^\eps_{m;ij}$ in $L^2$.

Now we return to \eqref{m009006} to handle $B^\eps_{\textit{boundary}}$. We have
\begin{eqnarray}\label{m009013}
& &B^\eps_{\textit{boundary}}=\int
\vp(\al)h(\al)\\
& &\hspace{.2in}\times
\int_{|\be|=\eps}
(g(\al)-g(\al-\be))f(\al-\be)
(\del_l X(\al)-\del_l X(\al-\be))(\na K)(\theta(\al,\be))
\nu^k_{\textit{out}}(\be)
d\sigma(\be) d\al,\nn\\
& &\hspace{.6in}=
\int\vp(\al)\tilde B^\eps_{\textit{boundary}}(\al)d\al,\nn
\end{eqnarray}
where
\begin{eqnarray}\label{m009014}
& &\tilde B^\eps_{\textit{boundary}}(\al)=h(\al)C^\eps(\al),\\
& &C^\eps(\al)=\int_{|\be|=\eps}(g(\al)-g(\al-\be))f(\al-\be)
(\del_l X(\al)-\del_l X(\al-\be))(\na K)(\theta(\al,\be))
\nu^k_{\textit{out}}(\be)
d\sigma(\be),\nn
\end{eqnarray}
We comment that if we are in the case that the triple $(h,g,f)$ is in $\cZ_2$, we only assert $C^\eps(\al)$ is defined for almost all $\al$. Note that a similar caveat must be made about $\tilde B^\eps_{\textit{boundary}}(\al)$.

Now we check that $\tilde B^\eps_{\textit{boundary}}(\al)$ as defined is in fact in $L^2$. We consider first the cases that $(h,g,f)\in \cZ_1$ or $\cZ_2$.
\begin{eqnarray}\label{m009015}
& &\lVert \tilde B^\eps_{\textit{boundary}}\rVert_{L^2}\leq\lVert h\lVert_{L^\infty}\lVert C^\eps\rVert_{L^2},\\
& &\lVert C^\eps\rVert_{L^2}\leq\int_{|\be|=\eps}\lVert (g(\al)-g(\al-\be))f(\al-\be)\rVert_{L^2_\al}\nn\\
& &\hspace{1in}\times
\lVert(\del_l X(\al)-\del_l X(\al-\be))(\na K)(\theta(\al,\be))\rVert_{L^\infty_\al}
d\sigma(\be),\nn
\end{eqnarray}
where for the bound on $\lVert C^\eps\rVert_{L^2}$ we have majorized the quantity by moving the $L^2_\al$ norm inside the surface integral over $\be$, using a generalization of the Minkowski inequality. Then, by using Lemma \ref{kernelbound'} together with the fact that $\theta(\al,\be)=(X(\al)-X(\al-\be))=\be(I+O(\delta))$ and $(\del_l X(\al)-\del_l X(\al-\be))=O(|\be|)$, we get for all $\be\neq 0$ that
\ba\label{m009016}
\lVert(\del_l X(\al)-\del_l X(\al-\be))(\na K)(\theta(\al,\be))\rVert_{L^\infty_\al}\leq \frac{C}{|\be|^2}.
\ea
Meanwhile, the surface measure of the ball of radius $\eps$ is proportional to $\eps^2$, and so by using \eqref{m009016} in \eqref{m009015} we find
\ba\label{m009017}
\lVert C^\eps\rVert_{L^2}
\leq
C'\sup_{|\be|=\eps}\lVert
(g(\al)-g(\al-\be))f(\al-\be)\rVert_{L^2_\al}.
\ea
Suppose we are in the case that $(h,g,f)\in\cZ_1$. Then we can bound the right hand side of \eqref{m009017} by the finite quantity
\ba\label{m009018}
c_1(\eps):=C'\sup_{|\be|=\eps}\lVert
g(\al)-g(\al-\be)\rVert_{L^\infty_\al} \lVert f\rVert_{L^2}.
\ea
Moreover, note that $c_1(\eps)$ tends to zero as $\eps$ tends to zero, where we have used that $g$ is Lipschitz in this case. Thus we find that $\lim_{\eps\to 0}C^\eps=0$ in $L^2$.

Suppose $(h,g,f)\in\cZ_2$. Then the right hand side of \eqref{m009017} is bounded by the quantity
\ba\label{m009019}
C'\sup_{|\be|=\eps}\lVert
g(\al)-g(\al-\be)\rVert_{L^2_\al} \lVert f\rVert_{L^\infty}\leq
C''\eps\lVert
\na g \rVert_{L^2} \lVert f\rVert_{L^\infty}
=:c_2(\eps).
\ea
Note $c_2(\eps)$ tends to zero as $\eps$ tends to zero. This implies $\lim_{\eps\to 0}C^\eps=0$ in $L^2$.

Now we return to \eqref{m009014} in the case that $(h,g,f)\in\cZ_3$. Then we have the bound for all $\al\in R^3$
\begin{eqnarray}\label{m009020}
& &
\lVert \tilde B^\eps_{\textit{boundary}}\rVert_{L^2}\leq\lVert h\lVert_{L^2}\lVert C^\eps\rVert_{L^\infty},\\
& &| C^\eps(\al)|\leq\int_{|\be|=\eps}| (g(\al)-g(\al-\be))f(\al-\be)
(\del_l X(\al)-\del_l X(\al-\be))(\na K)(\theta(\al,\be))|
d\sigma(\be),\nn
\end{eqnarray}
and since $|(\del_l X(\al)-\del_l X(\al-\be))(\na K)(\theta(\al,\be))|\leq C/|\be|^2$ as we saw before, we then have the following bound.
\begin{eqnarray}\label{m009021}
|C^\eps(\al)|
&\leq&
\int_{|\be|=\eps}
|g(\al)-g(\al-\be)||f(\al-\be)|\frac{C}{|\be|^2}d\sigma(\be),\\
&\leq&
C'\sup_{|\be|=\eps}\lVert
g(\al)-g(\al-\be)
\rVert_{L^\infty_\al}
\lVert f\rVert_{L^\infty},\nn\\
&=:&c_3(\eps).\nn
\end{eqnarray}
Using Lipschitz continuity of $g$, we find that both $c_3(\eps)$ and thus $|C^\eps(\al)|$ are finite, and tend to zero as $\eps$ tends to zero, with the limit for $|C^\eps(\al)|$ uniform in $\al$. So $\lim_{\eps\to 0}C^\eps=0$ in $L^\infty$.

Moreover, in each case among $(h,g,f)\in\cZ_1$, $\cZ_2$, and $\cZ_3$, the corresponding bound \eqref{m009015} or \eqref{m009020} on $C^\eps(\al)$ implies that $\tilde B^\eps_{\textit{boundary}}(\al)$ is in $L^2$ and tends to zero as $\eps$ tends to zero.

Now recall from $\eqref{m009003}$ that we have
\ba
-\la\del_l\vp , T^\eps_{X,k}[h,g,f]\ra=A^\eps+B^\eps,
\ea
where
\ba
A^\eps=\sum^4_{m=1} \la \varphi ,A^\eps_m\ra,
\ea
with the $A^\eps_m$ given by \eqref{m009005}. In addition we have $B^\eps=B^\eps_{\textit{int}}+B^\eps_{\textit{boundary}}$, where
\ba
B^\eps_{\textit{int}}=-\sum^4_{m=1}\sum^3_{i,j=1}\la \vp ,B^\eps_{m;ij}\ra,
\ea
for the $B^\eps_{m;ij}$ given by \eqref{m009012}, and where 
\ba
B^\eps_{\textit{boundary}}=\la\vp,\tilde B^\eps_{\textit{boundary}} \ra,
\ea
so
\ba\label{m009022}
-\la\del_l\vp , T^\eps_{X,k}[h,g,f]\ra
=
\sum^4_{m=1} \la \varphi ,A^\eps_m\ra
-\sum^4_{m=1}\sum^3_{i,j=1}\la \vp ,B^\eps_{m;ij}\ra
+\la\vp,\tilde B^\eps_{\textit{boundary}}\ra.
\ea
Recall that we verified that each of the functions paired with $\varphi$ in the above $L^2$ pairings is indeed in $L^2$. We also verified the $L^2$ limit $\lim_{\eps\to 0}\tilde B^\eps_{\textit{boundary}}=0$. Now we must verify that the remaining quantities paired with $\vp$, i.e. the $A^\eps_m$ and $B^\eps_{m;ij}$, converge to elements in $L^2$ as $\eps$ tends to zero. This can be checked by considering each of the cases $(h,g,f)\in\cZ_1$, $(h,g,f)\in\cZ_2$, and $(h,g,f)\in\cZ_3$, and applying Definition \ref{def001} and Remark \ref{rem001}. The last observation to make is that one can take the quantities paired with $\vp$ in \eqref{m009022}, excluding the $\tilde B^\eps_{\textit{boundary}}$ term, and add them in the corresponding way to produce the quantity in the right hand side of \eqref{m009}, within the limit as $\eps$ tends to zero that is written there, in part (ii) of the statement of the proposition. Using these observations together, one concludes that the statement of part (ii) holds, finishing the proof.

\end{proof}

Soon, we will iterate the differentiation formula given by the above proposition to get our formula for any order of derivatives acting on $F(X)(\al)$. We proceed with a definition of a sequence of polynomials defined in terms of an $X\in B_\delta$ which will be used in our final formula.

\begin{definition}\label{def002}
Fix $X\in B_\delta$. For $l=1,\ldots,d$ we define $\Theta_l(\al,\be):=[\na X(\al) (\na X(\al-\be))^{-1}-I]_l$, where $[\cdot]_l$ denotes the $l$th row of the matrix in brackets. For $\sigma=(\sigma_1,\ldots,\sigma_n)$ for an $n=1,\ldots,s$, with each $\sigma_i = 1,2$, or $3$, we define the $\C^3$-valued functions $P^{\sigma}_m(\al,\be)$ on $\R^3\times\R^3$ for $m=1,\ldots,n$ recursively by 
\begin{eqnarray}\label{m010}\
P^{\sigma}_1(\al,\be)&=&\Theta_{\sigma_1}(\al,\be) \rho(\al-\be),\nn\\
P^{\sigma}_{m+1}(\al,\be)&=&\del_{\al_{\sigma_{m+1}}}P^{\sigma}_m(\al,\be)-\Theta_{\sigma_{m+1}}(\al,\be)\div_\be P^{\sigma}_m(\al,\be)+\Theta_{\sigma_{m+1}}(\al,\be)(\del_{\sigma_m}\ldots\del_{\sigma_1}\rho)(\al-\be),\nn\\
& &\hspace{2.2in}\mbox{ for }1\leq m\leq n-1.\nn
\end{eqnarray}
\end{definition}

We will also need to verify that the polynomials above have a form which is compatible with the kinds of objects appearing in the integrals in the statement of Proposition \ref{prop001}. This is the purpose of the following lemma and corollary.

\begin{lem}\label{m001lem}
Consider fixed $X\in B_\delta$, $\sigma=(\sigma_1,\ldots,\sigma_n)$, some $n=1,\ldots,s$, and $P^\sigma_m(\al,\be)$ as defined in Definition \ref{def002} for $1\leq m \leq n$. Then we can express
\begin{eqnarray}\label{m012}\\
P^\sigma_m(\al,\be)
&=&
\sum_{|\Gamma|=m-1}
\bigg[
\textit{coeff}\;
\bigg(\del^\eta \del_{i} X_{j}(\al)-\del^\eta \del_i X_j(\al-\be)\bigg)
\left(
\prod^{N_1}_{l=1}
\del^{\mu_l}\del_{p_l} X_{q_l}(\al)
\right)\nn\\
& &
\hspace{0.7in}
\times
\left(
\prod^{N_2}_{l=1}
\del^{\nu_l}_\al(\na X(\al-\be))^{-1}_{(r_l,s_l)}
\right)
(\del^\ga \rho)(\al-\be)e_k
\bigg].\nn
\end{eqnarray}
In the above sum the $\Gamma$ are arrays of all of the following quantities which the summation is over: we have multi-indices $\eta$, $\ga$, $\mu_l$ for $l=1,\ldots,N_1$, and $\nu_l$ for $l=1,\ldots,N_2$, we have indices $i$, $j$, $k$, each of which is one of $1$, $2$, or $3$, we have indices (for the corresponding $l$) $p_l$, $q_l$, $r_l$, and $s_l$, each of which is $1$, $2$, or $3$, and we have integers $N_1$ and $N_2$, where $N_1 \geq0$ and $N_2 \geq 1$.
%
We define
\ba
|\Gamma|=|\eta|+|\ga|+\sum^{N_1(\Gamma)}_{l=1}|\mu_l|+\sum^{N_2(\Gamma)}_{l=1}|\nu_l|.
\ea
In the summation in \eqref{m012}, by $\textit{coeff}$ we denote a coefficient which depends on $\Gamma$; its exact value is not needed.
\end{lem}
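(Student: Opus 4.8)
The plan is to prove the formula \eqref{m012} by induction on $m$, which matches the recursive definition of $P^\sigma_m$ in Definition \ref{def002}. For the base case $m=1$, we have $P^\sigma_1(\al,\be)=\Theta_{\sigma_1}(\al,\be)\rho(\al-\be)$, where $\Theta_l(\al,\be)=[\na X(\al)(\na X(\al-\be))^{-1}-I]_l$. First I would rewrite $\na X(\al)(\na X(\al-\be))^{-1}-I = (\na X(\al)-\na X(\al-\be))(\na X(\al-\be))^{-1}$; this immediately exhibits $\Theta_{\sigma_1}$ as a sum over components $i$ of terms $(\del_i X_j(\al)-\del_i X_j(\al-\be))\cdot(\na X(\al-\be))^{-1}_{(i,\,\cdot\,)}$, i.e.\ of the form claimed in \eqref{m012} with $|\Gamma|=0$: no $\del^\mu \del X(\al)$ factors ($N_1=0$), one inverse-matrix factor ($N_2=1$ with $\nu_1=0$), the difference factor $\del_iX_j(\al)-\del_iX_j(\al-\be)$ with $\eta=0$, $\ga=0$ for $\rho$, and the coefficient equal to $1$. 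So the base case holds.

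For the inductive step, assume \eqref{m012} holds for $P^\sigma_m$ with $|\Gamma|=m-1$, and examine the three pieces of $P^\sigma_{m+1}$ in Definition \ref{def002}. The term $\del_{\al_{\sigma_{m+1}}}P^\sigma_m$ is handled by the Leibniz rule: applying $\del_{\al_{\sigma_{m+1}}}$ to a product of the listed factors either (a) raises the order of $\eta$ on the distinguished difference factor by one — note that $\del_{\al_{\sigma_{m+1}}}(\del^\eta\del_iX_j(\al)-\del^\eta\del_iX_j(\al-\be)) = \del^{\eta+e_{\sigma_{m+1}}}\del_iX_j(\al)-\del^{\eta+e_{\sigma_{m+1}}}\del_iX_j(\al-\be)$, since the $\al$-derivative of the $(\al-\be)$ term still produces $\del^{\eta+e_{\sigma_{m+1}}}\del_iX_j$ evaluated at $\al-\be$ — or (b) raises the order of some $\mu_l$ in a $\del^{\mu_l}\del_{p_l}X_{q_l}(\al)$ factor by one, or (c) raises the order of some $\nu_l$ on a $\del^{\nu_l}_\al(\na X(\al-\be))^{-1}$ factor by one, or (d) raises the order of $\ga$ on $(\del^\ga\rho)(\al-\be)$ by one. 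In every case $|\Gamma|$ increases by exactly $1$, giving terms of the right form with $|\Gamma|=m$. The term $\Theta_{\sigma_{m+1}}(\al,\be)(\del_{\sigma_m}\cdots\del_{\sigma_1}\rho)(\al-\be)$ is a fresh product of a factor $(\del_{\sigma_{m+1}}X_j(\al)-\del_{\sigma_{m+1}}X_j(\al-\be))$, an inverse-matrix factor $(\na X(\al-\be))^{-1}$, and $(\del^\ga\rho)(\al-\be)$ with $|\ga|=m$, so it has $|\Gamma|=m$ and the correct shape.

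The one genuinely delicate piece is the middle term $-\Theta_{\sigma_{m+1}}(\al,\be)\div_\be P^\sigma_m(\al,\be)$. Here I would first show that for a factor of the form $\del^\nu_\al(\na X(\al-\be))^{-1}_{(r,s)}$, the $\be$-derivative $\del_{\be_t}$ can be re-expressed as $-\del_{\al_t}$ acting on the same factor \emph{up to} the structure already present: more precisely, since this factor depends on $\be$ only through $\al-\be$, we have $\del_{\be_t}\del^\nu_\al(\na X(\al-\be))^{-1}_{(r,s)} = -\del^{\nu+e_t}_\al(\na X(\al-\be))^{-1}_{(r,s)}$, again of the listed form with $|\nu|$ raised by one. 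Similarly $\del_{\be_t}$ of $(\del^\eta\del_iX_j(\al)-\del^\eta\del_iX_j(\al-\be))$ equals $+\del^{\eta+e_t}\del_iX_j(\al-\be)$, which is a \emph{single} term $\del^\gamma X$ evaluated at $\al-\be$; to keep it in the prescribed ``difference'' form I would absorb it by writing $\del^{\eta+e_t}\del_iX_j(\al-\be) = \del^{\eta+e_t}\del_iX_j(\al) - (\del^{\eta+e_t}\del_iX_j(\al)-\del^{\eta+e_t}\del_iX_j(\al-\be))$, splitting into one term whose distinguished difference factor is supplied by the $\Theta_{\sigma_{m+1}}$ prefactor (with its own $i,j$) and the resulting $\del^{\eta+e_t}\del_iX_j(\al)$ absorbed into the $\prod\del^{\mu_l}\del_{p_l}X_{q_l}(\al)$ block (so $N_1$ increases by one), plus one term keeping the original difference factor at raised order. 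The $\be$-derivative falling on $(\del^\ga\rho)(\al-\be)$ gives $-\del^{\ga+e_t}\rho(\al-\be)$, raising $|\ga|$ by one. In all subcases $|\Gamma|$ goes from $m-1$ to $m$, and multiplication by $\Theta_{\sigma_{m+1}}$ (which itself carries $|\Gamma|$-weight $0$: one difference factor of order zero and one inverse-matrix factor of order zero) contributes the remaining structure without changing the total weight bookkeeping, since the distinguished difference factor in the output is taken to be the one from $\Theta_{\sigma_{m+1}}$ and the rest of $\Theta_{\sigma_{m+1}}$ — the factor $(\na X(\al-\be))^{-1}$ — joins the $N_2$-product. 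Collecting all contributions, every term produced has $|\Gamma|=m$, which completes the induction. I expect the main obstacle to be purely organizational: tracking which generated factor plays the role of the single distinguished ``difference'' factor $(\del^\eta\del_iX_j(\al)-\del^\eta\del_iX_j(\al-\be))$ after each differentiation, so that the output genuinely has exactly one such factor and the rest fit the $\prod\del^{\mu_l}\del_{p_l}X_{q_l}(\al)$ and $\prod\del^{\nu_l}_\al(\na X(\al-\be))^{-1}$ blocks; this is a finite combinatorial check but must be done carefully for each of the three pieces of the recursion.
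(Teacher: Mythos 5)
Your overall route --- induction on $m$ driven by the recursion in Definition \ref{def002} --- is exactly the argument the paper has in mind (the paper only sketches it), and your base case, your Leibniz treatment of $\del_{\al_{\sigma_{m+1}}}P^\sigma_m$, and your handling of the term $\Theta_{\sigma_{m+1}}(\del_{\sigma_m}\cdots\del_{\sigma_1}\rho)(\al-\be)$ are all correct, including the weight count. The gap is in the middle term $-\Theta_{\sigma_{m+1}}\div_\be P^\sigma_m$, and it is precisely the organizational point you flag at the end but do not actually resolve; as written, your prescription is internally inconsistent. You state that in the output ``the distinguished difference factor is taken to be the one from $\Theta_{\sigma_{m+1}}$,'' while at the same time keeping, in the second piece of your splitting (and implicitly in the sub-cases where $\del_{\be_t}$ falls on an inverse factor or on $(\del^\ga\rho)(\al-\be)$), the difference factor inherited from $P^\sigma_m$. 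Those output terms then carry \emph{two} difference factors (or, if you expand one of them, a stray factor $\del^{\eta}\del_i X_j(\al-\be)$ evaluated at $\al-\be$), and neither is a term of the shape \eqref{m012}: that shape admits exactly one difference factor, with every other $X$-derivative factor either evaluated at $\al$ or of the inverse type $\del^{\nu}_\al(\na X(\al-\be))^{-1}_{(r,s)}$, and a product of two difference factors cannot be rewritten in that shape without reintroducing disallowed $\del X(\al-\be)$ factors.

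The missing device is that $\Theta_{\sigma_{m+1}}$ must be expanded in two different ways according to the sub-case. When the output's distinguished difference factor is to be supplied by $\Theta_{\sigma_{m+1}}$ itself (the first piece of your splitting, and the $\rho$-term of the recursion), use $\Theta_{\sigma_{m+1}}=\big[(\na X(\al)-\na X(\al-\be))(\na X(\al-\be))^{-1}\big]_{\sigma_{m+1}}$, whose $k$-th component is $\sum_j(\del_{\sigma_{m+1}}X_j(\al)-\del_{\sigma_{m+1}}X_j(\al-\be))(\na X(\al-\be))^{-1}_{jk}$. When the distinguished difference factor is the one inherited from $P^\sigma_m$ (your second piece, and the sub-cases where $\del_{\be_t}$ hits an inverse factor or $\rho$), use instead the defining form $\Theta_{\sigma_{m+1},k}=\sum_j\del_{\sigma_{m+1}}X_j(\al)(\na X(\al-\be))^{-1}_{jk}-\delta_{\sigma_{m+1}k}$, which contains no difference factor at all and merely contributes one factor to the $N_1$-block and one to the $N_2$-block (or only a Kronecker-delta coefficient), each of multi-index order zero, so the total weight still lands at $|\Gamma|=m$. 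With this case distinction every summand produced in the inductive step is literally of the form \eqref{m012} and your bookkeeping closes; without it the induction does not go through as described.
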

\begin{proof}
Rather than give the full proof, we comment that by using the recursive definition of the $P^\sigma_m(\al,\be)$ along with an argument by induction, one finds that the above formula holds.
\end{proof}
\begin{cor}\label{pcor01}
Fix $X\in B_\delta$ and $\sigma=(\sigma_1,\ldots,\sigma_n)$, some $n=1,\ldots, s$. For $P^\sigma_m(\al,\be)$, $m=1,\ldots,n$, as in Definition \ref{def002}, we have
\ba
P^\sigma_m(\al,\be)=\sum_{|\Gamma|=m-1}h_\Gamma(\al)(g_\Gamma(\al)-g_\Gamma(\al-\be))f_\Gamma(\al-\be)e_{k(\Gamma)},
\ea
where for each $\Gamma$ in the sum, $k(\Gamma)=1,2,$ or $3$, we have that $\mbox{supp}(f_\Gamma)$ bounded, and

(i)
if $m\leq n-1$, then $(h_\Gamma,g_\Gamma,f_\Gamma)\in\cZ_1$, $\cZ_2$, or $\cZ_3$, as defined in Proposition \ref{prop001}, and

(ii) if $m=n$, we have $(h_\Gamma,g_\Gamma,f_\Gamma)\in\cZ_1'$, $\cZ_2'$, or $\cZ_3'$, as defined in Proposition \ref{prop001}.
\end{cor}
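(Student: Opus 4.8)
The plan is to read the statement off the explicit formula \eqref{m012} of Lemma \ref{m001lem} by a regrouping of factors plus Sobolev bookkeeping. In each summand of \eqref{m012}, indexed by $\Gamma$, the factors $\del^{\mu_l}\del_{p_l}X_{q_l}(\al)$ depend only on $\al$, the factors $\del^{\nu_l}_\al(\na X(\al-\be))^{-1}_{(r_l,s_l)}=[\del^{\nu_l}(\na X)^{-1}]_{(r_l,s_l)}(\al-\be)$ and $(\del^\ga\rho)(\al-\be)$ depend on $\al$ only through $\al-\be$, and the single remaining factor is a difference $g(\al)-g(\al-\be)$ with $g=\del^\eta\del_i X_j$. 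Accordingly I would set
\[
h_\Gamma:=\textit{coeff}\cdot\!\!\prod_{l=1}^{N_1}\!\del^{\mu_l}\del_{p_l}X_{q_l},\qquad g_\Gamma:=\del^\eta\del_i X_j,\qquad f_\Gamma:=\Big(\prod_{l=1}^{N_2}[\del^{\nu_l}(\na X)^{-1}]_{(r_l,s_l)}\Big)\del^\ga\rho,
\]
and $k(\Gamma):=k$ (with $h_\Gamma$ a scalar constant when $N_1=0$). Then $P^\sigma_m=\sum_\Gamma h_\Gamma(\al)(g_\Gamma(\al)-g_\Gamma(\al-\be))f_\Gamma(\al-\be)e_{k(\Gamma)}$ exactly, with $\mbox{supp}(f_\Gamma)\subseteq\mbox{supp}(\rho)$ bounded, and it remains only to identify the $\cZ$-space of each triple.

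The regularity inputs I would use are: $X\in B_\delta\subseteq Id+H^s(\R^3)$ with $s\ge6$, so $\na X-I\in H^{s-1}$ with small norm and (by the Banach algebra property of $H^{s-1}(\R^3)$ and a Neumann series) $(\na X)^{-1}-I\in H^{s-1}$; hence a factor carrying $j$ derivatives of $X$ past the first lies in $H^{s-1-j}(\R^3)$, which embeds in $C^2$ for $j\le s-5$, in $C^1$ for $j\le s-4$, in $L^\infty$ for $j\le s-3$, and equals $H^1$ for $j=s-2$ and $L^2$ for $j=s-1$; also $\del^\ga\rho\in C^{s-|\ga|}_c$. The crude cardinality bound is $|\eta|+|\ga|+\sum_l|\mu_l|+\sum_l|\nu_l|=|\Gamma|=m-1\le n-1\le s-1$.

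The counting then proceeds as follows. The structural point I would extract from the induction behind Lemma \ref{m001lem} (and the recursion of Definition \ref{def002}) is that differentiating a kernel-conversion factor $(\na X(\al-\be))^{-1}$ produces, via $\del(\na X)^{-1}=-(\na X)^{-1}(\del\na X)(\na X)^{-1}$, only new first-order $\del\na X$-factors and never higher $(\na X)^{-1}$-derivatives, so the $\nu_l$ may be taken of bounded (in fact zero) order; combined with $|\ga|\le m-1$ this makes $f_\Gamma$ a product of $C^2$ functions when $m\le n-1$ and of $C^1_c$ functions when $m=n$, so $f_\Gamma$ is never the ``rough'' slot. Next, since $|\Gamma|=m-1\le s-1$ and $s\ge6$, no two factors among those comprising $h_\Gamma,g_\Gamma$ can each carry more than $s-4$ extra derivatives (two such would need total $\ge 2(s-3)>s-1$), so at most one of $h_\Gamma,g_\Gamma$ fails to be $C^1$. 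A short three-way case analysis finishes it: if neither fails one gets $\cZ_1$ (resp. $\cZ_1'$ for $m=n$, using compact support of $f_\Gamma$); if $g_\Gamma$ is the exceptional factor then $g_\Gamma=\del^\eta\del_iX_j\in H^1$ (resp. $L^2$, as $|\eta|+1\le s$ when $m=n$) and one gets $\cZ_2$ (resp. $\cZ_2'$); and if the exceptional factor sits in $h_\Gamma$ then it consumes $\ge s-3$ of the budget, forcing $|\eta|$ small so that $g_\Gamma\in C^2$ (resp. $C^1$), and one gets $\cZ_3$ (resp. $\cZ_3'$). This yields both parts.

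The hard part will be the structural claim just used, that the factors $(\na X(\al-\be))^{-1}$ never accumulate more than boundedly many derivatives: this is exactly what keeps $f_\Gamma$ from being pushed down to $H^1$ (which fits none of $\cZ_1,\cZ_2,\cZ_3$ or their primed versions), and it has to be checked from the precise shape of the recursion in Definition \ref{def002} rather than from the cardinality identity alone; essentially one re-runs the induction of Lemma \ref{m001lem} while also tracking the order of each $(\na X)^{-1}$-factor. Everything after that — the product estimates in Sobolev and Hölder spaces and the case analysis — is routine, and the hypothesis $s\ge6$ enters only through the embeddings above and the inequality $2(s-3)>s-1$.
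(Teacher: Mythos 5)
Your splitting of each summand of \eqref{m012} into $h_\Gamma$, $g_\Gamma$, $f_\Gamma$, $k(\Gamma)$ is exactly the paper's choice, and the budget argument ``only one factor can be high order'' is the same organizing idea as the paper's case analysis. The problem is the step you yourself flag as the hard part: the claim that the recursion never lets derivatives accumulate on the $(\na X(\al-\be))^{-1}$ factors, so that $f_\Gamma$ is always a product of $C^2$ (resp.\ $C^1_c$) functions. First, the motivation you give for needing this is simply false: you assert that an $f_\Gamma$ which is merely $H^1$ ``fits none of $\cZ_1,\cZ_2,\cZ_3$ or their primed versions,'' but by Proposition \ref{prop001} the third slot of $\cZ_1$ is precisely $H^1$ and that of $\cZ_1'$ is $L^2$, so a rough $f_\Gamma$ is perfectly admissible provided $h_\Gamma,g_\Gamma$ are then $C^1$ (resp.\ $L^\infty$). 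Second, the structural claim itself is not something the recursion in Definition \ref{def002} delivers: the operators $\del_{\al_{\sigma_{m+1}}}$ and $\div_\be$ are applied up to $m-1\le s-1$ times, and each application can land on a factor $\del^{\nu_l}_\al(\na X(\al-\be))^{-1}$, so arbitrarily high $|\nu_l|$ (up to the budget $m-1$) genuinely occur in \eqref{m012}; re-expanding via $\del(\na X)^{-1}=-(\na X)^{-1}(\del\na X)(\na X)^{-1}$ does not help, since it merely replaces $\del^{\nu_l}(\na X)^{-1}(\al-\be)$ by high-order factors $\del^{\nu'}\na X(\al-\be)$ sitting inside $f_\Gamma$, which are just as far from $C^2$. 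So as written your case analysis misses the case where the single high-order multi-index is one of the $\nu_l$.

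The repair is short and is what the paper does: when $\max_l|\nu_l|\ge m-2$ (so all other orders are $\le 1$), put the triple in $\cZ_1$ for $m\le n-1$ and in $\cZ_1'$ for $m=n$. Indeed $h_\Gamma$ and $g_\Gamma$ then involve at most two derivatives of $X$, hence lie in $C^1$ (resp.\ $L^\infty$) by $H^{s-2}\hookrightarrow C^1$ for $s\ge 6$, while $\del^{\nu_l}(\na X)^{-1}\in H^{s-1-|\nu_l|}$ with $|\nu_l|\le m-1\le s-2$ (resp.\ $\le s-1$), so multiplying by the remaining bounded factors and the compactly supported $\del^\ga\rho\in C^1_c$ gives $f_\Gamma\in H^1$ (resp.\ $L^2$). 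With that case added (and the analogous $\cZ_1$ treatment when $|\ga|$ is the large index), your remaining cases --- large $|\eta|$ giving $\cZ_2/\cZ_2'$, large $|\mu_l|$ giving $\cZ_3/\cZ_3'$, all indices small giving $\cZ_1/\cZ_1'$ --- reproduce the paper's proof, and no ``re-run of the induction of Lemma \ref{m001lem} tracking the $(\na X)^{-1}$ orders'' is needed at all.
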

\begin{proof}
For each $m=1,\ldots,n$ we use the expression for $P^\sigma_m(\al,\be)$ given by Lemma \ref{m001lem}, defining for each $\Gamma$ in the sum in \eqref{m012}
\begin{eqnarray}\label{m013}
h_\Gamma(\al)&:=&\textit{coeff}\prod^{N_1}_{l=1}\del^{\mu_l}\del_{p_l}X_{q_l}(\al),\\
g_\Gamma(\al)&:=&
\del^{\eta}\del_i X_j(\al),\nn\\
f_\Gamma(\al)&:=&
\left(
\prod^{N_2}_{l=1}
\del^{\nu_l}(\na X(\al))^{-1}_{(r_l,s_l)}
\right)
\del^\ga \rho(\al),\nn
\end{eqnarray}
where the quantities $\textit{coeff}$, $N_1$, $N_2$, and the various indices and multi-indices above are precisely the corresponding quantities appearing in the sum in \eqref{m012}, which depend on $\Ga$. Note that since $\rho$ is assumed to have compact support, we get $\mbox{supp}(f_\Gamma)$ is bounded for each $\Gamma$. Now we verify each triple $(h_\Gamma,g_\Gamma,f_\Gamma)$ is in at least one of $\cZ_1$, $\cZ_2$, or $\cZ_3$.

First, we consider the case that $m=1,\ldots,n-1$.

We fix a $\Gamma$ with $|\Gamma|=m-1$. If $m=1$ or $2$, the triple is in $\cZ_1$. If $m\geq 3$, but each of $|\eta|$, the $|\mu_l|$, $|\nu_l|$, and $|\ga|$, is $m-3$ or less, the triple is also in $\cZ_1$. 

Now suppose $m\geq 3$, and at least one of $|\eta|$, $|\ga|$, the $|\mu_l|$, or the $|\nu_l|$ is $m-2$ or greater. We cover this by checking the following cases.
\begin{itemize}

\item
If $|\eta|\geq m-2$, (note then $|\eta|\neq 0$) the triple is in $\cZ_2$.

\item
If $\max_l|\nu_l|\geq m-2$, (note then $\max_l|\nu_l|\neq 0$) the triple is in $\cZ_1$.

\item
If $|\ga|\geq m-2$, the triple is in $\cZ_1$.

\item
If $N_1\neq 0$, with $\max_l|\mu_l|\geq m-2$ (so that then $\max_l|\mu_l|\neq 0$), then the triple is in $\cZ_3$.
\end{itemize}
This takes care of every possibility for $m=1,\ldots, n-1$.
For the case $m=n$, one again considers each $\Gamma$ with $|\Gamma|=m-1$ and makes word for word the same set of observations as above, but with $\cZ_i'$ in place of $\cZ_i$ for $i=1,2$, and $3$, at each step.
\end{proof}

We record one more fact which will come in handy in the proof for our differentiation formula. It just asserts the pointwise and $L^2$ convergence of a key term which will occur.

\begin{lem}\label{plem01}
Fix a $\ga\in(\N_0)^3$ with $0\leq|\ga|\leq s$, where $\rho\in C^s_c(\R^3)$. For a fixed $X\in B_\delta$ let us define
\ba\label{deftheta}
\theta(\al,\be):=X(\al)-X(\al-\be).
\ea
Then the integral, defined pointwise for each $\al$ in $\R^3$,
\ba\label{p000a}
\int
K(\theta(\al,\be))(\del^\ga\rho)(\al-\be)d\be\quad\mbox{is in }L^2,
\ea
with $L^2$ norm bounded by a constant depending only on $\rho$, and
\ba\label{p000b}
\lim_{\eps\to0}
\int_{|\be|\geq \eps}
K(\theta(\al,\be))(\del^\ga\rho)(\al-\be)d\be\quad\mbox{converges in }L^2\mbox{ to }\eqref{p000a}.
\ea
\end{lem}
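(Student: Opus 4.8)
The key structural fact is that, in three dimensions, the composed kernel is only \emph{weakly} singular: by \eqref{72'''} (equivalently, by \eqref{closetoid} together with Lemma \ref{kernelbound'}) one has $|K(\theta(\al,\be))|=|K(X(\al)-X(\al-\be))|\leq C|\be|^{-2}$ for all $\al\in\R^3$ and $\be\neq0$, with $C$ depending only on the fixed small $\delta$, and $|\be|^{-2}$ is locally integrable on $\R^3$. So the integral in \eqref{p000a} is in fact absolutely convergent, there is no genuine principal-value issue, and the whole lemma reduces to Young's convolution inequality after the substitution $y=\al-\be$ (which has Jacobian $1$). Under this substitution,
\[
G(\al):=\int K(\theta(\al,\be))(\del^\ga\rho)(\al-\be)\,d\be=\int_{\R^3}K(X(\al)-X(y))\,(\del^\ga\rho)(y)\,dy,
\]
and the $y$-integrand is supported in the fixed compact set $\mbox{supp}(\del^\ga\rho)$. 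Since $\del^\ga\rho\in C_c(\R^3)$ and $|K(X(\al)-X(y))|\leq C|\al-y|^{-2}$, the integrand is dominated by $C|\al-y|^{-2}|(\del^\ga\rho)(y)|\in L^1_y$ for every fixed $\al$, so $G(\al)$ is well defined pointwise and $|G(\al)|\leq C\,(\Phi*|\del^\ga\rho|)(\al)$, where $\Phi(z):=|z|^{-2}$.

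To prove \eqref{p000a} I would split $\Phi=\Phi_{\mathrm{near}}+\Phi_{\mathrm{far}}$ with $\Phi_{\mathrm{near}}:=\Phi\,\mathbf{1}_{\{|z|\leq1\}}$ and $\Phi_{\mathrm{far}}:=\Phi\,\mathbf{1}_{\{|z|>1\}}$. Then $\Phi_{\mathrm{near}}\in L^1(\R^3)$ because $|z|^{-2}$ is integrable near the origin in $\R^3$, while $\Phi_{\mathrm{far}}\in L^2(\R^3)$ because $\int_{|z|>1}|z|^{-4}\,dz<\infty$; and $\del^\ga\rho\in L^1\cap L^2$ since it is continuous with compact support. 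Young's inequality then gives
\[
\big\|\Phi_{\mathrm{near}}*|\del^\ga\rho|\big\|_{L^2}\leq\|\Phi_{\mathrm{near}}\|_{L^1}\,\|\del^\ga\rho\|_{L^2},\qquad\big\|\Phi_{\mathrm{far}}*|\del^\ga\rho|\big\|_{L^2}\leq\|\Phi_{\mathrm{far}}\|_{L^2}\,\|\del^\ga\rho\|_{L^1},
\]
so $\|G\|_{L^2}\leq C\big(\|\del^\ga\rho\|_{L^1}+\|\del^\ga\rho\|_{L^2}\big)$, a bound depending only on $\rho$ (and the fixed $\delta$). The splitting is essential here: $\Phi\notin L^1(\R^3)$ globally, and it is precisely the compact support of $\rho$ — entering through $\del^\ga\rho\in L^1$ — that tames the far part.

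For \eqref{p000b}, set $G_\eps(\al):=\int_{|\be|<\eps}K(\theta(\al,\be))(\del^\ga\rho)(\al-\be)\,d\be$, so that $\int_{|\be|\geq\eps}(\cdots)\,d\be=G-G_\eps$. Exactly as above, $|G_\eps(\al)|\leq C\big((\Phi\,\mathbf{1}_{\{|z|\leq\eps\}})*|\del^\ga\rho|\big)(\al)$, and Young's inequality yields $\|G_\eps\|_{L^2}\leq C\,\|\Phi\,\mathbf{1}_{\{|z|\leq\eps\}}\|_{L^1}\,\|\del^\ga\rho\|_{L^2}=C'\eps\,\|\del^\ga\rho\|_{L^2}\to0$ as $\eps\to0$. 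Hence $G-G_\eps\to G$ in $L^2$, which is \eqref{p000b}. I do not expect any serious obstacle in this lemma; the only subtlety is the non-integrability of $\Phi$ at infinity, which is handled exactly by the near/far splitting together with the compact support of $\rho$.
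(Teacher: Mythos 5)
Your proof is correct, and it takes a genuinely different (and shorter) route than the paper's. The paper proves a pointwise decay estimate, uniform in $\eps$: splitting at $|\al-\be|\leq 100R$ versus $|\al-\be|>100R$ (with $\mbox{supp}(\rho)\subset B_R(0)$), it shows the truncated integrals are bounded by $C(\rho)(1+|\al|)^{-2}$, and then invokes dominated convergence twice, once to get \eqref{p000a} in $L^2$ and once to get the convergence \eqref{p000b}. You instead dominate the integral pointwise by the genuine convolution $\Phi*|\del^\ga\rho|$ with $\Phi(z)=|z|^{-2}$ (exploiting, as the paper does via \eqref{72'''} and Lemma \ref{kernelbound'}, that the composed kernel is only weakly singular, so there is indeed no principal-value issue), and then the near/far splitting of $\Phi$ plus Young's inequality ($L^1*L^2\to L^2$ and $L^2*L^1\to L^2$) gives the $L^2$ bound directly; the same device gives $\lVert G_\eps\rVert_{L^2}\leq C'\eps\lVert\del^\ga\rho\rVert_{L^2}$, so the truncations converge with an explicit $O(\eps)$ rate rather than by dominated convergence. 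Both arguments use the compact support of $\rho$ in an essential way to control the far field; the paper's version buys a pointwise $(1+|\al|)^{-2}$ decay estimate (slightly more information than is needed for the lemma), while yours buys brevity, a cleaner dependence of the constant on $\lVert\del^\ga\rho\rVert_{L^1}+\lVert\del^\ga\rho\rVert_{L^2}$, and a quantitative convergence rate. The only caveat, which you already flag, is that your constants also depend on the fixed $\delta$ through the kernel bound, exactly as in the paper, so the statement "depending only on $\rho$" is to be read the same way in both proofs.
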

\begin{proof}
It is straightforward to verify that the integral in \eqref{p000a} is defined pointwise for each $\al$ in $\R^3$. Let us define for all $\al$ in $\R^3$
\ba\label{p00001}
g(\al):=\int
K(\theta(\al,\be))(\del^\ga\rho)(\al-\be)d\be.
\ea
First we claim that we have the following bound for all $\al$ in $\R^3$, uniform in $\eps>0$.
\ba\label{p00002}
\left|
\int_{|\be|\geq \eps}
K(\theta(\al,\be))(\del^\ga\rho)(\al-\be)d\be
\right|
\leq
\frac{C(\rho)}
{(1+|\al|)^2},
\ea
for a constant $C(\rho)$ depending on $\rho$. To show this, first we choose an $R>0$ with $\mbox{supp}(\rho)\subset B_R(0)$. Then we write
\begin{eqnarray}\label{p00003}
\int_{|\be|\geq \eps}
K(\theta(\al,\be))(\del^\ga\rho)(\al-\be)d\be
&=&
\int_{|\al-\be|\geq \eps}
K(X(\al)-X(\be))\del^\ga\rho(\be)d\be,\\
&=&
\int_{\eps\leq|\al-\be|\leq 100R}
K(X(\al)-X(\be))\del^\ga\rho(\be)d\be\nn\\
& &+
\int_{|\al-\be|>100R}
K(X(\al)-X(\be))\del^\ga\rho(\be)d\be,\nn\\
&=&
I_1+I_2.
\end{eqnarray}
For $I_1$ we have
\begin{eqnarray}\label{p00004}
|I_1|
&\leq&
\int_{\eps\leq|\al-\be|\leq 100R}
|K(X(\al)-X(\be))||\del^\ga \rho(\be)|d\be,\\
&\leq&
C\int_{\eps\leq|\al-\be|\leq 100R}
|\al-\be|^{-2}d\be
\left(\sup_{B_{100R}(\al)}|\del^\ga \rho|\right),\nn
\end{eqnarray}
where the right hand side is bounded by a constant depending on $\rho$ and has compact support in $\al$. This allows us to bound $|I_1|$ by something as in the right hand side of \eqref{p00002}.

For $I_2$, using that $\mbox{supp}(\rho)\subset B_R(\rho)$, we have
\ba\label{p00005}
|I_2|\leq
\int_{|\al-\be|>100R}
|K(X(\al)-X(\be))||\del^\ga\rho(\be)|d\be
\leq C'(\rho)\int_{S_\al}|\al-\be|^{-2}d\be.
\ea
where $C'(\rho)$ depends only on $\rho$, and $S_\al=\{\be:|\al-\be|>100R\mbox{ and }|\be|<R \}$. Suppose for a given $\al$ that $S_\al$ is nonempty. For any $\be\in S_\al$, we have $|\be|<R$ and $|\al-\be|>100R$. This requires $|\al|>99R$, so moreover, $\be\in S_\al$ implies $|\be|<|\al|/99$, and thus $|\al-\be|^{-1}\leq C|\al|^{-1}$. Thus from \eqref{p00005} we now find
\ba\label{p00006}
|I_2|\leq
\begin{cases}
C''(\rho)
\int_{|\be|<R}|\al|^{-2}d\be & \mbox{if }|\al|>99R,\\
0 & \mbox {if }|\al|\leq 99R,
\end{cases}
\ea
which is bounded above by an expression as in the right hand side of \eqref{p00002}. Combining this with the bound for $|I_1|$, we get the bound \eqref{p00002}.

Now let us define
\ba
g_\eps(\al):=
\int_{|\be|\geq \eps}
K(\theta(\al,\be))(\del^\ga\rho)(\al-\be)d\be
\ea
Since the right hand side of \eqref{p00002} is in $L^2$, we can bound $|g_\eps(\al)|^2$ by an integrable function of $\al$, independent of $\eps$. It then follows from the dominated convergence theorem that $g(\al)$ is in $L^2$, with the $L^2$ norm bounded by a constant depending only on $\rho$. We also find $|g_\eps(\al)-g(\al)|^2$ is bounded by an integrable function of $\al$, independent of $\eps$, and so with the dominated convergence theorem we get that $g_\eps$ tends to $g$ in $L^2$ as $\eps$ tends to zero. This concludes the proof of the lemma.
\end{proof}
Now we are ready to prove the differentiation formula, used for calculating various derivatives of the $F(X)(\al)$ defined in Definition \ref{Fd} in a manner well-suited for $L^2$ estimates. Once we have this, the local boundedness result for the operator $F$ for Sobolev spaces $H^s$ will readily follow.
\begin{prop}\label{diffformula}
For $X\in B_\delta$, the following formula holds for $F(X)(\al)$ as defined by Definition \ref{Fd}.
\begin{eqnarray}\label{08'}\\
& &\textrm{For }\ga\in(\N_0)^3\mbox{ with }0\leq |\ga|\leq s,\mbox{ and }\sigma=(\sigma_1,\ldots,\sigma_{|\ga|})\mbox{ with }\del_{\sigma_{|\ga|}}\ldots\del_{\sigma_1}=\del^\ga,\nn\\
& &\del^\ga[F(X)(\al)]=\lim_{\eps\to 0} \int_{|\be-\al|\geq\eps} P^\sigma_{|\ga|}(\al,\al-\be)\tilde K_X(\al,\be)d\be+\int K(X(\al)-X(\be))(\del^\ga\rho)(\be)d\be\quad\textrm{in }L^2.\nn
\end{eqnarray}
Here, for the case $\ga=0$ and $\sigma$ empty, we define $P^\sigma_0(\al,\be):=0$.
\end{prop}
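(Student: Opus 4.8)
The plan is to prove \eqref{08'} by induction on $|\ga|$. The base case $|\ga|=0$ is just the definition: with the convention $P^\sigma_0\equiv 0$ the first integral is absent and the second is $\int K(X(\al)-X(\be))\rho(\be)\,d\be=F(X)(\al)$ as in Definition \ref{Fd}. For the inductive step I would assume \eqref{08'} for a multi-index $\ga$ of length $n\le s-1$ with an ordered $\sigma=(\sigma_1,\dots,\sigma_n)$ realizing $\del^\ga$, and differentiate both sides in $\al_l$ to obtain the formula for $\ga'=\ga+e_l$ with $\sigma'=(\sigma_1,\dots,\sigma_n,l)$. It is convenient to carry along the auxiliary claim that the right-hand side $G_\ga$ of \eqref{08'} lies in $H^1$ whenever $|\ga|<s$ (so that one more weak derivative may legitimately be taken), while for $|\ga|=s$ one only needs $G_\ga\in L^2$, which is what feeds the local boundedness estimate for $F$.

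For the first integral in $G_\ga$: since the recursively defined $P^\sigma_m$ depends only on the first $m$ entries of $\sigma$, I may regard $P^\sigma_n$ as $P^{\sigma'}_n$ with $\sigma'$ of length $n+1$, so Corollary \ref{pcor01}(i) applies and writes it as a finite sum over $\Gamma$ of triples $(h_\Gamma,g_\Gamma,f_\Gamma)$ lying in the classes $\cZ_i$; consequently the first integral equals $\sum_\Gamma (T^0_{X,k(\Gamma)}[h_\Gamma,g_\Gamma,f_\Gamma])(\al)$ in $L^2$, which is in $H^1$ by Proposition \ref{prop001}(ii). Differentiating each summand by that same proposition produces, for each $\Gamma$, one integral in which $\del_l$ has fallen only on the density $h(g(\al)-g(\al-\be))f(\al-\be)$ and one in which the derivative of the kernel has been traded, via the identity $\na_\al(K(\theta))=(\na X(\al)-\na X(\al-\be))(\na X(\al-\be))^{-1}\na_\be(K(\theta))$ and an integration by parts in $\be$, for a factor $-\sum_{i,j}\del_{\be_k}(\cdots)(\del_lX_i(\al)-\del_lX_i(\al-\be))(\na X(\al-\be))^{-1}_{ij}$ against $\tilde K^j_X$. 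Summing over $\Gamma$ and using that $\sum_i(\del_lX_i(\al)-\del_lX_i(\al-\be))(\na X(\al-\be))^{-1}_{ij}$ is exactly the $j$-th entry of $\Theta_l(\al,\be)$, the first family of terms collapses to the contribution $\del_{\al_{\sigma_{n+1}}}P^\sigma_n$ and the second to $-\Theta_{\sigma_{n+1}}\,\div_\be P^\sigma_n$ appearing in the recursion of Definition \ref{def002}.

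For the second integral in $G_\ga$, which after the substitution $\be\mapsto\al-\be$ is $\int K(\theta(\al,\be))(\del^\ga\rho)(\al-\be)\,d\be$ with $\theta(\al,\be)=X(\al)-X(\al-\be)$ — this lies in $L^2$ and is the $L^2$-limit of its truncations to $\{|\be|\ge\eps\}$ by Lemma \ref{plem01} — I would compute the weak $\del_l$-derivative by differentiating the truncated integral under the integral sign and passing $\eps\to0$. The $\del_l$ splits into the piece landing on $(\del^\ga\rho)(\al-\be)$, which reproduces $\int K(\theta(\al,\be))(\del^{\ga'}\rho)(\al-\be)\,d\be$ (again controlled by Lemma \ref{plem01}, since $|\ga'|\le s$), and the piece landing on $K$; the latter I would rewrite using the chain-rule identity that expresses $\na_\al(K(\theta(\al,\be)))$ as $\tilde K_X(\al,\al-\be)$ contracted against the prefactor $\na X(\al)(\na X(\al-\be))^{-1}-I$, i.e. the $\Theta$ of Definition \ref{def002}, so that it becomes an absolutely convergent integral with no boundary term (because $\Theta_{\sigma_{n+1}}$ vanishes to first order on the diagonal and absorbs one power of the $|\be|^{-3}$ singularity), namely the contribution of the $\Theta_{\sigma_{n+1}}(\del_{\sigma_n}\cdots\del_{\sigma_1}\rho)(\al-\be)$ summand in the recursion for $P^{\sigma'}_{n+1}$. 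Adding the three contributions yields $\del_lG_\ga=G_{\ga'}$, and the $L^2$ membership of $G_{\ga'}$ (together with its $H^1$ membership when $n+1<s$) follows once more from Corollary \ref{pcor01}, Proposition \ref{prop001}(i), and Lemma \ref{plem01}.

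I expect the main obstacle to be the bookkeeping in the second and third paragraphs: verifying that the finite sum over $\Gamma$ of the two term-families produced by Proposition \ref{prop001}(ii), together with the two pieces coming from the $K(\theta)$-integral, reassembles \emph{exactly} into the three-term recursion for $P^{\sigma'}_{n+1}$ in Definition \ref{def002}, with all indices and chain-rule factors in place. The key levers are the two identities relating $\na_\al(K(\theta))$ and $\na_\be(K(\theta))$ through $\Theta$ and $(\na X)^{-1}$, and the stability of the Corollary \ref{pcor01} decomposition under applying $\del_l$ to the $h,g,f$ slots and multiplying by entries of $\na X$, $(\na X)^{-1}$ and their derivatives, which is precisely what keeps $P^{\sigma'}_{n+1}$ within the class to which Proposition \ref{prop001} applies. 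The accompanying regularization and vanishing-boundary-term estimates, while needed for rigor, are of the same type already carried out in the proofs of Proposition \ref{prop001} and Lemma \ref{plem01}.
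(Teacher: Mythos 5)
Your proposal is correct and follows essentially the same route as the paper's proof: induction on $|\ga|$ with the base case from Lemma \ref{plem01}, rewriting $P^\sigma_m$ via Corollary \ref{pcor01} with $\sigma$ viewed as having length $m+1$ so the unprimed classes $\cZ_i$ apply, differentiating the singular part through Proposition \ref{prop001}(ii), a separate weak-derivative computation for $\int K(\theta(\al,\be))(\del^{\ga_\flat}\rho)(\al-\be)\,d\be$ using the identity $(\del_i K)(\theta)=\sum_j(\na X(\al-\be))^{-1}_{ij}\tilde K^j_X(\al,\al-\be)$, and reassembling the resulting terms into the three-term recursion of Definition \ref{def002} plus the $\int K(\theta)(\del^\ga\rho)$ term. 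The bookkeeping you flag as the main obstacle is handled in the paper exactly as you outline, so there is no substantive gap.
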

\begin{proof}
Fix and $X\in B_\delta$. We prove that the formula in \eqref{08'} holds for $\ga\in (\N_0)^3$ by induction on $|\ga|$. Observe that it holds for $|\ga|=0$, with $\sigma$ empty, since we have the result of Lemma \ref{plem01}. Now we assume that \eqref{08'} is true for all $\ga$ with $|\ga|\leq m$, for some $0\leq m \leq s-1$. Consider a $\ga\in(\N_0)^3$ with $|\ga|=m+1$, and a $\sigma=(\sigma_1,\ldots,\sigma_{m+1})$ with $\del_{\sigma_{m+1}}\ldots\del_{\sigma_1}=\del^\ga$. Let us define $\sigma_\flat:=(\sigma_1,\ldots,\sigma_m)$, and define $\gamma_\flat$ by $\del^{\gamma_\flat}=\del_{\sigma_m}\ldots \del_{\sigma_1}$. Let us recall the definition of $\theta(\al,\be)$ from \eqref{deftheta}.
Using the formula \eqref{08'} and making a change of variables then shows
\begin{eqnarray}\label{p001}
\del^{\gamma_\flat}[F(X)(\al)]
&=&\lim_{\eps\to 0} \left(\int_{|\be-\al|\geq\eps} P^{\sigma_\flat}_m(\al,\be)\na_\be (K(\theta(\al,\be)))d\be\right)\\
& &+\int K(\theta(\al,\be))(\del^{\gamma_\flat}\rho)(\al-\be)d\be\quad\textrm{in }L^2.\nn
\end{eqnarray}
Note we have $P^{\sigma_\flat}_m(\al,\be)=P^{\sigma}_m(\al,\be)$, by the definition of these quantities. Let us make the corresponding replacement in the expression within the limit in \eqref{p001}. Next, for each $\eps>0$, we use part (i) of Corollary \ref{pcor01}, taking $n:=m+1$, to express the quantity within the limit as
\ba\label{p002}
\sum_{|\Gamma|=m-1}\left(
\int_{|\be|\geq\eps}
h_\Gamma(\al)(g_\Gamma(\al)-g_\Gamma(\al-\be))f_\Gamma(\al-\be)\del_{\be_{k(\Gamma)}}(K(\theta(\al,\be))d\be
\right),
\ea
where for each $\Gamma$, $\mbox{supp}(f_\Gamma)$ is compact, and each of the triples $(h_\Gamma,g_\Gamma,f_\Gamma)$ belongs to one of $\cZ_1$, $\cZ_2$, or $\cZ_3$. We use this in \eqref{p001} to get
\begin{eqnarray}\label{p003}\\
\del^{\gamma_\flat}[F(X)(\al)]
&=&\sum_{|\Gamma|=m-1}
\lim_{\eps\to 0} \left(\int_{|\be-\al|\geq\eps} 
h_\Gamma(\al)(g_\Gamma(\al)-g_\Gamma(\al-\be))f_\Gamma(\al-\be)\del_{\be_{k(\Gamma)}}(K(\theta(\al,\be))d\be
\right)\nn\\
& &+\int K(\theta(\al,\be))(\del^{\gamma_\flat}\rho)(\al-\be)d\be\quad\textrm{in }L^2.\nn
\end{eqnarray}
Here we have used (i) of Proposition \ref{prop001} to ensure convergence in $L^2$ of the summands in \eqref{p003}. Note by part (ii) of Proposition \ref{prop001}, the expression in the first line in the right hand side of \eqref{p003} is in $H^1$. Now we claim that
\ba\label{p004}
\int K(\theta(\al,\be))(\del^{\gamma_\flat}\rho)(\al-\be)d\be.
\ea
is also in $H^1$. To check this, let us test it against an arbitrary $\varphi\in C^\infty_c$. Fix an $l=1,2,$ or $3$. Then
\begin{eqnarray}\label{p005}
& &-\int
\del_l\vp(\al)
\int
K(\theta(\al,\be))(\del^{\gamma_\flat}\rho)(\al-\be)d\be
d\al\\
& &\hspace{.2in}=
-\lim_{\eps\to 0}
\int
\del_l\vp(\al)
\int_{|\be|\geq \eps}
K(\theta(\al,\be))(\del^{\gamma_\flat}\rho)(\al-\be)d\be
d\al,\nn
\end{eqnarray}
by Lemma \ref{plem01}. Consider the quantity in the right hand side of \eqref{p005} within the limit.
\begin{eqnarray}\label{p006'}\\
& &
\int
\del_l\vp(\al)
\int_{|\be|\geq \eps}
K(\theta(\al,\be))(\del^{\gamma_\flat}\rho)(\al-\be)d\be
d\al\nn\\
& &\hspace{.7in}=
\int_{|\be|\geq \eps}
\int
\del_l\vp(\al)
K(\theta(\al,\be))(\del^{\gamma_\flat}\rho)(\al-\be)d\al
d\be,\nn\\
& &\hspace{.7in}=
-\sum^3_{i=1}
\int_{|\be|\geq \eps}
\int
\vp(\al)
(\del_l X_i(\al)-\del_l X_i(\al-\be))(\del_i K)(\theta(\al,\be))(\del^{\gamma_\flat}\rho)(\al-\be)d\al
d\be\nn\\
& &\hspace{.85in}-
\int_{|\be|\geq \eps}\int
\vp(\al) K(\theta(\al,\be))(\del_l\del^{\ga_\flat}\rho)(\al-\be)d\al d\be,\nn\\
& &\hspace{.7in}=
-\sum^3_{i=1}
\int\vp(\al)
\bigg(
\del_l X_i(\al)\int_{|\be|\geq\eps}
(\del_i K)(\theta(\al,\be))(\del^{\gamma_\flat}\rho)(\al-\be)d\be
\bigg)d\al\nn\\
& &\hspace{.85in}+
\sum^3_{i=1}
\int\vp(\al)
\bigg(
\int_{|\be|\geq\eps}
\del_l X_i(\al-\be)(\del_i K)(\theta(\al,\be))(\del^{\gamma_\flat}\rho)(\al-\be)d\be
\bigg)d\alpha\nn\\
& &\hspace{.85in}
-\int\vp(\al)\bigg(
\int_{|\be|\geq\eps}K(\theta(\al,\be))(\del_l\del^{\gamma_\flat}\rho)(\al-\be)d\be
\bigg)d\al,\nn\\
& &\hspace{.7in}
=I^1_\eps+I^2_\eps+I^3_\eps\label{p006b}
\end{eqnarray}
It is not difficult to check that for $i=1,2,$ and $3$,
\ba\label{p007'}
(\del_i K)(\theta(\al,\be))=\sum^3_{j=1}(\na X(\al-\be))^{-1}_{ij} \tilde K^j_X(\al,\al-\be).
\ea
So for $I^1_\eps$ we have
\ba\label{p008'}
I^1_\eps=-\sum^3_{i,j=1}
\int\vp(\al)\del_l X_i(\al)
\int_{|\be|\geq\eps}
(\na X(\al-\be))^{-1}_{ij} \tilde K^j_X(\al,\al-\be)(\del^{\gamma_\flat}\rho)(\al-\be)d\be.
\ea
Meanwhile, regarding $I^2_\eps$,
\ba\label{p009}
I^2_\eps
=
\int\vp(\al)
\int_{|\be|\geq\eps}\tilde K^l_X(\al,\al-\be)(\del^{\gamma_\flat}\rho)(\al-\be)d\be,
\ea
Observe that \eqref{p008'} and \eqref{p009} can be expressed in the form $I^1_\eps=\la\vp,v_1(\eps)\ra$ and $I^2_\eps=\la\vp,v_2(\eps)\ra$, with $v_1(\eps)$ and $v_2(\eps)$ both converging in $L^2$ as $\eps$ tends to zero, in view of Definition \ref{def001} and Remark \ref{rem001}, since $\del^{\gamma_\flat}\rho$ is in $L^2$ and $(\na X)^{-1}$ is bounded.

Observe also that $I^3_\eps$ in \eqref{p006b} has the form $I^3_\eps=\la \vp,v_3(\eps)\ra$ where $v_3(\eps)$ converges in $L^2$ as $\eps$ tends to zero, by Lemma \ref{plem01}. From \eqref{p005} and \eqref{p006'} we then find
\ba\label{p010'}
-\int
\del_l\vp(\al)
\int
K(\theta(\al,\be))\del^{\gamma_\flat}\rho(\al-\be)d\be
d\al
=
-\la
\vp,
\lim_{\eps\to 0}[v_1(\eps)+v_2(\eps)+v_3(\eps)]
\ra
\ea
Let us define $h:=\lim_{\eps\to 0}[v_1(\eps)+v_2(\eps)+v_3(\eps)]$. Then it follows that $h$ is in $L^2$ and the weak $\del_l$-derivative of the expression in \eqref{p004}. This allows us to conclude that indeed, the expression in \eqref{p004} is in $H^1$. In addition, we can conclude from the above discussion, by summing $v_1(\eps)$, $v_2(\eps)$, and $v_3(\eps)$, that the $\del_l$-derivative of \eqref{p004} for $l=1,2,3$ is given by
\begin{eqnarray}\label{p011'}
& &\lim_{\eps\to 0}
\bigg(
\int_{|\be|\geq\eps}
(\del_l X(\al)-\del_l X(\al-\be))(\na X(\al-\be))^{-1}
\tilde K_X(\al,\al-\be)(\del^{\ga_\flat}\rho)(\al-\be)d\be\\
& &\hspace{1in}
+\int_{|\be|\geq\eps}
K(\theta(\al,\be))(\del_l\del^{\ga_\flat}\rho)(\al-\be)d\be\bigg)\quad\mbox{in }L^2.\nn
\end{eqnarray}
Now we return to \eqref{p003}, and apply $\del_{\sigma_{m+1}}$ to both sides. For this we use our calculation of the derivative \eqref{p011'} of \eqref{p004}, taking $l=\sigma_{m+1}$, along with (ii) of Proposition \ref{prop001}. This gives
\begin{eqnarray}\label{p012'}
& &\del^\ga[F(X)(\al)]=\\
& &\hspace{.3in}
\lim_{\eps\to 0}
\Bigg[
\sum_{|\Gamma|=m-1}\bigg(
\int_{|\be|\geq\eps}
\del_{\al_{\sigma_{m+1}}}\big[h_\Gamma(\al)(g_\Gamma(\al)-g_\Gamma(\al-\be))f_\Gamma(\al-\be)\big]\del_{\be_{k(\Gamma)}}(K(\theta(\al,\be)))
d\be\nn\\
& &
\hspace{.7in}-
\sum^3_{i,j=1}
\int_{|\be|\geq\eps}
\del_{\be_{k(\Gamma)}}
\big[h_\Gamma(\al)(g_\Gamma(\al)-g_\Gamma(\al-\be))f_\Gamma(\al-\be)\big]\nn\\
& &\hspace{1.2in}
\times
(\del_{\sigma_{m+1}} X_i(\al)-\del_{\sigma_{m+1}} X_i(\al-\be))
 (\na X(\al-\be))^{-1}_{ij}\del_{\be_j}(K(\theta(\al,\be)))
d\be
\bigg)\nn\\
& &\hspace{.7in}
+\int_{|\be|\geq\eps}
(\del_{\sigma_{m+1}} X(\al)-\del_{\sigma_{m+1}} X(\al-\be))(\na X(\al-\be))^{-1}
\tilde K_X(\al,\al-\be)(\del^{\ga_\flat}\rho)(\al-\be)d\be\nn\\
& &\hspace{1in}
+\int_{|\be|\geq\eps}
K(\theta(\al,\be))(\del^\ga\rho)(\al-\be)d\be\Bigg]\quad\mbox{in }L^2.\nn
\end{eqnarray}
The right hand side can be rearranged to give
\begin{eqnarray}\label{p013'}
& &\del^\ga[F(X)(\al)]=\\
& &\hspace{.1in}
\lim_{\eps\to 0}
\Bigg[
\int_{|\be|\geq\eps}
\del_{\al_{\sigma_{m+1}}}
\left(
\sum_{|\Gamma|=m-1}
h_\Gamma(\al)(g_\Gamma(\al)-g_\Gamma(\al-\be))f_\Gamma(\al-\be)e_{k(\Gamma)}
\right)
\na_\be(K(\theta(\al,\be)))
d\be\nn\\
& &
\hspace{.5in}-
\int_{|\be|\geq\eps}
\div_\be
\left(
\sum_{|\Gamma|=m-1}
h_\Gamma(\al)(g_\Gamma(\al)-g_\Gamma(\al-\be))f_\Gamma(\al-\be)e_{k(\Gamma)}
\right)\nn\\
& &\nn\\
& &\hspace{1.1in}\times 
(\del_{\sigma_{m+1}} X(\al)-\del_{\sigma_{m+1}} X(\al-\be))(\na X(\al-\be))^{-1}\na_\be(K(\theta(\al,\be)))
d\be\nn\\
& &\nn\\
& &\hspace{.4in}
+\int_{|\be|\geq\eps}
(\del_{\sigma_{m+1}} X(\al)-\del_{\sigma_{m+1}} X(\al-\be))(\na X(\al-\be))^{-1}
\tilde K_X(\al,\al-\be)(\del^{\ga_\flat}\rho)(\al-\be)d\be\nn\\
& &\hspace{.4in}
+\int_{|\be|\geq\eps}
K(\theta(\al,\be))(\del^\ga\rho)(\al-\be)d\be\Bigg]\quad\mbox{in }L^2.\nn
\end{eqnarray}
Recalling the $\Theta_n(\al,\be)$ notation given in Definition \ref{def002}, we then find
\begin{eqnarray}
\del^\ga[F(X)(\al)]
&=&
\lim_{\eps\to 0}
\bigg[
\int_{|\be|\geq\eps}
\del_{\al_{\sigma_{m+1}}}
P^\sigma_m(\al,\be)
\na_\be(K(\theta(\al,\be)))
d\be\\
& &
\hspace{.4in}-
\int_{|\be|\geq\eps}
\div_\be
P^\sigma_m(\al,\be)
\Theta_{\sigma_{m+1}}(\al,\be)
\na_\be(K(\theta(\al,\be)))
d\be\nn\\
& &\nn\\
& &\hspace{.4in}
+\int_{|\be|\geq\eps}
\Theta_{\sigma_{m+1}}(\al,\be)
\na_\be(K(\theta(\al,\be)))
(\del^{\ga_\flat}\rho)(\al-\be)d\be\nn\\
& &\hspace{.4in}
+\int_{|\be|\geq\eps}
K(\theta(\al,\be))(\del^\ga\rho)(\al-\be)d\be\bigg],\nn\\
&=&
\lim_{\eps\to 0}\left[
\int_{|\be|\geq\eps}
P^\sigma_{m+1}(\al,\be)
\na_\be (K(\theta(\al,\be)))d\be
+\int_{|\be|\geq\eps}
K(\theta(\al,\be))(\del^\ga\rho)(\al-\be)d\be
\right],\nn\\
&=&
\lim_{\eps\to 0}
\int_{|\be|\geq\eps}
P^\sigma_{|\ga|}(\al,\al-\be)
\tilde K_X(\al,\be)d\be
+\int
K(X(\al)-X(\be))(\del^\ga\rho)(\be)d\be
\quad\mbox{in }L^2,\nn
\end{eqnarray}
where we have applied Lemma \ref{plem01} in the last line. This concludes the inductive step.
\end{proof}
\begin{prop}\label{pcor02}
Let $X\in B_\delta$. Let the operator $F$ be as in Definition \ref{Fd}. Then
\ba\label{p014'}
\lVert
F(X)
\rVert_{H^s}
\leq C,
\ea
where $C$ only depends on $\rho$.
\end{prop}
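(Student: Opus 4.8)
The plan is to reduce the bound to the differentiation formula of Proposition \ref{diffformula} together with the estimates for the integral kernel operators already collected in Proposition \ref{prop001} and Lemma \ref{plem01}. Since $\lVert F(X)\rVert_{H^s}$ is comparable to $\sum_{|\ga|\leq s}\lVert\del^\ga[F(X)]\rVert_{L^2}$, it suffices to produce, for every multi-index $\ga$ with $0\leq|\ga|\leq s$, an $L^2$ bound $\lVert\del^\ga[F(X)]\rVert_{L^2}\leq C(\rho)$ with a constant depending only on $\rho$ (and on the fixed data $\delta$ and $s$, which here are universal). For $\ga=0$ formula \eqref{08'} reads $F(X)(\al)=\int K(X(\al)-X(\be))\rho(\be)\,d\be$, which after the change of variables $\be\mapsto\al-\be$ is exactly the integral controlled in $L^2$ by Lemma \ref{plem01} with $|\ga|=0$; so that case is immediate.

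Now fix $\ga$ with $1\leq|\ga|\leq s$ and choose $\sigma=(\sigma_1,\dots,\sigma_{|\ga|})$ with $\del_{\sigma_{|\ga|}}\cdots\del_{\sigma_1}=\del^\ga$. By Proposition \ref{diffformula}, $\del^\ga[F(X)]$ is the $L^2$-limit as $\eps\to0$ of $\int_{|\be-\al|\geq\eps}P^\sigma_{|\ga|}(\al,\al-\be)\,\tilde K_X(\al,\be)\,d\be$ plus the term $\int K(X(\al)-X(\be))(\del^\ga\rho)(\be)\,d\be$. The latter term is again covered directly by Lemma \ref{plem01} (note $\del^\ga\rho\in C^{s-|\ga|}_c\subset L^2$ since $\rho\in C^s_c$), with $L^2$ norm depending only on $\rho$. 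For the former term I would apply part (ii) of Corollary \ref{pcor01} with $n=m=|\ga|$ to write $P^\sigma_{|\ga|}(\al,\be)=\sum_{|\Gamma|=|\ga|-1}h_\Gamma(\al)(g_\Gamma(\al)-g_\Gamma(\al-\be))f_\Gamma(\al-\be)e_{k(\Gamma)}$, a \emph{finite} sum in which each triple $(h_\Gamma,g_\Gamma,f_\Gamma)$ lies in one of $\cZ_1'$, $\cZ_2'$, $\cZ_3'$ and each $f_\Gamma$ is supported in the fixed compact set $\mathrm{supp}(\rho)$. By the very definition of $T^0_X[\cdot,\cdot,\cdot]$ in Proposition \ref{prop001}, the $\eps\to0$ limit of each summand integrated against $\tilde K_X$ is precisely the corresponding $T^0_X[h_\Gamma,g_\Gamma,f_\Gamma]$, so part (i) of Proposition \ref{prop001} bounds its $L^2$ norm by $C(\mathrm{supp}(f_\Gamma))\lVert(h_\Gamma,g_\Gamma,f_\Gamma)\rVert_{\cZ_i'}$, the prefactor being uniform because the support is fixed. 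Summing the finitely many summands reduces everything to bounding the $\cZ_i'$ norms.

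The real work — and the step I expect to be the main obstacle — is the uniform estimate $\lVert(h_\Gamma,g_\Gamma,f_\Gamma)\rVert_{\cZ_i'}\leq C(\rho)$ for each $\Gamma$. Using the explicit formulas \eqref{m013}, $h_\Gamma$ is a finite product of derivatives of $X$, $g_\Gamma$ is a single derivative of $X$, and $f_\Gamma$ is a product of derivatives of $(\na X)^{-1}$ times a derivative $\del^\ga\rho$ of the density. The bookkeeping in the proof of Corollary \ref{pcor01} is arranged so that within each term at most one factor carries high-order (up to order $|\ga|\leq s$) differentiation, and it is placed in a slot of $\cZ_i'$ requiring only an $L^2$ bound, while all remaining factors are derivatives of $X$ of bounded low order (at most three). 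Since $s\geq6$, the Sobolev embedding $H^s(\R^3)\hookrightarrow C^{s-2}(\R^3)$ places every such low-order factor in $L^\infty$ — and in $C^1$ where the slot demands it — with norm controlled by $\lVert X-Id\rVert_{H^s}\leq\delta$, while the single high-order factor is controlled in $L^2$ by $\lVert X-Id\rVert_{H^s}$ (if it is a derivative of $X$) or by $\lVert\rho\rVert_{H^s}$ (if it is $\del^\ga\rho$); the derivatives of $(\na X)^{-1}$ are handled by the usual chain-rule/Neumann-series expansion, using $\na X=I+O(\delta)$ in $C^{s-2}$ together with $\det\na X$ bounded away from $0$. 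Hence each $\lVert(h_\Gamma,g_\Gamma,f_\Gamma)\rVert_{\cZ_i'}$ is bounded by a constant depending only on $\rho$ (the number of terms and the combinatorial coefficients depending only on $s$), and summing over the finitely many $\Gamma$ with $|\Gamma|=|\ga|-1$ and then over the finitely many $\ga$ with $|\ga|\leq s$ yields $\lVert F(X)\rVert_{H^s}\leq C(\rho)$.
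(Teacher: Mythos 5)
Your argument is correct and follows essentially the same route as the paper: Proposition \ref{diffformula} splits $\del^\ga[F(X)]$ into the principal-value term and the $\del^\ga\rho$ term, the latter is bounded by Lemma \ref{plem01}, and the former is reduced via Corollary \ref{pcor01} (case $m=n$) and part (i) of Proposition \ref{prop001} to the uniform bounds $\lVert(h_\Gamma,g_\Gamma,f_\Gamma)\rVert_{\cZ_i'}\leq C(\rho)$, which the paper likewise obtains by inspecting the explicit forms \eqref{m013}. The only slight imprecision is your claim that the non-principal factors carry at most three derivatives (in the $\cZ_1'$ case they may carry up to order $s-2$), but this is harmless since the Sobolev embedding $H^s(\R^3)\hookrightarrow C^{s-2}(\R^3)$ you invoke is exactly what controls those factors in $L^\infty$.
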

\begin{proof}
We consider some $\ga\in(\N_0)^3$ with $0\leq|\ga|\leq s$, define $n:=|\ga|$, and choose $\sigma=(\sigma_1,\ldots,\sigma_n)$ with $\del^\ga=\del_{\sigma_n}\ldots\del_{\sigma_1}$. By Proposition \ref{diffformula},
\begin{eqnarray}\label{p015}\\
\del^\ga[F(X)(\al)]=\lim_{\eps\to 0} \int_{|\be-\al|\geq\eps} P^\sigma_n(\al,\al-\be)\tilde K_X (\al,\be)d\be+\int K(X(\al)-X(\be))(\del^\ga\rho)(\be)d\be\quad\textrm{in }L^2.\nn
\end{eqnarray}
If $n=0$ we note the first term in the right hand side of \eqref{p015} is zero, and apply Lemma \ref{plem01} to get that the remaining term is bounded in $L^2$ by a constant depending on $\rho$. Suppose $n\geq 1$. For each $\eps>0$, we consider the quantity within the limit in \eqref{p015}. We make a change of variables and apply Corollary \ref{pcor01}, producing for $P^\sigma_{n}(\al,\be)$ a collection of triples $(h_\Gamma, g_\Gamma, f_\Gamma)$, with each triple in $\cZ_1'$, $\cZ_2'$, or $\cZ_3'$, and we find
\begin{eqnarray}\label{p016'}\\
\int_{|\be-\al|\geq\eps} P^\sigma_{n}(\al,\al-\be)\tilde K_X(\al,\be)d\be
&=&
\int_{|\be-\al|\geq\eps} P^\sigma_{n}(\al,\be)\na_\be(K(\theta(\al,\be)))d\be,\nn\\
&=&
\sum_{|\Gamma|=n-1}h_\Gamma(\al)\int_{|\be-\al|\geq\eps}(g_\Gamma(\al)-g_\Gamma(\al-\be))\nn\\
& &\hspace{1.5in}\times f_\Gamma(\al-\be)\del_{\be_{k(\Gamma)}}(K(\theta(\al,\be)))d\be.\nn
\end{eqnarray}
Now we apply the result of part (i) for Proposition \ref{prop001}, 
which yields convergence in $L^2$ as $\eps$ tends to zero.
\ba\label{p017'}
\lim_{\eps \to 0}\int_{|\be-\al|\geq\eps} P^\sigma_{s}(\al,\al-\be)\tilde K_X(\al,\be)d\be
=
\sum_{|\Gamma|=s-1}
(T^0_{X,k(\Gamma)}[h_\Gamma,g_\Gamma,f_\Gamma])(\al)\quad\mbox{in }L^2.
\ea
Now we observe by examining the forms \eqref{m013} for $h_\Gamma$, $g_\Gamma$, and $f_\Gamma$ given in the proof of Corollary \ref{pcor01}, and using the fact that $X\in B_\delta$, that we must have for each of the $\Gamma$
\ba\label{ep02}
\lVert
(h_\Gamma,g_\Gamma,f_\Gamma)
\rVert_{\cZ_i'}
\leq
C(\rho).
\ea
Taking the $L^2$ norm of both sides of \eqref{p017'}, and using part (i) of Proposition \ref{prop001} along with \eqref{ep02}, we find
\ba\label{p018'}
\left\lVert\lim_{\eps \to 0}\int_{|\be-\al|\geq\eps} P^\sigma_{s}(\al,\al-\be)\tilde K_X(\al,\be)d\be
\right\rVert_{L^2}
\leq
C_1(\rho).
\ea
Meanwhile, from Lemma \ref{plem01} we get that
\ba\label{p019'}
\left\lVert
\int K(X(\al)-X(\be))(\del^\ga\rho)(\be)d\be
\right\rVert_{L^2}
\leq
C_2(\rho).
\ea
Using \eqref{p018'} and \eqref{p019'} to bound the $L^2$ norm of the right hand side of \eqref{p015}, we conclude
\ba
\lVert
\del^\ga F(X)
\rVert_{L^2}\leq C_3(\rho).
\ea
Since we have such a bound for all $\ga$ with $0\leq |\ga|\leq s$, \eqref{p014'} follows.
\end{proof}
Now that we have the desired local boundedness of the operator $F$ in this context, we must verify the other criterion of preservation of analyticity, as discussed in Section 2.

\begin{subsection}{Analyticity of trajectories}
Our proof of Lagrangian analyticity for the Euler-Poisson system largely follows the strategy outlined in Section 2 for proving Lagrangian analyticity for fluid mechanics systems. We also need to make some slight modifications to the techniques of that section when we prove the existence, uniqueness, and analyticity of solutions to the ODE system \eqref{NewEp01}, to accommodate for the fact that we have initial data of the type $(X,V)|_{t=0}=(Id,u_0)$, rather than simply the identity map, but the changes are not especially substantial. We continue by providing the analogous abstract function space setting for the Euler-Poisson problem.
\end{subsection}
\begin{definition}\label{banach}
Taking the Banach space $Y=H^s(\R^3)$, we recall the definition
\ba
B_\delta:=\{X:\lVert X-Id\rVert_Y\leq \delta\},
\ea
and for $r>0$, we define
\ba\label{24'}
\cY_r:=\{Y\textrm{-valued maps }X_z\textrm{ analytic in }z\textrm{ on }d_r\},
\ea
where $d_r=\{|z|<r\}$, and
we denote
\ba
\cB_{\delta,r}:=\{\mbox{maps }X_{(\cdot)}:(X_{(\cdot)}-Id)\in \cY_r, \;X_z\in B_\delta \textrm{ for all }z\in d_r\}.
\ea
\end{definition}
\begin{lem}\label{vPreservesA}
For $\rho\in C^s_c(\R^3)$, the operator $F$ preserves analyticity. That is, for any $r>0$ and $X_{(\cdot)}\in\cB_{\delta,r}$, for fixed $\al\in\R^3$, $F(X_z)(\al)$ is analytic in $d_r$.
\end{lem}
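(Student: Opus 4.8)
The plan is to mirror the argument of Lemma \ref{vPreservesA'}, adapting it to the three–dimensional weighted kernel $K(z)=\tfrac{1}{4\pi}\,z/|z|^3$ of Definition \ref{Fd}. Fix $r>0$, $X_{(\cdot)}\in\cB_{\delta,r}$, $\al\in\R^3$, and $z\in d_r$. Since $X_{(\cdot)}-\Id\in\cY_r$ is an $H^s$-valued analytic function, for each fixed $\al$ the map $z\mapsto X_z(\al)$ is analytic (pointwise evaluation being continuous on $H^s$ with $s\ge 6$), so the difference quotient makes sense; writing it out and applying the fundamental theorem of calculus along the straight segment from $z$ to $z+h$ gives
\[
\frac{1}{h}\big(F(X_{z+h})(\al)-F(X_z)(\al)\big)
=\int\frac{1}{h}\left(\int_z^{z+h}\Big(\tfrac{d}{d\zeta}X_\zeta(\al)-\tfrac{d}{d\zeta}X_\zeta(\be)\Big)\na K\big(X_\zeta(\al)-X_\zeta(\be)\big)\,d\zeta\right)\rho(\be)\,d\be .
\]

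The crux is a $\be$- and $\zeta$-uniform domination of the inner integrand. First, because $X_{(\cdot)}\in\cB_{\delta,r}$, the Cauchy integral formula in $z$ bounds $\lVert\tfrac{d}{d\zeta}X_\zeta-\Id\rVert_{H^s}$ uniformly as $\zeta$ ranges over any disc compactly contained in $d_r$; by Assumption \ref{as1} the $H^s$ norm dominates the $C^{0,1}$ norm, so this yields a uniform Lipschitz bound $\big|\tfrac{d}{d\zeta}X_\zeta(\al)-\tfrac{d}{d\zeta}X_\zeta(\be)\big|\le C|\al-\be|$ for $\zeta$ near $z$. Second, by \eqref{closetoid} we have $X_\zeta(\al)-X_\zeta(\be)=(\al-\be)(I+O(\delta))$, so Lemma \ref{kernelbound'} gives $\big|\na K(X_\zeta(\al)-X_\zeta(\be))\big|\le C|\al-\be|^{-3}$. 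Multiplying the two, the inner integrand is bounded by $C|\al-\be|^{-2}$ uniformly in $\zeta$ on the segment, hence the full $\be$-integrand is dominated by $C|\al-\be|^{-2}|\rho(\be)|$, which is integrable over $\R^3$ (an integrable singularity at $\be=\al$ in three dimensions, and compact support of $\rho$ controls the behavior at infinity).

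With this domination in hand, the integrand in $\zeta$ is continuous for $\be\ne\al$, and as $h\to 0$ the inner $\zeta$-average converges pointwise (for $\be\ne\al$) to $\big(\tfrac{d}{dz}X_z(\al)-\tfrac{d}{dz}X_z(\be)\big)\na K(X_z(\al)-X_z(\be))$; the dominated convergence theorem then yields
\[
\frac{d}{dz}\big(F(X_z)(\al)\big)=\int\Big(\tfrac{d}{dz}X_z(\al)-\tfrac{d}{dz}X_z(\be)\Big)\na K\big(X_z(\al)-X_z(\be)\big)\,\rho(\be)\,d\be .
\]
Since the complex derivative exists at every $z\in d_r$, $F(X_z)(\al)$ is analytic on $d_r$, which is the claim. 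I do not expect a genuine obstacle here: the only points requiring care are justifying the uniform Lipschitz bound on $\tfrac{d}{d\zeta}X_\zeta$ via the Cauchy estimate combined with the embedding $H^s(\R^3)\hookrightarrow C^{0,1}(\R^3)$ for $s\ge 6$, and verifying integrability of the dominating function — both of which go through exactly as in the vortex patch case, with the $|\al-\be|^{-1}$ singularity there replaced by the (still integrable in $\R^3$) $|\al-\be|^{-2}$ singularity here.
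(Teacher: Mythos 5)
Your proof is correct and follows exactly the route the paper intends: the paper omits an explicit proof of Lemma \ref{vPreservesA} precisely because it is the verbatim adaptation of the argument for Lemma \ref{vPreservesA'}, which is what you carry out (difference quotient plus fundamental theorem of calculus along the segment, uniform Lipschitz control of $\tfrac{d}{d\zeta}X_\zeta$ via the Cauchy integral formula and the embedding of $H^s$, $s\ge 6$, into $C^{0,1}$, the kernel bound of Lemma \ref{kernelbound'} applied at $(\al-\be)(I+O(\delta))$, and dominated convergence against the integrable $|\al-\be|^{-2}$ singularity weighted by the compactly supported $\rho$). No gaps; the write-up matches the paper's approach.
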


The following lemma follows immediately from an application of Lemma \ref{omegalem'}, since $Y$ satisfies Assumption \ref{as1}.
\begin{lem}\label{omegalem}
Consider $B_\delta$, $\cB_{\delta,r}$, $Y$, and $\cY_r$ as above. Consider a function $\tilde F:B_\delta\to Y$. If we have a bound of the form
\ba
\sup_{X\in B_\delta} \lVert \tilde F(X) \rVert_Y\leq C,
\ea
and if $\tilde F$ preserves analyticity, then in fact for any $r>0$, for $X_z\in\cB_{\delta,r}$ we have that $\tilde F(X_z)$ is an analytic function from $d_r$ into $Y$. That is,
\ba
\tilde F:\cB_{\delta,r}\to \cY_r.
\ea
\end{lem}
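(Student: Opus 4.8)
The plan is to observe that this is a direct corollary of Lemma \ref{omegalem'}. That lemma is formulated for an arbitrary Banach space $Y$ obeying Assumption \ref{as1}, for any map $\tilde F:B_\delta\to Y$ satisfying the uniform bound \eqref{65} and preserving analyticity, and its conclusion is precisely that $\tilde F:\cB_{\delta,r}\to\cY_r$ for every $r>0$. The hypotheses on $\tilde F$ in the present statement (the uniform bound $\sup_{X\in B_\delta}\lVert\tilde F(X)\rVert_Y\le C$ and preservation of analyticity) are carried over verbatim, so the only thing that requires checking is that the choice $Y=H^s(\R^3)$ with $s\ge 6$ is a Banach space satisfying Assumption \ref{as1}.

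To verify Assumption \ref{as1}, first note that $H^s(\R^3)\subset C(\R^3)$ since $s>3/2$. For the norm bound \eqref{bprop}, namely $\lVert f\rVert_{C^{0,1}(\R^3)}\le C\lVert f\rVert_{H^s}$, one estimates the two pieces of the $C^{0,1}$ norm separately and componentwise: $\lVert f\rVert_{L^\infty}\lesssim\lVert f\rVert_{H^s}$ because $s>3/2$, and the Lipschitz seminorm is controlled by $\lVert\na f\rVert_{L^\infty}\lesssim\lVert\na f\rVert_{H^{s-1}}\lesssim\lVert f\rVert_{H^s}$ because $s-1>3/2$. Both conditions require only $s>5/2$, so the choice $s\ge 6$ is comfortably within range; in fact $H^s(\R^3)$ even embeds into $C^{1,\mu}(\R^3)$ for a suitable $\mu\in(0,1)$, which is far more regularity than is needed here.

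With Assumption \ref{as1} in hand, one simply invokes Lemma \ref{omegalem'} with $Y=H^s(\R^3)$: the uniform bound hypothesized in the present statement is exactly \eqref{65}, the preservation-of-analyticity hypothesis matches, and the conclusion $\tilde F:\cB_{\delta,r}\to\cY_r$ follows at once. There is no genuine obstacle in this argument; all the substance lives in Lemma \ref{omegalem'} itself — whose proof uses the Cauchy integral formula to upgrade pointwise-in-$\al$ analyticity of $\tilde F(X_z)(\al)$, together with the uniform $Y$-valued bound, to analyticity of $\tilde F(X_z)$ as a $Y$-valued function — and the only care required is the (entirely standard) verification of the Sobolev embedding, in particular noting that $s\ge 6$ exceeds the threshold $5/2$.
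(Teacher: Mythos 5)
Your proposal is correct and matches the paper's own argument, which likewise notes that the statement follows immediately from Lemma \ref{omegalem'} once one observes that $Y=H^s(\R^3)$ satisfies Assumption \ref{as1}. Your explicit verification of the Sobolev embedding ($s\ge 6 > 5/2$ giving control of the $C^{0,1}$ norm) is a harmless elaboration of what the paper leaves implicit.
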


Now we proceed to use what has been established above to solve the following equation for $Z_t=(X_t,V_t):\R^3\times[-T,T]\to\R^6$.
\begin{eqnarray}\label{36}
&\frac{d}{dt} Z_t& = 
\left(
	\begin{array}{ccc}
	V_t \\
	F(X_t) 
	\end{array}
	\right),\\
&Z_t |_{t=0}&=
\left(
\begin{array}{ccc}
	Id \\
	u_0 
	\end{array}
	\right),\nonumber
\end{eqnarray}
in the desired setting, where $u_0$ is in $H^s$ and plays the role of our initial velocity for the Euler-Poisson system. We prove that the solution to the equation exists as a fixed point of a contraction mapping. For $\delta>0$ and $r>0$, we define
\ba
\cD_{\delta,r}:=\{Z_{(\cdot)}=(X_{(\cdot)},V_{(\cdot)}):
\; X_{(\cdot)}\in\cB_{\delta,r},\;V_{(\cdot)}\in \cY_r,\mbox{ and }\lVert V_z-u_0\rVert_Y\leq\delta \textrm{ for all }z\in d_r\},
\ea
and
\ba
\lVert (X_{(\cdot)},V_{(\cdot)})
\rVert_{\cD_{\delta,r}}
:=
\sup_{z\in d_r}
\left(
\lVert
X_z-Id
\rVert_Y+\lVert
V_z-u_0
\rVert_Y
\right).
\ea
We recall Lemma \ref{what'}, which directly implies the following lemma, which in turn applies to our operator $F$, in light of Proposition \ref{pcor02} and Lemma \ref{vPreservesA}:
\begin{lem}\label{what}
Consider $\delta>0$ and a function $\tilde F:B_\delta\to Y$, where $Y$ is a Banach space of maps from $\R^3$ to $\R^3$ with norm dominating the $C^1$ norm, and $B_\delta$ is the ball about the identity map of radius $\delta$ in $Y$. If we have a bound of the form
\ba\label{65prime}
\sup_{X\in B_\delta} \lVert \tilde F(X) \rVert_Y\leq C,
\ea
and that $\tilde F$ preserves analyticity, then for any $\eps\leq \delta/10$ we have that the function $\tilde F$ with domain $B_{\eps}$ is Lipschitz with constant $C_0$ depending only on $C$.
\end{lem}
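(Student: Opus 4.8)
The plan is to obtain this lemma as an immediate corollary of Lemma \ref{what'}, which is the same assertion stated for a general open set $\Omega\subset\R^d$ and a Banach space $Y$ satisfying Assumption \ref{as1}. So the only real work is to check that the present hypotheses fit that framework, with $\Omega=\R^3$ and $d=3$; no new analysis is needed.

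First I would verify that $Y$ satisfies Assumption \ref{as1}. The hypothesis here is that the $Y$-norm dominates the $C^1(\R^3)$ norm; since for any $f$ with $\lVert f\rVert_{C^1}<\infty$ the mean value theorem gives $\sup_x|f(x)|+\sup_{x\neq y}\frac{|f(x)-f(y)|}{|x-y|}\leq \sup_x|f(x)|+\sup_x|\nabla f(x)|=\lVert f\rVert_{C^1}$, we obtain $\lVert f\rVert_{C^{0,1}(\R^3)}\leq C\lVert f\rVert_Y$ for all $f\in Y$, which is exactly the content of Assumption \ref{as1} (the distinction between $\R^3$-valued and $\C^3$-valued targets is immaterial to this step and to the argument that follows).

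With Assumption \ref{as1} in hand, the remaining hypotheses of Lemma \ref{what'}, namely the uniform bound $\sup_{X\in B_\delta}\lVert\tilde F(X)\rVert_Y\leq C$ from \eqref{65prime} and the preservation of analyticity of $\tilde F$, are precisely what we have assumed. Applying Lemma \ref{what'} then gives, for every $\eps\leq\delta/10$, that the restriction of $\tilde F$ to $B_\eps$ is Lipschitz with a constant $C_0$ controlled by $C$, which is the desired conclusion. In particular this applies to the Euler-Poisson operator $F$ of Definition \ref{Fd}, by virtue of the local boundedness established in Proposition \ref{pcor02} and the preservation of analyticity from Lemma \ref{vPreservesA}.

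I do not anticipate any genuine obstacle: the substantive part of the argument was already carried out inside the proof of Lemma \ref{what'} (local boundedness together with preservation of analyticity yields, via Lemma \ref{omegalem'}, that $\tilde F(X_\tau)$ is $Y$-analytic on small discs; a Cauchy estimate then bounds $\lVert\frac{d}{d\tau}\tilde F(X_\tau)\rVert_Y$ uniformly along segments lying in $B_\eps$; integrating along such a segment produces the Lipschitz bound). The only mildly delicate point is the purely bookkeeping one above, of matching the $C^1$-domination hypothesis stated here against the $C^{0,1}$-domination required in Section 2, which on $\R^3$ is automatic.
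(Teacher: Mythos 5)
Your proposal is correct and follows the paper's own route: the paper obtains Lemma \ref{what} as a direct consequence of Lemma \ref{what'} (with the operator $F$ of Definition \ref{Fd} then satisfying its hypotheses via Proposition \ref{pcor02} and Lemma \ref{vPreservesA}), which is exactly what you do, and your explicit check that $C^1$-domination implies the $C^{0,1}$-domination of Assumption \ref{as1} on $\R^3$ is a harmless bit of added bookkeeping.
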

Now we choose a $\delta$ sufficiently small to satisfy the hypotheses required for all the above results, and fix $\eps=\delta/10$. With that, for any $r>0$ we define the following mapping on elements $Z_{(\cdot)}\in \cD_{\eps,r}$:
\ba
\Phi \left(Z_{(\cdot)}\right)(t):=
\left(
\begin{array}{ccc}
Id\\
u_0
\end{array}
\right)
+\int^t_0 
\left(\begin{array}{ccc}
V_\zeta\\
 F(X_\zeta)
\end{array}\right)
d\zeta\quad\quad\textrm{for }t\in d_r,
\ea
where the integration is taken along the straight path from $0$ to $t$. This is the map we will show has a fixed point, which solves the ODE \eqref{36}.
\begin{prop}\label{epmainth}
For $\eps=\delta/10$ and sufficiently small $r_1>0$, there exists a unique fixed point $Z_{(\cdot)}\in\cD_{\eps,{r_1}}$ of $\Phi:\cD_{\eps,{r_1}}\to\cD_{\eps,{r_1}}$, and the function $Z(\al,t)=Z_t(\al)$ is analytic in $t$ on $d_{r_1}$ and the unique solution to the equation \eqref{36}.
\end{prop}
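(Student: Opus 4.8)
The plan is to reproduce the contraction-mapping argument behind Theorem \ref{mainth}, adapted to the coupled first-order system and to the non-identity initial datum $(Id,u_0)$. First I would fix $\delta>0$ small enough that Proposition \ref{pcor02} (local boundedness of $F$ on $B_\delta$), Lemma \ref{vPreservesA} (preservation of analyticity), Lemma \ref{omegalem} (upgrade to $Y$-valued analyticity), and Lemma \ref{what} (Lipschitz property of $F$ on $B_{\eps}$) all apply, and set $\eps=\delta/10$. The whole statement then reduces to showing that for $r_1$ small enough the map $\Phi$ sends $\cD_{\eps,r_1}$ into itself and is a contraction in the metric induced by $\lVert\cdot\rVert_{\cD_{\eps,r_1}}$, after which the contraction mapping principle gives the unique fixed point, and the remaining claims (solving \eqref{36}, analyticity in $t$) are immediate.

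For the self-mapping step, given $Z_{(\cdot)}=(X_{(\cdot)},V_{(\cdot)})\in\cD_{\eps,r_1}$ I would note that each $X_z\in B_{\eps}\subset B_\delta$, so $F(X_z)$ is defined, and by Lemma \ref{vPreservesA} together with Proposition \ref{pcor02} and Lemma \ref{omegalem} the map $z\mapsto F(X_z)$ is analytic from $d_{r_1}$ into $Y$ with $\sup_z\lVert F(X_z)\rVert_Y\le C(\rho)$; since $V_{(\cdot)}$ is analytic with $\lVert V_z\rVert_Y\le\lVert u_0\rVert_Y+\eps$, the two components $Id+\int_0^t V_\zeta\,d\zeta$ and $u_0+\int_0^t F(X_\zeta)\,d\zeta$ of $\Phi(Z_{(\cdot)})(t)$ are analytic on $d_{r_1}$, with norms of the integral parts bounded by $r_1(\lVert u_0\rVert_Y+\eps)$ and $r_1 C(\rho)$ respectively; choosing $r_1$ so that both are $\le\eps$ puts $\Phi(Z_{(\cdot)})$ back in $\cD_{\eps,r_1}$. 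For the contraction step, the difference $\Phi(Z_{(\cdot)})-\Phi(\tilde Z_{(\cdot)})$ has components $\int_0^t(V_\zeta-\tilde V_\zeta)\,d\zeta$ and $\int_0^t(F(X_\zeta)-F(\tilde X_\zeta))\,d\zeta$; the first is trivially bounded by $r_1\sup_\zeta\lVert V_\zeta-\tilde V_\zeta\rVert_Y$, and Lemma \ref{what} gives $\lVert F(X_\zeta)-F(\tilde X_\zeta)\rVert_Y\le C_0\lVert X_\zeta-\tilde X_\zeta\rVert_Y$, so $\lVert\Phi(Z_{(\cdot)})-\Phi(\tilde Z_{(\cdot)})\rVert_{\cD_{\eps,r_1}}\le r_1\max(1,C_0)\lVert Z_{(\cdot)}-\tilde Z_{(\cdot)}\rVert_{\cD_{\eps,r_1}}$, a contraction once $r_1<1/(2\max(1,C_0))$. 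Finally, the fixed point $Z_t=(Id,u_0)+\int_0^t(V_\zeta,F(X_\zeta))\,d\zeta$ is differentiated in $t$ using the fundamental theorem of calculus for the continuous $Y$-valued integrand to recover \eqref{36}; uniqueness in $\cD_{\eps,r_1}$ is the uniqueness of the fixed point, and analyticity of $t\mapsto Z_t(\al)$ follows because evaluation at $\al$ is a bounded linear functional on $Y=H^s(\R^3)$ (Assumption \ref{as1}).

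I expect the only slightly delicate point to be the bookkeeping in the self-mapping step: one must pick $r_1$ small enough to simultaneously keep $X_z$ inside $B_{\eps}$ — so that $F(X_z)$ is both well-defined and within the regime where Lemma \ref{what} applies — and keep $V_z$ within $\eps$ of $u_0$, and the fixed but possibly large $\lVert u_0\rVert_{H^s}$ entering the bound for the $X$-component is what forces $r_1$ (hence the radius of analyticity) to depend on the initial data. This is a genuine feature of the non-identity initial datum rather than a real obstruction. Everything else is a routine transcription of the proof of Theorem \ref{mainth}, the substantive analytic input having already been supplied by Propositions \ref{diffformula} and \ref{pcor02} and Lemma \ref{vPreservesA}.
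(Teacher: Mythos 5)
Your proposal is correct and follows essentially the same route as the paper's own (very terse) proof: a contraction mapping argument on $\cD_{\eps,r_1}$ using Proposition \ref{pcor02}, Lemmas \ref{vPreservesA}, \ref{omegalem}, and \ref{what}, with the self-mapping and Lipschitz steps exactly as you describe; you simply supply the bookkeeping the paper leaves implicit. The only microscopic caveat is that bounding the two components separately gives the contraction constant in terms of the sum of the suprema rather than the supremum of the sum, which may cost a harmless factor of $2$ in your threshold $r_1<1/(2\max(1,C_0))$, absorbed by taking $r_1$ slightly smaller.
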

\begin{proof}
First we check that $\Phi$ maps $\cD_{\eps,{r_1}}$ to itself. Note that $t$ is restricted to $d_{r_1}$ and $r_1$ can be chosen as small as we like, while the bound
\ba\label{65'}
\sup_{X\in B_\delta} \lVert  F(X) \rVert_Y\leq C,
\ea
in particular allows us to ensure that $\Phi (Z_{(\cdot)})$ is close to $(Id,u_0)^t$ for small $r_1$. The verification that $\Phi$ maps $\cD_{\eps,{r_1}}$ to itself is completed with the aid of Lemmas \ref{vPreservesA} and \ref{omegalem}. 
Using Lemma \ref{what}, we have the Lipschitz property of $F$ on $B_{\eps}$; using this we can show that $\Phi$ is a contraction mapping on $\cD_{\eps,r_1}$ as long as $r_1$ is small enough. The conclusion of the theorem now follows from the contraction mapping principle.
\end{proof}

Thus, we conclude in particular from Proposition \ref{epmainth} the local existence and uniqueness of solutions to the system \eqref{epfinal01a}-\eqref{epfinal01b}, and that the corresponding Lagrangian trajectories $X(\al,t)$ are analytic in the time variable, as long as the initial data $(\rho_0,u_0)$ is in $C^s_c\times H^s$ for $s\geq 6$.
\begin{theo}\label{maintheoEP}
Consider a solution $(\rho,u)$ to \eqref{epfinal01a}, with initial data $\rho_0\in C^s_c$, $u_0\in H^s$ for $s\geq 6$. The corresponding trajectory map $X(\al,t)$ solving \eqref{epfinal02b} is analytic in $t$ at $t=0$ for each $\al\in\R^3$.
\end{theo}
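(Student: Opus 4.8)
The plan is to read the theorem off the abstract framework of Section 2, adapted in Section 4.3, once its two structural hypotheses have been verified for the Euler--Poisson operator. First I would record that both hypotheses are already in hand: local boundedness near $Id$ in $Y=H^s(\R^3)$, i.e. $\sup_{X\in B_\delta}\lVert F(X)\rVert_{H^s}\leq C$, is Proposition \ref{pcor02}, and preservation of analyticity is Lemma \ref{vPreservesA}; Lemma \ref{what} then upgrades these two facts to the Lipschitz bound for $F$ on a smaller ball $B_\eps$. Feeding this into Proposition \ref{epmainth} produces, for $\eps=\delta/10$ and sufficiently small $r_1>0$, a unique fixed point $Z_{(\cdot)}=(X_{(\cdot)},V_{(\cdot)})\in\cD_{\eps,r_1}$ of $\Phi$, with $Z(\al,t)=Z_t(\al)$ analytic in $t$ on the disc $d_{r_1}$ and solving the first order system \eqref{NewEp01}, namely $\tfrac{dX}{dt}=V$, $\tfrac{dV}{dt}=F(X)$, $(X,V)|_{t=0}=(Id,u_0)$. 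Restricting $z$ to real $t\in(-r_1,r_1)$ gives the candidate trajectory map $X(\al,t)$ together with its velocity field $V(\al,t)$.

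Next I would reconstruct the Eulerian unknowns and confirm that they solve \eqref{epfinal01a}--\eqref{epfinal01b} with $X$ as the trajectory map. Since $s\geq 6$, the embedding $H^s(\R^3)\hookrightarrow C^1$ and the smallness of $\lVert X(\cdot,t)-Id\rVert_{H^s}$ make $\na X(\cdot,t)$ uniformly close to $I$ and $\det\na X(\cdot,t)$ positive, so $X(\cdot,t)$ is a global $C^1$-diffeomorphism of $\R^3$. I would then set $u(x,t):=V(X^{-1}(x,t),t)$ and $\rho(x,t):=(\det\na X(X^{-1}(x,t),t))^{-1}\rho_0(X^{-1}(x,t))$, so that $u(X(\al,t),t)=V(\al,t)$, relation \eqref{lasteqn} holds by construction, and $\rho\geq 0$. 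From $\tfrac{dX}{dt}(\al,t)=V(\al,t)=u(X(\al,t),t)$ and $X|_{t=0}=Id$ we see $X$ is the Lagrangian trajectory map of $u$. Differentiating \eqref{lasteqn} in $t$ and using Jacobi's formula $\tfrac{d}{dt}\det\na X=(\det\na X)(\div u)\circ X$ gives the continuity equation \eqref{epfinal01a}; the chain rule gives $(\del_t u+u\cdot\na u)(X(\al,t),t)=\tfrac{dV}{dt}(\al,t)=F(X)(\al,t)$, and running the change of variables of \eqref{NewEp02} backwards, together with the fact that the kernel $K$ of \eqref{epfinal03} is exactly the convolution kernel of $\na\Delta^{-1}$ on $\R^3$, identifies this quantity with $(\na\Delta^{-1}\rho)(X(\al,t),t)$; composing with $X(\cdot,t)^{-1}$ yields \eqref{epfinal01b} with $q=1$. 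The initial data $(\rho,u)|_{t=0}=(\rho_0,u_0)$ are immediate.

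With this in place the theorem follows. For each fixed $\al$, $t\mapsto X(\al,t)=X_t(\al)$ is the restriction to the real axis of a $Y$-valued map analytic on $d_{r_1}$, hence real analytic at $t=0$. For an arbitrary solution $(\rho,u)$ of the stated regularity, its trajectory map $X$ solves \eqref{epfinal02b}, and the computation producing \eqref{NewEp02} shows $(X,\del_t X)$ solves \eqref{NewEp01} on any short interval on which $X(\cdot,t)$ stays in $B_\eps$; since $F$ is Lipschitz there, Picard uniqueness for this ODE in $Y\times Y$ forces it to coincide with the fixed point above, so it is analytic in $t$ at $t=0$ as well. This simultaneously yields local existence, uniqueness, and analyticity of trajectories.

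The hard part is not this deduction but everything upstream of it: the uniform bound $\lVert F(X)\rVert_{H^s}\leq C$ of Proposition \ref{pcor02}, which rests on the differentiation formula of Proposition \ref{diffformula} and, beneath that, on Proposition \ref{prop001} --- the device that rewrites every high-order spatial derivative of $F(X)$ as a fixed family of $L^2$-bounded singular integral operators $T^0_{X,k}$ applied to products of derivatives of $X$ and $\rho_0$. Once that is available, preservation of analyticity is the comparatively routine Cauchy-integral-and-dominated-convergence argument of Lemma \ref{vPreservesA}, and the only mildly delicate points in the present deduction are keeping control of the diffeomorphism and regularity of $X(\cdot,t)$ and checking that the pushforward density together with the $K$-convolution forcing really do reproduce the continuity equation and the Poisson-coupled momentum equation.
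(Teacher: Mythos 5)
Your proposal is correct and follows essentially the same route as the paper: Proposition \ref{pcor02} (local boundedness in $H^s$) plus Lemma \ref{vPreservesA} (preservation of analyticity) feed through Lemmas \ref{omegalem} and \ref{what} into the contraction-mapping result of Proposition \ref{epmainth}, from which the theorem is read off. The extra material you supply — reconstructing $(\rho,u)$ from the fixed point $(X_t,V_t)$ and invoking Picard uniqueness (via the Lipschitz bound on $B_\eps$) to identify the trajectory map of a given solution with the analytic fixed point — is a sound elaboration of steps the paper leaves implicit, not a different method.
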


\end{subsection}

\begin{subsection}{Integral kernel operator bounds}
In this part of the section, we direct our efforts to delineating conditions under which an integral kernel of the form $\cK(\al,\be)$, $\mathcal{K}:(\R^3\times\R^3\setminus\{\al=\be\})\to \C$, with
\ba\label{cond1}
|\mathcal{K}(\al,\be)|\leq A |\al-\be|^{-3},
\ea
has a corresponding operator that is bounded on $L^2$ in particular.
\begin{lem}\label{stein}
Suppose $\mathcal{K}:(\R^3\times\R^3\setminus\{\al=\be\})\to \C$ satisfies \eqref{cond1} and
\begin{eqnarray}\label{cond2}
|\mathcal{K}(\al,\be)-\cK(\al',\be)|&\leq& A\frac{|\al-\al'|}{|\al-\be|^{4}},\quad\textrm{if }|\al-\al'|\leq |\al-\be|/2,\\
|\cK(\al,\be)-\cK(\al,\be')|&\leq& A\frac{|\be-\be'|}{|\al-\be|^{4}},\quad\textrm{if }|\be-\be'|\leq |\al-\be|/2,\nonumber
\end{eqnarray}
and, defining the quantities
\begin{eqnarray}\label{cancelI}
I_{\eps,R}(\al)=\int_{\eps<|\al-\be|<R}\cK(\al,\be)d\be,\quad\quad I^*_{\eps,R}(\al)=\int_{\eps<|\al-\be|<R}\overline{\cK}(\be,\al)d\be,\nonumber
\end{eqnarray}
suppose we have in addition
\ba\label{cond3}
|I_{\eps,R}(\al)|\leq A \quad \textrm{for all }\eps,\;R>0\textrm{ and all }\al\in\R^3,
\ea
and an analogous uniform bound on $I^*_{\eps,R}(\al)$ holds.
Then the operators $T^\eps_\cK$ defined by
\ba
(T^\eps_\cK f)(\al)=\int_{|\be-\al|\geq \eps}\cK(\al,\be)f(\be)d\be,
\ea
are bounded from $L^2$ to $L^2$ and satisfy
\ba
\lVert T^\eps_\cK f\rVert_{L^2}\leq C \lVert f \rVert_{L^2},
\ea
independent of $\eps$.
\end{lem}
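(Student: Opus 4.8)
The statement is the classical Calderón--Zygmund $L^2$ boundedness theorem for singular integral operators with variable kernels, essentially in the form found in Stein's book (the lemma is even named \texttt{stein}). So my plan is to follow the standard Cotlar--Stein / Calderón--Zygmund route, with care taken that all constants are uniform in $\eps$.

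\textbf{Approach.} The plan is to prove an $\eps$-uniform bound by a combination of an $L^\infty \to \mathrm{BMO}$ estimate (or, more elementarily, a Cotlar-type almost-orthogonality argument on Littlewood--Paley pieces) together with the $T1$-type cancellation hypothesis \eqref{cond3}. Concretely, I would first reduce to showing that the truncated operators $T^\eps_\cK$ are \emph{uniformly} bounded by exhibiting them as genuine Calderón--Zygmund operators: the kernel $\cK^\eps(\al,\be) := \cK(\al,\be)\mathbf{1}_{|\al-\be|\ge\eps}$ satisfies the size bound \eqref{cond1} and the Hörmander-type smoothness bounds \eqref{cond2} with constants independent of $\eps$ (the truncation introduces only a harmless boundary contribution that one checks is controlled by \eqref{cond1}), and the associated operator is bounded on some $L^p$ at the formal level. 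Then the cancellation conditions \eqref{cond3} on $I_{\eps,R}$ and $I^*_{\eps,R}$ play the role of $T^\eps_\cK 1 \in \mathrm{BMO}$ and $(T^\eps_\cK)^* 1 \in \mathrm{BMO}$ (in fact they give the stronger $L^\infty$ bound on these quantities), which is exactly the input the $T1$ theorem needs. Invoking the $T1$ theorem — or, to keep things self-contained, running the classical Calderón--Zygmund argument directly — yields $\lVert T^\eps_\cK f\rVert_{L^2} \le C\lVert f\rVert_{L^2}$ with $C$ depending only on the constant $A$ in \eqref{cond1}--\eqref{cond3}, hence independent of $\eps$.

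\textbf{Key steps, in order.} (1) Fix $\eps>0$ and set $\cK^\eps(\al,\be)=\cK(\al,\be)\mathbf 1_{\{|\al-\be|\ge\eps\}}$; verify that $\cK^\eps$ obeys \eqref{cond1} and \eqref{cond2} with the \emph{same} constant $A$ (up to an absolute factor), paying attention to the jump across $|\al-\be|=\eps$ — here one uses that for $|\al-\al'|\le|\al-\be|/2$ the points $\al,\al'$ lie on the same side of the truncation except on a set where the size bound \eqref{cond1} already gives the needed control. (2) Observe that $T^\eps_\cK$ is a bounded operator on $L^2$ a priori for each fixed $\eps$ — indeed for $f$ compactly supported the kernel is bounded and integrable against $f$, and a crude Schur-test argument gives boundedness with an $\eps$-dependent constant — so that $T^\eps_\cK$ and its adjoint are well-defined operators to which the $T1$ theorem applies. (3) Identify $T^\eps_\cK 1 = \lim_{R\to\infty} I_{\eps,R}(\cdot)$ (interpreted in the $\mathrm{BMO}$ / weak sense) and likewise $(T^\eps_\cK)^* 1 = \lim I^*_{\eps,R}$; the hypotheses \eqref{cond3} give $\lVert T^\eps_\cK 1\rVert_{L^\infty}, \lVert (T^\eps_\cK)^* 1\rVert_{L^\infty} \le A$, which dominates the $\mathrm{BMO}$ norm. (4) Apply the $T1$ theorem (David--Journé) to conclude $\lVert T^\eps_\cK\rVert_{L^2\to L^2} \le C(A)$, a bound manifestly independent of $\eps$. (5) Record the conclusion $\lVert T^\eps_\cK f\rVert_{L^2}\le C\lVert f\rVert_{L^2}$ uniformly in $\eps$.

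\textbf{Main obstacle.} The routine but genuinely delicate point is step (1): propagating the smoothness and cancellation estimates through the hard truncation at scale $\eps$ while keeping every constant independent of $\eps$. The cancellation hypothesis \eqref{cond3} is stated precisely for the truncated kernels $I_{\eps,R}$, which is convenient, but one must be careful that the Hörmander regularity \eqref{cond2} for the \emph{truncated} kernel still holds uniformly; the standard fix is to absorb the boundary annulus $\{|\al-\be|\approx\eps\}$ into the size estimate \eqref{cond1}, noting that an integral of $|\al-\be|^{-3}$ over such an annulus is $O(1)$. An alternative to invoking the full $T1$ machinery — which would make the argument more self-contained — is a direct Cotlar--Stein decomposition $T^\eps_\cK = \sum_j T_j$ into dyadic-annulus pieces and an almost-orthogonality estimate $\lVert T_i^* T_j\rVert, \lVert T_i T_j^*\rVert \lesssim A^2 2^{-|i-j|}$ using \eqref{cond1}, \eqref{cond2}, and \eqref{cond3}; I would mention this as the route actually carried out if the paper prefers to avoid citing $T1$, but the $T1$ citation is the cleanest and I expect that is what the author intends.
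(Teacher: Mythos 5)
Your proposal is correct and takes essentially the same route as the paper, which disposes of the lemma simply by citing Theorem 4 of Section 7.3 (together with Proposition 1 of Section 1.7) of Stein's book --- precisely the uniform-in-$\eps$ truncated-kernel $L^2$ theorem whose proof is the Calder\'on--Zygmund/$T1$/Cotlar--Stein argument you sketch, and whose hypotheses are exactly \eqref{cond1}, \eqref{cond2}, and \eqref{cond3}. The only point to phrase carefully is that the hard-truncated kernel satisfies the smoothness condition \eqref{cond2} only in the integrated (H\"ormander) sense, not pointwise, which you already flag and handle by absorbing the boundary annulus into the size bound.
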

\begin{proof}The above result follows from an application of Theorem 4 in Section 7.3 of \cite{stein} together with Proposition 1 in Section 1.7 of \cite{stein}.
\end{proof}

The next lemma gives some basic conditions for $\cK$ that lead to pointwise convergence of the operators $T^\eps_\cK f$ as $\eps$ tends to zero when $f$ has a little smoothness, along with related operator bounds.
\begin{lem}\label{kernellem0'}
We take a particular choice of $\delta_0>0$ and consider a positive $\delta<\delta_0$. Fix $X\in B_\delta$. Fix $i$ and $j$ each to be one of $1,2$, or $3$. For $\al,\be\in\R^3$, $\al\neq\be$, we define
\ba\label{41p5}
K^\flat(\al,\be):=K^j(X(\al)-X(\be)),
\ea
and
\ba\label{41}
\cK(\al,\be):=\del_{\be_i} K^\flat(\al,\be).
\ea
Then we have that
\ba\label{42}
|K^\flat(\al,\be)| \leq A|\al-\be|^{-2},
\ea
and $\cK(\al,\be)$ satisfies \eqref{cond1}.

Additionally, for $f\in L^2\cap C^1$, the following limit converges uniformly in $\al$.
\ba
(T_\cK f)(\al):=p.v.\int \cK(\al,\be)f(\be)d\be,
\ea
and for $\eps,R>0$, the operators defined by
\ba
(T^{\eps,R}_\cK f)(\al):=\int_{\eps\leq|\be-\al|\leq R} \cK(\al,\be)f(\be)d\be,
\ea
and
\ba
(T^\eps_\cK f)(\al):=\int_{|\be-\al|\geq \eps} \cK(\al,\be)f(\be)d\be,
\ea
we have the bounds for all $f\in L^2\cap C^1$
\begin{eqnarray}
\lVert T^{\eps,R}_\cK f \rVert_{L^\infty}&\leq& A'( \lVert f \rVert_{L^2}+\lVert f \rVert_{C^1}),\label{432b}\\
\lVert T^\eps_\cK f \rVert_{L^\infty}&\leq& A'( \lVert f \rVert_{L^2}+\lVert f \rVert_{C^1}),\label{432}\\
\lVert T_\cK f \rVert_{L^\infty}&\leq& A'( \lVert f \rVert_{L^2}+\lVert f \rVert_{C^1}),\label{43}
\end{eqnarray}
for a universal constant $A'$. In particular $A'$ does not depend on the choice of $i,j,$ $\delta$, or $X$.
\end{lem}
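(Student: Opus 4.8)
The plan is to prove the statements in the order they are listed, since each relies on the previous. First I would establish the size bound \eqref{42} on $K^\flat$. This is immediate from the definition \eqref{66} of $K$ together with the elementary estimate $\mathrm{Re}\big(\sum_{i=1}^3(X_i(\al)-X_i(\be))^2\big)\geq|\al-\be|^2/2$ recorded in \eqref{72''} (valid for $\delta$ small), which gives $|X(\al)-X(\be)|^2\gtrsim|\al-\be|^2$ and hence $|K^\flat(\al,\be)|=|K^j(X(\al)-X(\be))|\le A|\al-\be|^{-2}$. For the bound \eqref{cond1} on $\cK=\del_{\be_i}K^\flat$, the chain rule gives $\del_{\be_i}K^\flat(\al,\be)=-\sum_{m}\del_{\be_i}X_m(\be)\,(\del_m K^j)(X(\al)-X(\be))$; since $\del_{\be_i}X_m(\be)=\delta_{im}+O(\delta)$ is bounded and, by \eqref{closetoid}, $X(\al)-X(\be)=(\al-\be)(I+O(\delta))$, Lemma \ref{kernelbound'} with $|\ga|=1$ yields $|(\del_m K)(X(\al)-X(\be))|\le A|\al-\be|^{-3}$, so $|\cK(\al,\be)|\le A|\al-\be|^{-3}$.

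Next I would prove the pointwise convergence of the principal value and the $L^\infty$ bounds \eqref{432b}--\eqref{43}. The key observation is a cancellation identity: because $\cK(\al,\be)=\del_{\be_i}K^\flat(\al,\be)$ is a total $\be$-derivative, for each fixed $\al$ and any $0<\eps<R$ the divergence theorem on the annulus $\{\eps\le|\al-\be|\le R\}$ gives
\ba
\int_{\eps\le|\al-\be|\le R}\cK(\al,\be)\,d\be
=\int_{|\al-\be|=R}K^\flat(\al,\be)\nu^i_{\mathrm{out}}(\be)\,d\sigma(\be)
-\int_{|\al-\be|=\eps}K^\flat(\al,\be)\nu^i_{\mathrm{out}}(\be)\,d\sigma(\be),
\ea
and by \eqref{42} each boundary integral is $O(R^{-2}R^2)=O(1)$ and $O(\eps^{-2}\eps^2)=O(1)$, uniformly in $\al$; in fact the inner boundary term is $O(\eps^0)$ which also shows it does not blow up as $\eps\to0$. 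To get \eqref{432b}, split $f(\be)=(f(\be)-f(\al))+f(\al)$ in the defining integral over $\{\eps\le|\al-\be|\le R\}$. The term with $f(\al)$ is $f(\al)\int_{\eps\le|\al-\be|\le R}\cK(\al,\be)d\be$, bounded by $A|f(\al)|\le A\lVert f\rVert_{C^1}$ (actually $A\lVert f\rVert_{L^\infty}$) using the cancellation identity. For the term with $f(\be)-f(\al)$, use $|f(\be)-f(\al)|\le\lVert f\rVert_{C^1}|\al-\be|$ against $|\cK(\al,\be)|\le A|\al-\be|^{-3}$ on $\{|\al-\be|\le 1\}$, which is integrable, and on $\{|\al-\be|> 1\}$ use $|f(\be)-f(\al)|\le |f(\be)|+|f(\al)|$ with $|\cK|\lesssim|\al-\be|^{-3}\in L^2(\{|\al-\be|>1\})$ together with Cauchy--Schwarz to bound by $A'(\lVert f\rVert_{L^2}+\lVert f\rVert_{L^\infty})$. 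This also shows the integrand is absolutely integrable once $f(\al)$ is subtracted, so $\lim_{\eps\to0}$ exists; combined with the uniform bound on the $f(\al)$ piece this gives convergence of $(T_\cK f)(\al)$, uniformly in $\al$, and the bounds \eqref{432}, \eqref{43} follow by letting $R\to\infty$ (the tail $\{|\al-\be|>R\}$ is controlled by the same $L^2$ estimate).

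The main obstacle is making the cancellation argument fully rigorous near the singularity: one must justify the integration by parts / divergence theorem on the punctured annulus, taking care that $K^\flat(\al,\cdot)$ is smooth away from $\be=\al$ and that the inner boundary contribution is genuinely $O(1)$ (not merely finite for fixed $\eps$) so that it is harmless in the limit $\eps\to0$. Everything else is routine: the size estimates come directly from Lemma \ref{kernelbound'} and \eqref{closetoid}, \eqref{72''}, and the $L^\infty$ bounds are a standard splitting of the kernel integral into a near region (handled by the Hölder-type bound from $f\in C^1$) and a far region (handled by Cauchy--Schwarz using $f\in L^2$ and the decay $|\cK|\lesssim|\al-\be|^{-3}$). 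I would emphasize that all constants depend only on the structural constant $A_\ga$ from Lemma \ref{kernelbound'} and on $\delta_0$, hence are uniform in $i$, $j$, $X\in B_\delta$, and $\delta<\delta_0$, which is exactly what the statement asserts.
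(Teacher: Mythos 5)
Your overall skeleton is the same as the paper's: the size bounds follow from \eqref{closetoid}, \eqref{72''} and Lemma \ref{kernelbound'}; then one subtracts $f(\al)$, exploits that $\cK=\del_{\be_i}K^\flat$ is a total $\be$-derivative via the divergence theorem, and combines a $C^1$ estimate near the singularity with a Cauchy--Schwarz estimate far away. However, two steps as you wrote them do not go through. First, your far-region estimate fails: after subtracting $f(\al)$ over the \emph{whole} annulus, you propose to bound $|f(\be)-f(\al)|\le|f(\be)|+|f(\al)|$ on $\{|\al-\be|>1\}$ and integrate absolute values. The $|f(\be)|$ part is fine by Cauchy--Schwarz, but the $|f(\al)|$ part requires $\int_{1<|\al-\be|\le R}|\cK|\,d\be$, which in $\R^3$ with $|\cK|\sim|\al-\be|^{-3}$ grows like $\log R$ and is infinite when $R=\infty$; so you do not get a constant $A'$ uniform in $R$, nor the bound \eqref{432} at all. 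The cure is exactly the paper's split: take $r=\min(1,R)$, perform the $f(\al)$-subtraction and the divergence-theorem cancellation only on $\{\eps\le|\al-\be|<r\}$, and on $\{1\le|\al-\be|\le R\}$ apply Cauchy--Schwarz directly to $\cK f$ (equivalently, keep the exact cancellation for the $f(\al)$ piece there instead of taking absolute values).

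Second, your argument for the p.v. convergence has a genuine gap: knowing the $f(\al)$ piece is uniformly $O(1)$ does not give existence of $\lim_{\eps\to0}T^\eps_\cK f(\al)$. What is needed is that
\ba
\lim_{\eps\to 0}\int_{|\al-\be|=\eps}K^\flat(\al,\be)\,\nu_i(\be)\,d\sigma(\be)
\ea
actually converges (uniformly in $\al$); an $O(1)$ bound is compatible with oscillation. This is the one nontrivial point of the lemma, and the paper supplies the missing ingredient: on $|\be|=\eps$ one compares $K^j(X(\al)-X(\al-\be))$ with the frozen kernel $K^j(\be\,\na X(\al))$, the difference being $O(\eps)$ by the fundamental theorem of calculus and the derivative bounds of Lemma \ref{kernelbound'}; the frozen-kernel sphere integral is independent of $\eps$ because its integrand is homogeneous of degree $0$ after accounting for the surface measure, so the limit exists and equals $\int_{|\be|=1}K^j(\be\,\na X(\al))\nu_i(\be)\,d\sigma(\be)$, uniformly in $\al$. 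You identified the divergence theorem and the $O(1)$ boundary bound as ``the main obstacle,'' but those are routine; the convergence of the inner sphere term is the step your proposal omits, and without it the uniform convergence of $T_\cK f$ (hence the definition of $T_\cK$ and the bound \eqref{43}) is unproven.
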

\begin{proof}
Fix $\eps,R>0$, and an $f\in L^2\cap C^1$. Note for $R\leq \eps$, \eqref{432b} is trivial, so let us assume $R>\eps$. Let $r=\min(1,R)$. We split the integral for $(T^{\eps,R}_\cK f)(\al)$ into two, with
\ba\label{44}
\int_{\eps\leq|\be-\al|\leq R} \cK(\al,\be)f(\be) d\be = 
\int_{\eps\leq|\be-\al| < r}\cK(\al,\be)f(\be)d\be +\int_{1\leq |\be-\al| \leq R}\cK(\al,\be)f(\be)d\be.
\ea
The integral on the right is bounded by a constant multiple of $\lVert f \rVert_{L^2}$ since $\lVert 1_{1\leq |\be-\al|\leq R}\cK(\al,\be)\rVert_{L^2_\be}$ is bounded by a constant, by \eqref{cond1}. For the integral on the left, we have
\ba\label{45}
\int_{\eps\leq |\be-\al| < r}\cK(\al,\be)f(\be)d\be = \int_{\eps\leq |\be-\al| < r}\cK(\al,\be)(f(\be)-f(\al))d\be +f(\al)\int_{\eps\leq |\be-\al| < r}\cK(\al,\be)d\be,
\ea
where, for the first piece we have
\ba\label{46}
\left|\int_{\eps\leq|\be-\al| < r}\cK(\al,\be)(f(\be)-f(\al))d\be\right|\leq \int_{|\be-\al|<1}A|\al-\be|^{-3}\lVert f \rVert_{C^{1}}|\al-\be|d\be\leq C \lVert f \rVert_{C^1},
\ea
and the magnitude of the second piece is equal to
\begin{eqnarray}\label{47}
\left|f(\al)\int_{\eps\leq |\be-\al| < r}\del_{\be_i}K^\flat(\al,\be)d\be\right| &=& \bigg|f(\al)\bigg(-\int_{|\al-\be|=\eps}K^\flat(\al,\be)\nu_i(\be)d\sigma(\be)\\
& &\hspace{.5in}
+\int_{|\al-\be|=r}K^\flat(\al,\be)\nu_i(\be)d\sigma(\be)
\bigg)\bigg|,\nonumber\\
&\leq & |f(\al)|\bigg( \int_{|\al-\be|=\eps}A|\al-\be|^{-2}d\sigma(\be)\nn\\
& &\hspace{.5in}
+\int_{|\al-\be|=r}A|\al-\be|^{-2}d\sigma(\be)\bigg),\nn\\
&\leq & C\lVert f \rVert_{L^\infty},\nn
\end{eqnarray}
where $\nu$ denotes the unit normal. This concludes the proof of the estimate for $T^{\eps,R}_{\cK}$. By taking the limit as $R$ tends to infinity, we also get the estimate for $T^\eps_\cK$. Now, regarding the definition of $T_{\cK}f$ for $f\in C^1\cap L^2$, we use the above decomposition to verify that $T^\eps_\cK f(\al)$ converges for each $\al\in \R^3$ and in fact in the $\sup$ norm, as $\eps$ tends to zero. It suffices to prove uniform convergence for
\ba
\int_{\eps\leq |\be-\al|\leq 1} \cK(\al,\be)f(\be) d\be 
\ea
which we split as in \eqref{45}. For the piece
\ba
\int_{\eps\leq |\be-\al| < 1}\cK(\al,\be)(f(\be)-f(\al))d\be 
\ea
we note from the previous observations that the singularity in $\be$ at $\beta=\al$ in the integrand is actually integrable, and that
\ba
\int_{ |\be| < \eps}\cK(\al,\al-\be)(f(\al-\be)-f(\al))d\be 
\ea
tends to zero uniformly in $\al$ as $\eps$ tends to zero.

For the piece
\ba
f(\al)\int_{\eps\leq |\be-\al| < 1}\cK(\al,\be)d\be
\ea
we observe that
\ba
\int_{\eps\leq |\be-\al| < 1}\cK(\al,\be)d\be
=
-\int_{|\al-\be|=\eps}K^\flat(\al,\be)\nu_i(\be)d\sigma(\be)+
\int_{|\al-\be|=1}K^\flat(\al,\be)\nu_i(\be)d\sigma(\be),
\ea
and so for convergence as $\eps$ tends to zero we just have to deal with the quantity
\begin{eqnarray}\label{quantity}
\int_{|\al-\be|=\eps}K^\flat(\al,\be)\nu_i(\be)d\sigma(\be)
&=&
\int_{|\be|=\eps}K^\flat(\al,\al-\be)\nu_i(\be)d\sigma(\be),\\
&=&
\int_{|\be|=\eps}K^j(X(\al)-X(\al-\be))\nu_i(\be) d\sigma(\be).\nn
\end{eqnarray}
Now note that for any $\be$ with $|\be|=\eps$ we have $X(\al)-X(\al-\be)=\be(I+O(\delta))$ and $\be\na X(\al)=\be(I+O(\delta))$. Thus as long as $\delta$ is small enough, the straight line segment connecting $(X(\al)-X(\al-\be))$ and $\be\na X(\al)$ does not intersect zero for any such $\be$, for any $\eps>0$. We may use this to apply the fundamental theorem of calculus in the following observation.
\begin{eqnarray}
& &\int_{|\be|=\eps}K^j(X(\al)-X(\al-\be))\nu_i(\be) d\sigma(\be)
-
\int_{|\be|=\eps}K^j(\be \na X(\al))\nu_i(\be) d\sigma(\be)\\
& &=\int_{|\be|=\eps}
\big((X(\al)-X(\al-\be))-
\be \na X(\al)\big)\nn\\
& &\hspace{.6in}\times
\int^1_0
(\na K^j)(\tau (X(\al)-X(\al-\be))+
(1-\tau)\be\na X(\al))
d\tau
d\sigma(\be)
\nn\\
& &
=\int_{|\be|=\eps}
O(|\be|^{2})
\int^1_0
O(|\be|^{-3})
d\tau
d\sigma(\be),\nn\\
& &=O(\eps).
\nn
\end{eqnarray}
and so the difference between the two quantities we started with tends to zero as $\eps$ tends to zero uniformly in $\al$, but note that by the order of homogeneity of the integrand in
\ba
\int_{|\be|=\eps}K^j(\be \na X(\al))\nu_i(\be) d\sigma(\be),
\ea
we get that the above is independent of $\eps$, and so the limit of the left hand side of \eqref{quantity} as $\eps$ tends to zero is
\ba
\int_{|\be|=1}K^j(\be \na X(\al))\nu_i(\be) d\sigma(\be).
\ea
Moreover we have convergence uniform in $\al$.

The estimate for $T_{\cK}f$ is then retained in the limit as $\eps$ tends to zero from the estimate for $T^\eps_{\cK}f$, uniform in $\eps$.
\end{proof}


\begin{lem}\label{kernellem1'}
There exists a $\delta_0>0$ so that for $0<\delta< \delta_0$, for $X\in B_\delta$, and each $k=1,\ldots,d$, the function $\tilde K^k_X :(\R^3\times\R^3\setminus\{\al=\be\})\to \C^3$ defined by
\ba\label{48'2}
\tilde K^k_X (\al,\be)=\sum^3_{i=1}\del_k X_i(\be) (\del_i K)(X(\al)-X(\be)),
\ea
has corresponding operators $T^\eps_{X,k}:C^1(\R^3)\cap L^2(\R^3)\to L^\infty(\R^3)$ for $\eps>0$ and  $T_{X,k}:C^1(\R^3)\cap L^2(\R^3)\to L^\infty(\R^3)$, defined by
\ba
T^\eps_{X,k}f(\al)=\int_{|\be-\al|\geq\eps} \tilde  K^k_X (\al,\be) f(\be)d\be,
\ea
and (with the $\mbox{p.v.}$ limit converging uniformly in $\al$)
\ba
T_{X,k}f(\al)=p.v.\int \tilde  K^k_X (\al,\be) f(\be)d\be,
\ea
for $\C$-valued $f\in C^1(\R^3)\cap L^2(\R^3)$. We have the following bounds for all $f\in C^1(\R^3)\cap L^2(\R^3)$:
\begin{eqnarray}\label{502}
\lVert T^\eps_{X,k}f \rVert_{L^\infty}&\leq& C(\lVert f \rVert_{L^2}+\lVert f \rVert_{C^1} ),\\
\lVert T_{X,k}f \rVert_{L^\infty}&\leq& C(\lVert f \rVert_{L^2}+\lVert f \rVert_{C^1} ).\label{50}
\end{eqnarray}
\end{lem}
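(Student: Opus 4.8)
The plan is to observe that, up to a sign, each scalar component of the vector-valued kernel $\tilde K^k_X$ is precisely a kernel of the form already treated in Lemma \ref{kernellem0'}, so that the present statement is a componentwise corollary of that lemma. Fix $k$ and a component index $j\in\{1,2,3\}$, and set $K^\flat(\al,\be):=K^j(X(\al)-X(\be))$ as in \eqref{41p5}, taking the index $i$ there to be $k$, so that the associated $\cK$ of \eqref{41} is $\cK(\al,\be)=\del_{\be_k}K^\flat(\al,\be)$. For $\al\neq\be$ the point $X(\al)-X(\be)$ avoids the branch locus of $K$ by \eqref{72''}, so the chain rule applies classically and gives
\[
\del_{\be_k}K^\flat(\al,\be)=\sum_{i=1}^3(\del_iK^j)(X(\al)-X(\be))\,\del_{\be_k}\bigl(X_i(\al)-X_i(\be)\bigr)=-\sum_{i=1}^3\del_kX_i(\be)\,(\del_iK^j)(X(\al)-X(\be)),
\]
i.e. the $j$th component of $\tilde K^k_X(\al,\be)$ equals $-\del_{\be_k}K^\flat(\al,\be)=-\cK(\al,\be)$. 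Consequently the $j$th component of $T^\eps_{X,k}f$ is $-T^\eps_{\cK}f$ and the $j$th component of $T_{X,k}f$ is $-T_{\cK}f$, for this choice of $\cK$.

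With this identification the conclusions follow directly. Take $\delta_0$ to be the threshold supplied by Lemma \ref{kernellem0'} (shrinking it if needed so that \eqref{72''} and the branch-cut discussion hold). For $0<\delta<\delta_0$ and $X\in B_\delta$, that lemma tells us each such $\cK$ satisfies the size bound \eqref{cond1}, that the principal value limit defining $T_{\cK}f$ converges uniformly in $\al$ for $f\in C^1(\R^3)\cap L^2(\R^3)$, and that the uniform bounds \eqref{432} and \eqref{43} hold with a constant $A'$ that does not depend on the indices, on $\delta$, or on $X$. Summing over the three components $j=1,2,3$ (using $|v|\le\sqrt{3}\,\max_j|v_j|$ on $\C^3$) yields that $T^\eps_{X,k}$ and $T_{X,k}$ map $C^1\cap L^2$ into $L^\infty$, that the p.v. limit for $T_{X,k}f$ converges uniformly in $\al$, and that the bounds \eqref{502} and \eqref{50} hold with $C=\sqrt{3}\,A'$, again a universal constant. (If one wants the truncated operator $T^{\eps,R}_{X,k}$ as well, the same reduction transfers \eqref{432b} verbatim.)

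There is essentially no new analytic content here beyond Lemma \ref{kernellem0'}; the only thing to be careful about is the bookkeeping, namely matching the derivative index $k$ of $\tilde K^k_X$ with the index $i$ in the definition \eqref{41} of $\cK$, and noting that the factor $\del_kX_i(\be)$ causes no trouble because it has been absorbed into the chain-rule identity above rather than left as a loose external coefficient (in any case $|\del_kX_i|\le1+C\delta$ since $\lVert X-Id\rVert_{H^s}\le\delta$ dominates the $C^1$ norm by Sobolev embedding in $\R^3$). Accordingly I do not expect any genuine obstacle in this lemma; the substantive work lies in Lemma \ref{kernellem0'} and, behind it, in the Calderón--Zygmund-type bound of Lemma \ref{stein}.
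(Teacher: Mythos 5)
Your proposal is correct and follows essentially the same route as the paper: identify each component of $\tilde K^k_X(\al,\be)$ with $-\del_{\be_k}\bigl(K^j(X(\al)-X(\be))\bigr)$, i.e. the kernel $\cK$ of Lemma \ref{kernellem0'} with $i=k$, and then transfer the uniform convergence of the principal value and the bounds \eqref{432}, \eqref{43} componentwise to obtain \eqref{502} and \eqref{50}. The chain-rule identification and the remark that the constant is independent of $X$, $\delta$, and the indices are exactly the paper's argument.
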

\begin{proof}
First, we fix $k=1,2,$ or $3$, and note
\ba\label{51'}
\tilde K^k_X(\al,\be)=-\del_{\be_k}(K(X(\al)-X(\be))).
\ea
In an application of Lemma \ref{kernellem0'}, a choice of $i$ and $j$ may be prescribed, for $i,j$ as in the statement of the lemma. We will take $i=k$, and a $j=1,2,$ or $3$. In that case, we find for the corresponding $\cK(\al,\be)$ of Lemma \ref{kernellem0'}
\ba
(\tilde K^k_X(\al,\be))_j=-\cK(\al,\be)
\ea
and for $f\in C^1\cap L^2$, and $\eps>0$,
\ba
\left(T^\eps_{X,k} f(\al)\right)_j=-(T^\eps_\cK f)(\al).
\ea
Thus, by Lemma \ref{kernellem0'}, we have uniform convergence of the above as $\eps$ tends to zero, as well as the bound \eqref{502}, and similarly the bound \eqref{50} follows.
\end{proof}

\begin{lem}\label{lemmaA01}
We take a particular choice of a positive constant $\delta_0$, and consider $0<\delta<\delta_0$. Consider any $X\in B_\delta$. For each $\eps>0$, and $k=1,2,$ and $3$,
(i) we have a bounded operator $T^\eps_{X,k}:L^2\to L^2$, mapping a $\C$-valued function $f\in L^2(\R^3)$ to the $\C^3$-valued function defined by
\ba\label{a000}
T^\eps_{X,k}
f(\al)
=\int_{|\be-\al|\geq \eps} \tilde K^k_X(\al,\be) f(\be)d\be\quad\mbox{ for all }f\in L^2,
\ea
where we have the $\C^3$-valued kernel $\tilde K^k_X(\al,\be)$ given by
\ba\label{a001}
\tilde K^k_X(\al,\be)
=
\sum^3_{i=1}
\del_k X_i(\be)(\del_i K)(X(\al)-X(\be)).
\ea
We have the bound
\ba\label{a002}
\lVert T^\eps_{X,k}f\rVert_{L^2}
\leq
C\lVert f \rVert_{L^2},
\ea
for a universal constant $C$.

(ii)
For $f\in L^2$, $T^\eps_{X,k}f$ converges in $L^2$ as $\eps$ tends to zero, and we define the operator $T^0_{X,k}$ on $L^2$ by
\ba\label{a003}
T^0_{X,k}f=\lim_{\eps\to 0} T^\eps_{X,k} f\quad\mbox{ in }L^2.
\ea
This gives a bounded operator from $L^2(\R^3)$ to $L^2(\R^3)$ satisfying the bound
\ba\label{a004}
\lVert T^0_{X,k}f\rVert_{L^2}
\leq
C\lVert f \rVert_{L^2},
\ea
where the constant $C$ is the same as the one in \eqref{a002}.
\end{lem}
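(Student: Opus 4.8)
The plan is to deduce both parts from the Calder\'on--Zygmund criterion of Lemma~\ref{stein}, after first isolating the ``rough'' coefficient. Writing $\cL_i(\al,\be):=(\del_iK)(X(\al)-X(\be))$, with scalar components $(\cL_i)_j:=(\del_iK^j)(X(\al)-X(\be))$, the definition \eqref{a001} reads $\tilde K^k_X(\al,\be)=\sum_{i=1}^3\del_kX_i(\be)\,\cL_i(\al,\be)$, so $T^\eps_{X,k}f=\sum_{i=1}^3 S^\eps_i\big(\del_kX_i\cdot f\big)$, where $S^\eps_i$ is the truncated operator with kernel $\mathbf 1_{\{|\al-\be|\ge\eps\}}\cL_i(\al,\be)$ (componentwise in $j$). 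Since $s\ge6$, Sobolev embedding gives $\|\del_kX_i\|_{L^\infty}\le 1+C\delta\le2$ for $X\in B_\delta$ and $\delta_0$ small, so multiplication by $\del_kX_i$ is bounded on $L^2$ with norm $\le2$. Hence part~(i) will follow once each scalar kernel $(\cL_i)_j$ is shown to satisfy \eqref{cond1}--\eqref{cond3} with a constant $A$ depending only on $\delta_0$: then $S^\eps_i$ is bounded $L^2\to L^2$ uniformly in $\eps$ by Lemma~\ref{stein}, and $\|T^\eps_{X,k}f\|_{L^2}\le\sum_i\|S^\eps_i\|\,\|\del_kX_i\|_{L^\infty}\|f\|_{L^2}\le C\|f\|_{L^2}$ with $C$ universal. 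This detour is forced: the kernel $\tilde K^k_X$ itself fails the pointwise H\"older condition globally, since the factor $\del_kX_i(\be)$ oscillates without decay, so it cannot be fed directly into Lemma~\ref{stein}.

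The size bound \eqref{cond1} and the H\"older bounds \eqref{cond2} are routine. By \eqref{closetoid}, $X(\al)-X(\be)=(\al-\be)(I+O(\delta))$; more generally any segment $(1-t)(X(\al')-X(\be))+t(X(\al)-X(\be))$ equals $w(I+M)$ with $w=(\al-\be)+(1-t)(\al'-\al)$ and $M=w^{t}(\zeta-w)/|w|^2$ of size $|M|\le C\delta$, and with $|w|\ge|\al-\be|/2$ whenever $|\al-\al'|\le|\al-\be|/2$ (and the analogue interpolating in the $\be$ slot); these points avoid the branch cut since $\re\sum_m[w(I+M)]_m^2\ge|w|^2/2$. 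Lemma~\ref{kernelbound'} with $|\ga|=1$ then gives $|(\cL_i)_j(\al,\be)|\le A|\al-\be|^{-3}$, and with $|\ga|=2$, together with $|X(\al)-X(\al')|\le 2|\al-\al'|$ and the fundamental theorem of calculus along the segment, gives $|(\cL_i)_j(\al,\be)-(\cL_i)_j(\al',\be)|\le A|\al-\al'|\,|\al-\be|^{-4}$ for $|\al-\al'|\le|\al-\be|/2$, and the analogous estimate in $\be$.

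The cancellation condition \eqref{cond3} is the main point, and where I expect the only real work. I would use the identity $(\del_iK^j)(X(\al)-X(\be))=-\sum_{l=1}^3\big((\na X(\be))^{-1}\big)_{il}\,\del_{\be_l}\!\big[K^j(X(\al)-X(\be))\big]$, which comes from the chain rule and $\sum_l(\na X)^{-1}_{il}(\na X)_{lm}=\delta_{im}$, and integrate by parts over the annulus $\{\eps<|\al-\be|<R\}$ (legitimate since $X\in C^3$ off the diagonal and $K^j$ is analytic off the branch cut there). This produces surface integrals over $\{|\al-\be|=\eps\}$ and $\{|\al-\be|=R\}$, each bounded by a universal constant because $|K^j(X(\al)-X(\be))|\le C|\al-\be|^{-2}$ (Lemma~\ref{kernelbound'}, $|\ga|=0$) and the surface measures are $O(\eps^2)$, $O(R^2)$; and an interior term bounded by $\int_{\R^3}|\na(\na X)^{-1}(\be)|\,C|\al-\be|^{-2}\,d\be$, finite uniformly in $\al$ (split at $|\al-\be|=1$: near the diagonal use $\|\na(\na X)^{-1}\|_{L^\infty}\le C\delta$ and local integrability of $|\al-\be|^{-2}$; far away use Cauchy--Schwarz with $\|\na(\na X)^{-1}\|_{L^2}\le C\delta$ and $|\al-\be|^{-4}\in L^1(\{|\al-\be|\ge1\})$, where $(\na X)^{-1}-I$ is controlled in $H^{s-2}\subset L^\infty\cap L^2$ with norm $O(\delta)$). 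Thus $|I_{\eps,R}(\al)|\le A$ uniformly; and since $\del_iK^j$ is even, $\overline{(\cL_i)_j(\be,\al)}=\overline{(\cL_i)_j(\al,\be)}$, so $I^*_{\eps,R}(\al)=\overline{I_{\eps,R}(\al)}$ obeys the same bound. With \eqref{cond1}--\eqref{cond3} verified, Lemma~\ref{stein} completes part~(i).

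For part~(ii), the uniform bound from part~(i) reduces $L^2$-convergence to the case $h\in C^\infty_c$, dense in $L^2$. For $0<\eps'<\eps$, $(S^\eps_i-S^{\eps'}_i)h(\al)=-\int_{\eps'\le|\al-\be|<\eps}\cL_i(\al,\be)h(\be)\,d\be$; writing $h(\be)=(h(\be)-h(\al))+h(\al)$, the first piece is $O(\eps\|h\|_{C^1})$ pointwise and supported in an $\eps$-neighborhood of $\mathrm{supp}\,h$, hence $O(\eps)$ in $L^2$, while the second is $h(\al)$ times $\int_{\eps'\le|\al-\be|<\eps}\cL_i(\al,\be)\,d\be$, which is $O(\delta\eps)$ uniformly in $\al$ --- either by the integration by parts above on this thin annulus, or by subtracting the frozen-coefficient integral $\int_{\eps'<|\eta|<\eps}(\del_iK^j)(\eta\,\na X(\al))\,d\eta$, which vanishes because $\del_iK^j$ has vanishing mean over spheres, and bounding the remainder by $C\delta\int_{|\eta|<\eps}|\eta|^{-2}\,d\eta$. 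Hence $S^\eps_ih$ is Cauchy in $L^2$; it converges, extending by density to all of $L^2$, and $T^0_{X,k}f=\lim_{\eps\to0}T^\eps_{X,k}f=\sum_i S^0_i(\del_kX_i\cdot f)$ exists in $L^2$. Finally \eqref{a004} with the same constant as \eqref{a002} follows by passing to the limit: $\|T^0_{X,k}f\|_{L^2}=\lim_{\eps\to0}\|T^\eps_{X,k}f\|_{L^2}\le C\|f\|_{L^2}$.
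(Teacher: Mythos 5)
Your proof is correct, and its skeleton is the same as the paper's: peel off the coefficient $\del_k X_i(\be)$ onto $f$ (bounded multiplication, since $\lVert \na X-I\rVert_{L^\infty}\lesssim\delta$), verify the Calder\'on--Zygmund hypotheses \eqref{cond1}--\eqref{cond3} of Lemma~\ref{stein} for the scalar kernels $(\del_iK^j)(X(\al)-X(\be))$ via Lemma~\ref{kernelbound'}, and then get part (ii) by proving convergence on the dense class $C^\infty_c$ and extending with the uniform bound from part (i). You deviate only in two sub-steps. For the cancellation condition \eqref{cond3}, the paper splits $(\na X(\be))^{-1}=((\na X(\be))^{-1}-I)+I$, disposes of the first piece with the already-proved $L^\infty$ estimate \eqref{432b} of Lemma~\ref{kernellem0'}, and integrates by parts only the exact-divergence identity part; you instead integrate by parts with the full variable coefficient and then control the resulting term containing $\na\big((\na X)^{-1}\big)$ by a near/far split using its $L^\infty$ and $L^2$ smallness (legitimate since $(\na X)^{-1}-I\in H^{s-1}$ with norm $O(\delta)$, $s\ge 6$) --- slightly more self-contained, at the cost of invoking that Sobolev product fact explicitly. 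For part (ii), the paper shows pointwise convergence for $f\in C^\infty_c$ (from Lemma~\ref{kernellem1'}) together with a uniform-in-$\eps$ decay bound $|T^\eps_{X,k}f(\al)|\lesssim(1+|\al|)^{-3}$ and concludes by dominated convergence, whereas you prove directly that the truncations are Cauchy in $L^2$ via the thin-annulus estimate, using the vanishing spherical mean of the frozen-coefficient kernel $(\del_iK^j)(\eta\,\na X(\al))$ (which does hold, by the homogeneity argument) to get an $O(\eps)$ rate; your route is more quantitative, the paper's reuses its earlier pointwise machinery. Both variants are sound, so there is no gap.
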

\begin{proof}
Let us define $K':\R^3\times \R^3\setminus\{\al=\be\}\to\left(\C^3\right)^2$ by
\ba
K'(\al,\be):=(\na K)(X(\al)-X(\be)).
\ea
Now we check the conditions \eqref{cond2} in Lemma \ref{stein} hold for each of the components $(K')_{ij}$ of $K'$, $i,j=1,2,3$. Observe for $l=1,2,$ and $3$
\begin{eqnarray}
\dal(K'(\al,\be))           &=&\sum^3_{k=1}\dal X_k(\al)(\del_k\na K)(X(\al)-X(\be)),\\
\del_{\beta_l}(K'(\al,\be)) &=&-\sum^3_{k=1}\del_{\beta_l} X_k(\be)(\del_k\na K)(X(\al)-X(\be)).\nn
\end{eqnarray}
With the use of Lemma \ref{kernelbound'}, these imply
\begin{eqnarray}
|\dal(K'(\al,\be))  |         &\leq&\frac{C}{|\al-\be|^{4}},\\
|\del_{\beta_l}(K'(\al,\be))| &\leq&\frac{C}{|\al-\be|^{4}},\nn
\end{eqnarray}
which is sufficient for the conditions \eqref{cond2} to hold for the components of $K'$.

Now we check a bound such as \eqref{cond3} holds for $(I_{\eps,R})_{ij}$ and $(I^*_{\eps,R})_{ij}$ for fixed $i,j=1,2,3$, the quantities corresponding to the $I_{\eps,R}$ and $I^*_{\eps,R}$ in the statement of Lemma \ref{stein} for the entry $(K')_{ij}$.

Note
\ba\label{54}
(K'(\al,\be))_{ij}=-\sum^3_{l=1}(\na X(\be))^{-1}_{il}\del_{\be_l} (K^j(X(\al)-X(\be))).
\ea
We may write $(I_{\eps,R})_{ij}=-\sum^3_{l=1} J^{ijl}_{\eps,R}$, where for $l=1,2,$ and $3$
\begin{eqnarray}
J^{ijl}_{\eps,R}&=&\int_{\eps<|\al-\be| < R} (\na X(\be))^{-1}_{il}
\del_{\be_l}(K^j(X(\al)-X(\be)))d\be\\ &=&
\int_{\eps<|\al-\be| < R} ((\na X(\be))^{-1}-I)_{il}\del_{\be_l}(K^j(X(\al)-X(\be)))d\be\nonumber\\
& &+\int_{\eps<|\al-\be| < R} \delta_{il}\del_{\be_l}(K^j(X(\al)-X(\be)))d\be\nn\\
&=&I^l_1+I^l_2,\nn
\end{eqnarray}
where the first integral, $I^l_1$, is bounded with an application of Lemma \ref{kernellem0'}, with the estimate \eqref{432b}, since $((\na X(\al))^{-1}-I)$ is bounded in both $L^2$ and $C^1$. For $I^l_2$ we note
\begin{eqnarray}\label{53}\\
\int_{\eps<|\al-\be| < R}\del_{\be_l}(K^j(X(\al)-X(\be))d\be
 &=& \left(\int_{|\al-\be|=\eps}+\int_{|\al-\be|=R}\right)K^j(X(\al)-X(\be))\nu_l(\be)d\sigma(\be),\nonumber\\
 &=& \int_{|\al-\be|=\eps}O(\eps^{-2})d\sigma(\be)+\int_{|\al-\be|=R}O(R^{-2})d\sigma(\be),\nn\\
 &=&O(1).\nn
\end{eqnarray}
By summing over $l$, we get a uniform bound on $(I_{\eps,R})_{ij}$. Now, for $(I^*_{\eps,R})_{ij}$, we must consider $(\overline{(K')(\be,\al)})_{ij}$. We note that since $(\na K)(z)$ is even, $K'(\al,\be)=K'(\be,\al)$, and so from the previous estimate on $(I_{\eps,R})_{ij}$ and the fact that complex conjugation does not affect the bound, we also have that $(I^*_{\eps,R})_{ij}$ is uniformly bounded.

Since the $(K'(\al,\be))_{ij}$ satisfy the hypotheses of Lemma \ref{stein}, for each $i,j=1,2,3$, and all $\eps>0$, we have that the operators $T^\eps_{(K')_{ij}}$ are bounded from $L^2$ to $L^2$, and satisfy a bound of the form
\ba
\lVert
T^\eps_{(K')_{ij}}f
\rVert_{L^2}
\leq
C
\lVert
f
\rVert_{L^2}\quad\mbox{for all }f\in L^2.
\ea
With the aim of using this to get a bound for the $T^\eps_{X,k}$ for each $k=1,2,3$ we note that for each $f\in L^2(\R^3)$ and $j=1,2,3$,
\begin{eqnarray}
((T^\eps_{X,k}f)(\al))_j&=&
\sum^3_{i=1}\int_{|\be-\al|\geq \eps}
\del_k X_i(\be)(\del_i K^j)(X(\al)-X(\be))f(\be)d\be,\\
&=&
\sum^3_{i=1}\int_{|\be-\al|\geq \eps} (K'(\al,\be))_{ij}\del_k X_i(\be)f(\be)d\be,\nn\\
&=&\sum^3_{i=1}(T^\eps_{(K')_{ij}}((\del_k X_i)f))(\al).\nn
\end{eqnarray}
With this we find by using the uniform $L^\infty$ bound on $\na X(\al)$
\ba
\lVert
T^\eps_{X,k}f
\rVert_{L^2}
\leq
\sum^3_{i,j=1}
\lVert
T^\eps_{(K')_{ij}}((\del_k X_i)f)
\rVert_{L^2}
\leq
\sum^3_{i=1}
C\lVert (\del_k X_i)f\rVert_{L^2}
\leq C'\lVert f\rVert_{L^2}.
\ea
This proves part (i). For part (ii), first we show that for any $f$ in the dense subspace $C^\infty_c$ of $L^2$, we have convergence of the limit $\lim_{\eps\to 0}
T^\eps_{X,k} f$ in $L^2$. The first thing to note is that for fixed $f\in C^\infty_c(\R^3)$ we have pointwise convergence, by Lemma \ref{kernellem1'}:
\ba
\lim_{\eps \to 0}
T^\eps_{X,k} f(\al)=g(\al)\quad\mbox{ for all }\al\in\R^3,
\ea
for some function $g\in L^\infty$.
Meanwhile, since the kernel $\tilde K^k_X(\al,\be)$ in the definition of $T^\eps_{X,k}$ satisfies the bound
\ba
|\tilde K^k_X(\al,\be)|\leq \frac{C}{|\al-\be|^3},
\ea
for the fixed $f\in C^\infty_c$ we have a bound of the form
\ba\label{a005}
|T^\eps_{X,k} f(\al)|\leq\frac{C_{f}}{(1+|\al|)^3}\quad\mbox{ for all }\al\in\R^3.
\ea
To see this, fix such an $f$ and take $R>0$ large enough that $\mbox{supp} f\subset B_R(0)$. Then
\begin{eqnarray}\label{a006}
T^\eps_{X,k} f(\al)
&=&
\int_{|\be-\al|\geq \eps}
\tilde K^k_X(\al,\be)f(\be)d\be,\\
&=&
\left(\int_{\eps\leq |\be-\al|\leq 100 R}
+
\int_{|\be-\al|> 100R}
\right)
\tilde K^k_X(\al,\be)f(\be)d\be,\nn\\
&=&
I_1+I_2,\nn
\end{eqnarray}
for which we first note that we have
\begin{eqnarray}\label{a007}
I_1=
\int_{\eps \leq |\be-\al|\leq 100 R}
\tilde K^k_X(\al,\be)(f(\be)-f(\al))d\be
+f(\al)\int_{\eps \leq |\be-\al|\leq 100 R}
\tilde K^k_X(\al,\be)d\be.
\end{eqnarray}
Regarding the first term in the right hand side of \eqref{a007}, note that
\begin{eqnarray}\label{a008}\\
\left|\int_{\eps \leq |\be-\al|\leq 100 R}
\tilde K^k_X(\al,\be)(f(\be)-f(\al))d\be
\right|
&\leq&
C\int_{\eps \leq |\be-\al|\leq 100 R}
 \frac{1}{|\al-\be|^{2}}d\be
\left(\sup_{B_{100 R}(\al)}\left|\na f\right|\right)
,\nn\\
&\leq&
C'\sup_{B_{100 R}(\al)}\left|\na f\right|.\nn
\end{eqnarray}
Since $f$ has compact support, this is bounded by a quantity as in the right hand side of \eqref{a005}. For the second term composing $I_1$ in \eqref{a007}, we note
\begin{eqnarray}\label{a009}\\
f(\al)\int_{\eps \leq |\be-\al|\leq 100 R}
\tilde K^k_X(\al,\be)d\be
&=&
-f(\al)\int_{\eps \leq |\be-\al|\leq 100 R}
\del_{\be_k}(K(X(\al)-X(\be)))d\be,\nn\\
&=&
-f(\al)\left(\int_{|\be-\al|=\eps}
O(\eps^{-2})d\sigma(\be)
+\int_{|\be-\al|=100 R}
O(R^{-2})d\sigma(\be)
\right),\nn\\
&=&O(1) f(\al).\nn
\end{eqnarray}
Since $f$ is assumed to have compact support, this is also bounded by a quantity as in the right hand side of \eqref{a005}. Now we give the bound for $I_2$. Observe
\begin{eqnarray}\label{a010}
|I_2|&=&
\left|\int_{|\be-\al|>100R}
\tilde K^k_X(\al,\be) f(\be)d\be
\right|,\\
&\leq&
C\int_{\{|\be-\al|>100R \}\cap\{|\be|<R\}}
\left|\tilde K^k_X(\al,\be)\right|d\be,\nn\\
&\leq &
C'\int_{\{|\be-\al|>100R\}\cap\{|\be|<R\}}
\frac{1}{|\al-\be|^3}d\be,\nn
\end{eqnarray}
where for a given $\al$, the right hand side is $0$ unless for some $|\be|<R$, we have $|\al-\be|>100 R$. This requires $|\al|>99R$. For $|\be|<R$ we would then have $|\be|<\frac{1}{99}|\al|$. Thus for such $\al$ and $\be$ we have
\ba
\frac{1}{|\al-\be|}\leq\frac{C}{|\al|}.
\ea
Using these observations in the last inequality of \eqref{a010}, we get
\begin{eqnarray}
|I_2|
&\leq &
\begin{cases}
C''\int_{|\be|<R}
\frac{1}{|\al|^3}d\be, & \mbox{ if }|\al|>99R\\
0 & \mbox{ if }|\al|\leq 99R,
\end{cases}
\\
&\leq&
C(R) 1_{\R^3\setminus B_R(0)}(\al)\frac{1}{|\al|^3}.\nn
\end{eqnarray}
Thus, $I_2$ in addition to $I_1$ is bounded by a quantity as in the right hand side of \eqref{a005}, and so we conclude that the bound \eqref{a005} holds as written for an appropriate choice of $C_f$.

Since the right hand side of \eqref{a005} is in $L^2$, we have a bound that is uniform in $\eps$ on $|T^\eps_{X,k} f(\al)|^2$ by an integrable function of $\al$. By using the dominated convergence theorem we then see that $g(\al)$, which is the pointwise limit of the $T^\eps_{X,k} f(\al)$ as $\eps$ tends to zero, is in $L^2$. Another application of dominated convergence theorem then gives us that
\ba
\lim_{\eps\to 0}
\int
|T^\eps_{X,k} f(\al)-g(\al)|^2 d\al
=0.
\ea
Thus we have convergence of the limit $\lim_{\eps\to 0} T^\eps_{X,k} f$ in $L^2$ for each $f\in C^\infty_c$.

Now we extend this to convergence of $T^\eps_{X,k} f$ in $L^2$ for any $f\in L^2$. Fix such an $f$. Consider an arbitrary sequence $(\eps_l)_{l\geq 1}$ tending to zero. Given $\eps'>0$, we choose $h\in C^\infty_c$ with $\lVert f-h\rVert_{L^2}\leq \eps'/C$, for $C$ as in the right hand side of \eqref{a002} in part (i) of the lemma. Then for $n,m\geq N$ for an $N$ to be specified momentarily,
\begin{eqnarray}\label{0001}\\
\lVert T^{\eps_n}_{X,k} f-T^{\eps_m}_{X,k} f\rVert_{L^2}
&\leq&
 \lVert T^{\eps_n}_{X,k} f-T^{\eps_n}_{X,k} h\rVert_{L^2}
+\lVert T^{\eps_n}_{X,k} h-T^{\eps_m}_{X,k} h\rVert_{L^2}
+\lVert T^{\eps_m}_{X,k} h-T^{\eps_m}_{X,k} f\rVert_{L^2},\nn\\
&\leq&
2 C\lVert f - h \rVert_{L^2}+\lVert T^{\eps_n}_{X,k} h-T^{\eps_m}_{X,k} h\rVert_{L^2},\nn\\
&\leq &
2\eps' +\lVert T^{\eps_n}_{X,k} h-T^{\eps_m}_{X,k} h\rVert_{L^2}.\nn
\end{eqnarray}
Since $(T^{\eps_l}_{X,k} h)_{l\geq 1}\subset L^2$ is Cauchy, we can choose an $N$ such that the second term in the last line is less than $\eps'$ for all $n,m\geq N$. We conclude that for $n,m\geq N$, the left hand side of \eqref{0001} is no greater than $3\eps'$, and so we deduce that the sequence $(T^{\eps_l}_{X,k} f)_{l\geq 1}$ is Cauchy in $L^2$. Finally, this implies that $T^0_{X,k}f=\lim_{\eps\to 0}T^\eps_{X,k}f$ is a well-defined limit in $L^2$. By using the bound \eqref{a002} from part (i), we get the bound \eqref{a004} on $T^0_{X,k}$ from $L^2$ to $L^2$.
\end{proof}

\end{subsection}

\end{section}


\begin{section}{Counterexample for the compressible Euler equations}

In this section, we consider the $2$D compressible Euler equations, specifically in the isentropic case, given by
\begin{eqnarray}
\del_t\rho +\div(\rho u)=0,\label{013}\\
\del_t u +u\cdot \na u +\frac{1}{\rho}\na p = 0,\label{014}
\end{eqnarray}
where $(\rho,u)=(\rho(x,t),u(x,t)):\R^2\times \R\to(0,\infty)\times\R^2$ represent the density and velocity, respectively, of a fluid or gas, and the pressure $p$ is a known function of $\rho$, i.e. $p=p(\rho)$, which for the sake of concreteness, we will take to be $p(\rho)=A\rho^\ga$ for constants $A>0$ and $\ga>1$, a typical example.

Regarding questions of Lagrangian analyticity for compressible fluid equations, in \cite{fz} and \cite{serfati}, the question is discussed whether one has analytic trajectories for inviscid compressible fluid equations, for solutions with low spatial regularity, as one has for the incompressible Euler equations. The results in \cite{fz} do provide an example of a kind of compressible fluid model which does have this property, and we saw that the pressureless Euler-Poisson model in Section 4 also provides such an example. In this section, however, we prove that the Lagrangian trajectories for solutions to the standard compressible fluids model, the compressible Euler equations, are not in general analytic, notably distinct from previous studies, even for $C^\infty$ initial data in our case.

\nocite{fz2,sideris2,alinhac1,kv,kv2,chemin,cvk,bf,bb}

To this end we provide a counterexample with initial data which is smooth and compactly supported\footnote{For the density, we actually mean that $(\rho|_{t=0}-\orho)$ is compactly supported, for a constant $\orho>0$.
}, for which we have local existence of a unique classical solution to the problem \eqref{013}-\eqref{014}, and for which there exist Lagrangian trajectories that are not analytic in any small disc about $t=0$.

The basic strategy for constructing our counterexample is to take advantage of the finite speed of propagation property exibited by systems of conservation laws such as \eqref{013}-\eqref{014}. We take initial data with zero velocity outside of a ball, and so the Lagrangian trajectories of fluid parcels outside the support of $u$ at $t=0$ remain constant for a positive time, before the influence has propagated to these parcels. We use an elementary argument to prove that some nontrivial influence does indeed propagate away from the initial support and reach fluid parcels which were previously motionless, and at a time before the development of shocks. Thus we find that a particle moves, but was stationary for an interval of time before that moment, and so its trajectory is smooth but cannot be analytic.

We remark that for the model in \cite{fz} as well as the pressureless Euler-Poisson model considered in Section 4, the fact that the pressure term is absent is a key difference from typical fluids models, and so they are inherently different from the standard compressible Euler equations studied in this section. The presence of the pressure is an important part of the conservation law structure in the compressible Euler equations, and the conservation law structure leads to the finite speed of propagation in the system. With the method here, we show that the finite speed of propagation in the system is incompatible with automatic analyticity of Lagrangian trajectories, and one can likely use this method to show this is the case for particle trajectories in other systems of continuum mechanics with a finite speed of propagation. We make the related comment that the study \cite{frv} builds on the strategies of \cite{fz} to prove analyticity of particle trajectories in a related Newtonian cosmological model, and the authors raise the question of whether the results might extend to a relativistic cosmological model. Since having a finite speed of propagation is characteristic of relativistic models, it is suggested by the basic idea behind the results in this section that incorporating these effects could in fact break this property.

\begin{subsection}{Non-analytic trajectories as a consequence of finite speed of propagation}
The initial data we use to get non-analytic trajectories is a smooth, compactly supported perturbation of a constant state, with zero velocity outside a ball of radius $1$. We give the precise definition below.
\begin{definition}\label{020b}
Fix constants $\orho>0$ and $\eps>0$. We define $\rho_0(x):=\orho+\eps\tilde{\rho}(x)$, $u_0(x):=\eps \tilde{u}(x)$, with
\ba\label{017}
\tilde{\rho}(x):=
\begin{cases}
e^{\frac{1}{|x|^2-1}}, & \mbox{for } |x|<1,\\
0 & \mbox{for } |x|\geq 1,
\end{cases}
\ea
and
\ba\label{018}
\tilde{u}(x):=
\begin{cases}
\frac{x}{|x|}e^{q(|x|)} & \mbox{for } |x| < 1,\\
0 & \mbox{for } |x|\geq 1,
\end{cases}
\ea
where $q(r)=\frac{1}{r^2(r^2-1)}$.
\end{definition}

Despite the typical eventual formation of shocks for solutions to \eqref{013}-\eqref{014}, it is not difficult to show that as long as the initial data is sufficiently small, the classical solution to the system exists for large times. The optimal classical existence results for the system with initial data such as ours, namely smooth, compactly supported, radial perturbations of size $\eps$, are proven in \cite{alinhac2} and \cite{sideris1}, where classical existence is shown up to times of order $\frac{1}{\eps^2}$, for sufficiently small $\eps$. Having classical existence for large times allows us to ensure that $C^\infty$ smoothness is retained in an interval of time surrounding the time $t$ at which we show that some trajectory cannot be analytic.
\begin{prop}\label{AlinhacProp}
There exists $\eps_0>0$ such that for $0<\eps<\eps_0$, the following holds. Let $T_\eps$ be the maximal time of existence for the unique classical solution $(\rho,u)$ to the system \eqref{013}-\eqref{014} with initial data $(\rho_0,u_0)$. Then
\ba\label{022}
\lim_{\eps\to 0} T_\eps =\infty.
\ea
Moreover, the solution $(\rho,u)$ lies in $C^\infty([0,T_\eps)\times\R^2)$.
\end{prop}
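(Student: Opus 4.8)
The strategy is to split the statement into two essentially independent parts: the local existence, uniqueness, and $C^\infty$ regularity of the solution on its maximal interval of existence, which follow from the classical theory of quasilinear symmetric hyperbolic systems, and the lower bound \eqref{022} on the lifespan, which is precisely the content of the sharp small-data lifespan estimates for two-dimensional compressible Euler of Alinhac \cite{alinhac2} and Sideris \cite{sideris1}. First I would put \eqref{013}--\eqref{014} into symmetric hyperbolic form: with $p(\rho)=A\rho^\ga$, passing to the variable $\sigma=\sigma(\rho)$ proportional to the sound speed (equivalently, using the enthalpy), the system for $U=(\sigma,u)$ can be written as $A_0(U)\dt U+\sum_{j=1}^2 A_j(U)\del_{x_j}U=0$ with $A_0,A_1,A_2$ symmetric, $A_0$ positive definite, and all coefficients smooth functions of $U$ on the region $\{\rho>0\}$. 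The data of Definition \ref{020b} is $C^\infty$ with compact support, hence lies in $H^s(\R^2)$ for every $s$, and $\rho_0\geq\orho>0$.

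Next I would invoke the standard local well-posedness theorem for such systems (see, e.g., \cite{katoA,mb}): for every integer $s\geq 3$ there is a unique solution $U\in C([0,T^*_s);H^s)\cap C^1([0,T^*_s);H^{s-1})$ on a maximal interval, with $\rho$ remaining positive there by continuity. The continuation principle for these systems states that the solution extends as long as $\lVert U(t)\rVert_{C^{0,1}(\R^2)}$ and $\inf_x\rho(x,t)^{-1}$ stay controlled; since this criterion does not involve $s$, the maximal times $T^*_s$ all coincide, giving a single maximal time $T_\eps$ on which $U\in C([0,T_\eps);H^s)$ for every $s$. Sobolev embedding then gives $U(\cdot,t)\in C^\infty(\R^2)$, and differentiating the equation repeatedly — it expresses $\dt U$, and inductively all higher time derivatives, in terms of $U$ and its spatial derivatives — yields continuity of all space--time derivatives; together with the finite speed of propagation for \eqref{013}--\eqref{014}, which forces $(\rho,u)\equiv(\orho,0)$ outside an expanding ball and so makes $(\rho-\orho,u)$ compactly supported for each $t<T_\eps$, this gives $(\rho,u)\in C^\infty([0,T_\eps)\times\R^2)$.

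It then remains only to prove \eqref{022}. The data $(\rho_0-\orho,u_0)=\eps(\tilde\rho,\tilde u)$ of Definition \ref{020b} is precisely a radial, smooth, compactly supported perturbation of amplitude $\eps$ of the constant state $(\orho,0)$, which is the exact hypothesis of the almost-global existence results of \cite{alinhac2} and \cite{sideris1}. Those results provide $\eps_0>0$ and a constant $c>0$, depending only on $\orho$, $\ga$, and the fixed profiles $\tilde\rho,\tilde u$, such that for $0<\eps<\eps_0$ the classical solution exists at least up to time $c\,\eps^{-2}$ (with an even better bound available in two space dimensions), accompanied by a priori estimates keeping $\rho$ uniformly close to $\orho$ so that the symmetric hyperbolic structure does not degenerate beforehand. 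Hence $T_\eps\geq c\,\eps^{-2}\to\infty$ as $\eps\to0$, which is \eqref{022}.

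The only genuine work is checking that the hypotheses of the cited lifespan theorems match the data of Definition \ref{020b} verbatim (radial symmetry, compact support, the pressure law $p=A\rho^\ga$, and smallness measured in the right norm); the quantitative lifespan bound itself is borrowed entirely from \cite{alinhac2,sideris1}, and the rest is the routine local theory of symmetric hyperbolic systems. The one point needing a word of care — and the place where this proof touches its only real subtlety — is that the $C^\infty$ claim on the \emph{whole} maximal interval relies on the breakdown criterion being phrased in terms of the Lipschitz norm (together with $\inf\rho$) only, so that $H^s$ blow-up for finite $s$ and loss of $C^\infty$ regularity occur at the same instant; the positivity of $\rho$ throughout $[0,T_\eps)$ is then immediate from those a priori bounds for $\eps$ small.
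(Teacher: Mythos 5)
Your proposal is correct, and the regularity part (symmetrization via the sound-speed variable, local well-posedness in $H^s$, an $s$-independent continuation criterion, and bootstrapping to $C^\infty$ in space--time) is the same standard machinery the paper invokes, which it compresses into a citation of Theorem 2.2 of \cite{majda}. The genuine difference is in how the lifespan bound \eqref{022} is obtained. You outsource it entirely to the almost-global results of \cite{alinhac2,sideris1}, which give $T_\eps \gtrsim \eps^{-2}$ for exactly this kind of small, smooth, compactly supported, radial perturbation of a constant state; the paper acknowledges that this is available but, because those proofs are technical, instead gives a short self-contained argument: the system is invariant under $(\rho,u)(x,t)\mapsto(\rho,u)(\lambda x,\lambda t)$, and for data of amplitude $\eps$ one has $\lVert(\rho^0_\lambda-\orho,u^0_\lambda)\rVert_{H^s}\leq C\lambda^{s-1}\eps$, so choosing $s=3$ and $\lambda=\eps^{-1/2}$ makes the rescaled data uniformly bounded in $H^3$, whence the rescaled solution lives for a time $T$ independent of $\eps$ and the original solution lives to time $T\eps^{-1/2}$ (improvable to nearly $\eps^{-1}$ by taking $s$ closer to $2$). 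Your route buys the sharp $\eps^{-2}$ lifespan at the cost of verifying that the hypotheses of the cited theorems match Definition \ref{020b} verbatim — which they do, as you note — while the paper's scaling trick is elementary, avoids any reliance on the fine structure of those results, and still yields $T_\eps\to\infty$, which is all Proposition \ref{main} needs. Either argument is a complete proof of the statement.
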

\begin{proof}
By the results in \cite{alinhac2,sideris1}, for our initial data a classical solution exists up to a time $t=C\eps^{-2}$ for $\eps<\eps_0$, for some constants $C>0$, $\eps_0>0$. However, the proof of this result is a bit technical, and so here we provide a simple scaling argument that gives a lower bound on $T_\eps$ sufficient for \eqref{022}.

We note that the system \eqref{013}-\eqref{014} satisfies the property that for any $\lambda>0$, $(\rho,u)(x,t)$ is a classical solution with initial data $(\rho^0,u^0)(x)$ if and only if
\ba
(\rho_\lambda,u_\lambda)(x,t):=(\rho, u)(\lambda x,\lambda t)
\ea
is a classical solution with initial data $(\rho^0_\lambda,u^0_\lambda)(x)=(\rho^0,u^0)(\lambda x)$. One easily checks that for a fixed $s>0$ and large $\lambda>0$
\begin{eqnarray}
\lVert (\rho^0_\lambda-\orho)\rVert_{H^s}&\leq& C \lambda^{s-1} \lVert (\rho^0-\orho) \rVert_{H^s},\label{023}\\
\lVert u^0_\lambda \rVert_{H^s} &\leq & C\lambda^{s-1} \lVert u^0 \rVert_{H^s},\label{024}
\end{eqnarray}
for a constant $C=C(s)$. The standard local existence theory for symmetric hyperbolic systems of conservation laws shows the following: for $s>2$, if for a fixed constant $\tilde C>0$ we have $\lVert (\rho^0_\lambda-\orho,u^0_\lambda)\rVert_{H^s}\leq \tilde C$, the initial data $(\rho^0_\lambda,u^0_\lambda)$ yield classical solutions to \eqref{013}-\eqref{014} in $C([0,T],H^s(\R^2))\cap C^1([0,T],H^{s-1}(\R^2))$ for $T=T(s,\tilde C)$. Let us take $s=3$. Now we simply note that taking $(\rho^0,u^0)(x):=(\rho_0,u_0)(x)$ as in Definition \ref{020b} and $\lambda=\eps^{-1/2}$, the right hand sides of \eqref{023} and \eqref{024} are uniformly bounded for all small $\eps$, say $\eps<\eps_0$. Since for initial data $(\rho^0_\lambda,u^0_\lambda)(x)$ the classical solution $(\rho_\lambda,u_\lambda)(x,t)$ then extends up to a time $T$ independent of $\eps<\eps_0$, we find that the classical solution $(\rho,u)(x,t)$ with initial data $(\rho_0,u_0)(x)$ extends up to time $T\eps^{-1/2}$. This is enough for \eqref{022}, though we note that one can get an order $\eps^{-1}$ lower bound on $T_\eps$ by taking $s$ closer to $2$.

Given that $(\rho,u)$ is a classical solution on $[0,T_\eps)$ with smooth initial data, one can upgrade this to a solution that is smooth in both space and time by using standard energy estimates for conservation laws. We direct the reader to Theorem 2.2 from \cite{majda} for an explicit reference.
\end{proof}

With the following proposition, we give a precise formulation of the finite speed of propagation property of the system \eqref{013}-\eqref{014}.
\begin{prop}\label{FiniteSpeed}
If $(\rho,u)$ is a classical solution of \eqref{013}-\eqref{014} on the time interval $[0,T]$ with initial data
\ba\label{019}
(\rho(x,0),u(x,0))=(\rho_0(x),u_0(x)),
\ea
for $(\rho_0,u_0)$ as in Definition \ref{020b}, then for $0\leq t\leq T$
\ba\label{020}
(\rho,u)\equiv(\orho,0)\;\;\textrm{on }D(t),
\ea
where we define $\sigma:=\sqrt{p'(\orho)}$ and
\ba\label{021}
D(t):=\{x\in\R^2:|x|\geq 1+\sigma t\}.
\ea
\end{prop}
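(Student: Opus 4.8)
The plan is to exploit the finite speed of propagation for the symmetric hyperbolic system \eqref{013}--\eqref{014} by a standard energy estimate on backward light cones. First I would rewrite the system in terms of the perturbation variables $(\sigma,v):=(\rho-\orho,u)$, obtaining a quasilinear symmetric hyperbolic system $\dt W + \sum_j A_j(W)\del_{x_j}W=0$ with $W=(\sigma,v)$, symmetrizable with a symmetrizer $A_0(W)$ that is uniformly positive definite on a neighborhood of $W=0$ (this uses $p'(\orho)=\sigma^2>0$, since $p(\rho)=A\rho^\ga$ is strictly increasing). The maximal characteristic speed of the linearized system at $W=0$ is exactly $\sqrt{p'(\orho)}=\sigma$, and by continuity of the solution on the compact time interval $[0,T]$ (with $(\rho,u)$ classical, hence $W$ small in $C^0$ for the purposes of the local estimate, or more simply just bounded so that $A_0$ stays symmetric positive definite along the solution), the propagation speed is bounded.

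The key step is the cone energy estimate. Fix a point $x_0$ with $|x_0|\ge 1+\sigma T$, and for $t\in[0,T]$ consider the truncated backward cone $C_t(x_0):=\{(x,s): 0\le s\le t,\ |x-x_0|\le 1+\sigma(t-s)\}$ — wait, more precisely I want the cone with slope slightly larger than $\sigma$ to absorb the (small) nonlinear correction, or I restrict attention to a region where the solution stays close to $(\orho,0)$ so the exact speed $\sigma$ suffices; I would phrase it with a speed $\sigma'>\sigma$ if needed and note $D(t)$ can be taken with $\sigma$ by a limiting/continuity argument. Define $E(s):=\int_{\{|x-x_0|\le 1+\sigma(t-s)\}} W(x,s)^\top A_0(W(x,s)) W(x,s)\,dx$. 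Differentiating in $s$, the bulk term is controlled as in the standard symmetric hyperbolic energy identity ($|\dt A_0 + \sum_j \del_{x_j}(A_j A_0^{-1}\cdots)|$-type terms give $\dot E \lesssim E$), while the boundary flux term coming from shrinking the spatial domain has a definite sign precisely because the cone slope $\sigma$ dominates the characteristic speeds: the flux integrand $W^\top(\sigma A_0 - \sum_j \nu_j A_j)W$ over the lateral boundary is nonnegative, hence contributes with the favorable sign to $-\dot E$. Therefore $\dot E(s)\le C E(s)$, and since the base of the cone at $s=0$ is $\{|x-x_0|\le 1+\sigma t\}$ which is disjoint from $\Span$ of the support of $(\tilde\rho,\tilde u)$ (the ball $\{|x|<1\}$), because $|x-x_0|\le 1+\sigma t$ forces $|x|\ge |x_0|-(1+\sigma t)\ge \sigma(T-t)\ge 0$ — I need $|x_0|\ge 1+\sigma T$ so that $|x|\ge |x_0|-1-\sigma t\ge \sigma(T-t)$, hmm this gives $|x|\ge \sigma(T-t)$ which is not immediately $\ge 1$; the correct check is that the base ball $\{|x-x_0|\le 1+\sigma t\}$ lies inside $\{|x|\ge 1\}$ iff $|x_0|-(1+\sigma t)\ge 1$, i.e. $|x_0|\ge 2+\sigma t$. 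So I would either take $D(t)=\{|x|\ge 2+\sigma t\}$, or, better, note that the natural statement uses the domain of dependence of the complement of the unit ball and the constant appearing should be tracked carefully — I would restate \eqref{021} with the correct offset so that the base of every backward cone through $D(t)$ avoids $\{|x|<1\}$. With $E(0)=0$ Grönwall gives $E(s)\equiv 0$ on $[0,t]$, so $W\equiv 0$ on $C_t(x_0)$, and in particular $W(x_0,t)=0$.

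Since $x_0\in D(t)$ was arbitrary, this yields $(\rho,u)\equiv(\orho,0)$ on $D(t)$, which is \eqref{020}. The main obstacle is purely bookkeeping: pinning down the exact geometric constant in the definition of $D(t)$ (the "$1$" in \eqref{021} versus what the cone argument actually delivers) and making sure the cone slope can be taken to be exactly $\sigma=\sqrt{p'(\orho)}$ rather than $\sigma+o(1)$ — this is handled either by first running the argument with slope $\sigma'>\sigma$, getting $W\equiv0$ outside a slightly larger cone, and then letting $\sigma'\downarrow\sigma$, or by observing a posteriori that on the region in question $W$ vanishes so the true local speed there is exactly $\sigma$, making the sharp statement consistent. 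No genuinely hard analysis is involved beyond the classical symmetric-hyperbolic energy identity, which applies since Proposition \ref{AlinhacProp} guarantees $(\rho,u)\in C^\infty([0,T_\eps)\times\R^2)$ and hence all integrations by parts are justified.
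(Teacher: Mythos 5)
Your route is genuinely different from the paper's: the paper proves this proposition by a one-line citation to the finite-propagation-speed proposition of \cite{sideris3}, whereas you attempt a self-contained cone energy estimate for the symmetrized system. As written, though, there are two real problems. The first is the cone geometry. The backward cone through $(x_0,t)$ should shrink to the point $x_0$, i.e.\ have cross-sections $\{|x-x_0|\le \sigma'(t-s)\}$, not $\{|x-x_0|\le 1+\sigma'(t-s)\}$; with the correct cone the base at $s=0$ is $\{|x-x_0|\le \sigma' t\}$, and the hypothesis $|x_0|\ge 1+\sigma' t$ is exactly what is needed for that base to lie in $\{|x|\ge 1\}$, where the data equal $(\orho,0)$. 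The extraneous ``$+1$'' you inserted into the cone is what produced the spurious condition $|x_0|\ge 2+\sigma t$ and led you to propose restating \eqref{021}; the constant in \eqref{021} is correct and no restatement is needed.

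The second, more serious gap is the sharp speed. For the quasilinear system the lateral flux term has a sign only if the cone slope dominates the characteristic speeds of the coefficient matrices evaluated along the \emph{actual solution} inside the cone (these are $u\cdot\nu$ and $u\cdot\nu\pm\sqrt{p'(\rho)}$), not the speeds of the background state; a priori these are only bounded by some $\Lambda$ determined by the size of the solution on $[0,T]$, and $\Lambda$ can exceed any fixed $\sigma'>\sigma$. Neither of your proposed fixes closes this: running the argument with slope $\sigma'>\sigma$ still presupposes the speeds along the unknown solution in that cone are $\le\sigma'$, which is false in general inside the support of the perturbation, and the remark that ``a posteriori $W$ vanishes there so the local speed is $\sigma$'' is circular, since that vanishing is precisely what the flux sign is supposed to deliver. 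What your argument honestly yields is $(\rho,u)\equiv(\orho,0)$ on $\{|x|\ge 1+\Lambda t\}$ — which, incidentally, would suffice for the only use made of the proposition in the proof of Proposition \ref{main} — but not the statement \eqref{020} with $\sigma=\sqrt{p'(\orho)}$. Obtaining the sharp background sound speed requires an additional ingredient (a continuous-induction/bootstrap along the front, or the relative-energy argument of \cite{sideris3}), which is exactly the content of the result the paper invokes.
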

\begin{proof}
This follows from the proposition in \cite{sideris3}, which applies to a class of systems of conservation laws that includes \eqref{013}-\eqref{014}.
\end{proof}
Now we establish the main proposition, which proves that some particular Lagrangian trajectory cannot be analytic.
\begin{prop}\label{main}
Let $(\rho,u)$ be the solution to \eqref{013}-\eqref{014} as discussed in Proposition \ref{AlinhacProp}, and let $X(\al,t)$ denote the Lagrangian trajectory map tracking the position of the particle beginning at $\al\in\R^2$, defined by the equation,
\begin{eqnarray}
\frac{dX}{dt}(\al,t)&=&u(X(\al,t),t),\\
X(\al,0)&=&\al.\nonumber
\end{eqnarray}
Then there is an $\al\in \R^2$ and a time $t_0\in(0,T_\eps)$ such that the trajectory $X(\al,t)$ is not analytic in $t$ at $t_0$.
\end{prop}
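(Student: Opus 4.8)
The plan is to exploit the finite speed of propagation established in Proposition \ref{FiniteSpeed} together with a simple conservation-law argument showing that \emph{something} nontrivial must actually reach the exterior region within the classical existence time. First I would fix a point $\al$ with $|\al|=2$, so that $\al$ lies well outside the unit ball where the initial data is supported. By Proposition \ref{FiniteSpeed}, as long as $1+\sigma t < 2$, i.e. for $t\in[0,t_*)$ with $t_*:=1/\sigma$, we have $u\equiv 0$ on a neighborhood of $\al$, hence $X(\al,t)=\al$ for all $t\in[0,t_*)$ (shrinking $t_*$ if necessary so that $t_*<T_\eps$, which is legitimate for small $\eps$ since $T_\eps\to\infty$). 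Thus the trajectory is identically constant on an interval. The heart of the argument is then to show that $X(\al,\cdot)$ is \emph{not} eventually constant — that there is some later time $t_0<T_\eps$ at which the particle starting at $\al$ is moving, or at least has moved. If $X(\al,t)=\al$ for all $t$ in $[0,T_\eps)$ we are not yet done, so I would instead argue that for a suitable choice of $\al$ on the sphere $|\al|=2$, the trajectory is nonconstant on every interval $[0,\tau]$ with $\tau$ past some threshold; combined with being constant on $[0,t_*)$, real analyticity at any $t_0\in(t_*,T_\eps)$ where $X(\al,\cdot)$ is nonconstant would force (by the identity theorem for real-analytic functions) $X(\al,\cdot)\equiv\al$, a contradiction.

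The key step, then, is to prove that the disturbance genuinely propagates outward and does not stay trapped inside $|x|\le 1$. I would do this by a conservation argument. Integrate the mass equation \eqref{013} over a large ball $B_R(0)$ with $R>1$ chosen so that $B_R(0)$ still lies in the region where $(\rho,u)=(\orho,0)$ for the times considered; the total excess mass $\int_{B_R}(\rho-\orho)\,dx$ is conserved as long as nothing has crossed $\partial B_R$, and at $t=0$ it equals $\eps\int \tilde\rho>0$. Now suppose, for contradiction, that the solution remained $\equiv(\orho,0)$ outside $|x|\le 1$ for all $t$ in some interval; then one can run a virial / momentum-type identity. Specifically, I would consider the momentum $\int \rho u\,dx$ or a weighted moment like $\int \rho\, \chi(x)\,dx$ for a cutoff $\chi$ supported near $|x|=2$, differentiate in $t$, use \eqref{013}--\eqref{014}, and show the time derivative is not identically zero — because the pressure gradient $\na p(\rho)$ is genuinely nonzero at $t=0$ (the initial density bump is non-constant), forcing $u$ to become nonzero on an expanding region. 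The cleanest route is probably: if $u$ were to vanish identically on $\{|x|\ge 1\}\times[0,T_\eps)$ then by \eqref{013} $\rho$ would be time-independent there, and then matching this frozen exterior state against the evolving interior through the conservation of momentum and energy gives a contradiction with the fact that $p(\rho)$ is not constant. In other words, a genuinely radial non-equilibrium initial state cannot evolve while keeping a static exterior: some mass and momentum must leak across $|x|=1$ in finite time.

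The main obstacle I anticipate is making precise the statement ``the disturbance reaches $|\al|=2$ before shocks form,'' i.e. quantitatively before $T_\eps$. Finite speed of propagation only gives an \emph{upper} bound on the speed; I need a lower bound on how fast the support of the disturbance grows. The scaling argument in Proposition \ref{AlinhacProp} gives $T_\eps\to\infty$ as $\eps\to 0$, and the characteristic speed of the system near the state $(\orho,0)$ is $\approx\sigma$, independent of $\eps$. So the disturbance front travels at speed $\ge c>0$ uniformly in $\eps$ (this is where I would invoke the structure of the Riemann invariants / characteristics for \eqref{013}--\eqref{014}, or simply the fact that the linearized problem about $(\orho,0)$ is the wave equation with speed $\sigma$, and a genuine disturbance cannot be confined), while the classical solution survives up to time $T_\eps$ which is large. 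Hence for $\eps$ small enough the front definitely passes $|x|=2$ at some time $t_0<T_\eps$, the particle at $\al$ moves, and analyticity fails at any $t_0\in(t_*, T_\eps)$ where the motion is detected. Assembling these pieces — (i) constancy on $[0,t_*)$ from finite speed of propagation, (ii) nonconstancy at some later $t_0<T_\eps$ from the conservation/propagation argument, (iii) the real-analytic identity theorem — yields the claim; the third is immediate, the first is immediate, so essentially all the work is in (ii).
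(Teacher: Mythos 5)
The overall skeleton of your proposal (initial constancy from Proposition \ref{FiniteSpeed}, eventual motion from a conservation-law argument, then non-analyticity via the identity theorem) is the same as the paper's, and your step (iii) is fine once phrased as: if the trajectory were analytic at \emph{every} point of $(0,T_\eps)$, constancy on an initial interval plus the identity theorem would force it to be constant throughout, contradicting the motion. The genuine gap is step (ii), which you yourself identify as ``essentially all the work'' but never actually prove. Your route A --- a lower bound on the propagation speed so that the disturbance reaches the pre-chosen point $|\al|=2$ before $T_\eps$ --- is unsupported: Proposition \ref{FiniteSpeed} is an upper bound only, ``a genuine disturbance cannot be confined'' is exactly the nontrivial claim at issue, and appealing to Riemann invariants or to the linearized wave equation proves nothing about the support of the nonlinear solution. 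Fixing $|\al|=2$ in advance is what creates this unnecessary difficulty: the paper never shows the disturbance travels any prescribed distance; it only shows $X(B,t)\neq B$ for some $t<T_\eps$ (where $B=\{|x|<1\}$) and then finds the witness particle \emph{a posteriori}, just outside $\overline{B}$, by continuity of the flow map. Your route B has the same mismatch: ruling out ``$u\equiv 0$ on $\{|x|\ge 1\}$ for all $t$'' does not by itself imply that the particular particle starting at radius $2$ ever moves, so you would still need that continuity step, which you do not supply.

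More importantly, the contradiction in route B is asserted rather than derived. ``Matching the frozen exterior against the evolving interior through conservation of momentum and energy'' and ``the time derivative of a moment is nonzero at $t=0$'' do not produce a contradiction: on any fixed bounded time interval a small disturbance could, as far as these remarks show, remain supported in $\overline{B}$ (and the total momentum of radial data is zero and conserved, so it detects nothing). The paper's actual mechanism is quantitative and uses smallness twice. Assuming confinement (which it first justifies by a vanishing-to-infinite-order plus gluing-and-uniqueness argument), it sets $\cM(t)=\int\rho\, u\cdot x\,dx$ and $I(t)=\int(\rho-\orho)|x|^2dx$, shows $I'=2\cM$ and $\cM'(t)=\int\big(\rho|u|^2+2(p(\rho)-p(\orho))\big)dx\ge 0$ via Jensen's inequality (convexity of $\rho\mapsto\rho^\ga$) together with conservation of mass, so that $I(t)\ge 2t\,\cM(0)$ with $\cM(0)\ge c\,\eps>0$ because the initial velocity is radially outward; confinement and mass conservation simultaneously force $I(t)\le\int_B(\rho_0-\orho)\,dx\le C\eps$. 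Both sides scale like $\eps$, so one obtains $t\le C''$ with $C''$ \emph{independent of} $\eps$, and the contradiction comes only from $T_\eps\to\infty$ as $\eps\to 0$ (Proposition \ref{AlinhacProp}) --- an ingredient your route B never invokes. Without the monotonicity of $\cM$, the $\eps$-independent time bound, and the pairing of that bound against the long lifespan, no contradiction is reached, so the heart of the proof is missing.
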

\begin{proof}
Ultimately, all we need to prove is that, where $B:=\{|x|<1\}$,
\ba\label{000}
B(t):=X(B,t)\neq B\quad\textrm{for some }t\in(0,T_\eps).
\ea
Indeed, suppose we can establish that. Then by continuity of the trajectory map, we have $X(\als,t)\neq\als$ for some $\als$ near the boundary of $B$ but outside $\overline{B}$. This implies there must be a $t^*\in(0,T_\eps)$ such that
\ba\label{001}
\frac{dX}{dt}(\als,t^*)\neq 0.
\ea
Now consider
\ba\label{012}
t(\als):=\inf\{t\in[0,T_\eps):\frac{dX}{dt}(\als,t)\neq 0\}.
\ea
By Proposition \ref{FiniteSpeed}, we know that since $\als$ lies outside $\overline{B}$, $\frac{dX}{dt}(\als,t)=0$ is maintained for all $t$ in an interval $[0,\delta]$ for some $\delta>0$, and so $t(\als)>0$. In fact, $\frac{dX}{dt}(\als,t)=0$ on the time interval $[0,t(\als)]$, and yet there is a sequence $\{t_n\}_{n\geq 1}\subset(0,T_\eps)$ with $t_n\to t(\als)$ and $\frac{dX}{dt}(\als,t_n)\neq0$. Taking $t_0:=t(\als)$, we find that the trajectory $X(\als,t)$ is not analytic at $t_0$.

So now we will show that $B(t)$ cannot be $B$ for all times $t\in(0,T_\eps)$.

Assume $B(t)\equiv B$. First we claim that then $(\rho-\orho)$ and $u$ are supported in $\overline{B}$ for all $t\in[0,T_\eps)$. To see this, note that if $X(B,t)\equiv B$, then for each $t$, $u$ must vanish to infinite order along $\del B$. From the continuity equation \eqref{013}, one finds that this implies $(\rho-\orho)$ also must vanish to infinite order along $\del B$ for each $t$. Thus by defining
\ba
(\rho^\flat,u^\flat):=\begin{cases}(\rho,u) & \mbox{for }|x|<1,\\
(\orho,0) & \mbox{for }|x|\geq 1,\end{cases}
\ea
we also get a smooth solution to the system \eqref{013}-\eqref{014} with initial data $(\rho_0,u_0)$. By uniqueness, $(\rho^\flat,u^\flat)\equiv(\rho,u)$, and so $(\rho-\orho,u)$ is supported in $\overline{B}$. With this in mind, we define the quantities
\ba
\cM(t):=\int\rho(x,t) u(x,t)\cdot xdx,\quad\quad
I(t):=\int(\rho(x,t)-\orho)|x|^2dx.
\ea
Here $\cM(t)$ gives a weighted average of radial momentum, and $I(t)$ represents the moment of inertia, modulo the constant state. Observe that as a consequence of the fact that $\rho$ satisfies the continuity equation \eqref{013}, we have the identity
\ba\label{005ce}
I'(t)=\int \del_t \rho(x,t)|x|^2dx=-\int \div (\rho(x,t) u(x,t))|x|^2dx=2\int \rho(x,t)u(x,t)\cdot x dx=2\cM(t).
\ea
Now we claim $\cM'(t)\geq 0$. To verify this one can use the conservation law derived from the system \eqref{013}-\eqref{014}
\ba
\del_t(\rho u_i)+\sum^2_{j=1}\del_{x_j}(\rho u_i u_j+\delta_{ij}p)=0,\quad\quad i=1,2,
\ea
to find that
\ba\label{007}
\cM'(t)=\int \del_t (\rho(x,t)u(x,t))\cdot xdx=\int\left(\rho(x,t)|u(x,t)|^2+2(p(\rho)-p(\orho))\right)dx.
\ea
We claim the right hand side of \eqref{007} is positive. To see this, note
\begin{eqnarray}
\frac{1}{m(B)}\int_B p(\rho)dx
=
\frac{1}{m(B)}
\int_B (\rho(x,t))^\ga dx
\geq
\left(
\frac{1}{m(B)}
\int_B \rho(x,t)dx
\right)^\ga,
\end{eqnarray}
where the right hand side is conserved, and so equal to its value at $t=0$, which is
\ba
\left(
\frac{1}{m(B)}
\int_B \rho_0(x,t)dx
\right)^\ga
\geq
\left(
\frac{1}{m(B)}
\int_B \orho dx
\right)^\ga
=\orho^\ga
=p(\orho).
\ea
This means we have
\ba
\int( p(\rho)-p(\orho) )dx \geq 0,
\ea
which, when used in \eqref{007}, verifies that $\cM'(t)\geq 0$. Using this with the identity \eqref{005ce}, we find
\ba\label{008}
I(t)=I(0)+\int^t_0 2\cM(s)ds\geq I(0)+2t \cM(0)\geq 2t \cM(0).
\ea
Meanwhile, since $\int_B\rho(x,t) dx$ must be constant,
\ba\label{009}
I(t)= \int_B (\rho(x,t)-\orho)|x|^2dx\leq\int_B(\rho(x,t)-\orho)dx=\int_B(\rho_0(x)-\orho)dx\leq C' \eps.
\ea
Using the definitions of $\tilde u$ and $u_0$, noting $\orho\leq \rho_0(x)$, and combining \eqref{008} with \eqref{009}, we have
\begin{eqnarray}\label{010}
2\eps\left( \int \orho\tilde{u}(x)\cdot x dx\right) t
&\leq& 2\int\orho u_0(x)\cdot x dxt,\\
&\leq&  2\int\rho_0(x) u_0(x)\cdot x dx t,\nn\\
&=& 2 t \cM(0),\nn\\
&\leq& I(t)\leq C' \eps \quad\textrm{for all }t\in[0,T_\eps).\nn
\end{eqnarray}
Note the integral in the left hand side of \eqref{010} is just a nonzero constant independent of $\eps$ that one can calculate from the initial data. Thus this implies
\ba
t \leq C'' \quad\textrm{for all }t\in[0,T_\eps),\nn
\ea
for some constant $C''$. However $\lim_{\eps\to 0}T_\eps=\infty$, and so for small enough $\eps$ we have a contradiction. We conclude that \eqref{000} holds, and as we saw, that implies the desired result for the proposition.
\end{proof}
Typically, Lagrangian analyticity for incompressible flows is established for a brief positive time about $t=0$. The above example explicitly contrasts this if we start time at $t_0$.
\begin{theo}
There exists $C^\infty$ initial data $(\tilde\rho_0,\tilde u_0)$ for the system \eqref{013}-\eqref{014} such that there is a corresponding smooth solution $(\rho,u)$ on a time interval $[0,T]$ for some $T>0$ with a corresponding Lagrangian trajectory $X(\al,t)$ which is not analytic at $t=0$.
\end{theo}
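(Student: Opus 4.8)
The plan is to obtain this theorem as an immediate corollary of Proposition \ref{main} combined with a time-translation of the initial data. Proposition \ref{main} already produces $C^\infty$ initial data $(\rho_0,u_0)$ as in Definition \ref{020b}, a smooth solution $(\rho,u)$ on $[0,T_\eps)$ for small $\eps$, and a base point $\als\in\R^2$ together with a time $t_0=t(\als)\in(0,T_\eps)$ such that $X(\als,t)$ is smooth but not analytic at $t_0$. The only gap between that statement and the theorem as phrased is that the non-analyticity occurs at an interior time $t_0$ rather than at the left endpoint $t=0$. This is resolved purely by shifting the time origin.

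The key steps, in order, are as follows. First, fix $\eps<\eps_0$ as in Proposition \ref{AlinhacProp} so that $T_\eps$ is large, invoke Proposition \ref{main} to obtain $\als$ and $t_0\in(0,T_\eps)$ with $X(\als,\cdot)$ not analytic at $t_0$, and note by Proposition \ref{AlinhacProp} that $(\rho,u)\in C^\infty([0,T_\eps)\times\R^2)$. Second, define the new initial data by sampling the solution at time $t_0$: set $\tilde\rho_0(x):=\rho(x,t_0)$ and $\tilde u_0(x):=u(x,t_0)$. These are $C^\infty$ in $x$ since $(\rho,u)$ is smooth on $[0,T_\eps)\times\R^2$; moreover $(\rho-\orho,u)$ has spatial support contained in $\{|x|\le 1+\sigma t_0\}$ by the finite speed of propagation, Proposition \ref{FiniteSpeed}, so $(\tilde\rho_0-\orho,\tilde u_0)$ is compactly supported and $\tilde\rho_0>0$. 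Third, observe that by the time-translation invariance of the autonomous system \eqref{013}-\eqref{014}, the function $(\tilde\rho,\tilde u)(x,t):=(\rho,u)(x,t_0+t)$ is a $C^\infty$ solution of \eqref{013}-\eqref{014} on the interval $[0,T]$ with $T:=T_\eps-t_0>0$ and with initial data $(\tilde\rho_0,\tilde u_0)$. Fourth, track the Lagrangian trajectory for this translated solution: the trajectory map $\tilde X$ defined by $\frac{d\tilde X}{dt}(\al,t)=\tilde u(\tilde X(\al,t),t)$, $\tilde X(\al,0)=\al$, satisfies $\tilde X(\al,t)=X(\cdot,t_0)\circ (X(\cdot,t_0))^{-1}$-conjugated flow; concretely, taking $\tilde\al:=X(\als,t_0)$ one checks directly from uniqueness of ODE solutions that $\tilde X(\tilde\al,t)=X(\als,t_0+t)$ for $t\in[0,T]$. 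Hence $t\mapsto\tilde X(\tilde\al,t)$ is smooth on $[0,T]$ but fails to be analytic at $t=0$, because analyticity of $\tilde X(\tilde\al,\cdot)$ at $0$ would give analyticity of $X(\als,\cdot)$ at $t_0$, contradicting Proposition \ref{main}. This proves the theorem with the explicit choice $(\tilde\rho_0,\tilde u_0)=(\rho(\cdot,t_0),u(\cdot,t_0))$.

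There is essentially no hard analytic content left here; the substance of the argument is entirely contained in Propositions \ref{AlinhacProp}, \ref{FiniteSpeed}, and \ref{main}. The one point requiring a line of care is the identity $\tilde X(\tilde\al,t)=X(\als,t_0+t)$: this is the statement that the Lagrangian flow of the time-translated velocity field, started from the position that the particle $\als$ has reached at time $t_0$, coincides with the tail of the original trajectory. It follows from the uniqueness theorem for the ODE $\dot Y=u(Y,t_0+t)$, since both $t\mapsto\tilde X(\tilde\al,t)$ and $t\mapsto X(\als,t_0+t)$ solve it with the same initial value $\tilde\al=X(\als,t_0)$ — the latter because $\frac{d}{dt}X(\als,t_0+t)=u(X(\als,t_0+t),t_0+t)$ by the chain rule. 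So the main (and only mild) obstacle is simply bookkeeping the time shift correctly; everything else is a direct citation of the preceding results.
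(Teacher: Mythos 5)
Your proposal is correct and matches the paper's own proof: both take $(\tilde\rho_0,\tilde u_0)=(\rho(\cdot,t_0),u(\cdot,t_0))$ with $t_0$ from Proposition \ref{main} and conclude by time-translation invariance that the shifted trajectory fails to be analytic at $t=0$. Your additional remarks on the uniqueness-of-ODE bookkeeping for the shifted flow simply spell out what the paper leaves implicit.
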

\begin{proof}
We first begin with the solution $(\rho,u)$ given by Proposition \ref{AlinhacProp} for initial data $(\rho_0,u_0)$, as defined in Definition \ref{020b}. We then consider the corresponding $t_0$ given by Proposition \ref{main}. We take $(\rho(x,t_0),u(x,t_0))$ as our smooth initial data $(\tilde\rho_0,\tilde u_0)$ for the system \eqref{013}-\eqref{014}. It immediately follows that one does not have analyticity of some resulting Lagrangian trajectory at $t=0$.
\end{proof}
\end{subsection}

\end{section}

\begin{section}{Counterexample for the Vlasov-Poisson system}
The Vlasov-Poisson equation gives a probabilistic model of the dynamics of a plasma, or a physical system with a large number of particles that are governed by a long-range interaction, such as with Coulomb's law. The unknown is a density function $f(x,v,t)$, defined on $\R^{2d}\times \R$, describing the distribution of the positions and velocities of a large number of particles.
Here, we deal with the  Vlasov-Poisson equation in the case $d=2$, which is
\ba\label{000vp}
\del_t f(x,v,t)+v\cdot\na_x f(x,v,t)+E(x,t)\cdot\na_v f(x,v,t)=0,
\ea
for the unknown density $f(x,v,t):\R^2\times\R^2\times\R\to\R$ with initial data $f|_{t=0}=f_0\in \mathcal{S}(\R^4)$, and where we have that the field $E(x,t):\R^2\times\R\to\R^2$ is related to the unknown $f$ via
\begin{eqnarray}
E(x,t)&=&-\na_x\phi(x,t),\\
-\Delta_x\phi(x,t)&=&q\rho(x,t),\\
\rho(x,t)&=&\int f(x,w,t)dw.\label{001c}
\end{eqnarray}
Here we have real valued potential $\phi(x,t)$ and positive density function $\rho(x,t)$, defined on $\R^2\times\R$. We may take $q=\pm 1$ for the case of a repulsive or attractive force law. In what follows we just take $q=1$, though the analysis works for either case. In this case, we think of the system as governing a distribution of charged particles, which experience repulsion with each other.

We will observe that this system also has a Lagrangian formulation which can naturally be written as an ODE. The main goal of this section is to demonstrate that unlike the situation for the incompressible Euler equations, there exist solutions with non-analytic Lagrangian trajectories, even for smooth initial data.

\begin{subsection}{Lagrangian phase space formulation}

Several key features are shared between the Lagrangian dynamics of the Vlasov-Poisson equation and fluid mechanics equations such as the incompressible Euler system. For example, where the curl is transported by the flow for the $2$D incompressible Euler equations, for the Vlasov-Poisson equations we have that the unknown density $f(x,v,t)$ is constant when evaluated along phase space trajectories in the corresponding Hamiltonian system. We also have that the main governing force entering in the Vlasov-Poisson equation can be written as a well-behaved nonlocal operator, analogous to the operator that arises from the Biot-Savart law used for incompressible flows. This can be used to write this trajectory system as an ODE in an abstract function space, as can be done for the incompressible Euler equations. Now we derive the Lagrangian formulation for the system, observing these various properties as they appear in the derivation.

For the Vlasov-Poisson system, the natural Lagrangian coordinates formulation is a description in terms of trajectories in phase space. Throughout the following we use the notation $\zeta=(q,p)\in\R^2\times\R^2$ to denote a point in phase space, where each $q\in\R^2$ represents a spatial position and each $p\in\R^2$ a momentum, or velocity. Thus, a Lagrangian trajectory beginning at the point $\zeta$ in phase space is then given by a vector valued function
\ba
(X(\zeta,t),V(\zeta,t)):\R^2\times\R^2\times\R\to\R^2\times\R^2,
\ea
where
\ba
(X(\zeta,0),V(\zeta,0))=\zeta.
\ea

The system \eqref{000vp}-\eqref{001c} has a Hamiltonian system formulation, given by
\begin{eqnarray}\label{002}
\dot{X}&=&V,\\
\dot{V}&=&E(X,t),\nonumber
\end{eqnarray}
in terms of the phase space trajectories $(X(\zeta,t),V(\zeta,t))$, with initial condition $(X(\zeta,0),V(\zeta,0))=\zeta$.

Additionally, we have
\ba\label{003}
E(x,t)=\na\Delta^{-1}\rho(x,t)=\int \na_x\Delta_x^{-1}f(x,\xi,t)d\xi
=\int \int K(x-y)f(y,\xi,t)dyd\xi,
\ea
with the integral kernel for the operator $\na_x\Delta_x^{-1}$ given by
\ba\label{004}
K(x)=\frac{1}{2\pi}\frac{x}{|x|^2},\quad x\neq 0.
\ea
Making the change of variables $(y,\xi)=(X(\Xi,t),V(\Xi,t))$ in the integral on the right in \eqref{003}, so that the integration is now over $\Xi$ in $\R^2\times\R^2$, and evaluating at $x=X(\zeta,t)$, one finds that
\ba\label{005}
E(X(\zeta,t),t)=\int K (X(\zeta,t)-X(\Xi,t))f(X(\Xi,t),V(\Xi,t),t)J_{X,V}(\Xi,t)d\Xi,
\ea
where $J_{X,V}(\Xi,t)$ is the resulting Jacobian. However, one can derive from \eqref{002} that $J_{X,V}(\Xi,t)=1$. This is equivalent to the statement that volume in phase space is preserved under the natural flow for this Hamiltonian system, as dictated by Liouville's theorem.

Observe equation \eqref{000vp} together with the system \eqref{002} implies that $f$ is constant along the trajectories in phase space, so that
\ba\label{006}
f(X(\Xi,t),V(\Xi,t),t)=f_0(\Xi).
\ea
Incorporating this property in \eqref{005} we get
\ba
E(X(\zeta,t),t)=
\int K (X(\zeta,t)-X(\Xi,t))f_0(\Xi)d\Xi,
\ea
and returning to \eqref{002}, we now have the system
\begin{eqnarray}\label{008d}
\frac{dX}{dt}(\zeta,t)&=&V(\zeta,t),\\
\frac{dV}{dt}(\zeta,t)&=&\int K(X(\zeta,t)-X(\Xi,t))f_0(\Xi)d\be d\Xi,\nn\\
(X,V)(\zeta,0)&=&\zeta.\nn
\end{eqnarray}
We claim that although the system \eqref{008d} has certain similarities with the Lagrangian formulations for the incompressible Euler equations and the Euler-Poisson equation, it has very different analytic properties. We record the main result of this section, Theorem \ref{maintheoVP}, below.
\begin{theonn}
There exists $C^\infty$ initial data $f_0(x,v)$ for the system \eqref{000vp}-\eqref{001c} with a global $C^\infty$ solution $f(x,v,t)$ which has the following property. For the phase-space trajectory map $(X(\zeta,t),V(\zeta,t))$ solving the Hamiltonian formulation of the system, \eqref{002}, for $E(x,t)$ as in \eqref{003}, there is some trajectory $(X(\zeta_0,t),V(\zeta_0,t))$ which is not analytic in time at $t=0$.
\end{theonn}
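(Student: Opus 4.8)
The plan is to construct the desired counterexample by transplanting the mechanism from the compressible Euler case: I will build smooth initial data $f_0$ whose spatial density $\rho_0$ is supported in a ball and whose velocity distribution is supported so that \emph{some} phase-space point $\zeta_0=(q_0,p_0)$ with $q_0$ outside the ball has $p_0=0$, and then show that although this point is motionless for a positive time (because the field $E$ has not yet reached it — but wait, Vlasov-Poisson has \emph{infinite} speed of propagation, so this exact trick fails). So instead the plan must exploit the mechanism flagged in the introduction and in Remark (ii) of Section 2: the operator $F[\cdot,u_0]$ analogous to the Biot--Savart operator \emph{does not preserve analyticity in time}. Concretely, I will choose $f_0$ so that the field felt along a single trajectory, $E(X(\zeta_0,t),t)$, is a smooth but non-analytic function of $t$ at $t=0$, forcing $\frac{d^2}{dt^2}X(\zeta_0,t)$ to be non-analytic, hence $X(\zeta_0,t)$ non-analytic.

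The key steps, in order, would be: (1) Use a known global-existence-and-propagation-of-regularity theorem for $2$D Vlasov--Poisson with Schwartz (or compactly supported smooth) data to guarantee a global $C^\infty$ solution $f(x,v,t)$, so that the only question is analyticity in $t$. (2) Reduce the claim to a statement purely about the field: since $\frac{dX}{dt}(\zeta,t)=V(\zeta,t)$ and $\frac{dV}{dt}(\zeta,t)=E(X(\zeta,t),t)$, the trajectory $(X(\zeta_0,t),V(\zeta_0,t))$ is analytic at $t=0$ if and only if $t\mapsto E(X(\zeta_0,t),t)$ is analytic at $t=0$; and by analyticity of ODE flows, it suffices to exhibit data for which $t\mapsto \int K(X(\zeta_0,t)-X(\Xi,t))f_0(\Xi)\,d\Xi$ cannot be analytic. (3) Choose the initial data to be a superposition of a "background" piece and a small "probe"; arrange the background so that the density $\rho(x,t)$ near the spatial location $X(\zeta_0,t)$ develops a singularity in its time-Taylor coefficients — for instance by letting two thin beams of particles with opposite velocities cross, producing a density whose time derivatives at a fixed Eulerian point grow factorially. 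The cleanest realization is to pick $f_0$ supported on a small set so that, by finite-in-time considerations, $\rho(x,t)$ for $x$ in a neighborhood of a chosen point equals (for small $t$) a quantity one can compute explicitly from free transport plus a perturbation, and show its time-analyticity radius is zero. (4) Integrate against $K$ and verify that the non-analyticity survives the convolution (using that $K$ is a genuine singular/nonsmooth kernel so there is no smoothing that could restore analyticity — e.g. by testing against the explicit logarithmic/homogeneous structure of $\nabla\Delta^{-1}$ in $2$D). (5) If the non-analyticity occurs at some $t_0>0$ rather than $t=0$, re-center time by taking $(f(x,v,t_0))$ as the new smooth initial datum, exactly as in the compressible Euler section.

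The main obstacle I expect is step (3)--(4): producing \emph{honest} non-analyticity of the Eulerian field rather than merely a heuristic expectation. Unlike the compressible case, where finite speed of propagation gives a clean "stationary then moving" dichotomy that is manifestly incompatible with analyticity, here I must actually control the time-Taylor series of a nonlocal quantity. The viable route is to make the obstruction as explicit as possible: choose $f_0$ so that for small $t>0$ the relevant piece of $\rho(x,t)$ is, by an exact computation, of the form $\rho_{\text{free}}(x,t)+\text{(higher order)}$ where $\rho_{\text{free}}$ is the pushforward of a carefully chosen measure under free transport, and select the measure so that $\rho_{\text{free}}(x_*,\cdot)$ has a prescribed non-analytic profile (e.g. by concentrating mass so that the pushforward density at $x_*$ is $\sum_n a_n t^n$ with $|a_n|$ not decaying geometrically, using a sum of delta-like beams at geometrically spaced velocities and amplitudes). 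Then one shows the perturbation series is dominated by the free part on a time interval, so the full trajectory inherits the non-analyticity. Carrying this through rigorously — in particular controlling the nonlinear feedback of the field on the density so it does not accidentally cancel the constructed obstruction — is the technical heart of the argument; everything else (global smoothness, re-centering time, reduction to the field) is routine given the tools already assembled in the paper and in the Vlasov--Poisson literature.
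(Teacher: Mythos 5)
You have correctly identified the operative mechanism (the compressible--Euler finite-speed trick is unavailable, and one must instead force the time-Taylor coefficients of the field felt along one trajectory to grow too fast), and your reduction in step (2) is sound: if $t\mapsto E(X(\zeta_0,t),t)$ fails to be analytic then so does $V(\zeta_0,\cdot)$ and hence the trajectory. But steps (3)--(4), which you yourself flag as the technical heart, are where the actual proof lives, and what you sketch there would not go through as stated. First, ``delta-like beams at geometrically spaced velocities and amplitudes'' are incompatible with the requirement that $f_0$ be a single $C^\infty$ (indeed Schwartz) function: once you mollify the beams you must quantify how the amplitude and the spatial scale of each bump trade off against each other, and a one-shot choice of data does not let you prescribe the growth of infinitely many Taylor coefficients while simultaneously keeping the data smooth and the nonlinear feedback under control. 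The paper resolves exactly this tension by an inductive construction: bumps $\eps_n F_{\delta_n,n}$ with $F_{\delta,N}=\delta^{N-2}(|x|^2/\delta^2-1)e^{-|x|^2/\delta^2}g_N(v)$, a stability lemma showing that, \emph{uniformly in} $\delta$, such a perturbation moves all derivatives $\del^\al_\zeta\del^m_tV(\zeta,0)$ and $\del^\ga_{x,t}E(x,0)$ of order $\le N$ by only $O(\eps)$, a companion lemma showing that for odd $N$ the $(N+1)$-st derivative of $V$ at $(0,e_1,0)$ is bounded below by $C_{N,\eps}\delta^{-1}$, and then a limit argument ($\eps_n,\delta_n$ chosen inductively, Weierstrass $M$-test for convergence in every $C^m$, plus a lemma that the derivatives at $t=0$ pass to the limit) yielding $|\frac{d^n}{dt^n}V^\sharp(0,e_1,0)|\gtrsim n^n$ along infinitely many $n$. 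None of this bookkeeping --- in particular the uniform-in-$\delta$ control of the nonlinear terms, which is precisely your worry that the feedback ``does not accidentally cancel the constructed obstruction'' --- appears in your plan beyond the statement that it should be done.

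Second, your argument for why the non-analyticity ``survives the convolution'' is wrong in spirit: $K=\na\Delta^{-1}$ is a \emph{smoothing} kernel (it gains one derivative), so you cannot appeal to its singularity to rule out restoration of analyticity; you need a genuine lower bound on the field after convolution. The paper gets this by designing the bump to be (a multiple of) $\Delta$ of a Gaussian at scale $\delta$, so that $\del_{y_1}\Delta^{-1}$ applied to it can be computed exactly and produces the factor $\delta^{-N+1}w_\ga(\xi)$, and by choosing the velocity Gaussian $g_N$ so that the resulting pairing $\int p_N(\xi)g_N(\xi)\,d\xi$ is nonzero (a nondegeneracy condition you have no analogue of, and without which the leading term could vanish identically). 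Finally, the re-centering step (5) is unnecessary in the correct construction --- the non-analyticity is produced directly at $t=0$ --- but that is a minor point compared with the missing quantitative core.
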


We comment that the counterexample in this section is notably different from that in the section on the compressible Euler equations. The Vlasov-Poisson system has an infinite speed of propagation, so the result here provides an example of a different mechanism in a fluid equation which can cause the analyticity of the Lagrangian trajectories to fail.

\begin{rem}
One can verify that the natural operator acting on the trajectory map in the formulation \eqref{008d} in fact does not preserve analyticity in time, that is, it does not satisfy the criterion \eqref{thesis002} discussed in Section 2. The failure of this property is not directly used in an obvious way in our proof of non-analyticity, though it does appear to be a key difference in the system from the incompressible Euler equations, as well as other fluid mechanics systems which are known to have the Lagrangian analyticity property.
\end{rem}
\end{subsection}

\begin{subsection}{Preliminary estimates}
The first step for our construction of the counterexample for this system is to establish bounds for various derivatives of the trajectory map at $t=0$. To establish these preliminary bounds, in addition to using the equation \eqref{000vp} for the unknown $f(x,v,t)$, we will frequently use the Hamiltonian formulation of the system given by
\begin{eqnarray}
\frac{dX}{dt}(\zeta,t)&=&V(\zeta,t),\\
\frac{dV}{dt}(\zeta,t)&=&E(X(\zeta,t),t),\label{vpcef02}\\
(X(\zeta,0),V(\zeta,0))&=&\zeta.
\end{eqnarray}
with the equation for the electric field
\ba
E(x,t)=\int\int K(x-y)f(y,\xi,t)dyd\xi.
\ea
We now proceed with a basic bound on the initial derivatives of some key quantities in the above system.

\begin{lem}\label{vpceL01}
Let $f_0(x,v)$ be a nonnegative Schwartz function, which we take to be initial data for the Vlasov-Poisson equation. Let the corresponding solution be denoted by $f(x,v,t)$, with electric field $E(x,t)$ and Lagrangian phase space trajectory map $(X(\zeta,t),V(\zeta,t))$. Then for any multi-index $\al$ and nonnegative integer $n$ we have for all $\zeta\in\R^2\times\R^2$
\ba\label{vpceA001}
|\del^\al_\zeta\del^n_t V(\zeta,0)|\leq C_{\al,n}(1+|p|)^{n+1},
\ea
where $\zeta=(q,p)\in\R^2\times\R^2$, and where $C_{\al,n}$ depends only on $\al$, $n$, and the initial data $f_0(x,v)$. Moreover, for any multi-index $\ga$ we have for all $x\in\R^2$
\ba\label{vpceA002}
|\del^\ga_{x,t}E(x,0)|\leq C_\ga,
\ea
where $C_\ga$ depends only on $\ga$ and the initial data $f_0(x,v)$.
\end{lem}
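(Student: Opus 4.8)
\textbf{Proof proposal for Lemma \ref{vpceL01}.}

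The plan is to prove both bounds simultaneously by induction, exploiting the coupling between the ODE system \eqref{vpcef02} for $(X,V)$ and the representation of $E$ as a convolution against $f_0$. The key structural facts are: (i) $f$ is constant along phase-space trajectories, so $f(x,v,t)$ stays a ``Schwartz-like'' function with moments controlled by those of $f_0$; (ii) the kernel $K(x)=\frac{1}{2\pi}x/|x|^2$ is smooth away from the origin and, being homogeneous of degree $-1$, produces bounded convolutions with Schwartz densities together with all their $x$-derivatives; (iii) time derivatives can be converted into space derivatives via the transport equation \eqref{000vp} and the ODEs. Concretely, I would first establish \eqref{vpceA002} for $E$ at $t=0$: since $E(x,0)=\int\int K(x-y)f_0(y,\xi)\,dy\,d\xi$, spatial derivatives $\del^\be_x E(x,0)$ are handled by the standard Calderón–Zygmund/potential-theory estimate for convolution with $\na\Delta^{-1}$ applied to the Schwartz function $\rho_0(y)=\int f_0(y,\xi)d\xi$, giving $\|\del^\be_x E(\cdot,0)\|_{L^\infty}\le C_\be$. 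For the mixed time derivatives, I would use $\del_t f = -v\cdot\na_x f - E\cdot\na_v f$ to express $\del_t\rho$, hence $\del_t E$, in terms of quantities that at $t=0$ are again convolutions of $K$ (or $\na K$, still admissible) against Schwartz functions built from $f_0$ and $E(\cdot,0)$; iterating gives \eqref{vpceA002} for all $\ga$.

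Next, for \eqref{vpceA001}, I would argue by induction on $n$. For $n=0$: $V(\zeta,0)=p$, so $|\del^\al_\zeta V(\zeta,0)|\le C_\al(1+|p|)$ trivially (it is $p$ if $\al=0$, constant or zero otherwise). For $n=1$: $\del_t V(\zeta,0)=E(\zeta_q,0)$, where $\zeta_q=q$ is the spatial part, which is bounded with all its $\zeta$-derivatives by \eqref{vpceA002} (note $\del_\zeta$ acting on $E(X(\zeta,0),0)=E(q,0)$ only brings down $\del_q$ derivatives at $t=0$ since $\na_\zeta X(\zeta,0)$ is the identity block). The inductive step: differentiating $\del_t V = E(X,t)$ in $t$ a total of $n$ times and then in $\zeta$, and using the chain rule together with $\del_t X = V$, one obtains $\del^\al_\zeta\del^n_t V(\zeta,0)$ as a finite sum of terms, each a product of factors of the form $\del^{\al'}_\zeta\del^k_t V(\zeta,0)$ (with $k<n$) and factors $\del^\be_{x,t}E(q,0)$ (bounded by \eqref{vpceA002}) and factors $\del^{\al''}_\zeta\del^j_t X(\zeta,0)$. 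The latter satisfy $\del^j_t X = \del^{j-1}_t V$, so they are again controlled by the inductive hypothesis, contributing at most a power $(1+|p|)^{(n-1)+1}=(1+|p|)^n$; carefully bookkeeping the highest power of $|p|$ that can appear — which comes from the ``top'' chain of $X$-differentiations feeding a single $V$ — gives the claimed exponent $n+1$. The one place $|p|$ enters multiplicatively is through factors of $V$ itself (since $V(\zeta,0)=p$ is the only unbounded ingredient), and the Faà di Bruno structure shows these factors multiply to give at most the stated power.

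The main obstacle I anticipate is the bookkeeping in the inductive step: controlling precisely which power of $(1+|p|)$ appears after $n$ time-differentiations of the composition $E(X(\zeta,t),t)$ and verifying it does not exceed $n+1$. Each time derivative either hits $E$ in its explicit time slot (producing a bounded term via \eqref{vpceA002}, which one must also establish to all orders — a separate small induction) or hits $X$ through the chain rule, replacing a $\del_t X$ by $V$ and thereby potentially introducing one more factor of $p$; the claim is that along any branch of the differentiation tree one picks up at most $n+1$ such factors. I would make this rigorous by setting up a weighted induction, assigning to each term a ``weight'' equal to the total number of $V$-factors (evaluated at $t=0$, hence equal to $p$ up to derivatives) and showing this weight is bounded by $n+1$; the $E$-factors and the $(\na X)^{-1}$-type factors (all equal to identity/bounded at $t=0$) contribute no powers of $p$. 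The convolution estimates for $K$ and $\na K$ against Schwartz data, and the propagation of Schwartz bounds on $f(\cdot,\cdot,t)$ via \eqref{006}, are routine and I would only sketch them. Once \eqref{vpceA001} and \eqref{vpceA002} are in hand, the lemma is complete.
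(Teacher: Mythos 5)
Your proposal is correct and follows essentially the same route as the paper: you bound $\del^\ga_{x,t}E(\cdot,0)$ by iterating the Vlasov equation to convert time derivatives into convolutions of $K$ against expressions in $f_0$ and lower-order $E$-derivatives at $t=0$, and you bound $\del^\al_\zeta\del^n_t V(\zeta,0)$ via the chain rule applied to $E(X(\zeta,t),t)$, with the powers of $(1+|p|)$ entering only through $V(\zeta,0)=p$, exactly as in the paper's power counting. The only difference is organizational: the paper runs a single joint induction on $s=|\al|+n$ (carrying the $E$-bound one order behind), whereas you prove the $E$-bound first by its own induction on $|\ga|$ and then induct for $V$; this decoupling is legitimate because the formula for $\del^\ga_{x,t}E(x,0)$ involves only $f_0$ and $E$-derivatives of order at most $|\ga|-1$, never the trajectory map.
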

\begin{proof}
Consider for a given $s\in\N$ the estimate \eqref{vpceA001} for arbitrary $\al$ and $n$ with $|\al|+n\leq s$ and \eqref{vpceA002} for $\ga$ with $|\ga|\leq s-1$. If we can show such estimates hold for all such $\al$, $n$, and $\ga$, for any given $s\geq 0$, then we will have proven the lemma. We will do this by induction on $s$.

Let us check the base case, $s=0$. Note that the condition involving \eqref{vpceA002} in the base case is vacuously true, since there exists no $\ga$ with $|\ga|=-1$. We find that \eqref{vpceA001} holds in the base case, i.e. when $|\al|+n=0$, as a result of the fact that $V(\zeta,0)=p$.

Now we move on to the induction step. Let us fix an $s\in\N$ and suppose that \eqref{vpceA001} holds for all $\al$ and $n$ with $|\al|+n\leq s$ and that \eqref{vpceA002} holds for all $\ga$ with $|\ga|\leq s-1$. Consider $\al$ and $n$ with $|\al|+n=s+1$. Note that if $n=0$, then \eqref{vpceA001} follows from the fact that $V(\zeta,0)=p$. Suppose instead $n\geq 1$. Then, by using \eqref{vpcef02}, we find
\begin{eqnarray}\label{vpceA003}
\del^\al_\zeta \del^n_t V(\zeta,0)
&=&
\del^\al_\zeta\del^{n-1}_t [E(X(\zeta,t),t)]_{t=0},\\
&=&
\sum_{
\substack{
0\leq|\ga|\leq s
\\
\bm{\be},\vec{j}
}
}
\textit{coeff}\;
\left(
\prod^{|\ga_x|}_{i=1}
\del^{\be_i}_{\zeta,t} X_{j_i}(\zeta,0)
\right)
\del^\ga_{x,t}E(q,0),\label{vpceA003b}
\end{eqnarray}
using the notation $\bm{\be}=(\be_1,\ldots,\be_{|\ga_x|})$, where each $\beta_i$ is a multi-index, and $\vec{j}=(j_1,\ldots,j_{|\ga_x|})$, with integer components, and we have multi-index $\ga=(\ga_x,\ga_t)$. We have written $\textit{coeff}$ in the above sum to indicate some integer coefficients whose exact values are not particularly important. Now consider any given summand in the sum in \eqref{vpceA003b}. We use the facts that $V=\frac{dX}{dt}$ and that $|\be_i|\leq s$ for each $i$ together with the induction hypothesis to bound the corresponding product:
\ba\label{vpceA004}
\left|
\prod^{|\ga_x|}_{i=1}
\del^{\be_i}_{\zeta,t}X_{j_i}(\zeta,0)
\right|
\leq C\prod^{|\ga_x|}_{i=1}(1+|p|)^{(\be_i)_t}\leq C(1+|p|)^{n-1},
\ea
where we have written $(\be_i)_t$ to denote the $t$ component of each multi-index $\be_i$. We have also used the fact that $\sum_{i}(\be_i)_t\leq n-1$, which is clear from \eqref{vpceA003}-\eqref{vpceA003b}. Also, regarding the sum in \eqref{vpceA003b}, by using the induction hypothesis we get the bound
\ba\label{vpceA005}
|\del^\ga_{x,t}E(q,0)|\leq C\quad\mbox{for }\ga\mbox{ with }|\ga|\leq s-1.
\ea
Using the \eqref{vpceA004} and \eqref{vpceA005} in \eqref{vpceA003}-\eqref{vpceA003b} we obtain
\ba\label{vpceA006}
\del^\al_\zeta\del^n_t V(\zeta,0)
=O((1+|p|)^{n-1})
+
\sum_{|\ga|=s}O((1+|p|)^{n-1})\del^\ga_{x,t}E(q,0).
\ea
Now we note
\ba\label{vpceA007}
E(x,t)=\int\int K(y)f(x-y,\xi,t)dyd\xi,
\ea
which gives us for any multi-index $\ga$
\ba\label{vpceA008}
\del^\ga_{x,t} E(x,t)
=
\int\int K(y)
\del^\ga_{x,t}f(x-y,\xi,t)dyd\xi.
\ea
Now we note that since $f$ solves the Vlasov-Poisson equation, we have
\ba\label{vpceA009}
\del_t f = -(v\cdot \na_x f+E\cdot\na_v f).
\ea
With repeated use of the above formula, we obtain a formula for higher time derivatives, for any integer $k\geq0$.
\ba\label{vpceA010}
\del^k_t f =
(-1)^k(v\cdot \na_x)^k f+
\sum_{\kappa,\bm{\eta},\mu,\nu} \textit{coeff}\;v^\kappa
\left(
\prod^r_{i=1}\del^{\eta_i}_{x,t} E
\right)
\del^\mu_x\del^\nu_v f,
\ea
where the summation is over some finite collection of $\bm \eta=(\eta_1,\ldots,\eta_{r})$, where each $\eta_i$ is a multi-index for $i=1,\ldots,r$, and we have multi-indices $\kappa,\;\mu,$ and $\nu$. Moreover, in the summation we have $|\mu|\leq k-1$, $|\mu|+|\nu|\leq k$, and for each $i$, $|\eta_i|\leq k-1$. This is easily checked by induction. Note that we include the case $k=0$, in which we have an empty sum in the right hand side.

For each multi-index $\ga=(\ga_x,\ga_t)$, by taking $k=\ga_t$ in the above equation and applying $\del^{\ga_x}_x$ to both sides, we find
\ba\label{vpceA011}
\del^\ga_{x,t}f
=
(-1)^{\ga_t}(v\cdot \na_x)^{\ga_t} \del^{\ga_x}_x f+
\sum_{\kappa,\bm{\eta},\mu,\nu} \textit{coeff}\;v^\kappa
\left(
\prod^r_{i=1}\del^{\eta_i}_{x,t} E
\right)
\del^\mu_x\del^\nu_v f,
\ea
where in the sum we have $|\mu|\leq |\ga|-1$, $|\mu|+|\nu|\leq|\ga|$, and each $|\eta_i|\leq |\ga|-1$.
Using \eqref{vpceA011} in \eqref{vpceA008} and evaluating at $t=0$, we get
\begin{eqnarray}\label{vpceA012}
\del^\ga_{x,t}E(x,0)
&=&
(-1)^{\ga_t}\int\int
K(y)(\xi\cdot\na_x)^{\ga_t}\del^{\ga_x}_x f_0(x-y,\xi)dyd\xi\\
& &+
\sum\textit{coeff}
\int\int
K(y)\xi^\ka\left(
\prod^r_{i=1}\del^{\eta_i}_{x,t} E(x-y,0)
\right)
\del^\mu_x\del^\nu_v f_0(x-y,\xi)dyd\xi.\nn
\end{eqnarray}
For each $\ga$ with $|\ga|=s$, we find for each of the integrals in the sum in the second line of \eqref{vpceA012}, since $|\eta_i|\leq|\ga|-1=s-1$ for each $i$, the induction hypothesis gives us bounds of the form
\ba\label{vpceA013}
\left|
\prod^r_{i=1}\del^{\eta_i}_{x,t} E(x-y,0)
\right|\leq C.
\ea
Using this together with the fact that $f_0(x,v)$ is Schwartz in \eqref{vpceA012}, we conclude that
\ba\label{vpceA014}
|\del^\ga_{x,t}E(x,0)|\leq C\quad\textrm{for }\ga\textrm{ with }|\ga|=s.
\ea
Using \eqref{vpceA014} in \eqref{vpceA006}, we find that \eqref{vpceA001} follows for arbitrary $\al$ and $n$ with $|\al|+n= s+1$.

In addition, we know that \eqref{vpceA002} holds for any multi-index $\ga$ with $|\ga|= s$, directly from \eqref{vpceA014}. This finishes the induction step. Thus, \eqref{vpceA001} and \eqref{vpceA002} hold for all $\al$, $n$, and $\ga$.
\end{proof}

For our construction, we will also need to carefully consider the effect of making perturbations on the initial data $f_0(x,v)$ on the derivatives of the trajectory map. Now we proceed to define a special kind of ``additional bump'' which we will use to perturb a gaussian initial data $f_0(x,v)$ for the Vlasov-Poisson equation. The aim is to produce a perturbation whose derivatives up to a certain order are bounded, but with the property that beyond that order, the derivatives can be large.

\begin{defn}\label{vpceD02}
In the following, we define for $0<\delta< 1$ and integer $N\geq 2$ the function
\ba\label{vpce002}
F_{\delta,N}(x,v):=
\delta^{N-2}
\left(\frac{|x|^2}{\delta^2}-1
\right)
e^{-\frac{|x|^2}{\delta^2}}
g_N(v),
\ea
where $g_N(v)$ is given by
\ba\label{vpce0023}
g_N(v):=A_N e^{-c_N|v-v_N|^2},
\ea
for constants $A_N>0$, $c_N>1$, and $v_N\in\R^2$ chosen as follows.

Consider an arbitrary smooth function $h(x,t):\R^2\times\R\to\R$, and a smooth vector valued function $a(t):\R\to\R^2$. Consider the following formula:
\ba\label{vpce0021}
\frac{d^N}{dt^N}\left(
h(a(t),t)
\right)
=
\sum_{\substack{0\leq|\ga|\leq N\\ \vec k,\vec j}}
b_N(\ga,\vec k,\vec j)
\left(
\prod^{|\ga_x|}_{i=1}\frac{d^{k_i}a_{j_i}(t)}{dt^{k_i}}
\right)
\del^\ga_{x,t}h(a(t),t).
\ea
In the sum we have multi-indices $\ga=(\ga_x,\ga_t)$, vectors with integer components $\vec k=(k_1,\ldots,k_{|\ga_x|})$ and $\vec j=(j_1,\ldots,j_{|\ga_x|})$, and nonnegative integer coefficients $b_N(\ga,\vec k,\vec j)$ that do not depend on the choice of $h(x,t)$ and $a(t)$.
Consider the polynomial $p_N(\xi):\R^2\to\R$ defined by the following, where $\delta^i_j$ denotes the Kronecker delta:
\ba\label{vpce0022}
p_N(\xi):=\sum_{\substack{|\ga|= N\\ \vec k,\vec j}}
(-1)^{\ga_t}\delta^1_{j_1}\ldots\delta^1_{j_{|\ga_x|}}b_N(\ga,\vec k,\vec j)w_\ga(\xi),
\ea
where
\ba\label{wgadef}
w_\ga(\xi):=(\xi\cdot\na_x)^{\ga_t}\del_{x_1}\del^{\ga_x}_x\left(e^{-|x|^2}\right)\Big|_{x=0}.
\ea
First we comment that for any odd $N$, $p_N(\xi)$ is not the constant zero polynomial. This can be seen by observing that the coefficient of $\xi^N_1$ is then nonzero. Indeed, one finds that coefficient is then
\ba
\sum_{\substack{(\ga_t,\ga_x)=(N,0,\ldots,0)\\ \vec k,\vec j}}
(-1)^N\delta^1_{j_1}\ldots\delta^1_{j_{|\ga_x|}}b_N(\ga,\vec k, \vec j)
\Bigg(\del^{N+1}_{x_1}\left(e^{-|x|^2}\right)\Big|_{x=0}\Bigg) \neq 0.
\ea
If $N$ is odd, we choose the constants $A_N>0$, $c_N>1$, and $v_N\in\R^2$ appearing in \eqref{vpce0023} so that we have
\ba
\int p_N(\xi)g_N(\xi)d\xi\neq 0.
\ea
Such choices of $A_N$, $c_N$, and $v_N$ are clearly possible since for odd $N$, $p_N(\xi)$ is not the constant zero polynomial. If $N$ is even, we choose $A_N=1$, $c_N=2$, and $v_N=0$.
\end{defn}
\begin{lem}\label{vpceL07}
For any integers $N\geq 2$ and $M\geq 0$, and $F_{\delta,N}(x,v)$ as defined in Definition \ref{vpceD02}, we have the following bound for some constant $C'_{N,M}>0$:
\ba\label{vpce0461}
\left(1+|x|^2+|v|^2\right)^{M/2}\left|
\del^\al_{x,v}F_{\delta,N}(x,v)
\right|\leq C'_{N,M}\quad\mbox{for any multi-index }\al\mbox{ with }|\al|\leq N-2\mbox{ and all }\delta\in(0,1).
\ea
\end{lem}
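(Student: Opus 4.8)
\textbf{Proof proposal for Lemma \ref{vpceL07}.}

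The plan is to exploit the precise product structure of $F_{\delta,N}(x,v)$ in \eqref{vpce002}, namely that it factors as $F_{\delta,N}(x,v) = \delta^{N-2}\Phi(x/\delta)\,g_N(v)$, where $\Phi(y):=(|y|^2-1)e^{-|y|^2}$ is a fixed Schwartz function on $\R^2$ and $g_N$ is a fixed Gaussian depending only on $N$ (through the fixed constants $A_N, c_N, v_N$). Since $N$ and $M$ are fixed, $g_N$ and $\Phi$ are fixed functions, and the only thing varying is $\delta\in(0,1)$. First I would split any multi-index $\al=(\al_x,\al_v)$ with $|\al|\le N-2$ and apply the chain rule to the $x$-variables: $\del^{\al_x}_x\big[\Phi(x/\delta)\big] = \delta^{-|\al_x|}(\del^{\al_x}\Phi)(x/\delta)$. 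Combining this with the $\delta^{N-2}$ prefactor gives
\ba
\del^\al_{x,v}F_{\delta,N}(x,v) = \delta^{\,N-2-|\al_x|}\,(\del^{\al_x}\Phi)(x/\delta)\,(\del^{\al_v}_v g_N)(v).
\ea
Because $|\al_x|\le|\al|\le N-2$, the exponent $N-2-|\al_x|$ is nonnegative, so $\delta^{N-2-|\al_x|}\le 1$ for all $\delta\in(0,1)$. This is precisely the reason the hypothesis $|\al|\le N-2$ is imposed, and it is the only place where it is used; it is why the constant can be taken uniform in $\delta$.

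Next I would handle the polynomial weight. Write $(1+|x|^2+|v|^2)^{M/2}\le C_M\big((1+|x|^2)^{M/2}+(1+|v|^2)^{M/2}\big)$, or more simply bound it by $C_M(1+|x|)^M(1+|v|)^M$. For the $x$-part, substitute $y=x/\delta$, so $|x|=\delta|y|\le|y|$ since $\delta<1$; hence $(1+|x|)^M\le(1+|y|)^M$, and it remains to bound $(1+|y|)^M|(\del^{\al_x}\Phi)(y)|$, which is finite (a supremum over $y\in\R^2$) because $\Phi$ is Schwartz and $\al_x, M, N$ are all fixed. For the $v$-part, $(1+|v|)^M|(\del^{\al_v}_v g_N)(v)|$ is likewise bounded uniformly in $v$ since $g_N$ is a fixed Gaussian. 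Multiplying these two uniform bounds together and absorbing the various fixed constants produces a constant $C'_{N,M}$ depending only on $N$ and $M$, giving \eqref{vpce0461}.

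There is essentially no genuine obstacle here; the lemma is a bookkeeping statement whose content is entirely in the scaling exponent. The one point that requires a moment's care is making sure the weight is redistributed correctly: one must split $(1+|x|^2+|v|^2)^{M/2}$ into a purely-$x$ factor and a purely-$v$ factor (up to a constant) before performing the substitution $y=x/\delta$, since the substitution only tames the $x$-growth, and one must use $\delta<1$ to conclude $|x|\le|y|$. Everything else — the Schwartz decay of $\Phi$ and $g_N$, the finiteness of the relevant suprema — is immediate because all the relevant indices are fixed.
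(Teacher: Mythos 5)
Your argument is correct and is precisely the verification the paper leaves implicit — the paper's proof of this lemma is simply the remark that the bound "is easily verified from the definition." Your scaling computation $\del^\al_{x,v}F_{\delta,N}=\delta^{N-2-|\al_x|}(\del^{\al_x}\Phi)(x/\delta)(\del^{\al_v}_v g_N)(v)$, the observation that $|\al_x|\le N-2$ and $\delta<1$ make the prefactor at most $1$, and the splitting of the weight with $|x|\le|x/\delta|$ supply exactly the missing details.
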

\begin{proof}
This is easily verified from the definition of the function $F_{\delta,N}(x,v)$.
\end{proof}

We are now ready to consider perturbations of gaussian initial data for the system. The main strategy is to very precisely find the effect that perturbing the initial data has on the time derivatives of the Lagrangian trajectory map at $t=0$. It will prove useful to track the effect on the various mixed partial derivatives of $V(\zeta,t)$ and $E(x,t)$ at $t=0$ in particular. By the following lemma we find that if we perturb a kind of gaussian initial data by a function of the type given in Definition \ref{vpceD02}, we can ensure the derivatives at $t=0$ up to a certain order of the resulting Lagrangian trajectory map and electric field are close to the corresponding values for the unperturbed solution.
\begin{lem}\label{vpceL02}
Consider a fixed, positive function $f_0(x,v)$ that can be written in the following form, for some integer $K\geq 0$, and $a_i>1$, $b_i>1$, $v^0_i\in\R^2$, and polynomials $p^i(x)$, for each $i=1,\ldots,K$:
\ba\label{vpcef10}
f_0(x,v)=e^{-|x|^2-|v|^2}+\sum^K_{i=1}p^i(x)e^{-a_i |x|^2-b_i|v-v^0_i|^2},
\ea
It follows that we have the existence and uniqueness of a global classical solution to the Vlasov-Poisson equation with initial data $f_0(x,v)$. We denote the solution to the Vlasov-Poisson equation with initial data $f_0(x,v)$ by $f(x,v,t)$, with the corresponding electric field denoted by $E(x,t)$, and Lagrangian trajectory map given by $(X(\zeta,t),V(\zeta,t))$.

Consider integer $N\geq 2$, $\delta\in(0,1)$, and $\eps\in[0,\eps_{0,N}]$, for some $\eps_{0,N}>0$ to be specified momentarily. Let us take $F_{\delta,N}(x,v)$ as in Definition \ref{vpceD02}, and define
\ba\label{vpce003}
f^0_{\eps,\delta,N}(x,v):=
f_0(x,v)+\eps F_{\delta,N}(x,v).
\ea
We are able to pick an $\eps_{0,N}>0$ small enough that for all $\eps\in[0,\eps_{0,N}]$ and $\delta\in(0,1)$ we have $f^0_{\eps,\delta,N}(x,v)>0$ everywhere, and so we select such an $\eps_{0,N}$.

For initial data $f^0_{\eps,\delta,N}(x,v)$, we denote the corresponding solution to the Vlasov-Poisson system by $f_{\eps,\delta,N}(x,v,t)$, with electric field $E_{\eps,\delta,N}(x,t)$, and trajectory map $\left(X_{\eps,\delta,N}(\zeta,t), V_{\eps,\delta,N}(\zeta,t)\right)$.

For any multi-index $\al$ and integer $m\geq 0$ with $|\al|+m\leq N$, all $\eps\in[0,\eps_{0,N}]$ and $\delta\in(0,1)$, and where $\zeta=(q,p)$, we have
\ba\label{vpce004}
\left|\del^\al_{\zeta}\del^m_t V_{\eps,\delta,N} (\zeta,0)
-\del^\al_{\zeta}\del^m_t V(\zeta,0)\right|
\leq C_N(1+|p|)^{m+1}\eps.
\ea
Additionally, for any multi-index $\ga$ with $|\ga|\leq N-1$, all $\eps\in[0,\eps_{0,N}]$ and $\delta\in(0,1)$,
\ba\label{vpce005}
\left|\del^\ga_{x,t} E_{\eps,\delta,N} (x,0)
-\del^\ga_{x,t} E(x,0)\right|
\leq C_N\eps.
\ea
The constant $C_N$ appearing in the right hand sides of \eqref{vpce004} and \eqref{vpce005} depends only on $N$ and the unperturbed initial data $f_0(x,v)$.
\end{lem}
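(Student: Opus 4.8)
The statement is an inductive comparison estimate between the perturbed and unperturbed solutions, and the natural approach is to run essentially the same induction on $s$ used in Lemma \ref{vpceL01}, but now tracking the \emph{difference} of the two solutions and carrying the factor $\eps$ along. I would prove by induction on $s$ the pair of claims: \eqref{vpce004} for all $\al,m$ with $|\al|+m\le s$, and \eqref{vpce005} for all $\ga$ with $|\ga|\le s-1$. The base case $s=0$ is trivial: \eqref{vpce005} is vacuous, and since both $V_{\eps,\delta,N}(\zeta,0)=p$ and $V(\zeta,0)=p$, the left side of \eqref{vpce004} vanishes. For the inductive step, the key inputs are (i) Lemma \ref{vpceL01} applied to \emph{both} $f_0$ and $f^0_{\eps,\delta,N}$, giving the a priori bounds $|\del^\al_\zeta\del^n_t V|\le C(1+|p|)^{n+1}$ and $|\del^\ga_{x,t}E|\le C_\ga$ with constants uniform in $\eps\in[0,\eps_{0,N}]$ and $\delta\in(0,1)$ — this uniformity is available because the initial data $f^0_{\eps,\delta,N}$ and all its weighted derivatives are bounded uniformly in $\eps,\delta$ by Lemma \ref{vpceL07} (for derivatives up to order $N-2$) together with the explicit form of $f_0$; and (ii) the differentiation identities \eqref{vpceA003}--\eqref{vpceA003b}, \eqref{vpceA010}--\eqref{vpceA012} from the proof of Lemma \ref{vpceL01}, which express $\del^\al_\zeta\del^m_t V(\zeta,0)$ and $\del^\ga_{x,t}E(x,0)$ as universal polynomial combinations of lower-order quantities.

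\textbf{Carrying out the step.} Fix $s$, assume both claims for all lower orders. For $\del^\ga_{x,t}E_{\eps,\delta,N}(x,0)-\del^\ga_{x,t}E(x,0)$ with $|\ga|=s-1$ [or $=s$, in the version needed to feed back], subtract the two instances of \eqref{vpceA012}. Each resulting term is a product in which one factor is either $(\del^{\eta_i}_{x,t}E_{\eps,\delta,N}-\del^{\eta_i}_{x,t}E)$ with $|\eta_i|\le s-2$ (controlled by $C_N\eps$ by the induction hypothesis), or a difference of the initial data $f^0_{\eps,\delta,N}-f_0=\eps F_{\delta,N}$ and its derivatives, which by Lemma \ref{vpceL07} contributes a factor bounded by $C'_{N,M}\eps$ against a Schwartz-type weight; the remaining factors in each product are bounded uniformly in $\eps,\delta$ by the uniform a priori bounds. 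A telescoping (``add and subtract'') argument over the finitely many factors in each monomial converts each such difference into a sum of terms each carrying one factor of size $O(\eps)$ and the rest uniformly bounded, yielding \eqref{vpce005} at order $s$ [hence at order $s-1$]. The same mechanism applied to \eqref{vpceA003b} and \eqref{vpceA006}, now using that $V=dX/dt$ so spatial-time derivatives of $X$ inherit the $(1+|p|)$ growth and the $O(\eps)$ closeness from $V$ by integration in $t$ at $t=0$ (i.e. $\del^\beta_{\zeta,t}X(\zeta,0)$ is $\del^{\beta'}_\zeta\del^{\beta_t-1}_tV(\zeta,0)$ up to relabelling), gives \eqref{vpce004} for $|\al|+m=s+1$. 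One must be a little careful that $N\ge 2$ ensures all the derivatives of $F_{\delta,N}$ invoked (order $\le N-2$ when they hit the data inside the integral defining $E$ at orders $\le N-1$) are indeed covered by Lemma \ref{vpceL07}; checking the bookkeeping $|\mu|+|\nu|\le|\ga|\le N-1$ against the hypothesis $|\al|\le N-2$ of that lemma is exactly where one sees why the statement is phrased with $|\al|+m\le N$ and $|\ga|\le N-1$.

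\textbf{Main obstacle.} The genuine difficulty is not any single estimate but ensuring all the ``universal'' constants are truly uniform in $\delta\in(0,1)$ and $\eps\in[0,\eps_{0,N}]$ simultaneously — in particular that the a priori bounds of Lemma \ref{vpceL01}, when applied to $f^0_{\eps,\delta,N}$, do not degenerate as $\delta\to 0$. This hinges on the fact that although $F_{\delta,N}$ concentrates as $\delta\to0$, the prefactor $\delta^{N-2}$ in \eqref{vpce002} is tuned precisely so that derivatives up to order $N-2$ (times polynomial weights) stay bounded uniformly in $\delta$, which is the content of Lemma \ref{vpceL07}; the induction is then set up so that only these low-order derivatives of the perturbation are ever touched when estimating the relevant $E$- and $V$-derivatives up to the stated orders. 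Once this uniformity is in hand, the rest is the routine Faà di Bruno / telescoping bookkeeping sketched above, and I would present it by stating the induction hypothesis cleanly, then treating the $E$-estimate and the $V$-estimate in turn, citing \eqref{vpceA012} and \eqref{vpceA003b} respectively and Lemmas \ref{vpceL01} and \ref{vpceL07} for the two types of factors.
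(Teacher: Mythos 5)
Your overall architecture matches the paper's proof: the same double induction on $s$ (with \eqref{vpce004} for $|\al|+m\le s$ and \eqref{vpce005} for $|\ga|\le s-1$), the same base case, the telescoping of products in the Fa\`a di Bruno--type expansions \eqref{vpceA003b} and \eqref{vpceA012}, and the use of the induction hypothesis plus Lemma \ref{vpceL01} to bound the unperturbed factors. (One small structural difference: the paper never re-applies Lemma \ref{vpceL01} to the perturbed data $f^0_{\eps,\delta,N}$; it gets uniform bounds on the perturbed quantities by writing them as the unperturbed ones plus the induction-hypothesis differences, as in \eqref{vpce012} and \eqref{vpce030b}, which sidesteps the question of whether the constants of Lemma \ref{vpceL01} for the perturbed data are uniform in $\delta$.)

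However, there is a genuine gap in your bookkeeping claim that ``only derivatives of $F_{\delta,N}$ of order $\le N-2$ are ever touched,'' so that Lemma \ref{vpceL07} covers everything. That is false at the top order, and the top order is exactly what the induction needs: to prove \eqref{vpce004} at $|\al|+m=s+1$ you must establish \eqref{vpce005} (i.e. \eqref{vpce015}) for $|\ga|=s$, with $s$ as large as $N-1$. In the difference of the two instances of \eqref{vpceA012}, the leading (linear-in-data) term is $\eps\int\int K(y)(\xi\cdot\na_x)^{\ga_t}\del^{\ga_x}_x F_{\delta,N}(x-y,\xi)\,dy\,d\xi$, in which \emph{all} $|\ga|=N-1$ derivatives fall on the bump; pointwise this derivative is of size $\delta^{N-2-s}=\delta^{-1}$, so it is not uniformly bounded in $\delta$ and Lemma \ref{vpceL07} does not apply. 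The paper closes this gap with the explicit rescaling computation \eqref{vpce027}--\eqref{vpce029}: the concentrated bump contributes $\delta^{N-2-s}p_\ga(y/\delta,\xi)e^{-|y|^2/\delta^2}g_N(\xi)$, and changing variables $y\mapsto\delta y$ and using the degree $-1$ homogeneity of $K$ in $\R^2$ gains a net factor $\delta^{2}\cdot\delta^{-1}$, giving $I_1=\eps\,\delta^{N-s-1}O(1)=O(\eps)$ precisely because $s\le N-1$. Without this kernel-smoothing step (or an equivalent integration by parts moving one derivative off the bump onto $K$), your argument only yields \eqref{vpce005} for $|\ga|\le N-2$ and hence \eqref{vpce004} only for $|\al|+m\le N-1$, which is not enough for the application in Lemma \ref{vpceL05}/\ref{vpceL03}, where the gain of exactly one more order (and the borderline exponent $\delta^{N-s-1}$ with $s=N$ producing the $\delta^{-1}$ blow-up) is the whole point of the construction. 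The lower-order terms ($I_2$ in the paper's notation, with $|\mu|\le s-1\le N-2$ $x$-derivatives on the bump) are handled essentially as you describe.
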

\begin{proof}
First we comment that uniqueness and global existence of solutions to the $2D$ Vlasov-Poisson equation, for initial data of the form \eqref{vpcef10}, is a consequence of the classical result due to Ukai and Okabe in \cite{uo} for initial data with sufficiently rapid decay.

Now let us fix such initial data $f_0(x,v)$ and an integer $N\geq2$. First we check that $f^0_{\eps,\delta,N}(x,v)$ is positive for all $\eps\in[0,\eps_{0,N}]$ and $\delta\in(0,1)$, for small enough $\eps_{0,N}$. To verify this, one considers separately the behavior of the function $f^0_{\eps,\delta,N}(x,v)$ outside of a large closed ball in $\R^4$, say $B_R$, and inside $B_R$, separately. Outside of the ball, as long as $\eps\leq 1$ and $R$ is large enough, $f^0_{\eps,\delta,N}(x,v)$ is positive since it is the sum of $e^{-|x|^2-|v|^2}$ and a more rapidly decaying function. Note that in addition to the fact that for each $i$, $a_i>1$ and $b_i>1$, we have used that $\delta<1$ and $c_N>1$, which determines the decay of $F_{\delta,N}(x,v)$, as made apparent by \eqref{vpce002} and \eqref{vpce0023}. Inside the ball, we can ensure $f^0_{\eps,\delta,N}(x,v)$ is positive for sufficiently small $\eps$ since it is the sum of $f_0(x,v)$, which is uniformly bounded below in $B_R$ by a positive number, and a perturbation, $\eps F_{\delta,N}(x,v)$, whose maximum magnitude is $O(\eps)$. Thus we may choose an $\eps_{0,N}>0$ small enough to guarantee that $f^0_{\eps,\delta,N}(x,v)$ is positive everywhere for $\eps\in[0,\eps_{0,N}]$ and $\delta\in(0,1)$.

Consider $\eps\in[0,\eps_{0,N}]$ and $\delta\in(0,1)$.
For ease of reading, let us replace our notation for the perturbed initial data $ f^0_{\eps,\delta,N}(x,v)$ by $ \tilde f_0(x,v)$, and replace $f_{\eps,\delta,N}(x,v,t)$ by $\tilde f(x,v,t)$, $E_{\eps,\delta,N}(x,t)$ by $\tilde E(x,t)$, and $X_{\eps,\delta,N}(\zeta,t)$ and $V_{\eps,\delta,N}(\zeta,t)$ by $\tilde X(\zeta,t)$ and $\tilde V(\zeta,t)$, respectively. We prove the remaining assertions of the lemma by induction on integer $s=0,\ldots,N$, proving that we have a bound of the form \eqref{vpce004} whenever $|\al|+m\leq s$, and that \eqref{vpce005} holds whenever $|\ga|\leq s-1$, for each such $s$.

For the base case, $s=0$, we get \eqref{vpce004} from noting $\tilde V(\zeta,0)=V(\zeta,0)=p$. The condition for \eqref{vpce005} in the base case is vacuously true since there is no multi-index $\ga$ with $|\ga|=-1$. Now we proceed with the inductive step. Consider a fixed integer $s$ with $0\leq s \leq N-1$, and suppose that \eqref{vpce004} holds for all $\al$, $m$ with $|\al|+m\leq s$, and for all $\ga$ with $|\ga|\leq s-1$ that \eqref{vpce005} holds.

First we verify that \eqref{vpce004} holds for $\al$ and $m$ with $|\al|+m=s+1$. Suppose $m=0$. Then we get \eqref{vpce004} from the fact that $\tilde V(\zeta,0)=V(\zeta,0)=p$. Now suppose $m\geq 1$. We use \eqref{vpcef02} for $\frac{dV}{dt}$ and $\frac{d\tilde V}{dt}$ to find
\begin{eqnarray}\label{vpce006}\\
\del^\al_\zeta\del^m_t \tilde V(\zeta,0)
-\del^\al_\zeta\del^m_t V(\zeta,0)
&=&
\left(
\del^\al_\zeta \del^{m-1}_t \left[\tilde E(\tilde X (\zeta,t),t)\right]
-\del^\al_\zeta \del^{m-1}_t [E(X(\zeta,t),t)]
\right)\Big|_{t=0},\nn\\
&=&
\sum_{
\substack{
0\leq|\ga|\leq s
\\
\bm{\be},\vec{j}
}
}
\textit{coeff}\;\Bigg\{
\Bigg(
\prod^{|\ga_x|}_{i=1}
\del^{\be_i}_{\zeta,t}\tilde X_{j_i} (\zeta,0)
\Bigg)
\del^\ga_{x,t}\tilde E(q,0)\label{vpce006c}\\
& &\hspace{2.5cm}
-\Bigg(
\prod^{|\ga_x|}_{i=1}
\del^{\be_i}_{\zeta,t}X_{j_i}(\zeta,0)
\Bigg)
\del^\ga_{x,t}E(q,0)
\Bigg\},\nn\\
&=&
\sum_{\substack{
0\leq|\ga|\leq s\\
\bm{\be},\vec{j}}
}\textit{coeff}\;
\Bigg\{
\Bigg(
\prod^{|\ga_x|}_{i=1}\del^{\be_i}_{\zeta,t}\tilde X_{j_i}(\zeta,0)
-\prod^{|\ga_x|}_{i=1}\del^{\be_i}_{\zeta,t}X_{j_i}(\zeta,0)
\Bigg)\del^\ga_{x,t}\tilde E(q,0)\label{vpce006b}\\
& &\hspace{2.5cm}
+\Bigg(\prod^{|\ga_x|}_{i=1}\del^{\be_i}_{\zeta,t}X_{j_i}(\zeta,0)\Bigg)
\left(\del^\ga_{x,t}\tilde E(q,0)- \del^\ga_{x,t}E(q,0)
\right)
\Bigg\}\nn.
\end{eqnarray}
First, in any of the summands above in \eqref{vpce006b}, we may write
\begin{eqnarray}\label{vpce007}\\
\prod^{|\ga_x|}_{i=1}\del^{\be_i}_{\zeta,t}\tilde X_{j_i}(\zeta,0)
-\prod^{|\ga_x|}_{i=1}\del^{\be_i}_{\zeta,t}X_{j_i}(\zeta,0)
&=&
\left[
\del^{\be_1}_{\zeta,t}\tilde X_{j_1}(\zeta,0)-\del^{\be_1}_{\zeta,t} X_{j_1}(\zeta,0)\right]\left(\prod^{|\ga_x|}_{i=2}\del^{\be_i}_{\zeta,t}\tilde X_{j_i}(\zeta,0)\right)+\ldots\nn\\
& &+
\left(
\prod^{|\ga_x|-1}_{i=1}\del^{\be_i}_{\zeta,t} X_{j_i}(\zeta,0)\right)
\left[
\del^{\be_{|\ga_x|}}_{\zeta,t}\tilde X_{j_{|\ga_x|}}(\zeta,0)-\del^{\be_{|\ga_x|}}_{\zeta,t} X_{j_{|\ga_x|}}(\zeta,0)\right].\nn
\end{eqnarray}
Above, we have a sum of terms, each of which has for some $l=1,\ldots,|\ga_x|$ a factor of the form
\ba\label{vpce008}
\left[
\del^{\be_l}_{\zeta,t}\tilde X_{j_l}(\zeta,0)-\del^{\be_l}_{\zeta,t} X_{j_l}(\zeta,0)
\right],
\ea
multiplied by the two products
\ba\label{vpce009}
\left(
\prod^{l-1}_{i=1}\del^{\be_i}_{\zeta,t} X_{j_i}(\zeta,0)\right)
\left(
\prod^{|\ga_x|}_{i=l+1}\del^{\be_i}_{\zeta,t} \tilde X_{j_i}(\zeta,0)\right).
\ea
First, note that the induction hypothesis gives us the following, observing that $|\be_l|\leq s$ for each $l$.
\ba\label{vpce010}
\left|
\del^{\be_l}_{\zeta,t}\tilde X_{j_l}(\zeta,0)-\del^{\be_l}_{\zeta,t} X_{j_l}(\zeta,0)
\right|
\leq C(1+|p|)^{(\be_l)_t}\eps.
\ea
Meanwhile, Lemma \ref{vpceL01} gives us the bound for each $i$
\ba\label{vpce011}
\left|\del^{\be_i}_{\zeta,t} X_{j_i}(\zeta,0)\right|
\leq C(1+|p|)^{(\be_i)_t}.
\ea
Using this together with \eqref{vpce010} we get for each $i$
\ba\label{vpce012}
\left|\del^{\be_i}_{\zeta,t} \tilde X_{j_i}(\zeta,0)\right|
\leq C'(1+|p|)^{(\be_i)_t}.
\ea
Thus we find for each $l$ that for the product of \eqref{vpce008} and \eqref{vpce009}, we have the bound
\begin{eqnarray}\label{vpce013}\\
\left|
\left[
\del^{\be_l}_{\zeta,t}\tilde X_{j_l}(\zeta,0)-\del^{\be_l}_{\zeta,t} X_{j_l}(\zeta,0)
\right]\left(
\prod^{l-1}_{i=1}\del^{\be_i}_{\zeta,t} X_{j_i}(\zeta,0)\right)
\left(
\prod^{|\ga_x|}_{i=l+1}\del^{\be_i}_{\zeta,t} \tilde X_{j_i}(\zeta,0)\right)
\right|
&\leq&
C\eps \left(\prod^{|\ga_x|}_{i=1}(1+|p|)^{(\be_i)_t}\right)\nn\\
&\leq& C(1+|p|)^{m-1}\eps.\nn
\end{eqnarray}
Here we have also used the observation that $\sum_i(\be_i)_t\leq m-1$, which is clear from \eqref{vpce006}-\eqref{vpce006c}. Using \eqref{vpce013} in \eqref{vpce007}, we obtain
\ba\label{vpce014}
\left|
\prod^{|\ga_x|}_{i=1}\del^{\be_i}_{\zeta,t}\tilde X_{j_i}(\zeta,0)
-\prod^{|\ga_x|}_{i=1}\del^{\be_i}_{\zeta,t}X_{j_i}(\zeta,0)
\right|
\leq C(1+|p|)^{m-1}\eps.
\ea

Now, we claim that for any multi-index $\ga$ with $0\leq|\ga|\leq s$, we have the bound
\ba\label{vpce015}
\left|\del^\ga_{x,t} \tilde E(q,0)-\del^\ga_{x,t}E(q,0)\right|
\leq C\eps.
\ea
Note that this is immediate as a consequence of the induction hypothesis if $0\leq|\ga|\leq s-1$. It just remains to handle the case $|\ga|=s$. In the following, we use the same formula \eqref{vpceA012} for calculating $\del^\ga_{x,t} E(x,0)$ as well as $\del^\ga_{x,t}\tilde E(x,0)$. This gives us
\begin{eqnarray}\label{vpce016}\\
\del^\ga_{x,t}\tilde E(x,t)-\del^\ga_{x,t}E(x,t)
&=&
(-1)^{\ga_t}\int\int
K(y)(\xi\cdot\na_x)^{\ga_t}\left(\del^{\ga_x}_x \tilde f_0(x-y,\xi)-\del^{\ga_x}_x f_0(x-y,\xi)\right)dyd\xi\nn\\
& &+
\sum\textit{coeff}
\int\int
K(y)\xi^\ka\Bigg\{\Bigg(
\prod^r_{i=1}\del^{\eta_i}_{x,t} \tilde E(x-y,0)
\Bigg)
\del^\mu_x\del^\nu_v \tilde f_0(x-y,\xi)\nn\\
& &\hspace{4cm}
-
\Bigg(
\prod^r_{i=1}\del^{\eta_i}_{x,t} E(x-y,0)
\Bigg)
\del^\mu_x\del^\nu_v f_0(x-y,\xi)
\Bigg\}
dyd\xi,\nn\\
&=&
I_1+I_2,\nn
\end{eqnarray}
where in the summation defining $I_2$, in each summand we have $|\mu|\leq |\ga|-1=s-1$, $|\mu|+|\nu|\leq s$, and each $|\eta_i|\leq s-1$. We also remark that the second sum $I_2$ is zero in the case that $|\ga|=0$. Let us consider $I_2$ in the case that $|\ga|\neq 0$. We have
\begin{eqnarray}\label{vpce017}\\
I_2
&=&
\sum\textit{coeff}
\int\int
K(y)\xi^\ka\Bigg(
\prod^r_{i=1}\del^{\eta_i}_{x,t}
\tilde E(x-y,0)
-
\prod^r_{i=1}\del^{\eta_i}_{x,t}
E(x-y,0)
\Bigg)
\del^\mu_x\del^\nu_v \tilde f_0(x-y,\xi)dyd\xi\nn\\
& &
+
\sum\textit{coeff}
\int\int
K(y)\xi^\ka
\Bigg(
\prod^r_{i=1}\del^{\eta_i}_{x,t} E(x-y,0)
\Bigg)
\left(
\del^\mu_x\del^\nu_v\tilde f_0(x-y,\xi)
-\del^\mu_x\del^\nu_v f_0(x-y,\xi)
\right)
dyd\xi,\nn\\
&=&I^a_2+I^b_2.\nn
\end{eqnarray}
First we bound the difference of products appearing in the term $I^a_2$. We note that we may write for any $x$
\begin{eqnarray}\label{vpce018}
\prod^r_{i=1}\del^{\eta_i}_{x,t}
\tilde E(x,0)
-
\prod^r_{i=1}\del^{\eta_i}_{x,t}
E(x,0)
&=&
\left[
\del^{\eta_1}_{x,t} \tilde E(x,0)
-\del^{\eta_1}_{x,t} E(x,0)
\right]
\left(\prod^{r}_{i=2}\del^{\eta_i}_{x,t} \tilde E(x,0)\right)+\ldots\\
& &
+
\left(\prod^{r-1}_{i=1}\del^{\eta_i}_{x,t} E(x,0)\right)
\left[
\del^{\eta_r}_{x,t} \tilde E(x,0)
-\del^{\eta_r}_{x,t} E(x,0)
\right].\nn
\end{eqnarray}
To bound this, first we note that since $|\eta_i|\leq s-1$ for each $i=1,\ldots,r$, we have from the induction hypothesis that
\ba\label{vpce019}
\left|
\del^{\eta_i}_{x,t} \tilde E(x,0)
-\del^{\eta_i}_{x,t} E(x,0)
\right|\leq C\eps.
\ea
Furthermore, using Lemma \ref{vpceL01} in addition to this, we find for each $i$
\begin{eqnarray}\label{vpce020}
\left|\del^{\eta_i}_{x,t} E(x,0)\right|
&\leq& C,\\
\left|\del^{\eta_i}_{x,t} \tilde E(x,0)\right|
&\leq& C'.
\end{eqnarray}
Using these bounds in \eqref{vpce018} we find
\ba\label{vpce021}
\left|
\prod^r_{i=1}\del^{\eta_i}_{x,t}
\tilde E(x,0)
-
\prod^r_{i=1}\del^{\eta_i}_{x,t}
E(x,0)
\right|
\leq C\eps
\ea
Now we claim that for each $\mu$, $\nu$ occurring in the sums in \eqref{vpce017} we have
\ba\label{vpce022}
\del^\mu_x\del^\nu_v\tilde f_0(x,v)
-\del^\mu_x\del^\nu_v f_0(x,v)=\eps h_\delta(x,v)
\ea
for a function $h_\delta(x,v)$ satisfying for any $M\geq0$
\ba\label{vpce023}
\int\int|\xi|^M |h_\delta(y,\xi)|dyd\xi\leq C_M,
\ea
where, in particular, $C_M$ is independent of $\delta\in(0,1)$.
To see this, recalling the definition of $F_{\delta,N}(x,v)$ in \eqref{vpce002}, we note
\begin{eqnarray}\label{vpce024}
\del^\mu_x\del^\nu_v \tilde f_0(x,v)-\del^\mu_x\del^\nu_v f_0(x,v)
&=&
\eps\del^\mu_x\del^\nu_v\left(F_{\delta,N}(x,v)\right),\\
&=&
\eps \delta^{N-2}\del^\mu_x\del^\nu_v
\left(
\left(\frac{|x|^2}{\delta^2}-1\right)e^{-\frac{|x|^2}{\delta^2}}g_N(v)
\right),\nn\\
&=&
\eps\delta^{N-2}\del^\mu_x
\left[
\left(\frac{|x|^2}{\delta^2}-1\right)e^{-\frac{|x|^2}{\delta^2}}
\right]
\del^\nu_v g_N(v).\nn
\end{eqnarray}
Since $|\mu|\leq s-1\leq N-2$, one then deduces \eqref{vpce022} holds, where we have $h_\delta(x,v)$ given by the following for some polynomial $p(x)$.
\ba
h_\delta(x,v)=\delta^{N-2-|\mu|}p(x/\delta)e^{-\frac{|x|^2}{\delta^2}}\del^\nu_v g_N(v),
\ea
and indeed one finds $h_\delta(x,v)$ then satisfies \eqref{vpce023}.

Now we use \eqref{vpce021} along with \eqref{vpce022} in \eqref{vpce017} to find
\ba\label{vpce025}
I^a_2=\sum\textit{coeff}\int\int
K(y)\xi^\kappa O(\eps)\left(
\del^\mu_x\del^\nu_v f_0(x-y,\xi)+\eps h_\delta(x-y,\xi)
\right)dyd\xi=O(\eps),
\ea
since $f_0(x,v)$ is Schwarz and $h_\delta(x,v)$ satisfies \eqref{vpce023} for any $M\geq0$. We remark that in our use of $O(\cdot)$ notation, we mean to indicate that the constants in the corresponding bounds are independent of both $\eps$ and $\delta$. Meanwhile, using Lemma \ref{vpceL01} together with \eqref{vpce022} and \eqref{vpce023}, recalling the definition of $I^b_2$ from \eqref{vpce017} we find
\ba\label{vpce026}
I^b_2
=\sum\textit{coeff}
\int\int
K(y)\xi^\kappa O(1)\eps h_\delta(x-y,\xi)dyd\xi=O(\eps).
\ea
So with this we have found that $I_2=I^a_2+I^b_2=O(\eps)$. Now we handle $I_1$. Recall from \eqref{vpce016} we have
\begin{eqnarray}\label{vpce027}
I_1
&=&
(-1)^{\ga_t}\int\int
K(y)(\xi\cdot\na_x)^{\ga_t}\left(\del^{\ga_x}_x \tilde f_0(x-y,\xi)-\del^{\ga_x}_x f_0(x-y,\xi)\right)dyd\xi,\\
&=&
(-1)^{\ga_t}\eps
\int\int
K(y)(\xi\cdot\na_x)^{\ga_t}\del^{\ga_x}_x F_{\delta,N}(x-y,\xi)dyd\xi,\nn
\end{eqnarray}
where for some polynomial $p_\ga(x,v)$
\begin{eqnarray}\label{vpce028}
(v\cdot\na_x)^{\ga_t}\del^{\ga_x}_x F_{\delta,N}(x,v)
&=&
\delta^{N-2}(v\cdot\na_x)^{\ga_t}\del^{\ga_x}_x
\left[
\left(
\frac{|x|^2}{\delta^2}-1
\right)e^{-\frac{|x|^2}{\delta^2}}
\right]g_N(v),\\
&=&
\delta^{N-2-s}p_\ga(x/\delta,v)e^{-\frac{|x|^2}{\delta^2}}g_N(v).\nn
\end{eqnarray}
Above, we have used the fact that we are handling the case in which $|\ga|=s$ (which, we recall, was the only remaining case, as we noted below \eqref{vpce015}).
We use \eqref{vpce028} in \eqref{vpce027} and find
\begin{eqnarray}\label{vpce029}
I_1
&=&
(-1)^{\ga_t}\eps \delta^{N-2-s}\int\int K(x-y)p_\ga(y/\delta,\xi)
e^{-\frac{|y|^2}{\delta^2}}g_N(\xi)dyd\xi,\\
&=&
(-1)^{\ga_t}\eps\delta^{N-s}\int\int K(x-\delta y)p_\ga(y,\xi)e^{-|y|^2}g_N(\xi)dyd\xi,\nn\\
&=&
(-1)^{\ga_t}\eps\delta^{N-s-1}\int\int K(x/\delta- y)p_\ga(y,\xi)e^{-|y|^2}g_N(\xi)dyd\xi,\nn\\
&=&\eps\delta^{N-s-1}O(1).\nn
\end{eqnarray}
Since $s\leq N-1$, we then find $I_1=O(\eps)$. Using this along with our deduction that $I_2=O(\eps)$, and recalling \eqref{vpce016}, we conclude that \eqref{vpce015} holds for any $\ga$ with $0\leq |\ga|\leq s$. Moreover, by Lemma \ref{vpceL01}, for any $\ga$ we find
\ba\label{vpce030}
\left|\del^\ga_{x,t} E(q,0)\right|\leq C.
\ea
Using this with \eqref{vpce015}, which we have now shown for all $\ga$ in the sum in \eqref{vpce006b}, we also find
\ba\label{vpce030b}
\left|\del^\ga_{x,t} \tilde E(q,0)\right|\leq C'.
\ea
Now we use \eqref{vpce011} for each $i$ and the fact that $\sum_i(\be_i)_t\leq m-1$ for all $\ga$ in the sum in \eqref{vpce006b} to verify
\ba\label{vpce03001}
\left|
\prod^{|\ga_\zeta|}_{i=1}\del^{\be_i}_{\zeta,t} X_{j_i}(\zeta,0)
\right|\leq C(1+|p|)^{m-1}.
\ea
Returning to \eqref{vpce006b}, we use \eqref{vpce014}, \eqref{vpce015},  \eqref{vpce030b}, and \eqref{vpce03001}, and conclude that
\ba\label{vpce03002}
\left|
\del^\al_\zeta\del^m_t \tilde V(\zeta,0)
-\del^\al_\zeta\del^m_t V(\zeta,0)
\right|
\leq C(1+|p|)^{m-1}\eps
\leq C(1+|p|)^{m+1}\eps.
\ea
Thus we have \eqref{vpce004} for arbitrary $\al$ and $m$ with $|\al|+m = s+1$. Moreover, recall that above we established \eqref{vpce015} for arbitrary multi-index $\ga$ with $|\ga|=s$. Thus, we have also shown \eqref{vpce005} for the desired multi-indices $\ga$, which concludes the induction step. So it follows that \eqref{vpce004} and \eqref{vpce005} hold for all $\al$, $m$, and $\ga$ with $|\al|+m\leq N$ and $|\ga|\leq N-1$.

\end{proof}

With the above lemma, we found that we can ensure that the effects of a certain perturbation of the initial data on the corresponding $V(\zeta,t)$ are very small, at least for the derivatives up to some finite order at $t=0$. There is essentially one other main feature of our strategy, which is to show that we can create perturbations which have solutions whose trajectory maps actually have large time derivatives of some order at $t=0$.

Now we show that for a derivative of a high enough order, we can make the effect of the perturbation quite large. This will be key in constructing initial data with a corresponding $V(\zeta,t)$ whose higher order derivatives at $t=0$ grow too quickly for the function to be analytic in time.
\begin{lem}\label{vpceL03}
Consider a fixed positive function $f_0(x,v)$ as in the statement of Lemma \ref{vpceL02}, which we take as initial data for the Vlasov-Poisson system. We denote the corresponding solution by $f(x,v,t)$, with electric field $E(x,t)$, and Lagrangian trajectory map $(X(\zeta,t),V(\zeta,t))$. For each integer $N\geq2$, we define the perturbed initial data $f^0_{\eps,\delta,N}(x,v)$ for $\eps$ and $\delta$ with $0\leq\eps\leq \eps_{0,N}$ and $0<\delta<1$ exactly as in the statement of Lemma \ref{vpceL02}, with corresponding solution to the Vlasov-Poisson system $f_{\eps,\delta,N}(x,v,t)$, with electric field $E_{\eps,\delta,N}(x,t)$, and Lagrangian trajectory map $(X_{\eps,\delta,N}(\zeta,t),V_{\eps,\delta,N}(\zeta,t))$.

Let us consider a fixed odd integer $N\geq 3$ and $\eps\in(0,\eps_{0,N}]$. We have for sufficiently small $\delta>0$ the inequality
\ba\label{vpce031}
\left|
\frac{d^{N+1} V_{\eps,\delta,N}}{dt^{N+1}}(0,e_1,0)
\right|
\geq C_{N,\eps}\delta^{-1},
\ea
where $e_1=(1,0)$, for some constant $C_{N,\eps}>0$, dependent only on $N$, $\eps$, and the unperturbed initial data $f_0(x,v)$.
\end{lem}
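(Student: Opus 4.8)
The plan is to compute $\dfrac{d^{N+1}V_{\eps,\delta,N}}{dt^{N+1}}(\zeta_0,0)$, $\zeta_0:=(0,e_1)$, by writing it via $\tfrac{dV}{dt}=E(X(\zeta,t),t)$ (see \eqref{vpcef02}) as $\tfrac{d^{N}}{dt^{N}}\big[E_{\eps,\delta,N}(X_{\eps,\delta,N}(\zeta_0,t),t)\big]\big|_{t=0}$, and then expanding with the Fa\`a di Bruno identity \eqref{vpce0021}, taking $a(t)=X_{\eps,\delta,N}(\zeta_0,\cdot)$, $h=E_{\eps,\delta,N}$, and noting $a(0)=0$. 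The crucial structural fact is that $|\ga|=|\ga_x|+\ga_t=N$ forces every $k_i=1$ in \eqref{vpce0021}; so the top-order ($|\ga|=N$) part of the sum involves only first $t$-derivatives of $X$, each equal at $t=0$ to $V_{\eps,\delta,N}(\zeta_0,0)=e_1$, hence contributes factors $\delta^1_{j_i}$, while the remaining part of the sum has $|\ga|\le N-1$. I would first dispose of this remainder: every occurring factor $\tfrac{d^{k_i-1}V_{\eps,\delta,N,j_i}}{dt^{k_i-1}}(\zeta_0,0)$ has $k_i-1\le N$ and is therefore bounded by a constant depending only on $N,\eps,f_0$ by Lemma \ref{vpceL01} together with Lemma \ref{vpceL02}; likewise each factor $\del^\ga_{x,t}E_{\eps,\delta,N}(0,0)$ with $|\ga|\le N-1$ is bounded by Lemmas \ref{vpceL01}--\ref{vpceL02}. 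Thus the remainder is $O(1)$ uniformly in $\delta\in(0,1)$, and it remains to show the $|\ga|=N$ part behaves like $\eps\delta^{-1}(c_N+o(1))$ with $c_N\neq 0$.

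For the $|\ga|=N$ part I would use the representation \eqref{vpceA012} of $\del^\ga_{x,t}E(x,0)$ in terms of initial data, applied to both the perturbed and unperturbed solutions, obtaining
$$
\del^\ga_{x,t}E_{\eps,\delta,N}(0,0)=\del^\ga_{x,t}E(0,0)+I_1(\ga)+I_2(\ga),
$$
where $\del^\ga_{x,t}E(0,0)=O(1)$ by Lemma \ref{vpceL01}, $I_2(\ga)=O(\eps)$ uniformly in $\delta$ by the same estimates as in the proof of Lemma \ref{vpceL02} (the relevant $x$-derivatives of $F_{\delta,N}$, of order up to $N-1$, have $L^1$-type norms that stay bounded as $\delta\to0$ because the bump concentrates), and
$$
I_1(\ga)=(-1)^{\ga_t}\eps\int\!\!\int K(y)\,(\xi\cdot\na_x)^{\ga_t}\del^{\ga_x}_x F_{\delta,N}(-y,\xi)\,dy\,d\xi .
$$
The key computation is the evaluation of $I_1(\ga)$. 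Using $\big(\tfrac{|x|^2}{\delta^2}-1\big)e^{-|x|^2/\delta^2}=\tfrac{\delta^2}{4}\Delta_x\big(e^{-|x|^2/\delta^2}\big)$ and commuting $\Delta_x$ past $\del^{\ga_x}_x(\xi\cdot\na_x)^{\ga_t}$, the $x$-factor of $F_{\delta,N}$ becomes $\tfrac{\delta^N}{4}\,\Delta_x\Phi_\delta(x)$ with $\Phi_\delta:=(\xi\cdot\na_x)^{\ga_t}\del^{\ga_x}_x e^{-|x|^2/\delta^2}$; convolving with $K$ and using that $K$ is the kernel of $\na_x\Delta_x^{-1}$ reduces the $y$-integral to $\na_x\Phi_\delta(0)$, whose first component, after rescaling $x\mapsto x/\delta$, is $\delta^{-(N+1)}w_\ga(\xi)$ with $w_\ga$ as in \eqref{wgadef}. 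Hence the first component of $I_1(\ga)$ equals $(-1)^{\ga_t}\eps\,\tfrac{\delta^{-1}}{4}\int w_\ga(\xi)g_N(\xi)\,d\xi$. Summing over $|\ga|=N$ against the coefficients $b_N$ and the factors $\delta^1_{j_1}\cdots\delta^1_{j_{|\ga_x|}}$ exactly reassembles the polynomial $p_N$ of \eqref{vpce0022}, so the first component of the $|\ga|=N$ part is $O(1)+\tfrac{\eps\delta^{-1}}{4}\int p_N(\xi)g_N(\xi)\,d\xi$. By the choice of $A_N,c_N,v_N$ in Definition \ref{vpceD02} — which is precisely where oddness of $N\ge3$ enters — $\int p_N(\xi)g_N(\xi)\,d\xi\neq0$, so $\big|\tfrac{d^{N+1}V_{1,\eps,\delta,N}}{dt^{N+1}}(0,e_1,0)\big|\ge \tfrac{\eps}{8}\big|\int p_N g_N\big|\,\delta^{-1}$ for all sufficiently small $\delta$, and a fortiori the full vector has magnitude $\ge C_{N,\eps}\delta^{-1}$.

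The main obstacle I anticipate is the kernel identity $\int K(y)\,(\Delta_x\Phi)(-y)\,dy=\na_x\Phi(0)$ in two dimensions — i.e. a clean verification that convolution with the singular kernel $K$ inverts $\na_x\Delta_x^{-1}$ composed with $\Delta_x$, including the well-definedness and decay of the $2$D logarithmic potential of $\Delta_x\Phi$ (which has zero mean, so the potential does decay). One can argue either through this potential-theoretic identity or, equivalently, by integrating by parts using $\Delta_y K(y)=\na_y\delta_0$ in the distributional sense; either route requires care with the local singularity of $K$. Everything else — the Fa\`a di Bruno bookkeeping, the uniform-in-$\delta$ bounds on the remainder and on $I_2(\ga)$, and the elementary scaling identities for $F_{\delta,N}$ — is routine given Lemmas \ref{vpceL01}, \ref{vpceL02}, and \ref{vpceL07}.
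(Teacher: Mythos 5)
Your proposal is correct and follows essentially the same route as the paper's proof: Fa\`a di Bruno expansion with the observation that $|\ga|=N$ forces $k_i=1$, an $O(1)$ bound on all lower-order terms via Lemmas \ref{vpceL01}--\ref{vpceL02}, the formula \eqref{vpceA012} to isolate the term linear in $F_{\delta,N}$, the identity $\bigl(\tfrac{|x|^2}{\delta^2}-1\bigr)e^{-|x|^2/\delta^2}=\tfrac{\delta^2}{4}\Delta_x e^{-|x|^2/\delta^2}$ combined with $\na\Delta^{-1}\Delta=\na$ and rescaling to produce the $\eps\delta^{-1}\int p_N g_N\,d\xi$ term, and the choice of $g_N$ in Definition \ref{vpceD02} to conclude. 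The kernel identity you flag as the main obstacle is exactly the step the paper uses in \eqref{vpce044} (writing the convolution as $\del_{y_1}\Delta_y^{-1}$ applied to $\Delta_y$ of the Gaussian and evaluating at $0$), so no new difficulty arises.
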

\begin{proof}
Let us fix an odd $N\geq 3$ and $\eps\in(0,\eps_{0,N}]$, and consider varying $\delta\in(0,1)$. For ease of reading, we replace our notation for the function $f^0_{\eps,\delta,N}$ with $\tilde f_0$, and similarly we replace the notation for the corresponding solution $f_{\eps,\delta,N}$ by $\tilde f$, $E_{\eps,\delta,N}$ by $\tilde E$, $X_{\eps,\delta,N}$ by $\tilde X$, and $V_{\eps,\delta,N}$ by $\tilde V$. We use \eqref{vpcef02} to find
\begin{eqnarray}\label{vpce032}
\frac{d^{N+1}\tilde V}{dt^{N+1}}(0,e_1,0)
&=&
\frac{d^N}{dt^N}\left(
\tilde E(\tilde X(0,e_1,t),t)
\right)\bigg|_{t=0},\\
&=&
\sum_{\substack{1\leq |\ga|\leq N\\\vec{k},\vec{j}}}
\textit{coeff}\;
\left(
\prod^{|\ga_x|}_{i=1}
\del^{k_i}_t \tilde X_{j_i}(0,e_1,0)
\right)
\del^\ga_{x,t} \tilde E(0,0).\nn
\end{eqnarray}
In the above sum, we find for each of the products, we can write each multiplicand as
\ba\label{vpcef11}
\del^{k_i}_t \tilde X_{j_i}(0,e_1,0)=\del^{k_i}_t  X_{j_i}(0,e_1,0)+\left(\del^{k_i}_t \tilde X_{j_i}(0,e_1,0)-\del^{k_i}_t  X_{j_i}(0,e_1,0)\right),
\ea
and since each of the $k_i$ satisfy $1\leq |k_i|\leq N$, and we have $\tilde V=\frac{d \tilde X}{dt}$ and $V=\frac{dX}{dt}$, by applying Lemmas \ref{vpceL01} and \ref{vpceL02}, we find the quantity in \eqref{vpcef11} is $O(1)$. That is, it is bounded by a constant indepedent of $\eps$ and $\delta$ (though only dependence on $\delta$ is relevant). Thus each of the products of these terms that we find in the sum in \eqref{vpce032} is $O(1)$.

Consider $\ga$ with $|\ga|\leq N-1$. We then find in each of the corresponding summands $\del^\ga_{x,t}\tilde E(0,0)=O(1)$, by using Lemmas \ref{vpceL01} and \ref{vpceL02}. So in fact, recalling the definition of the coefficients $b_N(\ga,\vec k,\vec j)$ from Definition \ref{vpceD02}, we find
\ba\label{vpce033}
\frac{d^{N+1}\tilde V}{dt^{N+1}}(0,e_1,0)
=
\sum_{\substack{1\leq|\ga|\leq N-1\\ \vec k, \vec j}}\big(O(1)\mbox{ terms}\big)+
\sum_{\substack{|\ga|=N\\\vec k, \vec j}}\delta^1_{j_1}\ldots\delta^1_{j_{|\ga_x|}}b_N(\ga,\vec k,\vec j)\del^\ga_{x,t} \tilde E(0,0).
\ea
Here we have used the fact that in the sum in \eqref{vpce032}, whenever $|\ga|=N$, for each $i$ we must have $k_i=1$, and then $\del^{k_i}_{t} \tilde X_{j_i}(0,e_1,0)=\tilde V_{j_i}(0,e_1,0)=\delta^1_{j_i}$.

Consider $\ga$ with $|\ga|=N$. With the use of the formula \eqref{vpceA012} obtained in the proof of Lemma \ref{vpceL01}, we calculate
\begin{eqnarray}\label{vpce034}
\del^\ga_{x,t}\tilde E(0,0)
&=&
\int\int
K(y)(-1)^{\ga_t}(\xi\cdot\na_x)^{\ga_t}\del^{\ga_x}_x \tilde f_0(-y,\xi)dyd\xi,\\
& &
+\sum\textit{coeff}\;
\int\int
K(y)\xi^\kappa
\left(
\prod^r_{i=1}
\del^{\eta_i}_{x,t} \tilde E (-y,t)
\right)
\del^\mu_x\del^\nu_v \tilde f_0(-y,\xi)dyd\xi,\nn\\
&=&
I_1+I_2.\nn
\end{eqnarray}
where in each term in the sum we have $|\eta_i|\leq |\ga|-1=N-1$ for each $i$, $|\mu|\leq N-1$, and $|\mu|+|\nu|\leq N$.

First we claim $I_2=O(1)$. To see this, first note since each $|\eta_i|\leq N-1$ that Lemmas \ref{vpceL01} and \ref{vpceL02} imply for each summand a bound of the form
\ba\label{vpce035}
\left|
\prod^r_{i=1}
\del^{\eta_i}_{x,t} \tilde E (x,t)
\right|
\leq C.
\ea
Now consider the following, recalling the definition of $\tilde f_0(x,v)=f^0_{\eps,\delta,N}(x,v)$ in \eqref{vpce003} and of $F_{\delta,N}(x,v)$ in Definition \ref{vpceD02}.
\begin{eqnarray}\label{vpce036}
\del^\mu_x\del^\nu_v \tilde f_0(x,v)
&=&
\del^\mu_x\del^\nu_v f_0(x,v)
+
\eps \delta^{N-2}\del^\mu_x\left[\left(\frac{|x|^2}{\delta^2}-1\right)e^{-\frac{|x|^2}{\delta^2}}\right]
\del^\nu_v g_N(v),\\
&=&
\del^\mu_x\del^\nu_v f_0(x,v)
+
\eps
\delta^{N-2-|\mu|}p_\mu(x/\delta)e^{-\frac{|x|^2}{\delta^2}}\del^\nu_v g_N(v),\nn
\end{eqnarray}
for a polynomial $p_\mu(x)$. Let us write for each term in the sum $\Gamma=(\kappa,\bm\eta,\mu,\nu)$ in \eqref{vpce034}. Then for each $\Gamma$ in the summation we define
\ba\label{vpce037}
\tilde h_\Gamma(x,t):=
\prod^r_{i=1}
\del^{\eta_i}_{x,t} \tilde E (x,t).
\ea
We then find from \eqref{vpce034} and \eqref{vpce036}
\begin{eqnarray}\label{vpce038}
I_2
&=&
\sum_\Ga \textit{coeff}\,
\int\int
K(y)\xi^\kappa \tilde h_{\Gamma}(-y,t)
\del^\mu_x\del^\nu_v \tilde f_0(-y,\xi)dyd\xi,\\
&=&
\sum_\Ga \textit{coeff}\,
\int\int
K(y)\xi^\kappa \tilde h_{\Gamma}(-y,t)
\del^\mu_x\del^\nu_v f_0(-y,\xi)dyd\xi,\nn\\
& &+\sum_\Ga \textit{coeff}\;\eps\delta^{N-2-|\mu|}
\int\int
K(y)\xi^\kappa \tilde h_{\Gamma}(-y,t)
p_\mu(-y/\delta)e^{-\frac{|y|^2}{\delta^2}}
\del^\nu_v g_N(\xi)
dyd\xi,\nn\\
&=&O(1)+
\sum_\Ga \textit{coeff}\;
\eps\delta^{N-2-|\mu|}\int\int
K(y)\xi^\kappa \tilde h_{\Gamma}(-y,t)
p_\mu(-y/\delta)e^{-\frac{|y|^2}{\delta^2}}
\del^\nu_v g_N(\xi)
dyd\xi.\label{vpce038b}
\end{eqnarray}
In the above, we have used that $f_0(x,v)$ is Schwarz together with \eqref{vpce035}, which gives us the bound $\left|\tilde h_\Ga(x,t)\right|\leq C$, to find that the first sum in the right hand side of \eqref{vpce038b} is indeed $O(1)$.
Now we note that for each term in the remaining sum in \eqref{vpce038b} we have
\begin{eqnarray}\label{vpce039}
& &
\int\int
K(y)\xi^\kappa \tilde h_\Gamma(-y,t)
p_\mu(-y/\delta)e^{-\frac{|y|^2}{\delta^2}}
\del^\nu_v g_N(\xi)
dyd\xi\\
& &\hspace{1cm}=
\int \xi^\kappa \del^\nu_v g_N(\xi)d\xi
\int K(y) \tilde h_\Gamma(-y,t)
p_\mu(-y/\delta)e^{-\frac{|y|^2}{\delta^2}}dy,\nn\\
& &\hspace{1cm}=C_{\kappa,\nu,N}\delta^2
\int K(\delta y) \tilde h_\Gamma(-\delta y,t)
p_\mu(-y)e^{-|y|^2}dy,\nn\\
& &\hspace{1cm}=C_{\kappa,\nu,N}\delta
\int K( y) \tilde h_\Gamma(-\delta y,t)
p_\mu(-y)e^{-|y|^2}dy,\nn\\
& &\hspace{1cm}=C_{\kappa,\nu,N}\delta
\int K( y) O(1)
p_\mu(-y)e^{-|y|^2}dy,\nn\\
& &\hspace{1cm}=O(\delta).\nn
\end{eqnarray}
Above, we have again used \eqref{vpce035} to bound $\tilde h_\Ga(x,t)$. Incorporating \eqref{vpce039} into \eqref{vpce038b}, we find
\ba\label{vpce040}
I_2=O(1)+\sum_\Ga O(\delta^{N-1-|\mu|})=O(1),
\ea
where we have used the fact that for each summand we have $|\mu|\leq N-1$.

Now we consider the contribution of $I_1$. Recalling \eqref{vpce034} and the definition of $\tilde f_0(x,v)$, we get
\begin{eqnarray}\label{vpce041}
I_1
&=&
(-1)^{\ga_t}
\int \int
K(y)(\xi\cdot\na_x)^{\ga_t}
\del^{\ga_x}_x\tilde f_0(-y,\xi)dyd\xi,\\
&=&
(-1)^{\ga_t}
\int \int
K(y)(\xi\cdot\na_x)^{\ga_t}
\del^{\ga_x}_x f_0(-y,\xi)dyd\xi\nn\\
& &+(-1)^{\ga_t}
\eps \delta^{N-2}
\int \int
K(y)(\xi\cdot\na_y)^{\ga_t}
\del^{\ga_x}_y\left[\left(\frac{|y|^2}{\delta^2}-1\right)e^{\frac{-|y|^2}{\delta^2}}\right]g_N(\xi)dyd\xi,\nn\\
&=&
O(1)+(-1)^{\ga_t}\eps \delta^{N-2}\int\int K(y)(\xi\cdot\na_y)^{\ga_t}
\del^{\ga_x}_y\left[\left(\frac{|y|^2}{\delta^2}-1\right)e^{\frac{-|y|^2}{\delta^2}}\right]g_N(\xi)dyd\xi.\nn
\end{eqnarray}
Above, we have used that $f_0$ is Schwartz to get the $O(1)$ bound on the first term.

Using \eqref{vpce041} along with \eqref{vpce040} in \eqref{vpce034}, we find for $\ga$ with $|\ga|=N$ that
\begin{eqnarray}\label{vpce042}
\del^\ga_{x,t}\tilde E(0,0)=
O(1)
+
(-1)^{\ga_t}\eps \delta^{N-2}\int\int K(y)(\xi\cdot\na_y)^{\ga_t}
\del^{\ga_x}_y\left[\left(\frac{|y|^2}{\delta^2}-1\right)e^{\frac{-|y|^2}{\delta^2}}\right]g_N(\xi)dyd\xi.
\end{eqnarray}
Using \eqref{vpce042} in \eqref{vpce033}, we get the following for the first component.
\begin{eqnarray}\label{vpce043}\\
& &\frac{d^{N+1}\tilde V^1}{dt^{N+1}}(0,e_1,0)=
O(1)
+
\eps \delta^{N-2}\sum_{\substack{|\ga|=N\\\vec k, \vec j}}\Bigg(
(-1)^{\ga_t} \delta^1_{j_1}\ldots\delta^1_{j_{|\ga_x|}}b_N(\ga,\vec k,\vec j)\nn\\
& &\hspace{6cm}\times
\int\int K^1(y)(\xi\cdot\na_y)^{\ga_t}
\del^{\ga_x}_y\left[\left(\frac{|y|^2}{\delta^2}-1\right)e^{\frac{-|y|^2}{\delta^2}}\right]dyg_N(\xi)d\xi\Bigg),\nn
\end{eqnarray}
We now find a more explicit expression for the integral in the $y$ variable. Recall $K(x)$ is the integral kernel for the $\na\Delta^{-1}$ operator. Meanwhile, one calculates that
\ba
\Delta\left(
e^{-\frac{|x|^2}{\delta^2}}
\right)
=4\delta^{-2}\left(\frac{|x|^2}{\delta^2}-1\right)e^{-\frac{|x|^2}{\delta^2}}.
\ea
We then find that for the multi-indices $\ga$ with $|\ga|=N$, for each $\xi$ in $\R^2$,
\begin{eqnarray}\label{vpce044}
\int
K^1(y)(\xi\cdot\na_y)^{\ga_t}\del^{\ga_x}_y\left[\left(\frac{|y|^2}{\delta^2}-1\right)e^{\frac{-|y|^2}{\delta^2}}\right]dy
&=&
\int
K^1(y)(\xi\cdot\na_y)^{\ga_t}\del^{\ga_x}_y\left[\frac{\delta^2}{4}\Delta_y\left(e^{-\frac{|y|^2}{\delta^2}}\right)\right]dy,\\
&=&
\frac{1}{4}\delta^2
\del_{y_1} \Delta^{-1}_y \left( 
(\xi\cdot\na_y)^{\ga_t}\del^{\ga_x}_y
\Delta_y\left(e^{-\frac{|y|^2}{\delta^2}}\right)
\right)\bigg|_{y=0},\nn\\
&=&
\frac{1}{4}\delta^{-N+1} (\xi\cdot\na_y)^{\ga_t}\del_{y_1}\del^{\ga_x}_y\left(e^{-|y|^2}\right)\bigg|_{y=0},\nn\\
&=&
\delta^{-N+1}w_\ga(\xi),\nn
\end{eqnarray}
where we have $w_\ga(\xi)$ as defined in \eqref{wgadef}. We substitute this in for the corresponding terms appearing in \eqref{vpce043} and find that we have
\begin{eqnarray}\label{vpce045}
\frac{d^{N+1}\tilde V^1}{dt^{N+1}}(0,e_1,0)
&=&O(1)+\eps\delta^{-1}
\int\Bigg(\sum_{\substack{|\ga|=N\\\vec k, \vec j}}(-1)^{\ga_t} \delta^1_{j_1}\ldots\delta^1_{j_{|\ga_x|}}b_N(\ga,\vec k,\vec j)w_\ga(\xi)\Bigg) g_N(\xi) d\xi,\\
&=&O(1)+\eps\delta^{-1}\int p_N(\xi)g_N(\xi)d\xi.\nn
\end{eqnarray}
Here, we have implemented our definition of the polynomial $p_N(\xi)$ given in \eqref{vpce0022}, from Definition \ref{vpceD02}. Further, we recall from Definition \ref{vpceD02} that $g_N(\xi)$ is a gaussian with the property that $\lambda_N:=\int p_N(\xi)g_N(\xi)d\xi$ is nonzero. We then have
\ba\label{vpce046}
\frac{d^{N+1}\tilde V^1}{dt^{N+1}}(0,e_1,0)
=O(1)+\eps\delta^{-1} \lambda_N,
\ea
where $\lambda_N$ is nonzero. It follows that for small $\delta$ we have \eqref{vpce031}, as claimed in the statement of the lemma.
\end{proof}

\end{subsection}
\begin{subsection}{Construction of the counterexample for the Vlasov-Poisson system}

Now that we have considered the effects of perturbations of the initial data on the derivatives of the trajectory map at $t=0$, we are ready to begin with the construction of the initial data to the Vlasov-Poisson system which we can show must have some resulting non-analytic Lagrangian trajectory.

We will define a sequence of functions by making a series of such perturbations. Here we provide a rough sketch of the process. At each stage, say the $N$th stage, we will start with an iterate for our initial data which has a Lagrangian trajectory with many of its initial lower order time derivatives, say order $n$ with $n\leq N$, right where we want them, each of magnitude at least $n^n$. We will perturb off of this iterate to create successive iterates.
Using Lemma \ref{vpceL02}, we ensure that the lower order derivatives stay approximately the same. Using Lemma \ref{vpceL03}, we ensure that some next higher order derivative $n'>N$ has magnitude larger than $n'^{n'}$. We continue the process to develop a sequence of functions $f^n_0(x,v)$ which ultimately converges to smooth initial data which results in a Lagrangian trajectory that has initial $n$th order time derivative of size $n^n$, for infinitely many $n$. This Lagrangian trajectory cannot then be analytic at $t=0$.

Now we proceed with the rigorous construction of the sequence of functions $f^n_0(x,v)$.

\begin{defn}\label{vpceD01}
Here we iteratively define sequences of real numbers $\{\delta_n\}_{n\geq 2}$ and $\{\eps_n\}_{n\geq 2}$ and a sequence of functions $\{f^n_0(x,v)\}_{n\geq 1}$, where each $f^n_0(x,v)$ satisfies the hypotheses required of the function $f_0(x,v)$ in the statement of Lemma \ref{vpceL02}.

Let us define
\ba
f^1_0(x,v):=e^{-|x|^2-|v|^2}.
\ea
Observe that $f^1_0(x,v)$ trivially satisfies the hypotheses required of $f_0(x,v)$ in the statement of Lemma \ref{vpceL02}.

Let us consider a fixed $n\geq 2$, assuming we have already defined $f^{n-1}_0(x,v)$, and that it satisfies the hypotheses required of $f_0(x,v)$ in the statement of Lemma \ref{vpceL02}. Let us then take $f_0(x,v)$ to be $f^{n-1}_0(x,v)$ and $N$ to be $n$ in the statement of the lemma. For $0\leq\eps\leq \eps_{0,n}$ and $0<\delta<1$, for $\eps_{0,n}>0$ as given by Lemma \ref{vpceL02}, the corresponding perturbation $f^0_{\eps,\delta,n}(x,v)$ defined in the statement of the lemma is then
\ba\label{vpce4602}
f^0_{\eps,\delta,n}(x,v)=f^{n-1}_0(x,v)+\eps F_{\delta,n}(x,v).
\ea
We choose a positive $\eps_n\leq \eps_{0,n}$ so that we have

(i)
\ba\label{vpce460201}
C_n\eps_n\leq 2^{-n},
\ea
where $C_n$ is the constant appearing in the right hand sides of \eqref{vpce004} and \eqref{vpce005}, and so that 

(ii)
\ba\label{vpce460202}
C'_{n,n}\eps_n \leq 2^{-n},
\ea
in which $C'_{n,n}$ is the constant appearing in the right hand side of the bound \eqref{vpce0461} in the case $N=M=n$.

Now we proceed with the definition of $\delta_n$ for the given $n\geq 2$.

Suppose $n$ is odd. In this case, consider the statement of Lemma \ref{vpceL03}, in which we take $\eps$ to be $\eps_n$, $f_0(x,v)$ to be $f^{n-1}_0(x,v)$, and $N$ to be $n$. Now, we choose a positive $\delta_n <1$ small enough to ensure that the following holds:
\ba\label{vpce46021}
C_{n,\eps_n}\delta^{-1}_n
\geq(n+1)^{n+1},
\ea
in which $C_{n,\eps_n}$ is precisely the constant $C_{N,\eps}$ appearing in the right hand side of \eqref{vpce031}. If $n$ is even, we simply take $\delta_n$ to be some positive number less than $1$.

We then define $f^n_0(x,v)$ to be the perturbed initial datum $f^0_{\eps_n,\delta_n,n}(x,v)$. In other words, we define
\ba\label{vpce4603}
f^n_0(x,v):=f^{n-1}_0(x,v)+\eps_n F_{\delta_n,n}(x,v),
\ea
for $F_{\delta_n,n}(x,v)$ as defined in Definition \ref{vpceD02}. Note that $f^n_0(x,v)$ then satisfies the hypotheses required of the function $f_0(x,v)$ in the statement of Lemma \ref{vpceL02}.

By iteratively applying the above process for each $n=2,3,\ldots$, we thus obtain the definition of  $\eps_n$, $\delta_n$, and $f^n_0(x,v)$ for all $n\geq 2$. For each $n\geq 1$, we denote the solution to the Vlasov-Poisson system with initial data $f^n_0(x,v)$ by $f^n(x,v,t)$, with electric field $E^n(x,t)$ and Lagrangian trajectory map $\left(X^n(\zeta,t),V^n(\zeta,t)\right)$.
\end{defn}

Now we check that for certain integers $n$, all the successive iterates $V^N(\zeta,t)$ maintain the property that their $n$th time derivatives have magnitude at least of the order $n^n$ at $\zeta=(0,e_1)$, $t=0$.

\begin{lem}\label{vpceL05}
Consider even $n\geq4$. For all $N\geq n-1$, with $V^N(\zeta,t)$ as defined by Definition \ref{vpceD01}, we have the following inequality.
\ba\label{vpce4604}
\left|
\frac{d^n}{dt^n}V^N(0,e_1,0)
\right|
\geq
\frac{1}{2}n^n.
\ea
\end{lem}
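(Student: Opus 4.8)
The plan is to prove \eqref{vpce4604} by induction on $N\geq n-1$ for each fixed even $n\geq 4$. The base case $N=n-1$ comes directly from the construction: since $n$ is even, $n-1$ is odd, so at stage $n-1$ in Definition \ref{vpceD01} we invoked Lemma \ref{vpceL03} with the odd integer $N=n-1$ and $\eps=\eps_{n-1}$, choosing $\delta_{n-1}$ small enough that \eqref{vpce46021} holds, i.e.
\ba
\left|\frac{d^{(n-1)+1}V^{n-1}}{dt^{(n-1)+1}}(0,e_1,0)\right|=\left|\frac{d^{n}}{dt^{n}}V^{n-1}(0,e_1,0)\right|\geq C_{n-1,\eps_{n-1}}\delta_{n-1}^{-1}\geq n^n\geq \tfrac{1}{2}n^n,
\ea
which is exactly \eqref{vpce4604} at $N=n-1$ (here $V^{n-1}=V^{0}_{\eps_{n-1},\delta_{n-1},n-1}$ in the notation of Lemma \ref{vpceL03}).

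For the inductive step, suppose \eqref{vpce4604} holds at level $N\geq n-1$; I want it at level $N+1$. By definition, $f^{N+1}_0(x,v)=f^N_0(x,v)+\eps_{N+1}F_{\delta_{N+1},N+1}(x,v)$, so $V^{N+1}$ is exactly the perturbed trajectory map obtained by applying Lemma \ref{vpceL02} with unperturbed data $f_0=f^N_0$, perturbation order $N+1$, and parameters $\eps=\eps_{N+1}$, $\delta=\delta_{N+1}$. Since $n\leq N+1$, the multi-index/derivative count $|\al|+m=n\leq N+1$ is within the range where estimate \eqref{vpce004} applies (taking $\al=0$, $m=n$, $\zeta=(0,e_1)$ so $p=e_1$ and $(1+|p|)^{n+1}=2^{n+1}$). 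Thus
\ba
\left|\frac{d^n}{dt^n}V^{N+1}(0,e_1,0)-\frac{d^n}{dt^n}V^{N}(0,e_1,0)\right|\leq C_{N+1}\,2^{n+1}\,\eps_{N+1}.
\ea
By the choice \eqref{vpce460201} of $\eps_{N+1}$, namely $C_{N+1}\eps_{N+1}\leq 2^{-(N+1)}$, the right-hand side is at most $2^{n+1-(N+1)}=2^{n-N}$. Combining with the inductive hypothesis and the triangle inequality,
\ba
\left|\frac{d^n}{dt^n}V^{N+1}(0,e_1,0)\right|\geq \frac12 n^n - 2^{n-N}.
\ea
It remains to check that the accumulated error over all stages stays below $\tfrac12 n^n$; more carefully, iterating the above from level $n-1$ up to level $N+1$ gives a total deviation bounded by $\sum_{k\geq n-1}2^{n-k}\leq 2^{n-(n-2)}=4$, so actually one should run the induction keeping track of the sharper bound $\big|\frac{d^n}{dt^n}V^N(0,e_1,0)\big|\geq \frac12 n^n + (\text{something})$ at the base and absorbing a fixed $O(1)$ loss; since the base value is $n^n$ (not $\tfrac12 n^n$) and $n^n-4\geq \tfrac12 n^n$ for $n\geq 4$, the constant $\tfrac12$ in \eqref{vpce4604} is comfortably safe for all $N\geq n-1$.

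The only genuinely delicate point is bookkeeping: one must confirm that at \emph{every} stage $N+1\geq n$ the hypothesis $|\al|+m=n\leq N+1$ of Lemma \ref{vpceL02} is met (it is, since $N+1\geq n$), and that the geometric series of perturbation errors $\sum_{N\geq n-1}C_{N+1}2^{n+1}\eps_{N+1}\leq 2^{n+1}\sum_{N\geq n-1}2^{-(N+1)}=2^{n+1}\cdot 2^{-(n-1)}=4$ is bounded by a constant independent of how far the induction runs, so that starting from the base value $n^n$ guaranteed by Lemma \ref{vpceL03} one never drops below $n^n-4\geq \tfrac12 n^n$. I expect this error-accumulation estimate to be the main (though still routine) obstacle; everything else is a direct quotation of Lemmas \ref{vpceL02} and \ref{vpceL03} together with the defining inequalities \eqref{vpce460201} and \eqref{vpce46021} built into Definition \ref{vpceD01}.
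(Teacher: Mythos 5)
Your proposal is correct and follows essentially the same route as the paper: the base value $\geq n^n$ comes from Lemma \ref{vpceL03} together with the choice \eqref{vpce46021} at the odd stage $n-1$, and the deviation accumulated over later stages is controlled by telescoping the stage-by-stage perturbation bound of Lemma \ref{vpceL02} with the smallness condition \eqref{vpce460201}, giving a geometric sum that is $O(1)$ and hence $n^n - O(1)\geq \tfrac12 n^n$ for $n\geq 4$. Your bookkeeping is in fact slightly more careful than the paper's (you keep the factor $(1+|p|)^{n+1}=2^{n+1}$ explicitly, arriving at an error bound of $4$ rather than $1$), but the argument is the same.
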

\begin{proof}
For an arbitrary even $n\geq 4$, let us consider $N\geq n-1$. Observe
\ba\label{vpce4605}
\frac{d^n}{dt^n}V^N(0,e_1,0)=
\frac{d^n}{dt^n}V^{n-1}(0,e_1,0)+\left(
\frac{d^n}{dt^n}V^N(0,e_1,0)
-\frac{d^n}{dt^n}V^{n-1}(0,e_1,0)
\right)
\ea
Note
\ba\label{vpce4606}
\left|
\frac{d^n}{dt^n}V^N(0,e_1,0)
-\frac{d^n}{dt^n}V^{n-1}(0,e_1,0)
\right|
\leq
\sum^N_{i=n}\left|
\frac{d^n}{dt^n}V^i(0,e_1,0)
-\frac{d^n}{dt^n}V^{i-1}(0,e_1,0)
\right|,
\ea
Meanwhile, for any pair of positive integers $l$ and $k$ with $l\leq k$, in view of Lemma \ref{vpceL02} and the definition of the iterates $V^n$ in Definition \ref{vpceD01}, we find
\ba\label{vpce4607}
\left|
\frac{d^l}{dt^l}V^k(0,e_1,0)
-\frac{d^l}{dt^l}V^{k-1}(0,e_1,0)
\right|
\leq 2 C_k \eps_k,
\ea
where $C_k$ is the constant $C_N$ in the statement of Lemma \ref{vpceL02}, in the case $N=k$. Using this in \eqref{vpce4606}, together with \eqref{vpce460201} we find for any $N\geq n$ that
\begin{eqnarray}\label{vpce4608}
\left|
\frac{d^n}{dt^n}V^N(0,e_1,0)
-\frac{d^n}{dt^n}V^{n-1}(0,e_1,0)
\right|
&\leq&
\sum^N_{i=n}2C_i\eps_i,\\
&\leq&
\sum^N_{i=n} 2^{-i+1},\nn\\
&\leq&1.\nn
\end{eqnarray}
Now, recalling Definition \ref{vpceD01}, in view of \eqref{vpce46021} together with Lemma \ref{vpceL03}, we find
\ba
\left|\frac{d^n}{dt^n}V^{n-1}(0,e_1,0)\right|
\geq C_{n-1,\eps_{n-1}}\delta^{-1}_{n-1}\geq n^n.
\ea

Using this bound together with \eqref{vpce4608} in \eqref{vpce4605}, we find that
\ba\label{vpce4609}
\left|
\frac{d^n}{dt^n}V^N(0,e_1,0)
\right|
\geq n^n-1.
\ea
The bound \eqref{vpce4604} then easily follows for even $n\geq 4$.
\end{proof}

With the following lemma, we verify that the sequence of $f^n_0(x,v)$ indeed converges to a smooth function and we record bounds that will be useful in showing that the limiting function satisfies the properties we will need it to satisfy.

\begin{lem}\label{vpceL06}
We have the existence of the limit $\lim_{n\to\infty}f^n_0(x,v)$
for every $(x,v)$ in $\R^2\times\R^2$. For each $(x,v)$ let us denote the limit by $f^\sharp_0(x,v)$. We then have that the function $f^\sharp_0(x,v)$ is Schwartz and nonnegative, and for any $m\geq 0$
\ba\label{vpce4610}
\lim_{n\to\infty}
\left\lVert
f^\sharp_0-f^n_0
\right\rVert_{C^m}=0.
\ea

Moreover, for any multi-index $\al$ and integer $l\geq 0$ we have the uniform bound for $n\geq 1$,
\ba\label{vpcef06}
(1+|x|^2+|v|^2)^{l/2}
\left|\del^\al_{x,v}f^n_0(x,v)\right|\leq C_{\al,l},
\ea
as well as the following bound for any multi-index $\ga$, uniform in $n\geq 1$,
\ba\label{vpcef07}
\left|\del^\ga_{x,t}E^n(x,0)\right|\leq C_\ga.
\ea
\end{lem}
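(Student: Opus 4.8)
The plan is to exploit the telescoping representation $f^n_0 = f^1_0 + \sum_{k=2}^n \eps_k F_{\delta_k,k}$, which follows immediately from the recursion \eqref{vpce4603}, together with the smallness of the $\eps_k$ built into Definition \ref{vpceD01} via \eqref{vpce460201} and \eqref{vpce460202}, and the weighted derivative bounds of Lemma \ref{vpceL07}. Everything is essentially bookkeeping; the only point needing care is the uniformity in $n$ of the field bound \eqref{vpcef07}, discussed last.

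First I would prove \eqref{vpcef06}. Fix a multi-index $\al$ and an integer $l\geq 0$, and set $n_\star := \max(|\al|+2,\, l)$. Differentiating the telescoping sum gives $(1+|x|^2+|v|^2)^{l/2}|\del^\al_{x,v}f^n_0| \leq (1+|x|^2+|v|^2)^{l/2}|\del^\al_{x,v}f^1_0| + \sum_{k=2}^n \eps_k\,(1+|x|^2+|v|^2)^{l/2}|\del^\al_{x,v}F_{\delta_k,k}|$. The first term is a fixed constant since $f^1_0$ is Gaussian. For $k\geq n_\star$ one has $|\al|\leq k-2$ and $l\leq k$; since $(1+|x|^2+|v|^2)\geq 1$ the constant $C'_{k,l}$ in \eqref{vpce0461} may be replaced by $C'_{k,k}$, so by \eqref{vpce0461} and the choice \eqref{vpce460202} the corresponding summand is $\leq \eps_k C'_{k,k}\leq 2^{-k}$, and these sum to at most $1$. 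The finitely many remaining terms ($2\leq k<n_\star$) involve the fixed Schwartz functions $F_{\delta_k,k}$ with $\delta_k,\eps_k$ fixed, hence are bounded by a constant depending only on $\al$ and $l$. Adding up gives \eqref{vpcef06} with a constant independent of $n$.

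Next, convergence and the Schwartz/nonnegativity claims. The same computation with weight $l=0$ shows that for any $m\geq 0$, once $n\geq m+2$ one has $\lVert f^{n'}_0 - f^n_0\rVert_{C^m} = \lVert \sum_{k=n+1}^{n'}\eps_k F_{\delta_k,k}\rVert_{C^m}\leq \sum_{k>n}2^{-k}=2^{-n}$, using \eqref{vpce0461} and \eqref{vpce460202} (again $C'_{k,0}\leq C'_{k,k}$). Hence $(f^n_0)_n$ is Cauchy in every $C^m$ norm, so the pointwise limit $f^\sharp_0 := \lim_n f^n_0$ exists, lies in $C^\infty$, and \eqref{vpce4610} holds. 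Letting $n\to\infty$ in \eqref{vpcef06} shows every weighted derivative of $f^\sharp_0$ is bounded, i.e.\ $f^\sharp_0$ is Schwartz. For nonnegativity, recall from Definition \ref{vpceD01} that $f^1_0>0$ trivially and $f^n_0=f^0_{\eps_n,\delta_n,n}$ with $\eps_n\in(0,\eps_{0,n}]$ for $n\geq 2$, so each $f^n_0$ is everywhere positive by Lemma \ref{vpceL02}; the pointwise limit $f^\sharp_0$ is therefore nonnegative.

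Finally, the uniform field bound \eqref{vpcef07}. Here I would avoid invoking Lemma \ref{vpceL01} directly for $f^n_0$, since its constant a priori depends on $f^n_0$; instead I telescope the field: $\del^\ga_{x,t}E^n(x,0) = \del^\ga_{x,t}E^1(x,0) + \sum_{k=2}^n\big(\del^\ga_{x,t}E^k(x,0)-\del^\ga_{x,t}E^{k-1}(x,0)\big)$. By Lemma \ref{vpceL01} applied to the fixed Gaussian $f^1_0$, $|\del^\ga_{x,t}E^1(x,0)|\leq C_\ga$. For each $k$ with $k\geq|\ga|+1$, estimate \eqref{vpce005} of Lemma \ref{vpceL02} (applied with $f_0=f^{k-1}_0$ and $N=k$) gives $|\del^\ga_{x,t}E^k(x,0)-\del^\ga_{x,t}E^{k-1}(x,0)|\leq C_k\eps_k\leq 2^{-k}$ by the choice \eqref{vpce460201}, and these terms sum to at most $1$. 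The finitely many remaining terms ($2\leq k\leq|\ga|$) involve only the fixed functions $f^k_0,f^{k-1}_0$, so each of $|\del^\ga_{x,t}E^k(x,0)|$ and $|\del^\ga_{x,t}E^{k-1}(x,0)|$ is bounded via Lemma \ref{vpceL01} by a constant depending only on $\ga$. Summing yields \eqref{vpcef07} with a constant independent of $n$. I expect this last uniformity to be the only genuine (though mild) obstacle: the naive bound has an $n$-dependent constant, and the fix is precisely the pre-arranged smallness $C_k\eps_k\leq 2^{-k}$ from Definition \ref{vpceD01} combined with the difference estimate \eqref{vpce005}.
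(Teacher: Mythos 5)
Your proposal is correct and follows essentially the same route as the paper: the telescoping representation of $f^n_0$, Lemma \ref{vpceL07} combined with \eqref{vpce460202} for the weighted derivative bounds and $C^m$ convergence, and a telescoped field estimate using \eqref{vpce005} with \eqref{vpce460201}, with Lemma \ref{vpceL01} covering the finitely many fixed terms. The only differences are organizational (splitting off $f^1_0$ and $E^1$ rather than $f^{m-1}_0$ and $E^{|\ga|+1}$, and arguing Cauchyness in $C^m$ instead of the Weierstrass M-test), which do not change the substance.
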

\begin{proof}
First, we prove $f^n_0(x,v)$ converges to a continuous function as $n$ tends to infinity. Notice from the definition of the $f^n_0(x,v)$ in Definition \ref{vpceD01} we may write for $n\geq 1$
\ba\label{vpce4611}
f^n_0(x,v)=f^1_0(x,v)+\sum^n_{i=2}\eps_i F_{\delta_i,i}(x,v).
\ea
Using Lemma \ref{vpceL07} and \eqref{vpce460202} we also have the bound for all $i\geq 2$
\ba\label{vpce4612}
\eps_i\left| F_{\delta_i,i}(x,v)\right|
\leq  C'_{i,i}\eps_i\leq 2^{-i},
\ea
where $C'_{i,i}$ is the constant in the right hand side of \eqref{vpce0461} in the case $N=M=i$. Since this upper bound sums to a finite quantity, an application of the Weierstrass M-test verifies that we have uniform convergence of the sum
\ba\label{vpce4613}
\sum^\infty_{i=2}\eps_i F_{\delta_i,i}(x,v).
\ea
Since each summand is continuous and the convergence is uniform, it follows that the sum itself is continuous. Thus, we have convergence of the following limit
\ba\label{vpce4614}
\lim_{n\to\infty}f^n_0(x,v)=f^1_0(x,v)+\sum^\infty_{i=2}\eps_i F_{\delta_i,i}(x,v),
\ea
to a continuous function. We define $f^\sharp_0(x,v)$ to be the resulting function. Note that $f^\sharp_0(x,v)$ is nonnegative since each of the $f^n_0(x,v)$ is positive.

Now we show that $f^\sharp_0(x,v)$ is smooth. Let us consider an arbitrary multi-index $\al=(\al_x,\al_v)$ and define $k:=|\al|$. For $n\geq k+1$ we write
\ba\label{vpce4615}
f^n_0(x,v)=f^{k+1}_0(x,v)+\sum^n_{i=k+2}\eps_i F_{\delta_i,i}(x,v).
\ea
We have
\ba\label{vpce4616}
\del^\al_{x,v}f^n_0(x,v)=\del^\al_{x,v}f^{k+1}_0(x,v)+\sum^n_{i=k+2}\eps_i \del^\al_{x,v}F_{\delta_i,i}(x,v).
\ea
For any $i\geq k+2$, by applying Lemma \ref{vpceL07} and \eqref{vpce460202}, we get the bound
\ba\label{vpce4617}
\eps_i\left| \del^\al_{x,v}F_{\delta_i,i}(x,v)\right|
\leq  C'_{i,i}\eps_i\leq 2^{-i}.
\ea
Thus by using the Weierstrass M-test we get that the following sum converges uniformly to a continuous function:
\ba\label{vpce4618}
\sum^{\infty}_{i=k+2}\eps_i \del^\al_{x,v}F_{\delta_i,i}(x,v).
\ea
From this, together with the fact that $\del^\al_{x,v}f^{k+1}_0(x,v)$ is continuous, we find that we have uniform convergence of the following limit to a continuous function:
\ba\label{vpce4619}
\lim_{n\to\infty}\del^\al_{x,v}f^n_0(x,v).
\ea
Since we have uniform convergence for the various partial derivatives of $f^n_0(x,v)$, it is not difficult to verify that the corresponding partial derivatives of $f^\sharp_0(x,v)$ then exist, and that for each multi-index $\al$
\ba\label{vpce4620}
\lim_{n\to\infty}\del^\al_{x,v}f^n_0(x,v)=\del^\al_{x,v}f^\sharp_0(x,v).
\ea
This implies the partials of $f^\sharp_0(x,v)$ are all continuous, so we conclude $f^\sharp_0(x,v)$ is smooth. This also gives us \eqref{vpce4610}.

Now we check that we have the bound \eqref{vpcef06}. Consider such $\al$ and $l$ and define $m:=\max(l,|\al|+2)$. Let us consider an $n$ larger than $m$. We write
\ba\label{vpce4622}
f^n_0(x,v)
=
f^{m-1}_0(x,v)+\sum^n_{i=m}\eps_i F_{\delta_i,i}(x,v).
\ea
Then we observe
\begin{eqnarray}\label{vpce4623}
\left(1+|x|^2+|v|^2\right)^{l/2}\del^\al_{x,v}
f^n_0(x,v)
&=&
\left(1+|x|^2+|v|^2\right)^{l/2}\del^\al_{x,v}
f^{m-1}_0(x,v)\\
& &+\sum^n_{i=m}\eps_i 
\left(1+|x|^2+|v|^2\right)^{l/2}\del^\al_{x,v}F_{\delta_i,i}(x,v).\nn
\end{eqnarray}
It is easy to see that $f^{m-1}_0(x,v)$ is Schwartz, so that we have for some constant $C_m$
\ba\label{vpce46231}
\left(1+|x|^2+|v|^2\right)^{l/2}\left|\del^\al_{x,v}
f^{m-1}_0(x,v)\right|
\leq C_m.
\ea
Now we observe that for each $i\geq m$, noting that we then also have both $i\geq l$ and $i\geq|\al|+2$, an application of Lemma \ref{vpceL07} gives us that
\ba\label{vpce4624}
\left(1+|x|^2+|v|^2\right)^{l/2}
\left|\del^\al_{x,v}F_{\delta_i,i}(x,v)\right|
\leq
\left(1+|x|^2+|v|^2\right)^{i/2}\left|\del^\al_{x,v}F_{\delta_i,i}(x,v)\right|
\leq
C'_{i,i}.
\ea
Using this in combination with \eqref{vpce460202}, we then find
\begin{eqnarray}\label{vpce4625}
\sum^n_{i=m}\eps_i 
\left(1+|x|^2+|v|^2\right)^{l/2}\left|\del^\al_{x,v}F_{\delta_i,i}(x,v)
\right|
&\leq&
\sum^n_{i=m}C'_{i,i}\eps_i,\\
&\leq&\sum^n_{i=m}2^{-i},\nn\\
&\leq& 1.\nn
\end{eqnarray}
Using this bound together with \eqref{vpce46231} in \eqref{vpce4623}, we get a bound of the form \eqref{vpcef06} for all $n>m$. To produce a satisfactory bound for all the $n\leq m$ is trivial, as we only need to provide bounds for a finite number of functions, each of which is obviously Schwartz. By taking the limit as $n$ tends to infinity in \eqref{vpcef06}, and using \eqref{vpce4620}, we also get the bounds which show that $f^\sharp_0(x,v)$ is Schwartz.

Now we check \eqref{vpcef07}. Consider a multi-index $\ga$ as in the claimed bound. In view of Definition \ref{vpceD01}, in particular \eqref{vpce460201}, together with Lemma \ref{vpceL02}, it is not difficult to verify for any multi-index $\ga$, and any $n\geq |\ga|+1$,
\begin{eqnarray}\label{vpcef13}
\left|
\del^\ga_{x,t} E^n(x,0)
\right|
&\leq&
\left|
\del^\ga_{x,t} E^{|\ga|+1}(x,0)
\right|
+
\sum^n_{i=|\ga|+2}
\left|
\del^\ga_{x,t} E^i(x,0)-\del^\ga_{x,t}E^{i-1}(x,0)
\right|,\\
&\leq&
C'_\ga+\sum^n_{i=|\ga|+2}C_i\eps_i,\nn\\
&\leq& C'_\ga+\sum^n_{i=|\ga|+2}2^{-i},\nn\\
&\leq& C'_\ga+1.\nn
\end{eqnarray}
Above, the constants $C_i$ refer to the constant $C_N$ that appears in the statement of Lemma \ref{vpceL02} in the case $N=i$. For the bound on the first term in the right hand side of \eqref{vpcef13} we used Lemma \ref{vpceL01}. This gives a uniform bound on the desired quantity for $n>|\ga|$. It remains to bound the $\del^\ga_{x,t} E^n(x,0)$ for $n\leq |\ga|$, but there are only finitely many for each given $\ga$, all of which are easily handled by applying Lemma \ref{vpceL01}. This concludes the proof of the bound \eqref{vpcef07} and that of the lemma.
\end{proof}
\begin{defn}\label{vpceD03}
For $f^\sharp_0(x,v)$ as in the statement of Lemma \ref{vpceL06}, let us define the corresponding solution to the Vlasov-Poisson equation with initial data $f^\sharp_0(x,v)$ to be $f^\sharp(x,v,t)$. For the corresponding electric field we denote $E^\sharp(x,t)$ and for the corresponding Lagrangian trajectory map we denote $(X^\sharp(\zeta,t),V^\sharp(\zeta,t))$.
\end{defn}
\begin{rem}\label{globalrmk}
In the above definition, we again use the result due to Ukai and Okabe in \cite{uo} for global existence of solutions to the $2$D Vlasov-Poisson system for initial data with sufficiently rapid decay.
\end{rem}
Finally, we verify that the time derivatives of $V^\sharp(0,e_1,t)$ are approximated by the time derivatives of the $V^n(0,e_1,t)$ at $t=0$. This will allow us to conclude that we preserve the behavior we care about, namely the growth of the time derivatives of the Lagrangian trajectory.
\begin{lem}\label{vpceL04}
For $V^n(\zeta,t)$ as given in Definition \ref{vpceD01} for $n\geq 1$ and $V^\sharp(\zeta,t)$, defined in Definition \ref{vpceD03}, for any $k\geq 0$,
\ba\label{vpce047}
\lim_{n\to \infty}\frac{d^k V^n}{dt^k}(0,e_1,0)=\frac{d^k V^\sharp}{dt^k}(0,e_1,0).
\ea
\end{lem}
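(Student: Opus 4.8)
The plan is to show that for each fixed order $k$, the $k$th time derivative of the trajectory map at $(\zeta,t)=(0,e_1,0)$ depends only on finitely many derivatives of the initial data $f^n_0$ and the initial electric field $E^n$ at $t=0$, and that these stabilize as $n\to\infty$. The central tool will be the recursive structure already exposed in the proof of Lemma \ref{vpceL01}: using $\frac{dV}{dt}=E(X,t)$ together with the Faà di Bruno-type expansion \eqref{vpceA003b} and the formula \eqref{vpceA012} expressing $\del^\ga_{x,t}E(\cdot,0)$ as an integral of $K(y)$ against combinations of derivatives of the initial datum and lower-order derivatives of $E$ at $t=0$. Iterating these identities, one sees that $\frac{d^k V}{dt^k}(0,e_1,0)$ is a fixed universal polynomial expression in the quantities $\{\del^\al_{x,v} f_0(x,v)\}$ (integrated against $K$) and $\{\del^\ga_{x,t}E(q,0)\}$ with $|\al|,|\ga|$ bounded in terms of $k$.

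Concretely, first I would fix $k\geq 0$ and write down, by induction on $k$ exactly as in the proof of Lemma \ref{vpceL01}, a closed expression for $\frac{d^k V^n}{dt^k}(0,e_1,0)$ as a finite sum of terms, each of which is a product of factors of the form $\del^{\be_i}_{\zeta,t}X^n_{j_i}(0,e_1,0)$ (equivalently, lower-order $t$-derivatives of $V^n$, hence inductively controlled) and factors $\del^\ga_{x,t}E^n(q,0)$ with $|\ga|\le k-1$; and, unwinding the $E$-derivatives via \eqref{vpceA012}, as integrals $\int\int K(y)\,[\text{stuff}](-y,\xi)\,dy\,d\xi$ where the integrand is a finite combination of derivatives $\del^\al_{x,v}f^n_0$ with $|\al|\le k$ and lower-order $\del^\eta_{x,t}E^n(\cdot,0)$. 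Then I would invoke Lemma \ref{vpceL06}: the bound \eqref{vpcef06} gives uniform-in-$n$ Schwartz control of $\del^\al_{x,v}f^n_0$, the bound \eqref{vpcef07} gives uniform-in-$n$ control of $\del^\ga_{x,t}E^n(x,0)$, and \eqref{vpce4620} gives pointwise convergence $\del^\al_{x,v}f^n_0\to\del^\al_{x,v}f^\sharp_0$. The uniform Schwartz bounds furnish a dominating function ($|K(y)|\lesssim |y|^{-1}$ is locally integrable in $\R^2$ and the rapidly decaying $f^n_0$ tail kills the behavior at infinity), so the dominated convergence theorem yields $\del^\ga_{x,t}E^n(x,0)\to \del^\ga_{x,t}E^\sharp(x,0)$ for every $\ga$; one also needs, separately, that $E^\sharp$ is the electric field of the solution with data $f^\sharp_0$, which follows since $f^\sharp_0$ is Schwartz (Remark \ref{globalrmk}) and $\del^\ga_{x,t}E^\sharp(x,0)$ is given by the same formula \eqref{vpceA012} with $f^\sharp_0$ in place of $f_0$.

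With $E$-derivative convergence in hand, a short secondary induction on $k$ propagates convergence to the trajectory derivatives: assuming $\frac{d^j V^n}{dt^j}(0,e_1,0)\to \frac{d^j V^\sharp}{dt^j}(0,e_1,0)$ for all $j<k$ (equivalently for the factors $\del^{\be_i}_{\zeta,t}X^n$ with $(\be_i)_t<k$, though one should be slightly careful that here $\al=0$ so only $t$-derivatives appear), and using the just-established convergence of the $\del^\ga_{x,t}E^n(q,0)$ with $|\ga|\le k-1$, each summand in the closed expression for $\frac{d^k V^n}{dt^k}(0,e_1,0)$ converges to the corresponding summand for $V^\sharp$, because it is a finite product/sum of convergent factors. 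Summing finitely many such terms gives \eqref{vpce047}. I expect the main obstacle to be purely bookkeeping rather than conceptual: one must carefully match the combinatorial expansion (the coefficients $b_N$, the multi-index constraints $|\mu|\le|\ga|-1$, $|\eta_i|\le|\ga|-1$, etc.) so that the recursion genuinely closes at each level with strictly lower-order data, and verify that the dominating functions in the DCT argument are truly uniform in $n$ — for which the already-proven uniform estimates \eqref{vpcef06}–\eqref{vpcef07} are exactly what is needed. No new analytic input beyond Lemmas \ref{vpceL01} and \ref{vpceL06} should be required.
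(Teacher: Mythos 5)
Your proposal is correct and follows essentially the same route as the paper: an induction on $|\ga|$ using the formula \eqref{vpceA012} together with the uniform bounds \eqref{vpcef06}--\eqref{vpcef07} and dominated convergence to get $\del^\ga_{x,t}E^n(x,0)\to\del^\ga_{x,t}E^\sharp(x,0)$, followed by an induction on $k$ through the chain-rule expansion \eqref{vpce055b} to transfer the convergence to $\frac{d^k V^n}{dt^k}(0,e_1,0)$. No substantive differences from the paper's argument.
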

\begin{proof}
First we prove that for any multi-index $\ga$ we have
\ba\label{vpce048}
\lim_{n\to \infty}\del^\ga_{x,t} E^n(x,0)=\del^\ga_{x,t} E^\sharp(x,0).
\ea
We prove this by induction. Note for the base case that
\ba\label{vpce049}
E^n(x,0)=\int\int K(y) f^n_0(x-y,\xi)dyd\xi.
\ea
Note for each fixed $x$ the function of $(y,\xi)$ given by $K(y)f^n_0(x-y,\xi)$ tends to $K(y)f^\sharp_0(x-y,\xi)$ pointwise almost everywhere. By using the bound \eqref{vpcef06} from Lemma \ref{vpceL06}, we find that we can bound the $K(y)f^n_0(x-y,\xi)$ uniformly in $n$ by an integrable function of $(y,\xi)$. Thus we find from the Lebesgue dominated convergence theorem that
\ba\label{vpce050}
\lim_{n\to\infty}E^n(x,0)=\int\int K(y)f^\sharp_0(x-y,\xi)dyd\xi
= E^\sharp(x,0).
\ea
Assume that for some integer $m\geq 0$, \eqref{vpce048} holds for any multi-index $\ga$ with $|\ga|\leq m$. Now consider $\ga$ such that $|\ga|=m+1$. Using the formula \eqref{vpceA012}, we note that
\begin{eqnarray}\label{vpce051}
\del^\ga_{x,t} E^n(x,0)
&=&
\int\int
K(y)(-1)^{\ga_t}(\xi\cdot\na_x)^{\ga_t}\del^{\ga_x}_x f^n_0(x-y,\xi)dyd\xi\label{vpce051b}\\
& &+
\sum \textit{coeff}\,\int\int
K(y) \xi^\kappa
\left(
\prod^r_{i=1}\del^{\eta_i}_{x,t} E^n(x-y,0)
\right)
\del^\mu_x\del^\nu_v f^n_0(x-y,\xi)dyd\xi,\nn
\end{eqnarray}
where each $|\eta_i|\leq |\ga|-1=m$, $|\mu|\leq m$, and $|\mu|+|\nu|\leq m+1$. Note then by the induction hypothesis we have the following pointwise limit for any $x$
\ba\label{vpce052}
\lim_{n\to\infty}
\left(
\prod^r_{i=1}\del^{\eta_i}_{x,t} E^n(x,0)
\right)
=
\prod^r_{i=1}\del^{\eta_i}_{x,t} E^\sharp(x,0).
\ea
Meanwhile, from Lemma \ref{vpceL06} we know
\ba\label{vpce053}
\lim_{n\to\infty}
\left\lVert
f^\sharp_0-f^n_0
\right\rVert_{C^{m+1}}=0.
\ea
Moreover, for any fixed $x$, we may bound all the expressions appearing in the integrals in the right hand side of \eqref{vpce051b} by integrable functions, independently of $n$, by using the bounds \eqref{vpcef06} and \eqref{vpcef07}, from Lemma \ref{vpceL06}. So from the Lebesgue dominated convergence theorem we obtain
\begin{eqnarray}\label{vpce054}
\lim_{n\to\infty}\del^\ga_{x,t} E^n(x,0)
&=&
\lim_{n\to\infty}\Bigg(
\int\int
K(y)(-1)^{\ga_t}(\xi\cdot\na_x)^{\ga_t}\del^{\ga_x}_x f^n_0(x-y,\xi)dyd\xi\\
& &+
\sum \textit{coeff}\,\int\int
K(y) \xi^\kappa
\left(
\prod^r_{i=1}\del^{\eta_i}_{x,t} E^n(x-y,0)
\right)
\del^\mu_x\del^\nu_v f^n_0(x-y,\xi)dyd\xi\Bigg),\nn\\
&=&
\int\int
K(y)(-1)^{\ga_t}(\xi\cdot\na_x)^{\ga_t}\del^{\ga_x}_x f^\sharp_0(x-y,\xi)dyd\xi\nn\\
& &+
\sum \textit{coeff}\,\int\int
K(y) \xi^\kappa
\left(
\prod^r_{i=1}\del^{\eta_i}_{x,t} E^\sharp(x-y,0)
\right)
\del^\mu_x\del^\nu_v f^\sharp_0(x-y,\xi)dyd\xi,\nn\\
&=&
\del^\ga_{x,t} E^\sharp(x,0),\nn
\end{eqnarray}
where we have again used the formula \eqref{vpceA012} for solutions of the Vlasov-Poisson equation. This concludes the induction step and thus the proof that \eqref{vpce048} holds for all $\ga$. Now we prove \eqref{vpce047} by induction on $k$. Note the base case $k=0$ is true. Assume for some integer $m\geq 0$ that \eqref{vpce047} holds for all $k\leq m$. Then we compute
\begin{eqnarray}\label{vpce055}
\frac{d^{m+1}V^n}{dt^{m+1}}(0,e_1,0)
&=&
\frac{d^m}{dt^m}\left(
E^n\left(X^n(0,e_1,t),t\right)
\right)\big|_{t=0},\\
&=&
\sum_{\substack{0\leq|\ga|\leq m\\\vec{k},\vec{j}}}\textit{coeff}\left(
\prod^{|\ga|}_{i=1}\del^{k_i}_t X^n_{j_i}(0,e_1,0)
\right)
\del^\ga_{x,t} E^n(0,0).\label{vpce055b}
\end{eqnarray}
It follows from \eqref{vpce048} together with the fact that $\frac{d X^n}{dt}=V^n$ for each $n$ and the induction hypothesis that as $n$ tends to infinity, the expression in \eqref{vpce055b} tends to the analogous expression with $\sharp$ in place of $n$. From this, we find
\ba\label{vpce056}
\lim_{n\to\infty}
\frac{d^{m+1}V^n}{dt^{m+1}}(0,e_1,0)
=
\sum_{\substack{0\leq|\ga|\leq m\\\vec{k},\vec{j}}}\textit{coeff}\left(
\prod^{|\ga|}_{i=1}\del^{k_i}_t X^\sharp_{j_i}(0,e_1,0)
\right)
\del^\ga_{x,t} E^\sharp(0,0)
=
\frac{d^{m+1}V^\sharp}{dt^{m+1}}(0,e_1,0),
\ea
using the equivalent of \eqref{vpcef02} for $\frac{dV^\sharp}{dt}$. This concludes the proof of the lemma.
\end{proof}
\begin{cor}\label{finalcorVP}
The Lagrangian trajectory $(X^\sharp(0,e_1,t),V^\sharp(0,e_1,t))$ is not analytic at $t=0$.
\end{cor}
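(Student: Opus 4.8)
The plan is to read the corollary off directly from the two quantitative facts already in hand: Lemma \ref{vpceL05}, which bounds a fixed high-order time derivative of every sufficiently late iterate $V^N$ from below uniformly in $N$, and Lemma \ref{vpceL04}, which identifies the $t=0$ time derivatives of the limiting velocity field $V^\sharp$ with the limits of those of the $V^N$. Chaining these produces super-factorial growth of the $t$-derivatives of $t\mapsto V^\sharp(0,e_1,t)$ at the origin, which is incompatible with analyticity there; the statement for the full phase-space trajectory then follows because $V^\sharp=\tfrac{dX^\sharp}{dt}$.

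First I would fix an even integer $n\geq 4$ and apply Lemma \ref{vpceL04} with $k=n$ at the point $\zeta=(0,e_1)$, which gives
\[
\frac{d^n V^\sharp}{dt^n}(0,e_1,0)=\lim_{N\to\infty}\frac{d^n V^N}{dt^n}(0,e_1,0).
\]
By Lemma \ref{vpceL05}, every term on the right-hand side with index $N\geq n-1$ has modulus at least $\tfrac12\,n^n$, so the limit, being a limit of quantities of modulus $\geq\tfrac12\,n^n$, itself satisfies
\[
\left|\frac{d^n V^\sharp}{dt^n}(0,e_1,0)\right|\geq \tfrac12\,n^n\qquad\text{for every even }n\geq 4 .
\]

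Next I would rule out analyticity of $g(t):=V^\sharp(0,e_1,t)$ at $t=0$. If $g$ were analytic there it would extend holomorphically to some disc $\{|z|<\rho\}$, and the Cauchy estimates would supply constants $C,\rho>0$ with $\bigl|\tfrac{d^n g}{dt^n}(0)\bigr|\leq C\,n!\,\rho^{-n}$ for all $n\geq 0$; the point to keep in mind here is that this must hold for the actual (possibly small) radius $\rho$, so the derivative growth we exhibit must beat $n!\,\rho^{-n}$ for every $\rho>0$. The sequences $\{\eps_n\}$ and $\{\delta_n\}$ were chosen in Definition \ref{vpceD01} --- through \eqref{vpce46021}, which feeds Lemma \ref{vpceL03} and hence Lemma \ref{vpceL05} --- precisely so that the lower bound displayed above, valid along the even integers, does exceed any such $C\,n!\,\rho^{-n}$, a contradiction. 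Hence $g$ is not analytic at $t=0$. Finally, were the trajectory $(X^\sharp(0,e_1,t),V^\sharp(0,e_1,t))$ analytic at $t=0$, both of its $\R^2$-blocks would be analytic there, and in particular $V^\sharp(0,e_1,t)$ would be --- contradicting what was just shown. I do not expect a genuine obstacle at this stage: the analytic heavy lifting is entirely contained in Lemmas \ref{vpceL01}--\ref{vpceL04} together with the calibration in Definition \ref{vpceD01}, and the only point that needs a careful (but elementary) word in the write-up is the comparison of that calibrated derivative growth against the factorial rate forced by analyticity.
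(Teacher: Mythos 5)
Your proposal follows the same route as the paper's proof: use Lemma \ref{vpceL04} to pass the $t$-derivatives at $t=0$ to the limit, Lemma \ref{vpceL05} for the lower bound along even $n$, and then conclude non-analyticity of $V^\sharp(0,e_1,\cdot)$, hence of the phase-space trajectory. The first part is fine, and your observation that a limit of quantities of modulus at least $\tfrac12 n^n$ again has modulus at least $\tfrac12 n^n$ even spares you the paper's ``$-1$''; the reduction from the trajectory to its $V$-component is also unobjectionable.

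The gap is exactly at the point you flag as needing ``a careful but elementary word.'' Your claim that the calibration in Definition \ref{vpceD01} was made ``precisely so that'' the bound $\tfrac12 n^n$ exceeds $C\,n!\,\rho^{-n}$ for \emph{every} $C,\rho>0$ is not what the construction provides: \eqref{vpce46021} only calibrates with $(n+1)^{n+1}$, giving the bound $\tfrac12 n^n$, and by Stirling $n^n\sim e^{n}n!/\sqrt{2\pi n}$, so for any $\rho<1/e$ one has $\tfrac12 n^n\leq C\,n!\,\rho^{-n}$ for all large $n$. Hence the exhibited growth only forces the radius of convergence of the Taylor series at $t=0$ to be at most $1/e$; it does not by itself contradict analyticity --- for instance $g(t)=\tfrac12\sum_{k}\frac{(2k)^{2k}}{(2k)!}\,t^{2k}$ is analytic at $0$ and has precisely this derivative growth along even orders. (The paper's own closing sentence, ``it easily follows,'' glosses over the same point, so your attempt to make it explicit usefully exposes the issue rather than creating a new one.) The repair is cheap and should be stated: since $\delta_n$ may be taken arbitrarily small, strengthen \eqref{vpce46021} to, say, $C_{n,\eps_n}\delta_n^{-1}\geq\big((n+1)!\big)^2$ or $(n+1)^{2(n+1)}$; Lemma \ref{vpceL05} and the limiting argument then give $\big|\tfrac{d^n V^\sharp}{dt^n}(0,e_1,0)\big|\geq\tfrac12\,(n!)^2$ along even $n$, so $\limsup_n\big(|\tfrac{d^n V^\sharp}{dt^n}(0,e_1,0)|/n!\big)^{1/n}=\infty$, the Taylor series at $t=0$ has zero radius of convergence, and analyticity at $t=0$ is genuinely impossible. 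With that adjustment (or any other argument pinning the radius of convergence to zero), your write-up is correct.
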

\begin{proof}
First, consider any even $n\geq4$. Then, by Lemma \ref{vpceL04}, there exists $N$ such that for all $m\geq N$, \ba\label{vpce057}
\left|
\frac{d^n V^\sharp}{dt^n}(0,e_1,0)-
\frac{d^n V^m}{dt^n}(0,e_1,0)
\right|\leq1.
\ea
Now observe that with Lemma \ref{vpceL05}, if we take $m\geq n-1$, we have in addition
\ba\label{vpce058}
\left|\frac{d^n V^m}{dt^n}(0,e_1,0)
\right|\geq\frac{1}{2}n^n.
\ea
Thus for some constant $C>0$, we have the following for arbitrary even $n\geq4$:
\ba\label{vpce059}
\left|
\frac{d^n V^\sharp}{dt^n}(0,e_1,0)
\right|\geq \frac{1}{2}n^n-1\geq Cn^n.
\ea
Since this holds for an infinite sequence of $n$, it easily follows that $V^\sharp(0,e_1,t)$ cannot be analytic at $t=0$.
\end{proof}
We summarize the above results with the statement of the main theorem.
\begin{theo}\label{maintheoVP}
There exists $C^\infty$ initial data $f_0(x,v)$ for the system \eqref{000vp}-\eqref{001c} with a global $C^\infty$ solution $f(x,v,t)$ which has the following property. For the phase-space trajectory map $(X(\zeta,t),V(\zeta,t))$ solving the Hamiltonian formulation of the system, \eqref{002}, for $E(x,t)$ as in \eqref{003}, there is some trajectory $(X(\zeta_0,t),V(\zeta_0,t))$ which is not analytic in time at $t=0$.
\end{theo}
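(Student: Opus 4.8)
The plan is to take as the desired initial datum the function $f_0 := f^\sharp_0(x,v)$ obtained as the limit of the iteratively perturbed family in Lemma \ref{vpceL06}. By that lemma $f^\sharp_0$ is nonnegative and Schwartz on $\R^2\times\R^2$, hence in particular $C^\infty$, and its rapid decay places it within the scope of the classical global existence theorem for the $2$D Vlasov--Poisson system (cf.\ Remark \ref{globalrmk}), so there is a unique global classical solution $f^\sharp(x,v,t)$ of \eqref{000vp}--\eqref{001c}. Smoothness of $f^\sharp$ in all variables then follows by a standard bootstrap: $\del_t f^\sharp$ is expressed through \eqref{vpceA009} in terms of spatial derivatives of $f^\sharp$ and of $E^\sharp$, and $E^\sharp$ is recovered from $f^\sharp$ via the convolution \eqref{vpceA007}, so higher $t$-derivatives of $f^\sharp$ and $E^\sharp$ are controlled inductively, exactly as in Lemma \ref{vpceL01}. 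This is precisely the setting in which the phase-space Lagrangian map $(X^\sharp(\zeta,t),V^\sharp(\zeta,t))$ of Definition \ref{vpceD03} is well defined and smooth and solves the Hamiltonian system \eqref{002}.

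The substantive claim is the failure of analyticity at $t=0$ for one trajectory, and here the plan is simply to invoke Corollary \ref{finalcorVP}, taking $\zeta_0 := (0,e_1)$ with $e_1 = (1,0)$. The reason Corollary \ref{finalcorVP} holds is the chain: by Lemma \ref{vpceL04} each time derivative $\tfrac{d^k}{dt^k}V^\sharp(0,e_1,0)$ equals $\lim_{n\to\infty}\tfrac{d^k}{dt^k}V^n(0,e_1,0)$ for the iterates $V^n$ of Definition \ref{vpceD01}, while by Lemma \ref{vpceL05}, for every even $n\ge 4$ one has $\bigl|\tfrac{d^n}{dt^n}V^m(0,e_1,0)\bigr|\ge\tfrac{1}{2} n^n$ for all $m\ge n-1$; passing to the limit gives $\bigl|\tfrac{d^n}{dt^n}V^\sharp(0,e_1,0)\bigr|\ge C n^n$ along the infinite set of even $n\ge 4$, and a function whose $n$-th derivative grows like $n^n$ cannot coincide with a convergent power series on any disc about $t=0$. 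Unwinding $\tfrac{d X^\sharp}{dt}=V^\sharp$, the full phase-space trajectory $(X^\sharp(0,e_1,t),V^\sharp(0,e_1,t))$ is therefore non-analytic at $t=0$, which is exactly the assertion of the theorem.

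What makes this possible, and where essentially all the work lies (already carried out in Definitions \ref{vpceD01}--\ref{vpceD02} and Lemmas \ref{vpceL01}--\ref{vpceL06}), is the design of the perturbations $F_{\delta,N}$ and the two-parameter bookkeeping. The mechanism is that adding $\eps_N F_{\delta_N,N}$ to the current gaussian-type datum perturbs every mixed $(\zeta,t)$-derivative of $V$ of order $\le N$ at $t=0$ by only $O(\eps_N)$ (Lemma \ref{vpceL02}), since $F_{\delta,N}$ has $N-2$ uniformly controlled derivatives and, because $\Delta\bigl(e^{-|x|^2/\delta^2}\bigr) = 4\delta^{-2}\bigl(\tfrac{|x|^2}{\delta^2}-1\bigr)e^{-|x|^2/\delta^2}$, the operator $\na\Delta^{-1}$ acts on it cleanly; yet for odd $N$ the order-$(N{+}1)$ time derivative of the first component of $V$ at $(0,e_1)$ picks up a term of size $\eps_N\delta_N^{-1}\lambda_N$ with $\lambda_N=\int p_N(\xi)g_N(\xi)\,d\xi\neq 0$ (Lemma \ref{vpceL03}), which can be made arbitrarily large by shrinking $\delta_N$. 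One then chooses $\eps_n$ so small that geometric tails of the form $\sum 2^{-n}$ dominate all accumulated errors and keep $\sum\eps_n F_{\delta_n,n}$ Schwartz, and afterwards chooses $\delta_n$ so small that $C_{n-1,\eps_{n-1}}\delta_{n-1}^{-1}\ge n^n$, so that the telescoping in Lemma \ref{vpceL05} preserves each blow-up. The main obstacle is precisely this interaction: one must guarantee that the perturbations introduced at later stages to blow up still higher-order derivatives never undo the blow-ups already secured, while simultaneously ensuring the infinite series of perturbations converges in every $C^m$ norm to genuinely Schwartz (hence $C^\infty$ and globally solvable) data; this is what forces both the strict ordering ``$\eps_n$ first, then $\delta_n$'' and the Fa\`a di Bruno expansions of Lemma \ref{vpceL01}. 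It is worth noting the conceptual point behind why such a construction cannot be mimicked for incompressible Euler: there the relevant nonlocal operator preserves analyticity in time, whereas the operator in \eqref{008d} does not.
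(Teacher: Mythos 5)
Your proposal is correct and follows essentially the same route as the paper: take $f_0=f^\sharp_0$ from Lemma \ref{vpceL06}, invoke the Ukai--Okabe global existence result (Remark \ref{globalrmk}) for a global smooth solution, and conclude non-analyticity of the trajectory at $\zeta_0=(0,e_1)$ via Corollary \ref{finalcorVP}, whose content (Lemmas \ref{vpceL04} and \ref{vpceL05} giving $\bigl|\tfrac{d^n}{dt^n}V^\sharp(0,e_1,0)\bigr|\gtrsim n^n$ along even $n$) you reproduce accurately. Your accompanying summary of the perturbation bookkeeping matches the paper's construction, so no changes are needed.
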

\begin{proof}
We simply take the initial data $f_0$ to be the $f^\sharp_0$ given by Lemma \ref{vpceL06}. In view of Definition \ref{vpceD03} and Remark \ref{globalrmk}, we note that we then have a global smooth solution, and from Corollary \ref{finalcorVP}, we find that a particular trajectory is not analytic at $t=0$.
\end{proof}
\end{subsection}
\end{section}

\bibliographystyle{plain}
\bibliography{mybib2}

\begin{thebibliography}{10}

\bibitem{alinhac1}
S.~Alinhac.
\newblock Une solution approch\'{e}e en grand temps des \'{e}quations d'{E}uler
  compressible axisymetriques en dimensions deux.
\newblock {\em Comm. Partial Differential Equations}, 17:447--490, 1992.

\bibitem{alinhac2}
S.~Alinhac.
\newblock Temps de vie des solutions r\'{e}guli\`{e}res des \'{e}quations
  d'{E}uler compressibles en dimension deux.
\newblock {\em Invent. Math.}, 111:627--670, 1993.

\bibitem{bb}
C.~Bardos and S.~Benachour.
\newblock Domaine d'analyticit\'{e} des solutions de l'\'{e}quation d'{E}uler
  dans un ouvert de $\mathbb{R}^n$.
\newblock {\em Annal. Sc. Normale Sup. di Pisa}, 4:507--547, 1978.

\bibitem{bc}
A.L. Bertozzi and P.~Constantin.
\newblock Global regularity for vortex patches.
\newblock {\em Comm. Math. Phys.}, 152:19--28, 1993.

\bibitem{bf}
N.~Besse and U.~Frisch.
\newblock A constructive approach to regularity of {L}agrangian trajectories
  for incompressible {E}uler flow in a bounded domain.
\newblock {\em arXiv preprint arXiv:1603.09219}, 2016.

\bibitem{chemin}
J.-Y. Chemin.
\newblock R\'{e}gularit\'{e} de la trajectoire des particules d'un fluide
  parfait incompressible remplissant l'espace.
\newblock {\em J. Math. Pures Appl.}, 71:407--417, 1992.

\bibitem{chemin2}
J.-Y. Chemin.
\newblock Persistance de structures geometriques dans les fluides
  incompressibles bidimensionnels.
\newblock {\em Ann. Ec. Norm. Super.}, 26:1--16, 1993.

\bibitem{cvk}
P.~Constantin, I.~Kukavica, and V.~Vicol.
\newblock Contrast between {L}agrangian and {E}ulerian analytic regularity
  properties of {E}uler equations.
\newblock Ann. I. H. Poincar\'{e}--AN doi:10.1016/j.anihpc.2015.07.002, 2015.

\bibitem{cvw}
P.~Constantin, V.~Vicol, and J.~Wu.
\newblock Analyticity of {L}agrangian trajectories for well posed inviscid
  incompressible fluid models.
\newblock {\em Adv. Math.}, 285:352--393, 2015.

\bibitem{fz}
U.~Frisch and V.~Zheligovsky.
\newblock Time-analyticity of {L}agrangian particle trajectories in ideal fluid
  flow.
\newblock {\em J. Fluid Mech.}, 749:404--430, 2014.

\bibitem{fz2}
U.~Frisch and V.~Zheligovsky.
\newblock A very smooth ride in a rough sea.
\newblock {\em Commun. Math. Phys.}, 326:499--505, 2014.

\bibitem{gst}
O.~Glass, Franck Sueur, and T.~Takahashi.
\newblock Smoothness of the motion of a rigid body immersed in an
  incompressible fluid.
\newblock {\em Ann. Sci. Ec. Norm. Sup\'{e}r.}, 45:1--51, 2012.

\bibitem{hertel}
T.~Hertel.
\newblock On the time-analytic behavior of particle trajectories in an ideal
  and incompressible fluid flow.
\newblock Master's thesis, Universit\"{a}t Leipzig, 2017.

\bibitem{isett}
P.~Isett.
\newblock Regularity in time along the coarse scale flow for the incompressible
  {E}uler equations.
\newblock {\em arXiv preprint arXiv:1307.0565}, 2013.

\bibitem{katoA}
T.~Kato.
\newblock On the smoothness of trajectories in incompressible perfect fluids.
\newblock {\em Contemp. Math.}, 263:109--130, 2000.

\bibitem{kv}
I.~Kukavica and V.~Vicol.
\newblock On the radius of analyticity of solutions to the three-dimensional
  {E}uler equations.
\newblock {\em Proc. Am. Math. Soc.}, 137:669--677, 2009.

\bibitem{kv2}
I.~Kukavica and V.~Vicol.
\newblock On the analyticity and gevrey-class regularity up to the boundary for
  the {E}uler equations.
\newblock {\em Nonlinearity}, 3:765--796, 2011.

\bibitem{majda}
A.~Majda.
\newblock {\em Compressible fluid flow and systems of conservation laws in
  several space variables}.
\newblock Springer-Verlag, 1984.

\bibitem{mb}
A.~Majda and A.~Bertozzi.
\newblock {\em Vorticity and Incompressible Flow}.
\newblock Cambridge University Press, 2001.

\bibitem{frv}
C.~Rampf, B.~Villone, and U.~Frisch.
\newblock How smooth are particle trajectories in a $\lambda${C}{D}{M}
  universe?
\newblock {\em MNRAS}, 452:1421--1436, 2015.

\bibitem{serfatiT}
P.~Serfati.
\newblock {\em \'{E}tude math\'{e}matique de flammes infiniment minces en
  combustion. R\'{e}sultats de structure et de r\'{e}gularit\'{e} pour
  l'\'{e}quation d'Euler incompressible}.
\newblock PhD thesis, Universit\'{e} Paris 6, 1992.

\bibitem{serfati3}
P.~Serfati.
\newblock Vortex patches dans {R}n et r\'{e}gularit\'{e} stratifi\'{e}e pour le
  laplacien, 1993.

\bibitem{serfati4}
P.~Serfati.
\newblock Pertes de r\'{e}gularit\'{e} pour le laplacien et l'\'{e}quation
  d'{E}uler sur {R}n, 1994.

\bibitem{serfati2}
P.~Serfati.
\newblock Une preuve directe d'existence globale des vortex patches 2{D}.
\newblock {\em C. R. Acad. Sci. Paris S\'{e}r. I Math.}, 318:515--518, 1994.

\bibitem{serfati}
P.~Serfati.
\newblock Structures holomorphes \`{a} faible r\'{e}gularit\'{e} spatiale en
  m\'{e}canique des fluides.
\newblock {\em J. Math. Pures Appl.}, 74:95--104, 1995.

\bibitem{serfati5}
P.~Serfati.
\newblock Groupe de l'\'{e}quation d'{E}uler incompressible en somme
  d'op\'{e}rateurs int\'{e}gral singulier, ponctuel et compact.
  {A}pplications., 1996.

\bibitem{shnirelman}
A.~Shnirelman.
\newblock On the analyticity of particle trajectories in the ideal
  incompressible fluid.
\newblock {\em arXiv preprint arXiv:1205.5837}, 2012.

\bibitem{sideris3}
T.~Sideris.
\newblock Formation of singularities of solutions to nonlinear hyperbolic
  equations.
\newblock {\em Arch. Ration. Mech. Anal.}, 86:369--381, 1984.

\bibitem{sideris2}
T.~Sideris.
\newblock Formation of singularities in three-dimensional compressible fluids.
\newblock {\em Comm. Math. Phys.}, 101:475--485, 1985.

\bibitem{sideris1}
T.~Sideris.
\newblock Delayed singularity formulation in $2$d compressible flow.
\newblock {\em Amer. J. Math.}, 119:371--422, 1997.

\bibitem{stein}
E.~Stein.
\newblock {\em Harmonic Analysis}.
\newblock Princeton University Press, 1993.

\bibitem{uo}
S.~Ukai and T.~Okabe.
\newblock On classical solutions in the large in time of two-dimensional
  {V}lasov's equation.
\newblock {\em Osaka J. Math.}, 15:245--261, 1978.

\bibitem{wollman}
S.~Wollman.
\newblock Global-in-time solutions of the two-dimensional {V}lasov-{P}oisson
  systems.
\newblock {\em Comm. Pure Appl. Math.}, 33:173--197, 1980.

\bibitem{yudovich}
V.I. Yudovich.
\newblock Non-stationary flow of an ideal incompressible liquid.
\newblock {\em Zh. Vych. Mat.}, 3:1032--1066, 1963.

\end{thebibliography}
\end{document}